
\documentclass[a4paper,12pt]{article}
\usepackage{amsthm,amsmath,amssymb,amsfonts,mathrsfs,color,fancyhdr,xypic,mathrsfs,mathtools, hyperref, enumitem}
\usepackage{tikz,standalone,float}
\usepackage[labelfont=bf]{caption}
\usepackage{eurosym}
\hypersetup{
    colorlinks=true, 
    linktoc=all,     
    linkcolor=blue,  
citecolor=black}
\usepackage[english]{babel}
\usetikzlibrary{intersections,decorations.pathmorphing}
\usepackage{german}
\pagestyle{plain}

\addtolength{\voffset}{-1cm}
\addtolength{\textheight}{1cm}
\addtolength{\oddsidemargin}{-1cm}
\addtolength{\textwidth}{2cm}

%

\def\bigboxplus{\mathop{\raisebox{-2pt}{\scalebox{1.6}{$\kern1pt\boxplus\kern1pt$}}}}
%

\def\Aut{\mathop{\rm Aut}\nolimits}
\def\Stab{\mathop{\rm Stab}\nolimits}

\def\End{\mathop{\rm End}\nolimits}
\def\deg{\mathop{\rm deg}\nolimits}
\def\mod{\mathop{\rm mod}\nolimits}

\def\Image{\mathop{\rm Im}\nolimits}

\def\Quot{\mathop{\rm Quot}\nolimits}

\def\PGL{\mathop{\rm PGL}\nolimits}

\def\id{{\rm id}}

\def\Span{\mathop{\rm Span}\nolimits}

\def\Spec{\mathop{\rm Spec}\nolimits}
\def\Proj{\mathop{\rm Proj}\nolimits}
\def\Sym{\mathop{\rm Sym}\nolimits}
\def\max{\mathop{\rm max}}

\def\op{\mathop{\rm op}\nolimits}

%

\let\phi\varphi
\let\epsilon\varepsilon
\let\setminus\smallsetminus
\let\emptyset\varnothing

\let\leq\leqslant
\let\geq\geqslant

%

\newcommand{\BA}{{\mathbb{A}}}

\newcommand{\BF}{{\mathbb{F}}}
\newcommand{\BG}{{\mathbb{G}}}

\newcommand{\BN}{{\mathbb{N}}}

\newcommand{\BP}{{\mathbb{P}}}

\newcommand{\BZ}{{\mathbb{Z}}}

\newcommand{\Fm}{{\mathfrak{m}}}

\newcommand{\CB}{{\cal B}}
\newcommand{\CC}{{\cal C}}

\newcommand{\CE}{{\cal E}}
\newcommand{\CF}{{\cal F}}
\newcommand{\CG}{{\cal G}}

\newcommand{\CL}{{\cal L}}
\newcommand{\CM}{{\cal M}}

\newcommand{\CO}{{\cal O}}

\newcommand{\CU}{{\cal U}}

\newcommand{\CZ}{{\cal Z}}

\def\UX{{\underline{X}}}
\def\UT{{\underline{T}}}
\def\Ut{{\underline{t}}}

\def\os{{\overline{s}}}

\def\ov{{\overline{v}}}
\def\oV{{\overline{V}}}
\def\oW{{\overline{W}}}
\def\oW{{\overline{W}}}
\def\oC{{\overline{C}}}

\def\oCalE{{\overline{\mathcal{E}}}}

\def\mrV{\mathring{V}}
\def\OE{{\overline{E}}}

%

\let\longto\longrightarrow

\let\into\hookrightarrow

\let\onto\twoheadrightarrow

%
\newtheorem{theorem}{Theorem}[section]
\newtheorem*{Mtheorem*}{Main Theorem}
\newtheorem{lemma}[theorem]{Lemma}

\newtheorem*{lemma*}{Lemma}
\newtheorem*{Klemma*}{Key Lemma}
\newtheorem*{theorem*}{Theorem}
\newtheorem*{satz*}{Satz}
\newtheorem{proposition}[theorem]{Proposition}
\newtheorem{corollary}[theorem]{Corollary}
\newtheorem{definition}[theorem]{Definition}

\newenvironment{remark}[1][Remark.]{\begin{trivlist}
\item[\hskip \labelsep {\bfseries #1}]}{\end{trivlist}}

\def\OV{{\Omega_V}}

\def\OF{{\overline\CF}}
\def\OE{{\overline\CE}}
\def\Fern{\mathop{\underline{\rm Fern}}\nolimits}
\def\SchFqOp{\mathop{\underline{\rm Sch}_{\BF_q}^{\op}}\nolimits}
\def\SchFq{\mathop{\underline{\rm Sch}_{\BF_q}}\nolimits}
\def\Set{\mathop{\underline{\rm Set}}\nolimits}
\def\Mor{\mathop{\rm Mor}\nolimits}
\def\ch{\mathop{\rm char}\nolimits}
\def\SE{\mathscr{E}}
\numberwithin{equation}{section}

\numberwithin{equation}{section}
\begin{document}


\begin{titlepage}

\begin{center}
    Diss. ETH No. 25610
\end{center}
 
\medskip
 
\medskip
 
\medskip
 
\begin{center}
\begin{huge}
Compactification of the finite Drinfeld period domain as a moduli space of ferns
\end{huge}
\end{center}
 
\medskip
 
\medskip
 
\medskip
 
\begin{center}
    A thesis submitted to
 attain the degree of 
\end{center}

\begin{center}
    \begin{large}
    DOCTOR OF SCIENCES of ETH ZURICH
    \end{large}
\end{center}

\begin{center}
    (Dr. sc. ETH Zurich)
\end{center}
 
\medskip
 
\medskip
 
\begin{center}
    presented by
\end{center}

\begin{center}
    ALEXANDRE PUTTICK\\
    
    Master, Mathematics, Universit\'e de Paris-Sud 11\\
    
    born August 22, 1989\\
    
    citizen of the United States of America
\end{center}
 
\medskip
 
\medskip
 
\medskip
 
\begin{center}
    accepted on the recommendation of
\end{center}
 
\begin{center}
    Prof. Dr. Richard Pink, examiner\\
    Prof. Dr. Ching-Li Chai, co-examiner
\end{center}

\medskip
 
\medskip
 
\medskip
\begin{center}
    2018
\end{center}
\end{titlepage}
\pagenumbering{roman}
\setcounter{section}{-2}
\section*{Summary}
\addcontentsline{toc}{section}{Summary}
Let $\BF_q$ be a finite field with $q$ elements and let $V$ be a vector space over $\BF_q$ of dimension $n>0$. Let $\OV$ be the Drinfeld period domain over $\BF_q$. This is an affine scheme of finite type over $\BF_q$, and its base change to $\BF_q(t)$ is the moduli space of Drinfeld $\BF_q[t]$-modules with level $(t)$ structure and rank $n$.  In this thesis, we give a new modular interpretation to Pink and Schieder's smooth compactification $B_V$ of $\OV$. 

Let $\hat V$ be the set $V\cup\{\infty\}$ for a new symbol $\infty$. We define the notion of a $V$-fern over an $\BF_q$-scheme $S$, which consists of a stable $\hat V$-marked curve of genus $0$ over $S$ endowed with a certain action of the finite group $V\rtimes \BF_q^\times$. Our main result is that the scheme $B_V$ represents the functor that associates an $\BF_q$-scheme $S$ to the set of isomorphism classes of $V$-ferns over $S$. Thus $V$-ferns over $\BF_q(t)$-schemes can be regarded as generalizations of Drinfeld $\BF_q[t]$-modules with level $(t)$ structure and rank $n$. 

To prove this theorem, we construct an explicit universal $V$-fern over $B_V$. We then show that any $V$-fern over a scheme $S$ determines a unique morphism $S\to B_V$, depending only its isomorphism class, and that the $V$-fern is isomorphic to the pullback of the universal $V$-fern along this morphism.

We also give several functorial constructions involving $V$-ferns, some of which are used to prove the main result. These constructions correspond to morphisms between various modular compactifications of Drinfeld period domains over $\BF_q$. We describe these morphisms explicitly.

\newpage
 \sloppy
\section*{Zusammenfassung}
\addcontentsline{toc}{section}{Zusammenfassung}
Seien $\BF_q$ ein endlicher K\"orper mit $q$ Elementen und $V$ ein Vektorraum \"uber $\BF_q$ endlicher Dimension $n>0$. Sei $\OV$ der Drinfeld'sche Periodenbereich \"uber $\BF_q$. Dieser ist ein affines Schema von endlichem Typ \"uber $\BF_q$, dessen Basis-Wechsel nach $\BF_q(t)$ der Modulraum von Drinfeld $\BF_q[t]$-Moduln vom Rang $n$ mit Niveau $(t)$ Struktur ist. In dieser Arbeit geben wir Pinks und Schieders glatter Kompaktifizierung $B_V$ von $\OV$ eine neue modulare Interpretation. 

Sei $\hat V$ die Menge $V\cup\{\infty\}$ f"ur ein neues Symbol $\infty$. Wir f"uhren den Begriff eines $V$-Farns \"uber einem $\BF_q$-Schema ein. Ein solcher besteht aus einer stabilen $\hat V$-markierten Kurve vom Geschlecht 0 \"uber $S$ zusammen mit einer gewissen Wirkung der endlichen Gruppe $V\rtimes\BF_q^\times$. Unser Hauptsatz ist, dass das Schema $B_V$ den Funktor repr"asentiert, der einem $\BF_q$-Schema die Menge der Isomorphieklassen von $V$-Farnen \"uber $S$ zuordnet. Deshalb k\"onnen $V$-Farne \"uber $\BF_q(t)$-Schemen als Verallgemeinerungen von Drinfeld $\BF_q[t]$-Moduln vom Rang $n$ mit Niveau $(t)$ Struktur betrachtet werden.

Um diesen Satz zu beweisen, konstruieren wir einen expliziten universellen $V$-Farn \"uber $B_V$. Danach zeigen wir, dass ein beliebiger $V$-Farn "uber einem Schema $S$ einen eindeutigen Morphismus $S\to B_V$ bestimmt, der nur von der Isomorphieklasse abh"angt, derart, dass der $V$-Farn isomorph zur Zur"uckziehung des universellen $V$-Farns entlang dieses Morphismus ist.

Wir f"uhren auch verschiedene funktorielle Konstruktionen mit $V$-Farnen ein. Einige davon werden verwendet im Beweis des Hauptsatzes. Diese Konstruktionen entsprechen Morphismen zwischen unterschiedlichen modularen Kompaktifizierungen von Drinfeld'schen Periodenbereichen "uber $\BF_q$, die wir explizit beschreiben.

\fussy
\newpage
\subsection*{Acknowledgments}
\addcontentsline{toc}{section}{Acknowledgments}
My greatest thanks go to Prof. Dr. Richard Pink, who suggested the thesis topic and provided guidance and many invaluable ideas and insights. This includes inventing the concept of a $V$-fern and suggesting the construction of the universal $V$-fern. His mentoring pushed me to think much more thoroughly and carefully about both mathematics and the way it is presented, all while keeping the big picture in sight. Indeed, his influence has affected my thinking in all domains. It is a gift to have been stretched so far beyond my former horizons.
\vspace{2mm}

I would like to thank Prof. Dr. Ching-Li Chai for accepting to be the co-examiner for my doctoral thesis and reading through my work.
\vspace{2mm}

I also extend my gratitude to my fellow PhD students Jennifer-Jayne Jakob, Simon H"aberli, Nicolas M"uller and Felix Hensel for interesting mathematical discussions along with a good deal of emotional support. Special thanks especially to Simon H"aberli for proofreading my introduction and providing several useful suggestions.
\vspace{2mm}

Finally, I'd like to thank my family and friends for inspiration, support, love and joy throughout the course of my PhD studies.
\newpage
\tableofcontents
\vspace{10mm}
\pagenumbering{arabic}


\setcounter{section}{-1}
\section{Introduction}
Let $\BF_q$ be a finite field with $q$ elements. This thesis was motivated by a question involving Drinfeld modules and their moduli spaces. For an overview of the theory, see Drinfeld \cite{Drin}, Goss \cite{Go} and Deligne-Husem\"oller \cite{Del-Hu}. More specifically, we are interested in compactifications of such moduli spaces and will focus on the special case of Drinfeld $\BF_q[t]$-modules of generic characteristic and rank $n$ with level $(t)$ structure. In this case, the corresponding fine moduli space is obtained via base change from a \emph{Drinfeld period domain over a finite field} $\OV$, which is an affine algebraic variety over $\BF_q$ (see below for more details). In \cite{P-Sch}, Pink and Schieder construct two projective compactifications of $\OV$ and give each a modular interpretation: the normal \emph{Satake compactification} $Q_V$, and a smooth compactification $B_V$ which dominates $Q_V$. The aim of this thesis is to give $B_V$ a new modular interpretation in terms of geometric objects that we call $V$-ferns. This will hopefully aid in future efforts to construct useful compactifications of more general Drinfeld modular varieties.

\subsection{Drinfeld Modular Varieties}Let $C$ be a smooth projective geometrically irreducible curve over $\BF_q$. Let $\infty\in C$ be a closed point and let $A:=\CO_C(C\setminus\{\infty\}) $. Such an $A$ is called an \emph{admissible coefficient ring}. Let $F:=\Quot(A)$ and let $K$ be a field extension of $F$. The \emph{ring of additive polynomials} $K[\tau]$ is the ring of polynomials in $\tau$ over $K$ subject to the relation $\tau a=a^q\tau$ for all $a\in K$. A \emph{Drinfeld $A$-module} (of generic characteristic) over $K$ is a homomorphism of $\BF_q$-algebras
$$\psi\colon A\to K[\tau],\,\,a\mapsto \psi_{a,0}+\psi_{a,1}\tau+\psi_{a,2}\tau^2+\ldots$$ such that $\psi_{a,0}=a$ for all $a\in A$, and $\psi(A)\not\subset K.$

Let $\psi$ be a Drinfeld $A$-module over $K$. There exists an $n\in\BZ_{>0}$ such that $\deg_\tau(\psi_a)=n\deg_A (a)$ for all $a\in A$, where $\deg_A (a):=\dim_{\BF_q}A/(a).$ The integer $n$ is called the \emph{rank} of $\psi.$ Let $I\subset A$ be a non-zero proper ideal. A \emph{level $I$ structure} on $\psi$ is an $A$-module isomorphism $$\lambda\colon(I^{-1}A/A)^r\stackrel\sim\longto\bigcap_{i\in I}\ker(\psi_i)=:\psi[I]\subset K.$$ Here $\ker(\psi_i)$ denotes the set of zeros of $\psi_i$ (a priori contained in an algebraic closure of $K$), and the definition of $\lambda$ presupposes that $\psi[I]\subset K$.
\vspace{2mm}

One can generalize these notions and define Drinfeld $A$-modules of rank $n$ and level $I$ over an arbitary $F$-scheme. The corresponding moduli functor is represented by a smooth irreducible $(n-1)$-dimensional affine algebraic variety $M^n_{A,I}$ over $F$. It is then natural to search for meaningful compactifications of $M^n_{A,I}$. An example is the \emph{Satake compactification} of Pink \cite{Pink}, which he denotes by $\overline{M}^n_{A,I}$. This is normal integral proper algebraic variety over $F$, but is singular when $n\geq 3$ (see \cite[Theorem 7.8]{Pink}). It is our hope that generalizing the theory we describe in this thesis will lead to a ``nice'' (e.g. smooth, toroidal...) compactification $M^n_{A,I}\subset B^n_{A,I}$ dominating $\overline{M}^n_{A,I}$, in such a way that $B^n_{A,I}$ itself has a meaningful geometric modular interpretation. 
\vspace{2mm}

\subsection{The case $A=\BF_q[t]$ and $I=(t)$}
In the special case of Drinfeld $\BF_q[t]$-modules of rank $n$ and level $(t)$, the fine moduli space $M^n_{\BF_q[t],(t)}$ is obtained via base change to $\Spec\BF_q(t)$ from:
$$\Omega^n:=\BP^{n-1}_{\BF_q}\setminus\bigcup_{H}H,$$
where $H$ runs over the set of $\BF_q$-hyperplanes in $\BP^{n-1}_{\BF_q}$.
The scheme $\Omega^n$, often referred to as the \emph{Drinfeld period domain} over $\BF_q$, has been studied by Rapoport \cite{Rap}, Orlik-Rapoport \cite{Orl} and Pink-Schieder \cite{P-Sch}. As in \cite{P-Sch}, it will be useful to adopt a coordinate free formulation in order to describe the compactifications of $\Omega^n$ that we are interested in. Let $V$ be a finite dimensional vector space over $\BF_q$ of dimension $n$. Let $P_V:=\Proj\big(\Sym V\big)$, and define
$$\Omega_V:=P_V\setminus\bigcup_{H}H,$$
where $H$ runs over the set of $\BF_q$-hyperplanes in $P_V$.
Any basis of $V$ induces an isomorphism $P_V\stackrel\sim\to\BP^{n-1}_{\BF_q}$ sending $\OV$ to $\Omega^n$.
In \cite{P-Sch}, a smooth compactification $B_V$ of $\OV$ is constructed that differs in general from the tautological compactification $P_V$. The scheme $B_V$ is projective and defined as a closed subscheme of $\prod_{0\neq V'\subset V}P_{V'},$ where $V'$ runs over all non-zero subspaces of $V$. It is shown that $B_V$ possesses a natural stratification indexed by the flags of subspaces of $V$ and that the complement $B_V\setminus\OV$ is a divisor with normal crossings. Further properties of $B_V$ are established by Linden \cite{Lind}. For further details, see Section \ref{OVBV}.
\vspace{2mm}

\subsection{Smooth $V$-ferns}
We now turn to $V$-ferns, whose definition we motivate in the following paragraphs. 
Keeping in mind the ever-fruitful analogy between Drinfeld modules and elliptic curves, these $V$-ferns should correspond to generalizations of Drinfeld modules in the spirit of the generalized elliptic curves of Deligne-Rapoport \cite{Del-Rap}. Indeed, the latter are used to accomplish a task analogous to what one hopes to accomplish for Drinfeld modular varieties, namely, to construct smooth algebraic compactifications of moduli of elliptic curves with given level. On the other hand, we rely heavily on the work of Knudsen \cite{Knud} concerning the moduli space of stable $n$-pointed curves of a given genus. We recall the definition (with a formulation that is more suited to our purposes) in the case of genus~0:
\begin{definition}Let $K$ be a field, and let $I$ be a finite set. A \emph{stable $I$-marked curve of genus $0$} over $K$ is a pair $(C,\lambda)$ where
\begin{enumerate}
\item $C$ is a geometrically reduced, geometrically connected projective curve of genus 0 over $K$ with at worst nodal singularities, and
\item $\lambda\colon I\into C^{\mathop{\rm sm}}(K),\,\,i\mapsto\lambda_i,$ where $C^{\mathop{\rm sm}}$ denotes the smooth locus of $C$, is an injective map such that for each irreducible component $E\subset C$, the number of marked points $\lambda_i$ on $E$ plus the number of singular points on $E$ is $\geq 3$.
\end{enumerate}
\end{definition}
We call the map $\lambda$ an \emph{$I$-marking} of $C$. Since we never consider curves of higher genus, we use the phrase ``stable $I$-marked curve'' in place of  ``stable $I$-marked curve of genus 0'' from now on. We visualize such curves as trees of copies of $\BP^1_K$. Stable $I$-marked curves and their generalization to arbitrary base schemes are discussed in detail in Section \ref{stablemarkedcurves}. 
\vspace{2mm} 

Let $V:=\bigl(t^{-1}\BF_q[t]/\BF_q[t]\bigr)^n$.
A Drinfeld $\BF_q[t]$-module of rank $n$ and level $(t)$ over a field extension $K$ of $\BF_q(t)$ may be viewed as a pair $(\psi,\lambda)$ consisting of a homomorphism 
\begin{eqnarray*}
\psi\colon\BF_q[t]&\longto&\End_K(\BG_{a})\cong K[\tau]\\
a&\mapsto& \psi_{a}
\end{eqnarray*}
 along with an injective $\BF_q$-linear map 
\begin{equation*}\label{lvlstr}\lambda\colon V\stackrel\sim\to\ker(\psi_t)(K)\subset\BG_a(K)=(K,+).\end{equation*}
In fact $\psi$ is determined by $\psi_t$, which is in turn determined by its zeros and the condition that $\psi_{t,0}=t$, so the level structure $\lambda$ \textbf{completely determines} $\psi$. 

Let $\hat V:=V\cup\{\infty\}$ for a new symbol $\infty$. We add a ``point at infinity'' via the open embedding 
$$\BG_{a,K}\into\BP^1_K,\,\,t\mapsto (t:1),$$ and obtain a map $\hat\lambda\colon \hat V\to\BP^1_K(K)$ from $\lambda$ by sending $\infty$ to $(1:0)$. The pair $(\BP^1_K,\lambda)$ is then a smooth stable $\hat V$-marked curve. One obtains a left action of the group $$G:=V\rtimes\BF_q^\times$$ on $\BP^1_K$ via the homomorphism
$$\phi\colon G\to\PGL_2(K),\,\,(v,\xi)\mapsto\begin{pmatrix}
\xi& \lambda_v\\0 & 1
\end{pmatrix}.
$$
This leads to the following definition:
\begin{definition}\label{smoothferndef} A \emph{\textbf{smooth} $V$-fern} over $K$ is a tuple $(C,\lambda,\phi)$ consisting of a smooth stable $\hat V$-marked curve $(C,\lambda)$ and a left $G$-action
$$\phi\colon G\to \Aut_K(C),\,\,(v,\xi)\mapsto \phi_{v,\xi}$$ such that
\begin{enumerate}
\item $\phi_{v,\xi}(\lambda_w)=\lambda_{\xi w+v}$ for all $w\in \hat V$, and
\item there exists an isomorphism $C\stackrel\sim\to\BP^1_K$ with $\lambda_0\mapsto(0:1)$ and $\lambda_\infty\mapsto(1:0)$ under which $\phi_{\xi}$ corresponds to $\begin{pmatrix}\xi &0\\0&1
\end{pmatrix}\in\PGL_2(K)$ for all $\xi\in\BF_q^\times$.
\end{enumerate}
\end{definition}
The upshot of the preceding discussion is that one can naturally associate a smooth $V$-fern over $K$ to a Drinfeld $\BF_q[t]$-module of rank $n$ and level $(t)$ over $K$. In the above notation, this is given by $(\psi,\lambda)\mapsto \lambda\mapsto (\BP^1_K,\hat\lambda,\phi)$. After passing to isomorphism classes, the notions are in fact equivalent. 

We can generalize to the notion of a smooth $V$- fern over an arbitrary $\BF_q$-scheme $S$. Roughly speaking, a smooth $V$- fern over $S$ is a flat family over $S$ of objects of the type in Definition \ref{smoothferndef}. One can show the following with relative ease:\footnote{Without using Proposition \ref{smoothferns0} as an intermediary, we will prove a more general result directly and obtain Proposition \ref{smoothferns0} as a corollary.}
\begin{proposition}[see Corollary \ref{7-cor-smoothferns}]\label{smoothferns0}The scheme $\OV$ represents the functor that associates to an $\BF_q$-scheme $S$ the set of isomorphism classes of smooth $V$-ferns over $S$.\end{proposition}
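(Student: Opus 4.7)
The plan is to set up, over any $\BF_q$-scheme $S$, a natural bijection between isomorphism classes of smooth $V$-ferns over $S$ and elements of $\OV(S)$, by identifying both sides with the same concrete data: a line bundle $L$ on $S$ together with an $\BF_q$-linear map $\lambda\colon V \to H^0(S, L)$ whose restriction to every geometric fiber of $S$ is $\BF_q$-linearly injective. The modular description of $\OV \subset P_V = \Proj(\Sym V)$ identifies an $S$-point of $\OV$ with exactly such data up to isomorphism: the surjection $V \otimes_{\BF_q} \CO_S \twoheadrightarrow L$ defining an $S$-point of $P_V$ avoids every $\BF_q$-hyperplane in $P_V$ precisely when its restriction to each $\BF_q$-hyperplane $H \subset V$ remains surjective, which is equivalent to the fiberwise $\BF_q$-linear injectivity.

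Given such data $(L, \lambda)$, I would construct a smooth $V$-fern by taking $C := \BP(L \oplus \CO_S) \to S$, with $\lambda_\infty$ and $\lambda_0$ the two disjoint sections corresponding to the two summands. For each $v \in V$, the section $\lambda_v$ is determined inside the affine line bundle $C \setminus \lambda_\infty(S)$, which is canonically the total space of $L$ pointed by $\lambda_0$, by the element $\lambda(v) \in H^0(S, L)$. The group $G = V \rtimes \BF_q^\times$ acts on $L \oplus \CO_S$ by the $\CO_S$-linear formula $(x, y) \mapsto (\xi x + \lambda(v) y,\ y)$ for $(v, \xi) \in G$, and this descends to a $G$-action on $C$ satisfying conditions (1) and (2) of Definition \ref{smoothferndef}. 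Fiberwise injectivity of $\lambda$ ensures that the markings $\lambda_v$ are pairwise disjoint on each fiber, so that $(C, \hat\lambda)$ is a smooth stable $\hat V$-marked curve over $S$.

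Conversely, from a smooth $V$-fern $(C, \lambda, \phi)$ over $S$ I would extract the data $(L, \lambda)$ as follows. Because $C \to S$ is a smooth $\BP^1$-bundle equipped with two disjoint sections $\lambda_0$ and $\lambda_\infty$, there is a unique line bundle $L$ on $S$ with $C \cong \BP(L \oplus \CO_S)$ identifying these sections with the two summands; for instance $L$ may be taken to be the normal bundle of $\lambda_\infty(S)$ in $C$. Then $C \setminus \lambda_\infty(S)$ is the total space of $L$, and each section $\lambda_v$ for $v \in V$ defines an element $\lambda(v) \in H^0(S, L)$. The axioms on $\phi$ force $v \mapsto \lambda(v)$ to be $\BF_q$-linear, while the stability of $(C, \hat\lambda)$ gives the required fiberwise injectivity. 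The two constructions are mutually inverse up to canonical isomorphism.

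The step I would expect to require the most care is the canonicity of the extraction $\CF \mapsto (L, \lambda)$: one must verify that the identification $C \cong \BP(L \oplus \CO_S)$ is uniquely determined once the sections $\lambda_0, \lambda_\infty$ and the $\BF_q^\times$-action normalized as in condition (2) of Definition \ref{smoothferndef} are fixed, so that the resulting morphism $S \to \OV$ depends only on the isomorphism class of $\CF$ and not on any auxiliary choices. This rigidity is what makes $\OV$ represent the functor on the nose rather than merely give a coarse moduli space, and it is exactly the aspect generalized by the main theorem to treat possibly non-smooth $V$-ferns over $B_V$.
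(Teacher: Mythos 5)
Your proposal is correct in substance, but it takes a genuinely different route from the paper. The paper never proves this proposition directly: it is obtained as Corollary \ref{7-cor-smoothferns} of the representability theorem for $\CF$-ferns (Theorem \ref{flagrep}), specialized to the trivial flag $\CF_0=\{0,V\}$, for which $U_{\CF_0}=\Omega_{\CF_0}\cong\OV$ and $\CF_0$-ferns are exactly the smooth $V$-ferns; that theorem in turn rests on the explicit universal family over $B_V$, the Key Lemma \ref{key}, and induction on $\dim V$. Your argument is instead a short, self-contained equivalence of groupoids between smooth $V$-ferns over $S$ and fiberwise injective pairs $(L,\lambda)$, i.e.\ $\OV(S)$, via $C=\BP(L\oplus\CO_S)$. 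What you lose is any information about the boundary; what you gain is a direct proof that does not need the compactification at all, which is exactly what the footnote to the proposition alludes to. Note that the "extraction'' half of your argument is precisely Proposition \ref{lbstr} and Corollary \ref{fibnonzero} of the paper restricted to the smooth case, so the comparison is really between your $\BP(L\oplus\CO_S)$ construction plus those two statements on the one hand, and the full moduli-theoretic machine on the other.

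Two caveats. First, your justification of the modular description of $\OV$ contains a slip: avoiding all $\BF_q$-hyperplanes of $P_V$ is equivalent to $\lambda(v)$ being fiberwise nonzero for every $v\in V\setminus\{0\}$, i.e.\ to fiberwise injectivity; it is \emph{not} equivalent to the restriction of $V\otimes\CO_S\onto L$ to each hyperplane $H\subset V$ remaining surjective (for $\dim V\geq 3$ the latter is strictly weaker, since a nonzero kernel of $\lambda_s$ need not contain a hyperplane). The conclusion you need is nevertheless exactly the statement quoted from \cite{P-Sch}, so nothing breaks. Second, you locate the delicate point in the canonicity of $C\cong\BP(L\oplus\CO_S)$, but that part is comparatively harmless (the identification is unique up to a global unit rescaling of $L$, which does not change the isomorphism class of $(L,\lambda)$). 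The step that actually requires an argument is your sentence "the axioms on $\phi$ force $v\mapsto\lambda(v)$ to be $\BF_q$-linear'': additivity amounts to showing that each $\phi_v$ acts on the total space of $L$ by a genuine translation $x\mapsto x+\lambda_v$ rather than an affine map $x\mapsto a_vx+\lambda_v$, and over a non-reduced base this is not formal. The paper handles it in Proposition \ref{lbstr} by the commutator trick $[G,G]=V$ when $q\neq 2$ and by a separate order-two computation when $q=2$; your write-up should incorporate that argument (or cite it) rather than treat linearity as automatic.
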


\begin{remark}While the notion of a smooth $V$-fern makes sense over any $\BF_q$-scheme, Drinfeld modules of generic characteristic only occur over $\BF_q(t)$-schemes. To generalize the equivalence between smooth $V$-ferns over $K$ and Drinfeld $\BF_q[t]$-modules of rank $n$ and level $(t)$ over $K$, we must restrict ourselves to base schemes defined over $\Spec(\BF_q(t))$. However, we only restricted ourselves to Drinfeld modules of generic characteristic for simplicity. In fact, one may drop the requirement that the Drinfeld module be of generic characteristic, instead requiring that the characteristic of the Drinfeld module does not divide $(t)$. One then gets an equivalence for base schemes defined over $\Spec(\BF_q[t,t^{-1}])$. One can go even further and generalize the notion of level structure to that of a \emph{Drinfeld level structure} (see, for instance, \cite[Definition 6.1]{Del-Hu}). This allows one to define Drinfeld modules of rank $n$ and level $(t)$ over arbitrary $\BF_q[t]$-schemes, though the correspondence with smooth $V$-ferns then breaks down.
\end{remark}

\subsection{General $V$-ferns}
To motivate the general definition of a $V$-fern, let $R$ be a discrete valuation ring over $\BF_q$, and suppose we are given a smooth $V$-fern $(C,\lambda,\phi)$ over the generic point of $\Spec R$. Knudsen shows in \cite{Knud} that there is a fine moduli space $\CM_{\hat V}$ of stable $\hat V$-marked curves and that $\CM_{\hat V}$ is proper. The valuative criterion for properness (\cite[\S II.4]{Ha}) then says that the morphism $\Spec K\to \CM_{\hat V}$ corresponding to $(C,\lambda)$ extends uniquely to a morphism $\Spec R\to \CM_{\hat V}$. On other words, the curve $(C,\lambda)$ extends uniquely to a stable $\hat V$-marked curve $(\tilde C,\tilde\lambda)$ over $\Spec R$. By the functoriality of the extension, the $G$-action $\phi$ can be extended uniquely to $\tilde C$. The corresponding tuple $(\tilde C,\tilde\lambda,\tilde\phi)$ is characterized by certain conditions generalizing those in Definition \ref{smoothferndef}. It is important to note that the special fiber of $\tilde C$ can be singular, and hence the resulting object is \textbf{not} in general a smooth $V$-fern over $\Spec R$. 

It is then natural to define a $V$-fern over a general $\BF_q$-scheme $S$ as a tuple $(C,\lambda,\phi)$, where $(C,\lambda)$ is a stable $\hat V$-marked curve over $S$ and $\phi\colon G\to\Aut_S(C)$ is a left group action of $G$ on $C$ satisfying certain conditions generalizing those of Definition \ref{smoothferndef} (see Definition \ref{FernDef}). Note that we often write $C$ in place of $(C,\lambda,\phi)$. A smooth $V$-fern is then just a $V$-fern with smooth fibers. Since this is an open condition, Proposition \ref{smoothferns0} implies that the moduli space of $V$-ferns, if it exists, contains $\OV$ as an open subscheme. 
The argument that a smooth $V$-fern over the generic point of $\Spec R$ extends uniquely to a $V$-fern over $\Spec R$ applies just as well in the case of a general $V$-fern over the generic point. The moduli space of $V$-ferns would thus satisfy the valuative criterion for properness, so the notion of $V$-ferns points toward a modular compactification of $\OV$.

\subsection{Constructions involving $V$-ferns}
We will use several constructions involving $V$-ferns repeatedly and in combination. For convenience, we briefly introduce each of them here. Let $S$ be a scheme over $\BF_q$ and let $(C,\lambda,\phi)$ be a $V$-fern over $S$. Consider a subspace $0\neq V'\subsetneq V.$
\subsubsection*{Contraction} Contraction associates a $V'$-fern $(C',\lambda',\phi')$ to $C$, along with a morphism $C\to C'$. Conceptually, one obtains $C'$ from $C$ by forgetting the $(V\setminus V')$-marked points and contracting irreducible components until the curve is again stable. The contraction $C'$ inherits a $\hat V'$-marking from $C$, and the $G$-action on $C$ induces a left $(G':=V'\rtimes\BF_q^\times)$-action on $C'$. If $C$ is smooth, then $(C',\lambda',\phi')=(C,\lambda|_{V'},\phi|_{G'})$, and the contraction morphism $C\to C'$ is the identity.

\subsubsection*{Grafting}Let $\oV:=V/V'$. Grafting yields a $V$-fern from a given $V'$-fern $(C',\lambda',\phi')$ and $\oV$-fern $(\oC,\overline\lambda,\overline\phi)$. Intuitively, one does this by gluing a copy of $C'$ to each of the $\oV$-marked points of $\oC$. The resulting curve possesses a natural $\hat V$-marking (up to choosing a decomposition $V=V'\oplus U$) and left $G$-action.

\subsubsection*{Contraction to the $v$-component}
This construction associates to $C$ a pair $(C^v,\lambda^v)$ consisting of a $\BP^1$-bundle over $S$, i.e., a scheme that is Zariski-locally isomorphic to $\BP^1_S$,\footnote{Some authors use the term ``$\BP^1$-bundle'' more generally to mean \'etale locally (equivalently flat locally) trivial.} along with a map
$\lambda^v\colon \hat V\to C^v(S)$ and a morphism $C\to C^v$. We call $\lambda^v$ a \emph{$\hat V$-marking} and say $C^v$ is a (smooth) \emph{$\hat V$-marked curve.}  The construction is related to the notion of contraction described above. Intuitively, one obtains $C^v$ from $C$ fiberwise by contracting all irreducible components not containing the $v$-marked point. We will mostly be interested in the case $v=\infty$, but also consider the case $v=0$. When $C$ is smooth, we obtain $C^v$ by simply forgetting  the $G$-action on~$C$.

\subsubsection*{Line bundles}
We also provide constructions associating line bundles with certain extra structure to $(C,\lambda,\phi)$. One such construction assigns to $C$ a pair $(L,\lambda)$ consisting of a line bundle $L$ over $S$ along with a fiberwise non-zero (see Definition \ref{fiberwise}) $\BF_q$-linear map
$\lambda\colon V\to L(S).$
To obtain $L$, we take the contraction to the $\infty$-component $C^\infty$ and define $L$ to be the complement of the image of the $\infty$-section.

Let $\mrV:=V\setminus\{0\}.$ An analogous construction yields a pair $(\hat L,\rho)$ consisting of a line bundle $\hat L$ over $S$ and a fiberwise non-zero reciprocal map (see Definition \ref{recipmap})
$\rho\colon \mrV\to \hat L(S).$ This construction differs from the preceding one in that we instead take the contraction to the $0$-component $C^0$ and define $\hat L$ to be the complement of the $0$-marked point.

\subsection{Main theorem and outline of the proof}
Our main result is the following:
\begin{Mtheorem*}[see Theorem \ref{rep}] The scheme $B_V$ represents the functor that associates an $\BF_q$-scheme $S$ to the set of isomorphism classes of $V$-ferns over $S$.
\end{Mtheorem*}
 We denote the functor referred to in the Main Theorem by
$$\begin{array}{cccl}
\Fern_V\colon&\SchFqOp&\longto&\Set\\
&S&\mapsto&\left\{\substack{\mbox{Isomorphism classes} \\
    \mbox{of $V$-ferns over $S$}}\right\}.
\end{array}$$
Roughly speaking, our strategy for proving the theorem is to begin by defining a universal $V$-fern $(\CC_V,\lambda_V,\phi_V)$ over $B_V$. For an arbitrary $\BF_q$-scheme $S$ and a $V$-fern $(C,\lambda,\phi)$ over $S$, we then show that there exists a unique morphism $f_C\colon S\to B_V$ such that $C$ is isomorphic to the pullback $f_C^*\CC_V$. We now give a more detailed outline of the proof.
\subsubsection*{Step 1: Construct the universal family}
Let $\Sigma:=V\times (V\setminus\{0\})$ and let $P^\Sigma:=\prod_{(v,w)\in \Sigma}\BP^1.$ We define the universal $V$-fern $\pi\colon \CC_V\to B_V$ as the scheme theoretic closure of the image of a certain morphism from $\OV\times\BA^1$ to $B_V\times P^\Sigma$. 

Recall that a \emph{flag} $\CF$ of $V$ is a set $\{V_0,\ldots,V_m\}$ of subspaces of $V$ with $m\in \BZ_{>0}$ such that $V_0=\{0\}$ and $V_m=V$ and $V_i\subsetneq V_j$ for $i<j$. We say that $\CF$ is \emph{complete} if $m=\dim V$. For each flag $\CF$ of $V$, there is an open subscheme $U_\CF$ of $B_V$ which represents a certain subfunctor of the functor in \cite{P-Sch} which is represented by $B_V$. As $\CF$ varies over all (complete) flags of $V$, the $U_\CF$ form an open covering of $B_V$.

In order to endow $\CC_V$ with the structure of a $V$-fern, we separately consider a scheme $\CC_\CF$, which is a closed subscheme of $U_\CF\times P^\Sigma$ defined by explicit polynomial equations. We endow $\CC_\CF$ with the structure of a $V$-fern and then show that $\CC_\CF= \pi^{-1}(U_\CF)$. This allows us to glue the $V$-fern structures on the various $\CC_\CF$ to obtain one on $\CC_V.$

\subsubsection*{Step 2: Reduction to the case of $\CF$-ferns}
 We show that for a $V$-fern $(C,\lambda,\phi)$ over a scheme $S$ one can associate to each $s\in S$ a flag $\CF_s$ coming from the fiber $C_s$. For a fixed flag $\CF$, we define an \emph{$\CF$-fern} to be a $V$-fern for which $\CF_s\subset\CF$. The locus $S_\CF:=\{s\in S\mid \CF_s\subset \CF\}$ is open in $S$ and the $S_\CF$ cover $S$ as $\CF$ varies over all (complete) flags of $V$. We reduce the proof of the Main Theorem to showing that $U_\CF$ represents $\Fern_\CF$, where $\CF$ is a complete flag, with the universal $\CF$-fern is given by $\CC_\CF$.

\subsubsection*{Step 3: Morphism to $U_\CF$ representing an $\CF$-fern}
Let $\CF$ be a complete flag and let $(C,\lambda,\phi)$ be an $\CF$-fern over a scheme $S$. Using one of the constructions described above, we associate a pair $(L,\lambda)$ to $C$ consisting of a line bundle $L$ over $S$ and an $\BF_q$-linear map $\lambda\colon V\to L(S)$ such that the image of $V$ generates the sheaf of sections $\CL$ of $L$. Using the well-known description of the functor of points of projective space (\cite[\S II.7]{Ha}), the isomorphism class of the pair $(L,\lambda)$ corresponds to a unique morphism $S\to P_V$. By repeatedly contracting and applying the same construction, we obtain morphisms $S\to P_{V'}$ for each $0\neq V'\subset V$, and hence a morphism $$f_C\colon S\to\prod_{0\neq V'\subset V}P_{V'}.$$ The scheme $U_\CF$ is a locally closed subscheme of $\prod_{0\neq V'\subset V}P_{V'}$, and we show that $f_C$ factors through $U_\CF$.

\subsubsection*{Step 4: Key lemma for induction}Write $\CF=\{V_0,\ldots V_n\}$ and $\CF'=\{V_0,\ldots V_{n-1}=:V'\}$. We prove the following lemma:
\begin{Klemma*}[see Lemma \ref{key}]Let $(C,\lambda,\phi)$ and $(D,\mu,\psi)$ be $\CF$-ferns such that the $V'$-contractions $C'$ and $D'$ are isomorphic $\CF'$-ferns. Suppose further that $C^\infty\cong D^\infty$ as $\hat V$-marked curves. Then $C$ and $D$ are isomorphic.
\end{Klemma*}

\subsubsection*{Step 5: Induction on $\dim V$ and the remainder of the proof}
We prove the Main Theorem in the $\dim V=1$ case. In the general case, the Key Lemma allows us to use induction on $\dim V$ to show that $C\cong f_C^*\CC_\CF$ as follows. There is a natural morphism $p\colon U_\CF\to U_{\CF'}$, and we show that the pullback $p^*\CC_{\CF'}$ is isomorphic to the $V'$-contraction $(\CC_\CF)'$ of $\CC_\CF$. We consider the $V'$-contraction $C'$ of $C$ and the corresponding morphism $f_{C'}$ constructed in the same manner as $f_C$. We show that $f_{C'}=p\circ f_C$. Using induction, we obtain a chain of isomorphisms $$C'\cong f_{C'}^*\CC_{\CF'}\cong (p\circ f_C)^*\CC_{\CF'}\cong f_C^*(p^*\CC_{\CF'})\cong f_C^*(\CC_\CF)'.$$ Using the fact that contraction commutes with pullback, the last scheme is isomorphic to the $V'$-contraction of $f_C^*\CC_\CF.$ We then show that the contractions to the $\infty$-components $C^\infty$ and $(f_C^*\CC_\CF)^\infty$ are isomorphic. The may thus apply the Key Lemma to deduce that $C$ and $f_C^*\CC_\CF$ are isomorphic.

Finally, we show that the pullbacks of the universal family under two distinct morphisms from$S$ to $B_V$ yield non-isomorphic $V$-ferns over $S$, concluding the proof of the main theorem.

\subsection{Results on morphisms of moduli spaces} After proving the Main Theorem, we provide some additional results concerning certain natural morphisms amongst the schemes $B_V$ and $Q_V$ and $P_V$, where $V$ may also vary. More precisely, we relate them to the various constructions involving $V$-ferns described above. Contraction and grafting correspond to morphisms $B_V\to B_{V'}$ and $B_{V'}\times B_{\oV}\to B_V$ respectively, whereas the two line bundle constructions give morphisms $B_V\to P_V$ and $B_V\to Q_V$. We will show that all of these agree with the natural morphisms between the same schemes described in \cite{P-Sch} (see Propositions \ref{PVmorph}, \ref{contrmorph}, \ref{graftingmorph} and \ref{QVmorph}). 

\subsection{Outlook for the general case} We conclude the introduction with a short word on the generalization to Drinfeld modular varieties $M^n_{A,I}$ for arbitrary $A$ and $(0)\neq I\subsetneq A$. In \cite{Pink}, Pink defines the Satake compactification of $M^n_{A,I}$ to be a normal integral proper variety $\overline{M}^n_{A,I}$ with an open embedding $M^n_{A,I}\into \overline{M}^n_{A,I}$, which satisfies a certain universal property. In proving the existence of such a compactification, he reduces to the case of Drinfeld $\BF_q[t]$-modules with level $(t)$, where the Satake compactification is given by the base change of $Q_V$ to $\Spec \BF_q(t)$. This reduction is mainly accomplished using the following two observations. First, if $I'\subset I\subset A$ are proper non-zero ideals, then $\overline{M}^n_{A,I}$ can be realized as a quotient of $\overline{M}^n_{A,I'}$ under the action of a finite group. Second, for certain $A\subset A'$ and $I':=IA'$ and $n=n'\cdot[F'/F]$, where $F$ and $F'$ are the corresponding quotient fields, there is a finite injective morphism $M^{n'}_{A',I'}\to M^{n}_{A,I}$. Then $\overline{M}^{n'}_{A',I'}$ is the normalization of $\overline{M}^{n}_{A,I}$ in the function field of $M^{n'}_{A',I'}$. In proving the existence of the Satake compactification from the base case, the first observation allows one to increase the level, and the second allows one to enlarge the admissible coeffecient ring.
\vspace{2mm}

One might use similar reasoning to generalize $B_V$. First, appropriately generalize the notion of a $V$-fern and consider the corresponding moduli functor. Then exploit the morphisms $M^n_{I',A}\to M^n_{I,A}$ and $M^{n'}_{I',A'}\to M^n_{A,I}$ to reduce the question of representabiliy to the case $A=\BF_q[t]$ and $I=(t)$. It is our hope that the foundations laid here will lead to the success of such an approach.

\section*{Outline}
\addcontentsline{toc}{section}{Outline}
\noindent\textbf{Section 1.} We review the notion of \emph{stable marked curves} of genus 0 and recall the concepts of contraction and stabilization from \cite{Knud}. We describe a related construction, the contraction to the $i$-component, of which the contraction to the $v$-component mentioned above is a special case.
\vspace{2mm}

\noindent\textbf{Section 2.} We recall the construction of $B_V$ from \cite{P-Sch} and gather additional results from \emph{loc. cit.} which will be of use to us. In particular, we describe the open subscheme $U_\CF\subset B_V$ associated to a flag $\CF$ of $V$.
\vspace{2mm}

\noindent\textbf{Section 3.} The concept of $V$-ferns is formally introduced. We then demonstrate several of their properties. First, we discuss $V$-ferns over a field, and show that there is a natural way to associate a flag of $V$ to each such fern. As a result, for a $V$-fern over an arbitrary $\BF_q$-scheme $S$, we can associate a flag $\CF_s$ of $V$ to each $s\in S$. We then discuss $V$-ferns for $\dim V=1$ and $2$. In the first case, we show that there is exactly one $V$-fern over $S$ up to (unique) isomorphism. This provides the base case for the induction in our proof of the Main Theorem.
\vspace{2mm}

\noindent\textbf{Section 4.} We describe the contraction, grafting and line bundle constructions associated to $V$-ferns in detail.
\vspace{2mm}

\noindent\textbf{Section 5.} Let $\CF$ be a flag of $V$. We define the notion of an $\CF$-fern. For a scheme $S$ and $V$-fern $C$ over $S$, we also define the locus $S_\CF\subset S$ over which $C$ is an $\CF$-fern. We show that $S_\CF$ is open and that the $S_\CF$ cover $S$ as $\CF$ varies.
\vspace{2mm}

\noindent\textbf{Section 6.}We construct the universal family $\CC_V$ over $B_V$. This constitutes the technical bulk of the thesis.
\vspace{2mm}

\noindent\textbf{Section 7.} Here we prove the Main Theorem.
\vspace{2mm}

\noindent\textbf{Section 8.} We consider the morphisms corresponding the each of the constructions described in Section 4 between various moduli schemes.
\vspace{2mm}


\section*{Notation and conventions}
\addcontentsline{toc}{section}{Notation and conventions}
We gather some notation and conventions here for easy reference:
\begin{eqnarray*}
 \BF_q& &\mbox{ a finite field of $q$ elements};\\
V&&\mbox{ a finite dimensional vector space over $\BF_q$ of dimension $n>0$};\\
\hat V&&\mbox{ the set $V\cup \{\infty\}$ for a new symbol $\infty$.}\\
\mrV&&\mbox{ the set $V\setminus \{0\}$.}\\
G&&\mbox{ the finite group $V\rtimes\BF_q^\times$};\\
S_V&&\mbox{ the symmetric algebra of $V$ over $\BF_q,$}\\
RS_V&&\mbox{ the localization of $S_V$ obtained by inverting all $v\in V\setminus\{0\}$.}\\
P_V&&\mbox{ the scheme }\Proj(S_V);\\
\OV&&\mbox{ the complement of the union of all $\BF_q$-hyperplanes in $P_V$}\\
\Sigma&&\mbox{ the set $V\times\mrV$}\\
P^\Sigma&&\mbox{ the scheme $\prod_{(v,w)\in\Sigma}\BP^1$}.
\end{eqnarray*}
For a morphism $X\to S$ and $s\in S$, we denote the fiber over $s$ by $X_s$. Given another morphism $f\colon T\to S$, we often denote the base change $X\times_S T$ by $f^*X$. For an open immersion $j\colon U\into S$, we will usually write $X|_U$ in place of $j^*X$.

We always view the irreducible components of a scheme as closed subschemes endowed with the induced reduced scheme structure. 


%



\section{Stable marked curves of genus 0}\label{stablemarkedcurves}
In this section we collect some useful facts and constructions involving stable marked curves of genus 0. The main references are \cite{Arb} and \cite{Knud}. Let $S$ be an arbitrary scheme, and let $I$ be a finite set. 
\subsection{Stable $I$-marked curves of genus 0}
\begin{definition}\label{stablecurve}
An \emph{$I$-marked curve} $(C,\lambda)$ of genus 0 over $S$ is a flat and proper scheme $C$ over $S$, together with a map $$\lambda\colon I\to C(S),\, i\mapsto \lambda_i$$ such that for every geometric point $\os$ of $S$,
\begin{enumerate}
\item the geometric fiber $C_\os$ is a reduced and connected curve with at most ordinary double point singularities,
\item the equality $\dim H^1(C_\os,\CO_{C_\os})=0$ holds.
\end{enumerate}
A closed point $p\in C_{\os}$ is called \emph{marked} if $p=\lambda_i(\os)$ for some $i\in I$. The point $p$ is called \emph{special} if it is singular or marked. We say that $(C,\lambda)$ is \emph{stable} if, in addition to the above,
\begin{enumerate}
 \setcounter{enumi}{2}
 \item the curve $C_\os$ is smooth at the marked points $\lambda_i(\os)$,
 \item we have $\lambda_i(\os)\neq \lambda_j(\os)$ for all $i\neq j$,
 \item each irreducible component of $C_\os$, contains at least 3 special points.
\end{enumerate}
\end{definition}
\begin{remark} Conditions (1) and (2) imply that each geometric fiber is a tree of copies of $\BP^1$. We will see later that this already holds on (scheme-theoretic) fibers (Proposition \ref{P1fibs}).
\end{remark}
We call the map $\lambda\colon I\to C(S)$ an \emph{$I$-marking} on $C$. From now on, we abbreviate the expression ``$I$-marked curve of genus 0'' by $I$-marked curve. We will often write $C$ for the pair $(C,\lambda)$ if confusion is unlikely.
\vspace{2mm}

In \cite{Knud} a morphism of stable $I$-marked curves $f\colon (C,\lambda)\to(D,\mu)$ over a scheme $S$ is defined to be an isomorphism over $S$ such that $f\circ\lambda_i=\mu_i$ for all $i\in I$. In the following paragraphs, we will define a more general notion of morphisms between (not necessarily stable) $I$-marked curves that is equivalent to Knudsen's definition when the curves are stable. One of the main results in \cite{Knud}, written here in the context of stable $I$-marked curves is the following:
\begin{theorem}[\cite{Knud}, Theorem 2.7]\label{moduli}The functor $\CM_I$ associating to a scheme $S$ the set of isomorphism classes of stable $I$-marked curves over $S$ is represented by a scheme $M_I$, which is smooth and proper over $\Spec\BZ$.
\end{theorem}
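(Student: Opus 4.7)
The plan is to argue by induction on $|I|$, broadly following Knudsen. The base case is $|I|=3$: a stable $3$-marked curve of genus $0$ over any scheme $S$ must be a $\BP^1$-bundle with three pairwise disjoint smooth sections, and since $\Aut(\BP^1)$ acts simply $3$-transitively on distinct points, any two such tuples are canonically isomorphic. Hence $M_3\cong\Spec\BZ$ works, with universal family $\BP^1_\BZ$ equipped with the sections $0$, $1$, $\infty$.

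For the induction step, suppose $M_I$ exists with universal family $\pi\colon\CC_I\to M_I$ and sections $\sigma_i\colon M_I\to\CC_I$. The key claim is that $\CC_I$, regarded as a scheme over $\Spec\BZ$, represents $\CM_{I\cup\{*\}}$. In one direction, given a stable $(I\cup\{*\})$-marked curve $(D,\mu)$ over $S$, forgetting $\mu_*$ and contracting any component of $D$ that becomes unstable yields a stable $I$-marked curve over $S$, hence a map $S\to M_I$; the section $\mu_*$ then lifts this to a map $S\to\CC_I$. In the other direction, given $f\colon S\to\CC_I$, pulling back $\pi$ along $\pi\circ f$ produces an $I$-marked curve over $S$ together with an extra tautological section, which may coincide with existing sections or land on the nodal locus. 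Knudsen's stabilization procedure then converts this into a stable $(I\cup\{*\})$-marked curve by blowing up locally at the loci where the new section meets an existing section or a node, inserting a $\BP^1$-component to separate them. The main technical obstacle is verifying that these two operations are mutually inverse in families and behave well under base change; this is where the bulk of Knudsen's work lies.

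Smoothness of $M_I$ over $\Spec\BZ$ follows from deformation theory. A reduced nodal curve of arithmetic genus $0$ has unobstructed deformations, since $H^2$ vanishes in dimension $1$ and the local deformations of a node are smoothly parametrized. The marked points lie in the smooth locus, so moving them is also unobstructed. A tangent-space count then gives relative dimension $|I|-3$.

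Properness is established via the valuative criterion. Given a discrete valuation ring $R$ with fraction field $K$ and a stable $I$-marked curve $(C_K,\lambda_K)$ over $K$, one first spreads $C_K$ out to a proper flat family over $R$; then, possibly after a finite base change to resolve singularities in the special fiber, one applies semistable reduction and finally contracts unstable components to obtain a stable $I$-marked model extending $C_K$. Uniqueness of the extension follows because any two stable extensions must agree on the generic fiber, and the contraction/stabilization operations are canonical, so the two coincide. As in the representability step, the delicate point is making these operations work uniformly in families --- all other parts of the argument are formal consequences of the inductive setup.
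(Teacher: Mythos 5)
Your outline is correct and follows essentially the same route as the paper: the paper itself only cites Knudsen for this theorem, and its accompanying remark reduces representability by a scheme to exactly your inductive claim that the universal curve $\CZ_k$ over $\CM_k$ represents $\CM_{k+1}$, with base case $\CM_3\cong\Spec\BZ$. The contraction/stabilization adjunction, smoothness via unobstructedness of deformations, and properness via stable reduction are all as in Knudsen's argument, so there is nothing to add.
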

\begin{remark}Knudsen's result in \cite{Knud} actually states that the Deligne-Mumford moduli stack $\CM_I$ is smooth and proper over $\Spec\BZ$. For $I:=\{1,\ldots,n\}\subset\BN$, write $\CM_n:=\CM_I$. Knudsen proves that the universal stable $k$-pointed curve $\CZ_k$ is isomorphic to $\CM_{k+1}$. Moreover, if $\CM_k$ is representable by a scheme, then so is $\CZ_k$. To see that the stack $\CM_n$ is a scheme, it thus suffices by induction to observe that $\CM_3$ is representable by $\Spec \BZ$.
\end{remark}
We will often use the following consequence of the existence of a fine moduli scheme:
\begin{corollary}[Uniqueness of morphisms]\label{uniquemorph}For any stable $I$-marked curves $(C,\lambda)$ and $(D,\mu)$ over a scheme $S$, there exists at most one morphism of stable $I$-marked curves between them.
\end{corollary}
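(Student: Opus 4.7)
The plan is to deduce the corollary from a rigidity statement: any automorphism of a stable $I$-marked curve $(C,\lambda)$ over $S$ that fixes each section $\lambda_i$ must be the identity. Given two morphisms $f,g:(C,\lambda)\to(D,\mu)$, both are isomorphisms over $S$ respecting the markings, so $\sigma:=g^{-1}\circ f$ is an $S$-automorphism of $C$ with $\sigma\circ\lambda_i=\lambda_i$ for all $i\in I$; the desired conclusion $f=g$ is equivalent to $\sigma=\id_C$.

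\textbf{Step 1 (fiberwise rigidity).} First I would establish that over any algebraically closed field $k$, the only automorphism of a stable $I$-marked curve fixing all marked points is the identity. Since each geometric fiber is a tree of copies of $\BP^1$ (as noted in the remark following Definition \ref{stablecurve}), I would argue by induction on the number of irreducible components. For a single component, stability forces at least three marked points, and any automorphism of $\BP^1$ fixing three distinct points is the identity. For the inductive step, pick a leaf component $E\cong\BP^1$ of the dual tree; by stability $E$ carries at least two marked points plus the node connecting it to the rest of the tree. The automorphism must preserve the set of nodes and the markings, so it maps $E$ to itself (since $E$ is determined by which markings lie on it) and fixes three distinct points on $E$, hence is the identity on $E$. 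Then the automorphism descends to an automorphism of the stable marked curve obtained by removing $E$ (with the node on the adjacent component promoted to a marked point, which keeps stability), and one concludes by induction.

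\textbf{Step 2 (passing from fibers to the total space).} Let $Z\subset C$ denote the equalizer of $\sigma$ and $\id_C$, viewed as $S$-morphisms to $C$. Since $C\to S$ is proper and in particular separated, $Z$ is a closed subscheme of $C$. By Step 1, for every geometric point $\bar s$ of $S$ the fiber $Z_{\bar s}$ contains (as a set) all of $C_{\bar s}$; a small additional argument (using that $\sigma_{\bar s}$ and $\id$ agree on the underlying reduced fiber, and that marked and nodal points on each $\BP^1$-component are enough to pin down an endomorphism scheme-theoretically) upgrades this to $Z_{\bar s}=C_{\bar s}$ as subschemes of $C_{\bar s}$. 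Letting $\CI\subset\CO_C$ be the ideal sheaf cutting out $Z$, this says $\CI\otimes_{\CO_S} k(\bar s)=0$ for every geometric point $\bar s$. Since $C\to S$ is of finite type and $\CI$ is quasi-coherent of finite type, Nakayama's lemma applied stalk by stalk (at each $x\in C$ over $s\in S$, one has $\CI_x=\Fm_s\CI_x$ with $\Fm_s\subset\Fm_x$) forces $\CI=0$, hence $Z=C$ and $\sigma=\id_C$.

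\textbf{Expected main obstacle.} Step 1 is the combinatorial heart and should be straightforward once one formalizes the tree-of-$\BP^1$'s picture. The delicate point is Step 2: turning fiberwise equality into scheme-theoretic equality of $Z$ and $C$. In particular, I would want to ensure that $Z_{\bar s}=C_{\bar s}$ scheme-theoretically, not merely set-theoretically, before invoking Nakayama; to be safe, one can reduce to the Noetherian situation by pulling back the whole datum from the universal family over $M_I$, which is of finite type (in fact proper and smooth) over $\Spec\BZ$ by Theorem \ref{moduli}, so that $\CI$ is automatically coherent and the Nakayama argument applies without reservation.
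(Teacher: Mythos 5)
Your Step 1 is correct and is a clean fiberwise rigidity argument, but Step 2 contains a genuine gap, and it sits exactly at the point you flag as delicate. The implication you use --- ``$Z_{\bar s}=C_{\bar s}$ scheme-theoretically for every geometric point $\bar s$, hence $\CI\otimes k(\bar s)=0$, hence $\CI=0$ by Nakayama'' --- is false as a general statement about closed subschemes, even over a Noetherian (indeed Artinian) base. Take $S=\Spec k[\epsilon]/(\epsilon^2)$, $C=\BP^1_S$, and $Z=V(\epsilon)\subset C$. Then $Z\times_S\Spec k=\BP^1_k=C\times_S\Spec k$ as schemes, yet $Z\neq C$; at a point $x$ the ideal is $\CI_x=(\epsilon)$ and $\Fm_s\CI_x=(\epsilon^2)=0$, so $\CI_x/\Fm_s\CI_x\neq 0$. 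The source of the error is the identification of ``$Z_{\bar s}=C_{\bar s}$'' with ``$\CI\otimes k(\bar s)=0$'': the former only says that the \emph{image} of $\CI\otimes_{\CO_C}\CO_{C_{\bar s}}$ in $\CO_{C_{\bar s}}$ vanishes, which is strictly weaker than the vanishing of $\CI\otimes_{\CO_C}\CO_{C_{\bar s}}=\CI_x/\Fm_s\CI_x$ that Nakayama requires. So reducing to the Noetherian case does not save the argument; what is missing is control over infinitesimal (nonreduced) directions in the base, which no amount of information about the fibers over fields can supply. The standard way to close this gap is deformation-theoretic: one shows that a stable marked curve over a field has no nonzero infinitesimal automorphisms fixing the markings (i.e.\ $H^0\bigl(C_s,\mathcal{T}_{C_s}(-\textstyle\sum_i\lambda_i(s))\bigr)=0$, which again follows from the ``at least three special points per component'' condition), so that the automorphism functor is unramified over $S$; combined with your Step 1 this forces $\sigma=\id$. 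Without that input, Step 2 does not go through.

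For comparison: the paper does not argue directly at all --- it records Corollary \ref{uniquemorph} as a formal consequence of Theorem \ref{moduli}, i.e.\ of the fact that $\CM_I$ is representable by a \emph{scheme}, which forces objects to have trivial automorphism groups (otherwise one could twist a family by a nontrivial automorphism cocycle to produce non-isomorphic families with the same classifying map). Your route, once repaired by the unramifiedness/infinitesimal-rigidity input, is more self-contained and is essentially the argument underlying Knudsen's and Deligne--Mumford's proofs; but as written it proves only rigidity over fields, not over arbitrary bases.
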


\begin{lemma}\label{fibiso} Let $X$ and $Y$ be schemes over $S$, and let $f\colon X\to Y$ be an $S$-morphism. Assume
\begin{enumerate}[label=(\alph*)] 
\item $X$ is locally of finite presentation over $S$,
\item $X$ is flat over $S$,
\item $Y$ is locally of finite type over $S$, and
\item for all $s\in S$, the induced morphism $f_s\colon X_s\to Y_s$ on fibers is an isomorphism. 
\end{enumerate}
Then $f$ is an isomorphism. 
\end{lemma}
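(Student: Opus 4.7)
The plan is to combine standard fibre-wise criteria from EGA~IV: flatness from a fibre-wise criterion, étaleness from another, and then the characterisation of open immersions among étale morphisms.

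First I would check that $f$ itself is locally of finite presentation. Since $X$ is locally of finite presentation over $S$ by (a) and $Y$ is locally of finite type over $S$ by (c), this follows from EGA~IV$_1$, 1.4.3(v). Next I would establish that $f$ is flat. The morphism $X\to S$ is flat by (b), and each fibre $f_s\colon X_s\to Y_s$ is trivially flat, being an isomorphism. The fibre-wise criterion for flatness (EGA~IV$_3$, 11.3.10) then yields that $f$ is flat.

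At this stage $f$ is flat and locally of finite presentation, and every fibre $f_s$ is étale since isomorphisms are étale. By the fibre-wise characterisation of étale morphisms (EGA~IV$_4$, 17.8.2), $f$ itself is étale. Each $f_s$ is also universally injective (in fact, an isomorphism), so $f$ is universally injective: given two geometric points of $X$ with the same image in $Y$, they lie in the same geometric fibre over $S$, and injectivity of the corresponding $f_s$ forces them to coincide. An étale, universally injective morphism is an open immersion by EGA~IV$_4$, 17.9.1. Finally, surjectivity of each $f_s$ on underlying sets implies that $f$ itself is surjective, since the fibres of $X\to S$ and $Y\to S$ partition the respective total spaces. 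A surjective open immersion is an isomorphism, completing the proof.

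The main obstacle is bookkeeping the finite-presentation hypotheses, since several EGA statements require locally of finite presentation on the target, whereas (c) only provides locally of finite type. This is handled by the initial reduction, which upgrades $f$ to be locally of finite presentation; thereafter the target's finiteness type plays no role, as all remaining arguments concern $f$ relative to $S$, and the finite-presentation hypothesis on $X$ transfers to $f$ once $Y/S$ is locally of finite type.
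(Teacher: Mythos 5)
Your proof is correct and follows essentially the same route as the paper's: both first upgrade $f$ to a morphism locally of finite presentation using (a) and (c), both invoke the crit\`ere de platitude par fibres to obtain flatness of $f$, and both conclude from EGA IV 17.9.1 that $f$ is a bijective open immersion and hence an isomorphism. The only cosmetic difference is that the paper verifies the monomorphism condition of 17.9.1 directly via EGA IV 17.2.6 (each fibre $f^{-1}(y)$ is empty or $\Spec k(y)$), whereas you route through the equivalent characterisation as an \'etale, universally injective morphism.
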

\begin{proof} Conditions (a) and (c) together imply that $f$ is locally of finite presentation (\cite[Tag 02FV]{Stacks}). In particular, the morphism $f$ is locally of finite type, so we may apply Proposition 17.2.6 in \cite{EGAIV}, which says that $f$ is a monomorphism if and only if for all $y\in Y$, the fiber $f^{-1}(y)$ is empty or isomorphic to $\Spec k(y)$. Fix $y\in Y$, and let $s$ be the image of $y$ in $S$. Since $f_s$ is an isomorphism, we have $f^{-1}(y)\cong f_s^{-1}(y)\cong \Spec k(y)$, and thus $f$ is a monomorphism and in fact bijective. We next apply the \emph{crit\`ere de platitude par fibre} (\cite{Stacks}, Tag 039A), which says that if (a)-(c) hold and $f_s$ is flat for all $s\in S$, then $f$ is flat. 
A flat monomorphism that is locally of finite presentation is an open immersion (\cite[Theorem 17.9.1]{EGAIV}). Since a bijective open immersion is an isomorphism, this concludes the proof.
\end{proof}
Let $(C,\lambda)$ and $(D,\mu)$ be $I$-marked curves over a scheme $S$, and let $f\colon C\to D$ be an $S$-morphism such that $f\circ\lambda_i=\mu_i$ for all $i\in I$.
Consider the set
 \begin{equation}\label{dim1fiber}Z:=\{d\in D\mid \dim f^{-1}(d)=1\}.
 \end{equation}
 For each $s\in S$, we also define 
\begin{equation}\label{1-eq-dim1fiber}Z_s:=\{d\in D_s\mid \dim f_s^{-1}(d)=1\}.\end{equation}
\begin{proposition}\label{1-prop-Zclosedfinite}The set $Z$ is closed in $D$ and finite over $S$ when endowed with the induced reduced subscheme structure. Moreover, for every $s\in S$, we have $Z\cap D_s=Z_s$.
\end{proposition}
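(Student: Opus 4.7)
The plan is to combine the properness of $f \colon C \to D$, which follows from $C$ being proper over $S$ and $D$ being separated, with the one-dimensionality of the fibers of $C \to S$, so as to reduce everything to standard results on fiber dimension and on quasi-finite proper morphisms.

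First I would establish the closedness of $Z$. Each fiber $f^{-1}(d)$ is contained in a fiber of $C \to S$, which is a curve, hence has dimension at most $1$; so $Z = \{d \in D \mid \dim f^{-1}(d) \geq 1\}$. The upper semicontinuity of fiber dimension for the proper morphism $f$ (see Stacks Project, Tag 0D4H, or EGA IV, 13.1.3) then gives closedness of $Z$ in $D$. For the identity $Z \cap D_s = Z_s$, I would just unwind fiber products: for $d \in D_s$, the morphism $\Spec k(d) \to D$ factors through $D_s \hookrightarrow D$, hence
$$f^{-1}(d) \;=\; C \times_D \Spec k(d) \;\cong\; C_s \times_{D_s} \Spec k(d) \;=\; f_s^{-1}(d),$$
so $\dim f^{-1}(d) = \dim f_s^{-1}(d)$.

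For the finiteness of $Z \to S$, endow $Z$ with its reduced induced subscheme structure. Then $Z \to S$ is proper (composition of a closed immersion with the proper morphism $D \to S$), so it suffices to prove finiteness of its fibers and then invoke the criterion that a proper, quasi-finite, locally finitely presented morphism is finite (EGA IV, 8.11.1). The finiteness of fibers is concrete: each irreducible component of $C_s$ either collapses under $f_s$ to a point of $D_s$ or dominates an irreducible component of $D_s$; only the former contribute to $Z_s$, and since $C_s$ has only finitely many components, $Z_s$ is a finite set. The main obstacle, in my view, is verifying local finite presentation of $Z \to S$ over an arbitrary (possibly non-Noetherian) base $S$; this should reduce to the local finite presentation of $D \to S$, which is built into the definition of a stable marked curve through its moduli-theoretic character.
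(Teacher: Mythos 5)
Your proof is correct and follows essentially the same route as the paper: upper semicontinuity of fibre dimension for the proper morphism $f$ gives closedness, the component-counting argument on $C_s$ gives finiteness of each $Z_s$, and properness of $Z\to S$ plus quasi-finiteness gives finiteness over $S$. The finite-presentation worry at the end is unnecessary: the criterion the paper invokes (a proper morphism is finite if and only if it is quasi-finite, \cite[Corollary 12.89]{GW}) needs no finite-presentation hypothesis, so no further reduction is required.
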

\begin{proof}We first show that $Z$ is a closed subset of $D$. According to \cite[Corollary 13.1.5]{EGAIV}, given a proper morphism of schemes $g\colon X\to Y$, the function $Y\to\BZ,\,\,y\mapsto \dim X_y$ is upper semicontinuous.
Since $C$ is proper over $S$, the morphism $f$ is proper, and we deduce by upper semicontinuity that the set of points $d\in D$ such that $\dim f^{-1}(d)\geq 1$ is closed. The dimension of $f^{-1}(d)$ is bounded above by 1 for all $d\in D$, so this set is precisely $Z$. Hence $Z$ is closed. 
\vspace{1mm}

Let $s\in S$ and let $d\in D$ be a point lying over $s$. Since $f_s^{-1}(d)\cong f^{-1}(d)$, we immediately deduce that $Z\cap D_s=Z_s$.
We claim that $Z_s$ is a finite set. This follows easily from the fact that each irreducible component of $C_s$ maps onto an irreducible closed subset of $D_s$, i.e., an irreducible component or a closed point of $D_s$. The set $Z_s$ consists precisely of the images of irreducible components of $C_s$ mapping to a closed point. Since $C_s$ has finitely many irreducible components, the finiteness of $Z_s$ follows.
\vspace{1mm}

 Endow $Z$ with the induced reduced scheme structure inherited from $D$. Since $D$ is proper over $S$, so is $Z$. Moreover, the fact that each $Z_s$ is finite implies that $Z$ is quasi-finite over $S$. Since a proper morphism is finite if and only if it is quasi-finite (\cite[Corollary 12.89]{GW}), we conclude that $Z$ is finite over $S$. 
\end{proof}
\begin{lemma}\label{connfibers}Let $Y\into D$ be a closed subscheme which is finite over $S$, and suppose that $f$ induces an isomorphism $C\setminus f^{-1}(Y)\stackrel\sim\to D\setminus Y$. Let $s\in S$ and let $\os$ be a geometric point centered at $s$. The fiber $f_\os^{-1}(y)$ is connected for every $y\in Y_\os$.
\end{lemma}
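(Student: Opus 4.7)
The plan is to reduce the statement to the assertion that the natural map $\alpha\colon \CO_{D_\os}\to (f_\os)_\ast \CO_{C_\os}$ is an isomorphism on the geometric fiber over $s$, from which the connectedness of every fiber $f_\os^{-1}(y)$ (for any $y\in D_\os$, and in particular for $y\in Y_\os$) will follow immediately from Stein factorization applied to the proper morphism $f_\os$.

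First I would pass to the geometric fiber over $\os$ and record the relevant hypotheses. Setting $k:=k(\os)$, both $C_\os$ and $D_\os$ are reduced, connected, proper curves over $k$ with $H^0(\CO)=k$ and $H^1(\CO)=0$ by the definition of an $I$-marked curve of genus $0$. Since $Y\to S$ is finite, the fiber $Y_\os$ is a finite set of closed points of $D_\os$. The hypothesis on $f$ forces $\alpha$ to be an isomorphism over the dense open subscheme $D_\os\setminus Y_\os$.

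Next I would argue that $\alpha$ is injective. Any section of $\CO_{D_\os}$ lying in the kernel of $\alpha$ vanishes on the dense open set $D_\os\setminus Y_\os$; since $D_\os$ is reduced and none of its one-dimensional irreducible components is contained in the finite set $Y_\os$, such a section must vanish identically. This yields a short exact sequence
\[
0\longto \CO_{D_\os}\longto (f_\os)_\ast\CO_{C_\os}\longto Q\longto 0
\]
where $Q$ is a coherent sheaf supported on the finite set $Y_\os$.

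The key step, and the main thing to check, is that $Q$ vanishes. I would extract this from the associated long exact sequence in cohomology: since $H^0(D_\os,\CO_{D_\os})=k=H^0(C_\os,\CO_{C_\os})=H^0(D_\os,(f_\os)_\ast\CO_{C_\os})$, the induced map $H^0(\alpha)$ is necessarily an isomorphism between one-dimensional $k$-vector spaces, and combined with $H^1(D_\os,\CO_{D_\os})=0$ the sequence forces $H^0(D_\os,Q)=0$. Because $Q$ has zero-dimensional support, $H^0(D_\os,Q)=\dim_k Q$, so $Q=0$ and $\alpha$ is an isomorphism. Stein factorization of $f_\os$ then shows that each fiber of $f_\os$ is connected, proving the lemma. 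The main obstacle is simply the cohomological step; the tree-of-$\BP^1$ geometry is never needed directly.
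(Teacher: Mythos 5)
Your argument is correct, but it takes a genuinely different route from the paper. The paper's proof is combinatorial: assuming $f_\os^{-1}(y)$ is disconnected, it produces a chain of irreducible components of $C_\os\setminus f_\os^{-1}(y)$ joining two connected components of the fiber, and observes that its image is a loop in (the dual graph of) $D_\os$ at $y$, contradicting the fact that a connected nodal curve with $h^1(\CO)=0$ is a tree. Your proof instead runs the genus-$0$ hypothesis through cohomology directly: injectivity of $\CO_{D_\os}\to(f_\os)_*\CO_{C_\os}$ from reducedness and density of $D_\os\setminus Y_\os$, then $H^0(\CO_{D_\os})=H^0(\CO_{C_\os})=k$ together with $H^1(D_\os,\CO_{D_\os})=0$ to kill the punctual cokernel $Q$, and finally Stein factorization. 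Every step checks out; in particular the identification $H^0(Q)=\bigoplus_y Q_y$ for a sheaf with zero-dimensional support is what makes $H^0(Q)=0$ force $Q=0$. What your approach buys is that it never touches the component structure or the dual graph — it is essentially Zariski connectedness specialized to this situation, and it in fact gives connectedness of \emph{all} fibers of $f_\os$, not just those over $Y_\os$. What the paper's approach buys is brevity and consistency with the dual-graph toolkit it develops in the same section (and reuses in Propositions \ref{stableiso} and \ref{equivdefs}); note also that your use of $H^1(D_\os,\CO_{D_\os})=0$ to conclude $H^0(Q)=0$ is genuinely a genus-$0$ shortcut, whereas the tree argument is the geometric face of the same fact.
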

\begin{proof}Suppose $y\in Y_\os$ and that $f_\os^{-1}(y)$ is disconnected. Since $C_\os$ is connected, there exists a unique chain of irreducible components in $C_\os\setminus f^{-1}_\os(y)$ connecting any two connected components of $f^{-1}_\os(y)$. The image of any such chain forms a loop in $D_\os$ (viewed as a graph) at $y$ which contradicts the fact that $D_\os$ has genus 0.
\end{proof}
\begin{proposition}\label{morphprops}The following are equivalent:
\begin{enumerate}[label=(\alph*)]
\item There exists a closed subscheme $Y$ of $D$ which is finite over $S$ such that $f$ induces an isomorphism $C\setminus f^{-1}(Y)\stackrel\sim\to D\setminus Y$.
\item For each $s\in S$, the morphism $f_s$ induces an isomorphism $C_s\setminus f_s^{-1}(Z_s)\stackrel\sim\to D_s\setminus Z_s$.
\item The morphism $f$ induces an isomorphism $C\setminus f^{-1}(Z)\stackrel\sim\to D\setminus Z.$
\end{enumerate}
\end{proposition}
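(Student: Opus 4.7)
The plan is to prove the cyclic chain $(c)\Rightarrow(a)\Rightarrow(b)\Rightarrow(c)$. The implication $(c)\Rightarrow(a)$ is immediate: take $Y:=Z$, which by Proposition \ref{1-prop-Zclosedfinite} is closed in $D$ and finite over $S$ when endowed with the induced reduced structure. For $(b)\Rightarrow(c)$, I would apply Lemma \ref{fibiso} to the restricted morphism $f\colon C\setminus f^{-1}(Z)\to D\setminus Z$. Both source and target are open subschemes of $C$ and $D$ respectively, since $Z$ is closed in $D$ by Proposition \ref{1-prop-Zclosedfinite}. Flatness of the source over $S$ and the local finite presentation/type hypotheses are inherited from $C$ and $D$, and the fiberwise isomorphism condition is exactly $(b)$, after using $Z\cap D_s=Z_s$ from Proposition \ref{1-prop-Zclosedfinite} to identify the fibers $(C\setminus f^{-1}(Z))_s$ and $(D\setminus Z)_s$ with $C_s\setminus f_s^{-1}(Z_s)$ and $D_s\setminus Z_s$.

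The substantive step is $(a)\Rightarrow(b)$. Given $Y$ as in $(a)$, I would first observe that $Z\subseteq Y$: for any $d\in D\setminus Y$, the given isomorphism forces $f^{-1}(d)$ to be a single reduced point, so $\dim f^{-1}(d)=0$ and $d\notin Z$. Now fix $s\in S$ and a geometric point $\os$ above it; since being an isomorphism descends along faithfully flat base change, it suffices to prove that the restriction of $f_\os$ to $C_\os\setminus f_\os^{-1}(Z_\os)\to D_\os\setminus Z_\os$ is an isomorphism. The hypothesis restricts to an isomorphism $C_\os\setminus f_\os^{-1}(Y_\os)\stackrel\sim\to D_\os\setminus Y_\os$, while on the larger open $D_\os\setminus Z_\os$ the restricted morphism is proper with $0$-dimensional fibers, hence finite, and birational since it is already an isomorphism on the dense open $D_\os\setminus Y_\os$. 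By Lemma \ref{connfibers}, the fibers over the finitely many points of $Y_\os\setminus Z_\os$ are connected, and being $0$-dimensional they are singletons; moreover, all residue fields at closed points coincide with the algebraically closed base field of $\os$, so the morphism is a universal homeomorphism. Since $D_\os$ is reduced with only nodal singularities by Definition \ref{stablecurve}(1) and hence seminormal, and seminormality passes to open subschemes, the classical fact that a finite universal homeomorphism from a reduced scheme to a seminormal scheme is an isomorphism finishes the argument.

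The chief obstacle will be this final seminormality step in $(a)\Rightarrow(b)$: upgrading the set-theoretic bijection furnished by Lemma \ref{connfibers} to a scheme-theoretic isomorphism. This rests on the classical fact that a reduced curve with only nodal singularities is seminormal, together with the characterization of seminormal schemes by the property that every finite universal homeomorphism from a reduced scheme onto them is an isomorphism. A minor technical issue in $(b)\Rightarrow(c)$ is reconciling the ``locally of finite presentation'' hypothesis of Lemma \ref{fibiso} with the weaker ``proper'' (a priori only of finite type) assumption of Definition \ref{stablecurve}; this can be absorbed either by assuming $S$ is locally Noetherian or by a standard spreading-out argument.
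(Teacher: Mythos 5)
Your proposal is correct in outline and handles $(c)\Rightarrow(a)$ and $(b)\Rightarrow(c)$ exactly as the paper does (take $Y:=Z$ using Proposition \ref{1-prop-Zclosedfinite}; apply Lemma \ref{fibiso}), but your argument for the substantive implication $(a)\Rightarrow(b)$ is genuinely different. The paper works one point $d\in Y_\os\setminus Z_\os$ at a time: after Lemma \ref{connfibers} forces the fiber over $d$ to be a single point, it invokes the fiberwise monomorphism criterion of EGA IV 17.2.6, notes that a proper monomorphism is a closed immersion, and concludes from reducedness of $D_\os$ that the restriction is an isomorphism, iterating over the finitely many such $d$. You instead treat all of $Y_\os\setminus Z_\os$ at once by observing that the restriction over $D_\os\setminus Z_\os$ is finite, birational and bijective with trivial residue field extensions, and then appeal to seminormality of nodal curves. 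Your route is structurally cleaner and isolates the real reason the statement is true (nodes are seminormal singularities), at the cost of importing the theory of seminormalization; the paper's route is more elementary and self-contained. One caution: the ``classical fact'' as you state it --- that a finite universal homeomorphism from a reduced scheme onto a seminormal scheme is an isomorphism --- is false; the relative Frobenius of $\BA^1_k$ in characteristic $p$ (and we are over $\BF_q$ here) is a finite universal homeomorphism of a reduced scheme onto a normal one that is not an isomorphism. The correct criterion requires the morphism to be subintegral, i.e.\ one must also control the residue field extensions at the \emph{generic} points, which is exactly what your birationality observation supplies; so your argument closes, but only because you established birationality in the same breath, and the final lemma should be quoted in that form.
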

\begin{proof}
\vspace{2mm}Since we aim to apply Lemma \ref{fibiso}, we first show that $C$ and $C'$ are $S$-schemes of finite presentation. Both $C$ and $C'$ are obtained via base change from schemes of finite type over a noetherian base by Theorem \ref{moduli}. Since finite type and finite presentation are equivalent over a noetherian base, and we conclude by observing that finite presentation is preserved under base change.
\vspace{1mm}

(a)$\Rightarrow$(b): Let $s\in S$. The assumption in (a) implies that $f_s$ induces an isomorphism $C_s\setminus f^{-1}_s(Y_s)\stackrel\sim\to D_s\setminus Y_s$; hence $Z_s\subset Y_s$. If $Z_s=Y_s$, then (b) immediately follows. Suppose $Z_s\subsetneq Y_s$. It suffices to show that $f_s$ induces an isomorphism $C_\os\setminus f_\os^{-1}(Z_\os)\stackrel\sim\to D_\os\setminus Z_\os$, where $\os$ is a geometric point centered at $s$. Let $d\in Y_\os\setminus Z_\os.$ Then $f_\os^{-1}(d)$ must be $0$-dimensional by the definition of $Z_s$. By Lemma \ref{connfibers}, the fiber $f_\os^{-1}(d)$ is connected and thus consists of a single point. Let $U_\os:=C_\os\setminus f_\os^{-1}(Y_\os\setminus\{d\})$. Since $f_\os^{-1}(d)$ is a point, by applying \cite[Proposition 17.2.6]{EGAIV} in the same way as in the proof of Lemma \ref{fibiso}, we deduce that the restriction 
$f_\os|_{U_\os}\colon U_\os\to D_\os\setminus (Y_\os\setminus \{d\})$ is a monomorphism. It is also proper and hence a (bijective) closed immersion (\cite[Corollary 12.92]{GW}). Since $C_\os$ and $D_\os$ are reduced, it follows that $f_\os|_{U_\os}$ is an isomorphism. Iterating this process for every point in $Y_\os\setminus Z_\os$, we find that $f_\os$ restricts to an isomorphism $C_\os\setminus f_\os^{-1}(Z_\os)\stackrel\sim\to D_\os\setminus Z_\os$, as desired.
\vspace{2mm}

(b)$\Rightarrow$(c): This follows directly from Lemma \ref{fibiso} and the fact that $Z_s=Z\cap D_s$ by Proposition \ref{1-prop-Zclosedfinite}.
\vspace{2mm}

(c)$\Rightarrow$(a): Endowing $Z$ with the induced reduced subscheme structure inherited from $D$, we deduce from Proposition \ref{1-prop-Zclosedfinite} that (a) holds with $Y:=Z$ .
\end{proof}
\begin{definition}We call a morphism $f\colon (C,\lambda)\to(D,\mu)$ satisfying any of the equivalent properties in Proposition \ref{morphprops} a \emph{morphism of $I$-marked curves}.
\end{definition}
We observe that the identity is a morphism of $I$-marked curves and that the composite of two morphisms of $I$-marked curves is again a morphism of $I$-marked curves.
\begin{proposition}\label{1-prop-category-stable-curves}Let $S$ be a scheme. The $I$-marked curves over $S$ and the morphisms between them form a category.
\end{proposition}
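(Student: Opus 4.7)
To establish the category structure, I need to verify four things: identity morphisms exist, composition yields another morphism of $I$-marked curves, composition is associative, and the unit laws hold. Since morphisms of $I$-marked curves are a particular kind of $S$-scheme morphism compatible with the markings, the associativity and unit laws will be inherited from the category of $S$-schemes. So the work reduces to checking the first two points, both of which are already announced in the paragraph preceding the proposition.

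For the identity $\id_C$ on an $I$-marked curve $(C,\lambda)$, compatibility with the markings is clear from $\id_C \circ \lambda_i = \lambda_i$, and I will verify criterion (a) of Proposition \ref{morphprops} by taking $Y := \emptyset$, which is trivially a closed subscheme of $C$ finite over $S$, since $\id_C \colon C \setminus \emptyset \to C \setminus \emptyset$ is an isomorphism.

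For composition, let $f \colon (C,\lambda) \to (D,\mu)$ and $g \colon (D,\mu) \to (E,\nu)$ be morphisms of $I$-marked curves, with witnessing closed subschemes $Y \subset D$ and $Y' \subset E$ from criterion (a). I plan to verify criterion (a) for $g \circ f$ with the closed subscheme $Y'' := Y' \cup g(Y) \subset E$, endowed with the reduced induced subscheme structure. Since $g$ is proper (as a morphism between schemes proper over $S$), $g(Y)$ is closed in $E$; moreover, $g(Y)$ is proper over $S$ with finite fibers, hence finite over $S$ by \cite[Corollary 12.89]{GW}, so $Y''$ itself is closed in $E$ and finite over $S$. The verification that $g \circ f$ restricts to an isomorphism $C \setminus (g \circ f)^{-1}(Y'') \to E \setminus Y''$ will be a short diagram chase: the containment $Y \subset g^{-1}(g(Y)) \subset g^{-1}(Y'')$ places the source inside the locus $C \setminus f^{-1}(Y)$ where $f$ is already an isomorphism onto $D \setminus Y$, and the containment $Y' \subset Y''$ places $g^{-1}(E \setminus Y'') \subset D \setminus g^{-1}(Y')$ inside the locus where $g$ is already an isomorphism onto $E \setminus Y'$. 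Compatibility with the markings is immediate from $(g \circ f) \circ \lambda_i = g \circ \mu_i = \nu_i$.

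\textbf{Expected obstacle.} The only mild subtlety will be confirming that $g(Y)$ is closed and finite over $S$, for which I rely on the properness of $g$ together with Corollary 12.89 of \cite{GW} (a proper morphism with finite fibers is finite). Beyond that, the proposition is essentially a bookkeeping exercise exploiting the flexibility of criterion (a), namely that one is free to enlarge the ``exceptional locus'' at will.
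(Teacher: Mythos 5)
Your proof is correct and matches the paper's approach: the paper offers no argument beyond the preceding observation that identities and composites of morphisms of $I$-marked curves are again such morphisms, and your verification (taking $Y=\emptyset$ for the identity and $Y'':=Y'\cup g(Y)$ for the composite, with $g(Y)$ closed and finite over $S$ by properness plus quasi-finiteness) correctly fills in exactly those details via criterion (a) of Proposition \ref{morphprops}.
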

In order to simplify the proof of the next proposition, we introduce the concept of a dual graph:
\begin{definition}\label{1-def-dualgraph}Let $(C,\lambda)$ be a stable $I$-marked curve over an algebraically closed field. The \emph{dual graph} $\Gamma_C$ of $C$ is defined to have
\begin{enumerate}
\item a vertex for each irreducible component of $C$;
\item an edge for each node of $C$, joining the corresponding vertices;
\item a labeled half edge for each $I$-marked point, emanating from the corresponding vertex.
\end{enumerate}
\end{definition}
The graph $\Gamma_C$ is a connected tree and the definition of stability translates to each vertex having degree $\geq 3$. Given a non-empty closed subset $E\subset C$ such that every connected component of $E$ is 1-dimensional, we define the \emph{subgraph $\Gamma_E$ generated by $E$} to be the subgraph of $\Gamma_C$ with vertices corresponding to the irreducible components of $E$ and whose edges consist of the edges of $\Gamma_C$ incident to those vertices. In particular, the graph $\Gamma_E$ has a half edge for each irreducible component of $E$ that intersects an irreducible component of $\overline{C\setminus E}$. We call the half edges in $\Gamma_E$ the \emph{external edges} and denote the number of external edges in $\Gamma_E$ by $n_{E}^{\mathrm{ext}}$. We call the remaining edges of $\Gamma_E$ the \emph{internal edges}, and we call a vertex that has at most one internal edge incident to it a \emph{leaf}. A leaf of $\Gamma_C$ corresponds to a irreducible component of $F$ of $C$ such that $|F\cap \overline{C\setminus F}|\leq 1$. We call such components the $\emph{tails}$ of $C$.
\begin{figure}[H]
\centering
  \caption{A stable $\{1,2,3,4\}$-marked curve $C$ and its dual graph. The closed subset $E$ and the subgraph $\Gamma_E$ generated by it are represented by thick lines.}
\includestandalone[scale=1.5]{Figures/DualGraph}
  \label{fig:dualgraph}
  \end{figure}

\begin{lemma}\label{1-lem-number-external-edges}For every $E$, we have $n_E^{\mathrm{ext}}\geq 3$.
\end{lemma}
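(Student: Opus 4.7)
The plan is to derive the bound via a handshaking argument on the subgraph $\Gamma_E$. The text preceding the lemma already supplies the two inputs I need: the dual graph $\Gamma_C$ is a connected tree, so its subgraph $\Gamma_E$ is a forest; and stability translates to $\deg_{\Gamma_C}(v) \geq 3$ for every vertex $v$, where ``degree'' counts each incident edge once and each incident half edge once.

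My first step is the compatibility $\deg_{\Gamma_E}(v) = \deg_{\Gamma_C}(v)$ for every vertex $v$ of $\Gamma_E$. Each edge of $\Gamma_C$ incident to $v$ contributes exactly one to $\deg_{\Gamma_E}(v)$, regardless of whether it survives as an internal edge of $\Gamma_E$ (other endpoint in $V(\Gamma_E)$), gets cut to an external half edge (other endpoint outside $V(\Gamma_E)$), or was already a marked half edge of $\Gamma_C$. Combined with stability, this yields $\deg_{\Gamma_E}(v) \geq 3$ for every vertex $v$ of $\Gamma_E$. Summing over $V(\Gamma_E)$ and invoking the standard edge-count identity gives
\[
3|V(\Gamma_E)| \;\leq\; \sum_{v \in V(\Gamma_E)} \deg_{\Gamma_E}(v) \;=\; 2\, n_E^{\mathrm{int}} + n_E^{\mathrm{ext}},
\]
where $n_E^{\mathrm{int}}$ denotes the number of internal edges. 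Since $\Gamma_E$ is a forest, one has $n_E^{\mathrm{int}} = |V(\Gamma_E)| - k$, with $k \geq 1$ the number of connected components of $\Gamma_E$. Substituting yields $n_E^{\mathrm{ext}} \geq |V(\Gamma_E)| + 2k \geq 1 + 2 = 3$.

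This is essentially a one-line handshaking computation, so I do not anticipate a serious obstacle. The only bookkeeping point requiring real care is the equality $\deg_{\Gamma_E}(v) = \deg_{\Gamma_C}(v)$: the three kinds of edge-incidences at $v$ in $\Gamma_E$ (internal, cut-to-half, and originally-marked) must together account for exactly the edges and half edges incident to $v$ in $\Gamma_C$, and this follows directly from the defining recipe for $\Gamma_E$ in Definition~\ref{1-def-dualgraph}.
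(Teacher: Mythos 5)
Your proof is correct, but it takes a genuinely different route from the paper's. The paper argues by induction on the number of irreducible components of $E$: the base case is exactly the stability condition, and the inductive step splits off one component $F$, writes $\Gamma_E=\Gamma_{\overline{E\setminus F}}\cup\Gamma_F$, and combines the two external-edge counts after accounting for the shared edge(s). Your argument instead globalizes the stability bound $\deg(v)\geq 3$ via the handshake identity $\sum_v\deg_{\Gamma_E}(v)=2n_E^{\mathrm{int}}+n_E^{\mathrm{ext}}$ together with the forest relation $n_E^{\mathrm{int}}=|V(\Gamma_E)|-k$. The key bookkeeping step $\deg_{\Gamma_E}(v)=\deg_{\Gamma_C}(v)$ is handled correctly: every edge or half edge of $\Gamma_C$ at $v$ survives in $\Gamma_E$ either as an internal edge, as a cut external edge, or as an inherited marked half edge, and since $\Gamma_C$ is a tree there are no loops or multiple edges to worry about. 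What your approach buys is twofold: it avoids the inductive step entirely (in particular, it does not need to control how $\Gamma_{\overline{E\setminus F}}$ and $\Gamma_F$ overlap, which in the paper's argument implicitly requires choosing $F$ so that it meets the rest of $E$ in at most one edge), and it yields the sharper quantitative bound $n_E^{\mathrm{ext}}\geq |V(\Gamma_E)|+2k\geq |V(\Gamma_E)|+2$, of which the lemma's statement (and the $\geq 4$ used for $m>1$ in the paper's proof of Proposition \ref{stableiso}) are special cases.
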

\begin{proof}We proceed by induction on the number of irreducible components $m$ of $E$. If $m=1$, then the external edges of $\Gamma_E$ correspond precisely to the special points on $E$. The lemma then follows directly from the definition of a stable $I$-marked curve. Suppose $m>1$. Let $F\subset E$ be an irreducible component. Then $\Gamma_E=\Gamma_{\overline{E\setminus{F}}}\cup \Gamma_F\subset\Gamma_C$. By the induction hypothesis, both $\Gamma_{\overline{E\setminus{F}}}$ and $\Gamma_F$ have at least three external edges. Since all of the graphs are trees, the graphs $\Gamma_{\overline{E\setminus{F}}}$ and $\Gamma_F$ have at most one shared edge. It follows that $n_E^{\mathrm{ext}}\geq n_{\overline{E\setminus F}}^{\mathrm{ext}}+n_F^{\mathrm{ext}}-2\geq 4$ by induction.
\end{proof}

\begin{proposition}\label{stableiso}
Let $f\colon (C,\lambda)\to (D,\mu)$ be a morphism of $I$-marked curves over $S$. If $C$ and $D$ are both stable, then $f$ is an isomorphism.
\end{proposition}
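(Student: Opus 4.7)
The plan is to reduce to the case of a geometric fibre, and there to derive a contradiction from the dual-graph bound of Lemma~\ref{1-lem-number-external-edges}. As noted in the proof of Proposition~\ref{morphprops}, both $C$ and $D$ are of finite presentation over $S$, and $C$ is flat over $S$ by Definition~\ref{stablecurve}. Hence the hypotheses of Lemma~\ref{fibiso} are met, and it suffices to show that each geometric fibre of $f$ is an isomorphism; in particular I may assume $S=\Spec k$ with $k$ algebraically closed, and that $C$ and $D$ are stable $I$-marked curves over $k$.

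In this setting, condition~(c) of Proposition~\ref{morphprops} yields an isomorphism $C\setminus f^{-1}(Z)\isoto D\setminus Z$, where $Z\subset D$ is the finite set of points with $1$-dimensional fibre. The goal becomes to prove $Z=\emptyset$. Assuming the contrary, pick $d\in Z$ and let $E:=f^{-1}(d)_{\mathrm{red}}$, which is connected by Lemma~\ref{connfibers}. A standard argument using that $C$ and $D$ are reduced, proper, and $1$-dimensional shows that every irreducible component of $C$ either maps isomorphically onto an irreducible component of $D$ or is contracted to a point of $Z$; combined with the connectedness of $E$, this identifies $E$ with the union of those irreducible components of $C$ contracted to $d$. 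Lemma~\ref{1-lem-number-external-edges} then gives $n_E^{\mathrm{ext}}\ge 3$.

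I then bound the two types of external half-edges of $\Gamma_E$ separately. A marked external half-edge corresponds to an index $i\in I$ with $\lambda_i\in E$; since $\mu_i=f(\lambda_i)=d$ and the markings on the stable curve $D$ are pairwise distinct, at most one such $i$ exists, and in that case $d$ must be a smooth point of $D$. A node-external half-edge corresponds to an irreducible component $F\not\subset E$ meeting $E$ at a node $p$; since $f(p)=d$ but $F\not\subset f^{-1}(d)=E$, the component $F$ cannot be contracted, so $f(F)$ is an irreducible component of $D$ passing through $d$. Moreover two distinct such components $F_1,F_2$ produce distinct images, because $f$ is injective on $C\setminus f^{-1}(Z)$ and each $F_i$ is determined by any dense open subset of itself. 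Consequently the number of node-external half-edges is at most the number of irreducible components of $D$ through $d$, which is $\le 1$ if $d$ is smooth and $\le 2$ if $d$ is a node. Adding the two contributions gives $n_E^{\mathrm{ext}}\le 2$, contradicting $n_E^{\mathrm{ext}}\ge 3$. Hence $Z=\emptyset$ and $f$ is an isomorphism.

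The main obstacle is not a single step but the need to carefully justify two auxiliary statements: first, that $E=f^{-1}(d)_{\mathrm{red}}$ is genuinely a union of irreducible components of $C$ (no component of $C$ sits partially inside $f^{-1}(d)$); second, that distinct adjacent components $F_1,F_2\not\subset E$ produce distinct components of $D$ through $d$. Both ultimately rest on the isomorphism $C\setminus f^{-1}(Z)\isoto D\setminus Z$ from Proposition~\ref{morphprops}(c) together with the fact that an irreducible component of a reduced scheme is the closure of any one of its dense open subsets. Once these points are in hand, the dual-graph bookkeeping is routine.
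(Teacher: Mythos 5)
Your proof is correct and follows essentially the same route as the paper: reduce to a geometric fibre via Lemma~\ref{fibiso}, assume $Z\neq\emptyset$, and contradict the bound $n_{f^{-1}(d)}^{\mathrm{ext}}\geq 3$ of Lemma~\ref{1-lem-number-external-edges} by counting the external edges of $\Gamma_{f^{-1}(d)}$. The only cosmetic difference is that you organize the case analysis by whether $d$ is smooth or nodal in $D$, whereas the paper splits according to the number $|\CE|\in\{0,1,2\}$ of adjacent non-contracted components; the resulting contradictions are identical.
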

\begin{proof}By Lemma \ref{fibiso}, the morphism $f$ is an isomorphism if and only if $f_s$ is an isomorphism for all $s\in S$, which in turn holds if and only if $f_\os$ is an isomorphism for any geometric point $\os$ centered on $s$. We may thus assume without loss of generality that $S=\Spec k$ for an algebraically closed field $k$. Suppose $Z\subset D$ as defined in (\ref{dim1fiber})  is non-empty and consider $d\in Z$. Let $\CE$ be the set of irreducible components in $\overline{C\setminus f^{-1}(d)}$ which have non-empty intersection with $f^{-1}(d)$. Since $f$ is an isomorphism over a deleted neighborhood of $d$, it follows that the elements of $\CE$ map isomorphically onto distinct irreducible components of $D$ which all intersect at the point $d$. Since $D$ has at worst nodal singularities and $f$ is non-constant, it follows that $0\leq|\CE|\leq 2$.

Suppose $|\CE|=2$. Consider the subgraph $\Gamma_{f^{-1}(d)}\subset\Gamma_C$. Each external edge of $\Gamma_{f^{-1}(d)}$ corresponds either to the intersection of an irreducible component of $f^{-1}(d)$ with an irreducible component of $\overline{C\setminus f^{-1}(d)}$ or to an $I$-marked point contained in $f^{-1}(d)$. Since $|\CE|=2$, there are exactly 2 external edges of the first kind, and it follows from Lemma \ref{1-lem-number-external-edges} that $f^{-1}(d)$ contains a marked point. But then the corresponding marked point of $D$ is the singular point $d$, which is not allowed. If $|\CE|=1$, then Lemma \ref{1-lem-number-external-edges} implies that $f^{-1}(d)$ contains at least two marked points, which again correspond to $d\in D$. Since the marked points of $D$ are required to be distinct, this is a contradiction. Finally, if $|\CE|=0$, then $f$ is constant, which is similarly prohibited. Thus $Z$ is empty and $f$ is an isomorphism, as desired.
\end{proof}

Consider a morphism $T\to S$, and let $(D,\mu)$ be an $I$-marked curve over $T$. We define a \emph{morphism} from $(D,\mu)\to (C,\lambda)$ to be a commutative diagram
\begin{equation}\label{morph}\begin{gathered}\xymatrix{D\ar[d]\ar[r]& C\ar[d]\\
T\ar[r]& S}\end{gathered}
\end{equation}
such that the induced morphism $D\to C\times_S T$ is a morphism of $I$-marked curves.
\begin{corollary}\label{cartesian}If $(D,\mu)$ and $(C,\lambda)$ are both stable, then (\ref{morph}) is cartesian.
\end{corollary}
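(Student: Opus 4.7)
The plan is to reduce the statement to Proposition \ref{stableiso}. By the definition given just before (\ref{morph}), the diagram (\ref{morph}) being cartesian is equivalent to the induced morphism $D\to C\times_S T$ being an isomorphism, and this morphism is already known to be a morphism of $I$-marked curves over $T$. Thus, after verifying that the pullback $C\times_S T$ inherits the structure of a \emph{stable} $I$-marked curve over $T$, Proposition \ref{stableiso} applies and finishes the proof.

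First I would equip $C\times_S T$ with the $I$-marking obtained by pulling back each section $\lambda_i\colon S\to C$ along $T\to S$, giving sections $T\to C\times_S T$. The properties in Definition \ref{stablecurve} that do not involve geometric fibers (flatness and properness) are preserved under arbitrary base change, so $C\times_S T\to T$ is flat and proper. For the fiber conditions (1)--(5) of Definition \ref{stablecurve}, note that for any geometric point $\ot$ of $T$ with image $\os$ in $S$, the geometric fiber $(C\times_S T)_\ot$ is canonically isomorphic as an $I$-marked scheme over $k(\ot)$ to $C_\os\times_{k(\os)} k(\ot)$, and all five conditions are stable under extension of the algebraically closed base field. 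Since $(C,\lambda)$ is stable over $S$, the fiber $C_\os$ satisfies each condition, and therefore so does $(C\times_S T)_\ot$. Hence $C\times_S T$ is a stable $I$-marked curve over $T$.

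By hypothesis $(D,\mu)$ is also a stable $I$-marked curve over $T$, so the induced map $D\to C\times_S T$ is a morphism between two stable $I$-marked curves over $T$. Proposition \ref{stableiso} then directly yields that this morphism is an isomorphism, which is the definition of (\ref{morph}) being cartesian. The only substantive content is the fiberwise check above, and even that is essentially a routine transport of the definitions; no real obstacle is expected.
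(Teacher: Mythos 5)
Your argument is correct and is exactly the intended proof: the paper states the corollary without proof, as an immediate consequence of Proposition \ref{stableiso} applied to the induced morphism $D\to C\times_S T$, after observing that stability is preserved under base change. Your fiberwise verification of that last point is the routine check the paper leaves implicit.
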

\begin{remark}Proposition \ref{stableiso} and Corollary \ref{cartesian} show that when $(D,\mu)$ and $(C,\lambda)$ are both stable then our notion of a morphism of stable $I$-marked curves is identical to that of \cite{Knud}, where a morphism is defined to be a cartesian diagram of the form in (\ref{morph}) respecting the $I$-markings of $D$ and $C$. Our results also demonstrate that the category of stable $I$-marked curves over $S$ forms a full subcategory of the category of $I$-marked curves over $S$.
\end{remark}

\subsection{Contractions}\label{stablecontractions}
Note that for stable $I$-marked curves to exist, we must have $|I|\geq 3$. Let $(C,\lambda)$ be a stable $I$-marked curve over a scheme $S$. Consider a subset $I'\subset I$ with $|I'|\geq 3$.
\begin{definition}\label{cntrct}
Let $(C',\lambda')$ be an $I$-marked curve over $S$, together with a morphism $f\colon (C,\lambda)\to (C',\lambda')$. We call $C'$ a \emph{contraction of $C$ with respect to $I'$} if the pair $(C',\lambda'|_{I'})$ is a stable $I'$-marked curve.
\end{definition}
Intuitively, the contraction is obtained by viewing $C$ as an $I'$-marked curve and then (on geometric fibers) contracting irreducible components containing fewer than three special points until one obtains a stable $I'$-marked curve. This is indeed the definition given by Knudsen in \cite{Knud}, where contractions are only explicitly defined when $I'=I\setminus\{i\}$ for some $i\in I$. The following proposition shows that the two definitions are equivalent.

\begin{proposition}\label{equivdefs}Let $S=\Spec k$ for an algebraically closed field $k$, and let $I':=I\setminus\{i\}$ for a fixed $i\in I$. Let $(C,\lambda)$ and $(C',\lambda')$ be $I$-marked curves, and suppose that $(C,\lambda)$ and $(C',\lambda'|_{I'})$ are stable. Consider a morphism $f\colon C\to C'$ of schemes over $S$ satisfying $f\circ\lambda_i=\lambda'_i$ for all $i\in I$. Then $C'$ is a contraction if and only if
\begin{enumerate}[label=(\alph*)]
\item $C$ is stable as an $I'$-marked curve and $f$ is an isomorphism, or
\item $C$ is not stable as an $I'$-marked curve and $f$ sends the irreducible component $E_i$ of $C$ containing the $i$-marked point to a point $c'\in C$ and induces an isomorphism $C\setminus E_i\stackrel\sim\to C'\setminus \{c'\}$.
\end{enumerate}
\end{proposition}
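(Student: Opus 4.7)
The plan is to prove both implications using Proposition \ref{morphprops} to characterize morphisms of $I$-marked curves, reducing the forward direction to a dual graph counting argument based on Lemma \ref{1-lem-number-external-edges}. The main obstacle will be the combinatorial step showing that when the locus $Z$ of one-dimensional fibers is nonempty, each of its fibers consists of the single component $E_i$.

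For the converse direction: if we are in case (a), then $f$ is trivially a morphism of $I$-marked curves via Proposition \ref{morphprops}(a) with $Y = \emptyset$, and $(C', \lambda'|_{I'})$ is stable by hypothesis, so $C'$ is a contraction. If we are in case (b), setting $Y := \{c'\}$ gives $f^{-1}(Y) = E_i$ and $f$ restricts to an isomorphism on the complement, so $f$ is again a morphism of $I$-marked curves by Proposition \ref{morphprops}(a), while $(C', \lambda'|_{I'})$ is $I'$-stable by assumption.

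For the forward direction, suppose $C'$ is a contraction and let $Z := \{d \in C' \mid \dim f^{-1}(d) = 1\}$. By Proposition \ref{morphprops}(c), $f$ restricts to an isomorphism $C \setminus f^{-1}(Z) \stackrel\sim\to C' \setminus Z$. If $Z = \emptyset$, then $f$ is an isomorphism, transferring the $I'$-stability of $(C', \lambda'|_{I'})$ to $(C, \lambda|_{I'})$, yielding case (a). Otherwise, I would fix $d \in Z$ and consider the connected (by Lemma \ref{connfibers}) subtree $\Gamma_{f^{-1}(d)} \subset \Gamma_C$.

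The core step is to show that $\Gamma_{f^{-1}(d)}$ has exactly one vertex $E_i$. For an upper bound on $n^{\mathrm{ext}}_{f^{-1}(d)}$, note that the external edges coming from edges of $\Gamma_C$ (as opposed to half-edges) correspond to branches of $C'$ at $d$, hence are at most $2$ since $C'$ is nodal; a case analysis on whether $d$ is a node or a smooth point of $C'$ shows that at most one $\lambda_j$ with $j \in I'$ can lie in $f^{-1}(d)$, and only when $d$ is smooth, so together with $\lambda_i$ this gives $n^{\mathrm{ext}}_{f^{-1}(d)} \leq 3$. A matching lower bound is obtained by summing the stability condition (each component has at least $3$ special points) over the $V$ vertices of $\Gamma_{f^{-1}(d)}$ and using that a tree has $V - 1$ internal edges, which yields $n^{\mathrm{ext}}_{f^{-1}(d)} \geq V + 2$. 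Combining these forces $V = 1$, and a second look at the case analysis shows that one must have $\lambda_i \in f^{-1}(d)$ (else $n^{\mathrm{ext}}_{f^{-1}(d)} \leq 2$, violating Lemma \ref{1-lem-number-external-edges}). By the uniqueness of $\lambda_i$, this gives $Z = \{c'\}$ with $f^{-1}(c') = E_i$, and since $E_i$ then has exactly three special points in $C$ including $\lambda_i$, the curve $(C, \lambda|_{I'})$ is not stable; we are thus in case (b).
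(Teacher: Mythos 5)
Your proof is correct and follows essentially the same route as the paper's: both directions reduce to Proposition \ref{morphprops}, and the forward direction pins down the fiber over a point of $Z$ by counting external edges of its dual subgraph against the nodal structure of $C'$ (via Lemma \ref{1-lem-number-external-edges}). Your explicit lower bound $n^{\mathrm{ext}}\geq V+2$ in terms of the number of vertices is a slightly sharper, unified version of the paper's case analysis on $|\CE|\in\{1,2\}$, but the underlying argument is the same.
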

\begin{proof}Under our assumptions, the morphism $f$ is a contraction if and only if it satisfies any of the equivalent conditions in Proposition \ref{morphprops}. The ``if'' direction thus follows from the fact that (a) and (b) both imply condition (b) in Proposition \ref{morphprops}.
\vspace{2mm}

For the converse, suppose $f\colon (C,\lambda)\to (C',\lambda')$ is a contraction. If $C$ is stable as an $I'$-marked curve, then $f$ is an isomorphism by Proposition \ref{stableiso}, so (i) holds. If $C$ is not stable, we must show that the closed subset $Z\subset C'$ as defined in (\ref{dim1fiber}) consists exclusively of the point $c'_i:=\lambda'_i(S)$ and that $f^{-1}(c'_i)=E_i$. Let $c'\in Z$ and consider the set $\CE$ of irreducible components of $\overline{C\setminus f^{-1}(c')}$ which have non-empty intersection with $f^{-1}(c')$. By the same reasoning as in the proof of Proposition \ref{stableiso}, we must have $0<|\CE|\leq 2$.

 If $|\CE|=2$, then, by the same reasoning as in the proof of Proposition \ref{stableiso}, the stability of $C$ implies that the fiber $f^{-1}(c')$ contains an $I$-marked point. We claim that the only marked point in $f^{-1}(c')$ is the $i$-marked point. Indeed, in this case $c'\in C'$ is a nodal singularity in $C'$. Since $C'$ is stable, the $I'$-marked points are smooth by definition. The fiber $f^{-1}(c')$ therefore cannot contain an $I'$-marked point, which yields the claim. The stability of $C$ then implies that $f^{-1}(c')$ is irreducible and hence equal to $E_i$, as desired. 
 
 By similar reasoning, if $|\CE|=1$, the fiber $f^{-1}(c')$ must contain precisely two marked points, and these correspond to $i$ and some $i'\in I'$. The stability of $C$ again implies that $f^{-1}(c')$ is irreducible and hence equal to $E_i$.
\end{proof}

\begin{figure}[H]
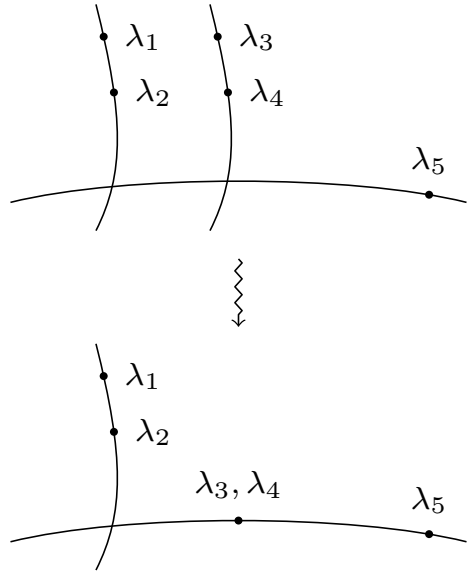

\centering
  \caption{Example of a contraction for $I:=\{1,2,3,4,5\}$ and $I':=\{1,2,3,5\}$.}
\includestandalone[scale=1.5]{Figures/Contraction}
  \label{fig:contraction}
  \end{figure}
\begin{proposition}\label{1-prop-uniquenesscontractions}Let $(C,\lambda)$ be a stable $I$-marked curve, and let $I'$ be a subset of $I$ with $|I'|\geq 3.$ There exists a contraction $f\colon(C,\lambda)\to (C',\lambda')$ of $C$ with respect to $I'$. The tuple $(C',\lambda',f)$ is unique up to unique isomorphism.
\end{proposition}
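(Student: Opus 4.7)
The plan is to handle existence by induction on $|I \setminus I'|$, using Knudsen's one-point contraction theorem from \cite{Knud} as the basic building block, and to deduce uniqueness from a fibrewise analysis combined with Lemma \ref{fibiso}.

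\textbf{Existence.} The base case $I = I'$ is trivial, taking $(C',\lambda',f) := (C,\lambda,\id_C)$. For the inductive step, fix $i \in I \setminus I'$; since $|I'| \geq 3$, we have $|I| \geq 4$, so Knudsen's one-point contraction result supplies an $I$-marked curve $(C_1,\lambda^{(1)})$ together with a morphism $f_1\colon (C,\lambda) \to (C_1,\lambda^{(1)})$ of $I$-marked curves such that $(C_1,\lambda^{(1)}|_{I \setminus \{i\}})$ is stable as an $(I \setminus \{i\})$-marked curve. The induction hypothesis, applied to this new stable curve with $I' \subset I \setminus \{i\}$, produces a contraction $f_2\colon C_1 \to C_2$ whose target $(C_2,\lambda^{(2)}|_{I'})$ is a stable $I'$-marked curve. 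Endowing $C_2$ with the $I$-marking $\lambda'_j := \lambda^{(2)}_j$ for $j \neq i$ and $\lambda'_i := f_2(\lambda^{(1)}_i)$, the composition $f := f_2 \circ f_1$ is a morphism of $I$-marked curves by Proposition \ref{1-prop-category-stable-curves}, and its target is stable $I'$-marked by construction.

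\textbf{Uniqueness.} Let $(C'_j,\lambda'_j,f_j)$ for $j=1,2$ be two contractions. To construct an isomorphism $g\colon C'_1 \to C'_2$ with $g \circ f_1 = f_2$, I define $g := f_2 \circ f_1^{-1}$ on the open locus $C'_1 \setminus Z_1$ where $f_1$ is invertible (with $Z_1$ the finite closed subscheme from Proposition \ref{morphprops}), and extend over $Z_1$ fibrewise: on each geometric fiber $\bar s$, iterating Proposition \ref{equivdefs} along the chain of single-point contractions supplied by the existence argument shows that $f_{1,\bar s}$ and $f_{2,\bar s}$ collapse exactly the same set of irreducible components of $C_{\bar s}$, yielding a canonical identification $C'_{1,\bar s} \cong C'_{2,\bar s}$ over $C_{\bar s}$. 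Lemma \ref{fibiso} then promotes the resulting $S$-morphism to an isomorphism, and compatibility with the full $I$-marking is forced by the construction on the dense open set $C'_1 \setminus Z_1$. The uniqueness of $g$ (and hence of the tuple) finally follows from Corollary \ref{uniquemorph} applied to the stable $I'$-marked targets.

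\textbf{Main obstacle.} The principal subtlety lies in the relative extension of $g$ over the exceptional locus: while Proposition \ref{equivdefs} makes the picture fully transparent on each geometric fiber, gluing these fibrewise extensions into a genuine $S$-morphism requires some coherence in families. The cleanest route I anticipate is to reduce both existence and uniqueness all the way down to the single-point case $|I \setminus I'| = 1$, where Knudsen's explicit construction in \cite{Knud} realizes $C'$ as a canonical blow-down along the section $\lambda_i$ and the extension of $g$ is manifest. The general case then follows by iterating the single-point construction, and uniqueness reduces to the uniqueness already contained in \cite{Knud}.
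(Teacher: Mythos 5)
Your existence argument---induction on $|I\setminus I'|$, iterating Knudsen's one-point contraction---is exactly what the paper does; the only ingredient you omit is the appeal to \cite[Lemma 10.6.10]{Arb} for independence of the order in which the sections are forgotten, which is what makes the iterated construction well-defined independently of the chosen enumeration of $I\setminus I'$.

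The uniqueness argument as you primarily state it has a genuine gap. You define $g$ on the open locus $C'_1\setminus Z_1$ and then ``extend over $Z_1$ fibrewise,'' invoking Lemma \ref{fibiso} to promote ``the resulting $S$-morphism'' to an isomorphism. But a rule assigning to each geometric fiber a map $C'_{1,\bar s}\to C'_{2,\bar s}$ is not an $S$-morphism, and Lemma \ref{fibiso} takes an already-given morphism of schemes as its input---it cannot manufacture one from fibrewise data. Extending a morphism from a dense open subset across the finite locus $Z_1$ is precisely the delicate point: the total spaces are nodal, hence not normal, so no extension principle is available for free. You identify this obstacle yourself, and your proposed repair---reduce everything to the case $|I\setminus I'|=1$ and invoke the uniqueness in \cite[Proposition 2.1]{Knud}---is exactly the paper's proof. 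However, to complete that reduction you still owe an argument that an \emph{arbitrary} contraction $f\colon C\to C'$ with respect to $I'$ (not just the one produced by your inductive construction) factors as a composite of one-point contractions; this requires comparing which irreducible components are collapsed, in the spirit of Proposition \ref{equivdefs}, together with the order-independence just mentioned, and neither step is supplied in the proposal.
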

\begin{proof}
The existence and uniqueness of contractions when $I'=I\setminus\{i\}$ for some $i\in I$ is proven in \cite[Proposition 2.1]{Knud}. Iterating this (using that the composite of morphisms of $I$-marked curves is again such a morphism) and noting that the result is independent of the order in which we forget sections \cite[Lemma 10.6.10]{Arb},  yields the desired contraction for general $I'$.
\end{proof}
Let $(C',\lambda')$ be the contraction of $(C,\lambda)$ with respect to $I'$. For each $i\in I$, we denote the image of the $i$-marked section by $Z_i\subset C'$. Since $C'$ is separated over $S$, each $Z_i$ is closed in $C'$. Consider
$$Z_{(I\setminus I')}:=\bigcup_{i\in (I\setminus I')}Z_i.$$ The following property of contractions will be used repeatedly in later sections.

\begin{proposition}\label{contriso}The contraction morphism $f\colon C\to C'$ induces an isomorphism $$C\setminus f^{-1}(Z_{(I\setminus I')})\stackrel\sim\to C'\setminus Z_{(I\setminus I')}.$$ 
\end{proposition}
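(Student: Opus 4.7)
The plan is to reduce to the fiberwise claim that the set $Z\subset C'$ of points with one-dimensional fiber under $f$ lies inside $Z_{(I\setminus I')}$, and then conclude via Proposition \ref{morphprops}(c). Indeed, once the set-theoretic inclusion $Z\subset Z_{(I\setminus I')}$ is proved, we get $C'\setminus Z_{(I\setminus I')}\subset C'\setminus Z$ and $f^{-1}(C'\setminus Z_{(I\setminus I')})=C\setminus f^{-1}(Z_{(I\setminus I')})$, so the restriction of the isomorphism in Proposition \ref{morphprops}(c) delivers the desired isomorphism onto $C'\setminus Z_{(I\setminus I')}$.

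To verify $Z\subset Z_{(I\setminus I')}$, I would use Proposition \ref{1-prop-Zclosedfinite} (which gives $Z\cap C'_s=Z_s$) and base change to geometric points, thereby reducing to showing the following: if $\bar s$ is a geometric point of $S$ and $c'\in C'_{\bar s}$ has $\dim f_{\bar s}^{-1}(c')=1$, then $c'=\lambda'_i(\bar s)$ for some $i\in I\setminus I'$. Let $T$ be the union of those irreducible components of $C_{\bar s}$ that are contracted to $c'$. By Lemma \ref{connfibers}, $f_{\bar s}^{-1}(c')$ is connected, which together with a straightforward analysis of components mapping isomorphically onto their images shows that the dual subgraph $\Gamma_T$ is a connected subgraph of $\Gamma_{C_{\bar s}}$.

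Next, I would classify the external edges of $\Gamma_T$. Each external edge corresponds either to (a) an $I$-marked point on a $T$-component, or (b) a node where a $T$-component meets a component not in $T$. In case (b), the non-$T$-component maps (isomorphically onto its image) to an irreducible component of $C'_{\bar s}$ passing through $c'$; since $C'$ has at worst nodal singularities, the number of such components through $c'$ is at most $2$. Lemma \ref{1-lem-number-external-edges} gives $n_T^{\mathrm{ext}}\geq 3$, so at least one external edge is of type (a); call the corresponding marked point $\lambda_i$.

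It remains to show $i\in I\setminus I'$. If $i\in I'$, then $\lambda'_i=f\circ\lambda_i$ is a section of the stable $I'$-marked curve $(C',\lambda'|_{I'})$ landing at $c'$, so $c'$ would be a smooth, non-marked-collision point. If $c'$ is a node of $C'$, this already contradicts smoothness of $\lambda'_i$. If $c'$ is smooth, only one component of $C'_{\bar s}$ passes through $c'$, so at most one external edge of $\Gamma_T$ is of type (b); then Lemma \ref{1-lem-number-external-edges} forces at least two external edges of type (a), whose corresponding sections would both land at $c'$, contradicting the injectivity of the $I'$-marking of the stable curve $(C',\lambda'|_{I'})$. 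In every case we obtain $i\in I\setminus I'$ with $\lambda'_i(\bar s)=c'$, hence $c'\in Z_{(I\setminus I')}$. The main obstacle, in my estimation, is bookkeeping the dual-graph argument cleanly — in particular verifying that $\Gamma_T$ is connected and that non-contracted adjacent components really do map to distinct components through $c'$ — but this is all handled by the nodal hypothesis together with Lemmas \ref{connfibers} and \ref{1-lem-number-external-edges}.
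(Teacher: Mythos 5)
Your proof is correct and follows essentially the same route as the paper: the paper likewise reduces to showing that every $c'\in C'_s$ with one-dimensional fiber carries an $(I\setminus I')$-marked point, by examining the irreducible components adjacent to $f_s^{-1}(c')$ (your type-(b) external edges, bounded by $2$ via the nodal hypothesis), invoking Lemma \ref{1-lem-number-external-edges} and the stability of $(C',\lambda'|_{I'})$ exactly as you do, and then concluding from the definition of a morphism of $I$-marked curves, i.e.\ Proposition \ref{morphprops}. The only difference is presentational: the paper compresses the dual-graph case analysis into a reference to the proof of Proposition \ref{equivdefs}, which you have written out in full.
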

\begin{proof}
Let $s\in S$, and let $c'\in C'_s$ be a point with $\dim f_s^{-1}(c')=1$. We consider the set of irreducible components of $C_s\setminus f_s^{-1}(c')$ which intersect $f_s^{-1}(c')$ and apply the same argument as in the proof
of Proposition \ref{equivdefs} to deduce that $f_s^{-1}(c')$ contains an $(I\setminus I')$-marked point. It follows that $Z\subset C'$ as defined in (\ref{dim1fiber}) is contained in $Z_{(I\setminus I')}$, and the proposition follows by the definition of a morphism of $I$-marked curves.
\end{proof}

\subsubsection*{Irreducible components of stable $I$-marked curves}
The following lemma and proposition give a nice consequence of the existence of contractions. Let $(C,\lambda)$ be a stable $I$-marked curve over $\Spec k$, where $k$ is a field, and fix an algebraic closure $\overline{k}$ of $k$. 
\begin{lemma}\label{1-lem-irred-comps}
Every irreducible component of $C$ is geometrically irreducible. 
\end{lemma}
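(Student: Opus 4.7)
The plan is to argue by contradiction using the Galois action on the dual graph of $C_{\overline{k}}$. Let $E\subset C$ be an irreducible component and suppose that $E_{\overline{k}}$ decomposes into irreducible components $\widetilde{E}_1,\ldots,\widetilde{E}_r$ with $r\geq 2$; the Galois group $\Pi := \Gal(\overline{k}/k)$ permutes them transitively. Write $T := \Gamma_{C_{\overline{k}}}$ for the dual graph and let $v_i$ denote the vertex corresponding to $\widetilde{E}_i$. The starting observation is that each marking $\lambda_i\in C(k)$ is $k$-rational, so its associated $\overline{k}$-point of $C_{\overline{k}}$ is $\Pi$-fixed; consequently every vertex of $T$ carrying a marked half-edge is $\Pi$-fixed. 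Since $\{v_1,\ldots,v_r\}$ is a nontrivial $\Pi$-orbit, no $\widetilde{E}_i$ contains a marked point.

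For the main step, stability forces $|I|\geq 3$, so $T$ possesses some vertex $v_0$ carrying a marked half-edge; I root $T$ at $v_0$. Then $v_0$ is $\Pi$-fixed, the $\Pi$-action preserves graph distance from $v_0$, and hence the $v_i$ all lie at one common distance $d\geq 1$ from $v_0$. Let $T_i\subset T$ be the rooted subtree consisting of $v_i$ together with all its descendants. Because no $v_i$ is a descendant of another (same depth), the subtrees $T_1,\ldots,T_r$ are pairwise disjoint, and $\Pi$ permutes them transitively. Applying Lemma \ref{1-lem-number-external-edges} to the closed subset $E'_i\subset C_{\overline{k}}$ spanned by the components at the vertices of $T_i$, its external edges consist of the single edge from $v_i$ to its parent (the only edge leaving $T_i$) together with every marked half-edge emanating from a vertex of $T_i$. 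The inequality $n_{E'_i}^{\mathrm{ext}}\geq 3$ therefore forces at least two marked half-edges inside $T_i$, and in particular a $\Pi$-fixed vertex in $T_i$.

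To close the contradiction I would argue that the $\Pi$-disjointness of the subtrees prohibits any such $\Pi$-fixed vertex: if $w\in T_i$ satisfies $\sigma(w)=w$ for all $\sigma\in\Pi$, then $\sigma(T_i)=T_i$ for all $\sigma$, but since $\Pi$ acts transitively on $\{T_1,\ldots,T_r\}$ with $r\geq 2$, some $\sigma$ must send $T_i$ to a different $T_j$, yielding a contradiction. This forces $r=1$, so $E$ is geometrically irreducible.

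The main obstacle I anticipate is handling the case in which $E$ carries no $k$-rational point at all, since if $E$ contains any marking the Galois-fixedness of that marking immediately pins $E$ to a single geometric component. The combinatorial passage through the rooted subtree $T_i$, together with the leveraging of Lemma \ref{1-lem-number-external-edges} to squeeze out a forced marking inside $T_i$ that then collides with the $\Pi$-disjointness of the $T_i$, is exactly the ingredient that disposes of this residual case and makes the argument uniform in whether $E$ itself is marked.
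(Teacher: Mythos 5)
Your proof is correct, but it takes a genuinely different route from the paper's. The paper argues by induction on $|I|$ using contractions: it contracts one marked point at a time, observes (via Proposition \ref{equivdefs} applied over $\overline k$) that the locus contracted in $C_{\overline k}$ is a single irreducible component, so the exceptional fibre is geometrically irreducible, and handles the remaining components by the induction hypothesis applied to the contraction $C'$. You instead argue directly via the Galois action on the dual graph of $C_{\overline k}$: the markings are $k$-rational and smooth, so the vertices carrying them are $\Gal(\overline k/k)$-fixed; if an irreducible component of $C$ broke into an orbit of $r\geq 2$ geometric components, rooting the tree at a marked vertex and applying Lemma \ref{1-lem-number-external-edges} to the descendant subtree of each orbit element forces a marked (hence fixed) vertex inside each of $r\geq 2$ pairwise disjoint subtrees permuted transitively by the Galois group --- a contradiction. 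Both arguments are sound. Your route dispenses with the existence and uniqueness of contractions and uses only Lemma \ref{1-lem-number-external-edges} together with the standard fact that $\Gal(\overline k/k)$ acts transitively on the geometric irreducible components lying over a given irreducible component; the paper's route avoids Galois theory altogether and reuses the contraction machinery it needs anyway. A minor remark: a single marked half-edge inside $T_i$ already yields your contradiction, so you do not need the full strength of $n^{\mathrm{ext}}_{E'_i}\geq 3$, only that the one external edge to the parent of $v_i$ cannot account for all of them.
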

\begin{proof}
We proceed by induction on $|I|$. If $|I|=3$, then it follows directly from the definition of stability that the base change $C_{\overline k}$ must be isomorphic to $\BP^1_{\overline k}$; hence $C$ is geometrically irreducible. Suppose $|I|=n>3$. Let $i\in I$ and consider the contraction $C'$ of $C$ with respect to  $I':=I\setminus\{i\}.$ Let $Z\subset C'$ be as defined in (\ref{dim1fiber}). Since $|I'|=n-1$, it follows from the induction hypothesis that every irreducible component of $C'$ is geometrically irreducible. If $Z=\emptyset$, then the contraction morphism $f\colon C\to C'$ is an isomorphism by definition, so we deduce the same for the irreducible components of $C$. Otherwise, the base change $Z_{\overline k}$ must consist of a single point, whose image in $C'$ we denote by $c'$ (so that $Z=\{c'\}$). Then $f^{-1}(c')_{\overline k}$ is irreducible, so $f^{-1}(c')$ is geometrically irreducible. By definition, the morphism $f$ induces an isomorphism $C\setminus f^{-1}(c')\stackrel\sim\to C'\setminus \{c'\}$. The induction hypothesis then implies that every irreducible component of $C\setminus f^{-1}(c')$ is geometrically irreducible. Since $C=\bigr(C\setminus f^{-1}(c')\bigl)\cup f^{-1}(c')$, this proves the lemma.
\end{proof}

\begin{proposition}\label{P1fibs}Let $E\subset C$ be an irreducible component. Then $E\cong\BP^1_k$.
\end{proposition}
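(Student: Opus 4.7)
My plan is to deduce the proposition from three properties of $E$: smoothness, genus $0$, and the existence of a $k$-rational point. Once these are established, the classical fact that any smooth projective geometrically integral genus-$0$ curve over $k$ with a rational point is isomorphic to $\BP^1_k$ (apply Riemann--Roch to a degree-$1$ divisor, or alternatively realize $E$ together with three chosen $k$-rational smooth points as a stable $3$-marked curve and invoke Theorem \ref{moduli} with $\CM_3\cong\Spec\BZ$) finishes the proof.

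For smoothness and genus $0$, I would base change to $\overline{k}$. The arithmetic genus formula for nodal curves, combined with the connectedness of the dual graph of $C_{\overline{k}}$ and the condition $p_a(C_{\overline{k}})=0$ built into Definition \ref{stablecurve}, forces the dual graph to be a tree and each irreducible component of $C_{\overline{k}}$ to be isomorphic to $\BP^1_{\overline{k}}$; this is essentially the content of the remark following Definition \ref{stablecurve}. By Lemma \ref{1-lem-irred-comps}, $E$ is geometrically irreducible, so $E_{\overline{k}}$ coincides with one of these components and satisfies $E_{\overline{k}}\cong\BP^1_{\overline{k}}$. Consequently $E$ is smooth of genus $0$ over $k$.

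The remaining and subtle step is to produce a $k$-rational point on $E$. If some marked section $\lambda_i$ factors through $E$, then $\lambda_i(\Spec k)$ is such a point. Otherwise $E$ contains no marked point while $C$ does, so $E\neq C$; by connectedness of $C$ the component $E$ then meets some other irreducible component $E'\subset C$ at a node. By Lemma \ref{1-lem-irred-comps}, both $E$ and $E'$ are geometrically irreducible, so the scheme-theoretic intersection $E\cap E'$ is a closed subscheme of $C$ whose base change to $\overline{k}$ is $E_{\overline{k}}\cap E'_{\overline{k}}$. Since the dual graph of $C_{\overline{k}}$ is a tree, distinct vertices share at most one edge, and the local model $\Spec\overline{k}[x,y]/(xy)$ at a node shows that a single shared edge contributes a single reduced $\overline{k}$-point to the intersection. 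Thus $E_{\overline{k}}\cap E'_{\overline{k}}\cong\Spec\overline{k}$, and consequently $E\cap E'$ is a reduced zero-dimensional $k$-scheme of degree $1$, i.e., $\Spec k$, which yields the desired $k$-rational point. The main obstacle is precisely this descent: without the geometric irreducibility of \emph{both} $E$ and $E'$ (supplied by Lemma \ref{1-lem-irred-comps}), the intersection $E\cap E'$ could acquire a residue field strictly larger than $k$, and the argument would collapse.
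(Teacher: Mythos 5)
Your proof is correct, but in the key case it takes a genuinely different route from the paper's. The two arguments agree when $E$ carries a marked point (that point is $k$-rational and the genus-$0$ classification applies). When $E$ carries no marked point, the paper invokes the contraction machinery: $E_{\overline k}$ then has at least three singular points, so $C_{\overline k}\setminus E_{\overline k}$ has at least three connected components; choosing a marked point on each and contracting with respect to the resulting $I'=\{i,j,k\}$ gives $C'\cong\BP^1_k$, and the induced map $E\to C'$ is shown to be an isomorphism after base change to $\overline k$. You instead manufacture a $k$-rational point on $E$ directly, by descending the node where $E$ meets a neighbouring component $E'$: geometric irreducibility of both components (Lemma \ref{1-lem-irred-comps}) together with the tree structure of the dual graph forces $(E\cap E')_{\overline k}=E_{\overline k}\cap E'_{\overline k}$ to be a single reduced $\overline k$-point, whence $E\cap E'\cong\Spec k$. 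Your route is more elementary --- it bypasses the existence and uniqueness of contractions (ultimately Knudsen's theorem) and isolates the useful by-product that nodes joining geometrically irreducible components are $k$-rational --- at the price of a slightly more delicate descent step: for the intersection computation you should also record that $E$ and $E'$ are geometrically \emph{reduced} (this follows from the geometric reducedness of $C$, since the function field of each component is then separable over $k$), so that $E_{\overline k}$ really is the reduced irreducible component of $C_{\overline k}$ and not a thickening of it. One further small caveat: your parenthetical alternative to Riemann--Roch, realizing $E$ as a stable $3$-marked curve and invoking $\CM_3\cong\Spec\BZ$, presupposes three $k$-rational smooth points, which you have not produced at that stage; the Riemann--Roch argument you give first is the one that actually closes the proof.
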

\begin{proof}If $E$ contains an $I$-marked point, then it is isomorphic to $\BP^1_k$ because any connected smooth projective curve of genus $0$ over $\Spec k$ possessing a $k$-rational point is isomorphic to $\BP^1_k$. Suppose $E$ contains no $I$-marked points. The base change $E_{\overline k}$ is irreducible by Lemma \ref{1-lem-irred-comps} and also contains no $I$-marked points. It follows that $E_{\overline{k}}$ contains as least three singular points. This implies that $C_{\overline k}\setminus E_{\overline k}$ has at least three connected components. Since $C$ is stable, we can choose a marked point on each of these components. The chosen points correspond to some $I':=\{i,j,k\}\subset I$. Let $C'$ denote the contraction of $C$ with respect to $I'$. Then $C'$ contains a marked point and is thus isomorphic to $\BP^1_k$. The contraction morphism $f\colon C\to C'$ then induces a morphism $E\to C'\cong\BP^1_k$. This becomes an isomorphism after base change to $\overline k$ and is hence itself an isomorphism.
\end{proof}
\begin{remark}It follows from Proposition \ref{P1fibs} that we may replace every instance of the geometric fiber in Definitions \ref{stablecurve} and \ref{cntrct} by the (scheme-theoretic) fiber, and we will often do so in the remainder of the text.
\end{remark}
%
%
\subsection{Contraction to the $i$-component}\label{contractionicomp}
Let $(C,\lambda)$ be a stable $I$-marked curve over a scheme $S$, and fix an element $i\in I$. In this subsection we construct an $I$-marked curve $(C^i,\lambda^i)$ along with a morphism $f\colon C\to C^i$ which, on fibers, contracts the irreducible components of $C$ not meeting the $i$-marked point.
We first formalize what we mean for a morphism to contract a given set of irreducible components.
\begin{definition} Suppose $S=\Spec k$, where $k$ is a field.  Let $\SE$ be a set of irreducible of components of $C$  such that $$\bigcup_{E\in\SE} E\subsetneq C.$$ Let $C'$ be an $I$-marked curve over $k$. We say that a morphism $f\colon C\to C'$ \emph{contracts} $\SE$ if $f(E)$ is a closed point in $C'$ for every $E\in\SE$, and $f$ induces an isomorphism from $C\setminus \bigl(\bigcup_{E\in\SE}E\bigl)$ to $C'\setminus f\bigl(\bigcup_{E\in\SE}E\bigl)$.
\end{definition}

For a morphism $X\to S$ of schemes, a \emph{relative effective Cartier divisor} is an effective Cartier divisor on $X$ which is flat over $S$ when regarded as a closed subscheme of $X$ (\cite[Definition 3.4]{Mil}). In what follows, we do not distinguish between an effective Cartier divisor and the associated closed subscheme. 

\begin{lemma}The closed subscheme $D_i:=\lambda_i(S)$ of $C$ is a relative effective Cartier divisor on $C/S$.
\end{lemma}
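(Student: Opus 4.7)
My plan is to verify the two defining conditions of a relative effective Cartier divisor: (i) $D_i$ is flat over $S$, and (ii) $D_i \subset C$ is an effective Cartier divisor. Condition (i) is immediate: the structure morphism $\pi\colon C\to S$ is proper, hence separated, so the section $\lambda_i\colon S\to C$ is a closed immersion. Therefore the scheme-theoretic image $D_i=\lambda_i(S)$ is isomorphic to $S$ via $\lambda_i$, and in particular flat over $S$.

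For condition (ii), the strategy is to show that $\pi$ is smooth of relative dimension $1$ in an open neighborhood of $D_i$, and then apply the standard fact that a section of such a smooth morphism cuts out an effective Cartier divisor. First, $C$ is locally of finite presentation over $S$ (it is pulled back from the universal family over $M_I$, which is of finite type over $\Spec\BZ$ by Theorem \ref{moduli}; finite presentation is preserved under base change). Combined with flatness of $C/S$, the fiberwise criterion for smoothness (e.g.\ \cite[Tag 01V8]{Stacks}) says that $\pi$ is smooth at $x\in C$ if and only if the fiber $C_{\pi(x)}$ is smooth at $x$. By condition (3) in Definition \ref{stablecurve}, the fibers $C_s$ are smooth at each marked point $\lambda_i(s)$, so $\pi$ is smooth at every point of $D_i$. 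Since the smooth locus is open, there exists an open subscheme $U\subset C$ containing $D_i$ on which $\pi|_U$ is smooth of relative dimension~$1$.

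On $U$, the section $\lambda_i$ is a section of a smooth $S$-morphism of relative dimension~$1$, and such sections are regular immersions of codimension~$1$ (see e.g.\ \cite[Tag 067R]{Stacks}). Equivalently, étale-locally we may identify $U$ with an open of $\BA^1_S$, in which case the section is cut out by $t-a$ for some $a\in\CO_S$ and $t$ the coordinate on $\BA^1$, so its ideal sheaf is invertible. Thus the ideal sheaf of $D_i$ in $U$ is an invertible $\CO_U$-module. Since $D_i\subset U$, its ideal sheaf in $C$ agrees with the one in $U$ on $U$ and is the unit ideal on $C\setminus D_i\supset C\setminus U$; in either case it is locally free of rank~$1$, proving that $D_i$ is an effective Cartier divisor on $C$. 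The only mildly delicate point is ensuring the fiberwise smoothness hypothesis upgrades to smoothness of $\pi$ near $D_i$, which is handled by invoking flatness together with local finite presentation of $C$ over~$S$.
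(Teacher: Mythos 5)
Your proof is correct, but it takes a genuinely different route from the paper's. The paper verifies flatness of $D_i$ just as you do (this part is identical), but then treats the Cartier condition fiberwise: since $\lambda_i(\os)$ is a smooth point of the curve $C_\os$, the fiber $D_{i,s}$ is an effective Cartier divisor on $C_s$, and the paper then invokes the fiberwise criterion for relative effective Cartier divisors (\cite[Proposition 3.8]{Mil}): if $C/S$ is flat, $D\subset C$ is closed and flat over $S$, and $D_s$ is an effective Cartier divisor on each fiber, then $D$ is a relative effective Cartier divisor. You instead upgrade fiberwise smoothness to smoothness of $\pi$ in a neighborhood of $D_i$ (via flatness plus local finite presentation, the latter obtained by pulling back from the noetherian moduli scheme $M_I$), and then use that a section of a smooth morphism of relative dimension $1$ is a regular immersion of codimension $1$. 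Both arguments hinge on the same flatness input to pass from fibers to the family; the paper's is shorter because the cited criterion does the gluing for you, while yours is more self-contained given standard facts about smooth morphisms and has the side benefit of recording that $C\to S$ is smooth along the marked sections, a fact the paper uses implicitly elsewhere. One cosmetic remark: your parenthetical claim that the section is \'etale-locally cut out by $t-a$ on an open of $\BA^1_S$ is looser than needed; the regular-immersion statement \cite[Tag 067R]{Stacks} already gives the invertibility of the ideal sheaf directly.
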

\begin{proof}The scheme $D_i$ is clearly flat over $S$. By the definition of stable $I$-marked curves, for each $s\in S$, the fiber $D_{i,s}$ corresponds to a smooth point of $C_s$ and is hence an effective Cartier divisor on $C_s$. The lemma then follows directly from \cite[Proposition 3.8]{Mil}, which says that if $X/S$ is flat and $D$ is a closed subscheme of $X$ that is flat over $S$ and such that $D_s$ is an effective Cartier divisor on the fiber $X_s$ for all $s\in S$, then $D$ is a relative effective Cartier divisor on $X/S$. 
\end{proof}
Suppose $S=\Spec k$, where $k$ is a field.
\begin{lemma}\label{ggs}We have the following:
\begin{enumerate}[label=(\alph*)]
\item $h^0\bigl(C,\CO_C(D_i)\bigr)=2$,
\item $H^1\bigl(C,\CO_C(D_i)\bigr)=0$,
\item $\CO_C(D_i)$ is generated by its global sections.
\end{enumerate}
\end{lemma}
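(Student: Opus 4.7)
My plan is to induct on the number $c$ of irreducible components of $C$. All three statements are invariant under flat base change, so I reduce immediately to the case $k$ algebraically closed; then by Proposition \ref{P1fibs} every irreducible component of $C$ is isomorphic to $\BP^1_k$, and as observed after Definition \ref{1-def-dualgraph} the dual graph of $C$ is a connected tree on $c$ vertices. The base case $c=1$ is the familiar computation for $\CO_{\BP^1}(1)$: its space of global sections is two-dimensional, $H^1$ vanishes, and it is very ample, hence globally generated.

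For the inductive step with $c\geq 2$, any tree on at least two vertices has at least two leaves, so I can choose a tail $E\subset C$ not containing the marked point $\lambda_i(S)$. Set $C':=\overline{C\setminus E}$ and let $p$ denote the unique node of $C$ lying on $E$; then $C'$ is again a tree of $\BP^1$'s with $c-1$ components, still carrying $\lambda_i$ as a smooth marked point, so the inductive hypothesis applies to it. The Mayer--Vietoris sequence
\begin{equation*}
0\longto \CO_C\longto\CO_{C'}\oplus\CO_E\longto\CO_{\{p\}}\longto 0
\end{equation*}
tensored with the locally free sheaf $\CO_C(D_i)$ becomes
\begin{equation*}
0\longto \CO_C(D_i)\longto\CO_{C'}(D_i)\oplus\CO_E\longto\CO_{\{p\}}\longto 0,
\end{equation*}
where I have used $D_i\subset C'$ and $D_i\cap E=\emptyset$ to identify the restrictions. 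Since the component $H^0(E,\CO_E)\to\CO_{\{p\}}$ is already an isomorphism of $k$-vector spaces, the associated long exact sequence in cohomology, combined with the inductive hypothesis and the vanishing of $H^1(\BP^1,\CO)$, yields $h^0(C,\CO_C(D_i))=2$ and $H^1(C,\CO_C(D_i))=0$, establishing (a) and (b).

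For (c), I check global generation pointwise. Via the sequence above, global sections of $\CO_C(D_i)$ are identified with pairs $(f,g)\in H^0(C',\CO_{C'}(D_i))\oplus H^0(E,\CO_E)$ satisfying $f(p)=g$. Given any $q\in C$, I produce such a pair non-vanishing at $q$ in three cases: if $q\in E$, pick a non-zero constant $g\in k$, and using that $\CO_{C'}(D_i)$ is globally generated at $p$ by induction, choose $f$ with $f(p)=g$; if $q\in C'\setminus\{p\}$ or $q=p$, start from a section $f$ with $f(q)\neq 0$ (supplied by the inductive hypothesis) and set $g:=f(p)$. The main technical point is confirming the identifications $\CO_C(D_i)|_{C'}\cong\CO_{C'}(D_i)$ and $\CO_C(D_i)|_E\cong\CO_E$, which follow from $D_i$ being a Cartier divisor disjoint from $E$ meeting $C'$ transversally at a single smooth point; everything else is routine manipulation of the long exact sequence.
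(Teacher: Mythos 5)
Your proof is correct, but it takes a genuinely different route from the paper's. The paper's argument is a one-step affair: it uses the ideal-sheaf sequence $0\to\CO_C\to\CO_C(D_i)\to (f_i)_*k\to 0$ together with the hypothesis $H^1(C,\CO_C)=0$, which is built into the definition of an $I$-marked curve of genus $0$; the long exact sequence gives (a) and (b) immediately, and (c) follows from the five lemma applied to the global generation of $\CO_C$ and $(f_i)_*k$. Your induction on the number of components via the Mayer--Vietoris sequence in effect re-proves the genus-$0$ vanishing along the way instead of quoting it, at the cost of two extra inputs: the structural fact that $C$ is a tree of $\BP^1$'s (Proposition \ref{P1fibs}, which is indeed available at this point in the text, so there is no circularity), and the reduction to $k$ algebraically closed, which is genuinely needed in your argument since over a general field a node of $C$ need not be $k$-rational, and then $H^0(E,\CO_E)\to\CO_{\{p\}}$ would fail to be surjective. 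One presentational point: your inductive hypothesis must be the statement for an arbitrary connected tree of $\BP^1$'s equipped with a chosen smooth rational point, not the lemma as stated for stable $I$-marked curves, since $C'=\overline{C\setminus E}$ need not be stable and does not carry the markings that lay on $E$; this is what you clearly intend, but it is worth saying explicitly. What your approach buys is a concrete description of $H^0\bigl(C,\CO_C(D_i)\bigr)$ as tuples of sections on components glued at the nodes, which makes global generation transparent; what the paper's approach buys is brevity and independence from both the tree structure and the base field.
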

\begin{proof}
For the first statement, let $f_i$ denote the closed embedding $D_i\into C$, and consider the short exact sequence
\begin{equation}\label{ses}0\to\CO_C\to\CO_C(D_i)\to (f_i)_*k\to0.
\end{equation}
Since $H^1(C,\CO_C)=0$ by assumption, the first part of the associated long exact sequence on cohomology reads
$$\xymatrix@C=1em@R=1em{0\ar[r]& H^0(C,\CO_C)\ar[r]& H^0\bigl(C,\CO_C(D_i)\bigr)\ar[r]& H^0\bigl(C, (f_i)_*k\bigr)\ar[r]&0}.$$
Since $H^0(C,\CO_C)$ and $H^0\bigl(C,(f_i)_*k\bigr)\cong H^0(D_i,k)$ are both isomorphic to $k$, it follows that $h^0\bigl(C,\CO_C(D_i)\bigr)=2$, as desired. 
\vspace{2mm}

For (b), we observe that the long exact sequence on cohomology induced by (\ref{ses}) also yields
$$0\to H^1\bigl(C,\CO_{C}(D_{i})\bigr)\to H^1\bigl(C,(f_i)_*k\bigr)\to\ldots.$$
Since $H^1(C,(f_i)_*k)\cong H^1(D_i, k)$ and the latter vanishes because $D_i$ is zero-dimensional, we conclude that $ H^1\bigl(C,\CO_{C}(D_{i})\bigr)=0$.
\vspace{2mm}

The third statement follows from the exact sequence (\ref{ses}) and the fact that $\CO_C$ and $(f_i)_*k$ are both generated by their global sections: We have a commutative diagram
$$\xymatrix@C=1em@R=1em{0\ar[r]&\CO_C\ar[r]\ar[d]_f&\CO_C\oplus\CO_C\ar[r]\ar[d]_h&\CO_C\ar[r]\ar[d]_g&0\\
0\ar[r]&\CO_C\ar[r]& \CO_C(D_i)\ar[r]&  (f_i)_*k\ar[r]&0}$$
with exact rows and such that $f$ and $g$ are surjective. It follows from the Five Lemma that $h$ is also surjective; hence $\CO_C(D_i)$ is generated by global sections.
\end{proof}
\vspace{2mm}

We now return to the case where $S$ is an arbitrary scheme. Let $\pi\colon C\to S$ denote the structure morphism.
\begin{lemma}\label{pushfwd}We have the following:
\begin{enumerate}[label=(\alph*)]
\item The sheaf $\pi_*\CO_C(D_i)$ is locally free of rank $2$.
\item The adjunction morphism $\pi^*\pi_*\CO_C(D_i)\to\CO_C(D_i)$ is surjective.
\end{enumerate}
\end{lemma}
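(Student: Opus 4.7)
The plan is to deduce both statements from the fiberwise results of Lemma \ref{ggs} by invoking cohomology and base change for the proper flat morphism $\pi\colon C\to S$.

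For part (a), I would first verify the hypotheses needed to apply the cohomology and base change machinery (e.g.\ \cite[Tag 0B91]{Stacks} or Grauert's theorem): the morphism $\pi$ is proper by definition of a stable $I$-marked curve, it is flat, and $\CO_C(D_i)$ is an invertible sheaf on $C$, hence $S$-flat. By Lemma \ref{ggs}(b), the fiber cohomology $H^1\bigl(C_s,\CO_{C_s}(D_{i,s})\bigr)$ vanishes for every $s\in S$, so the base change map
\[
R^1\pi_*\CO_C(D_i)\otimes k(s)\longrightarrow H^1\bigl(C_s,\CO_{C_s}(D_{i,s})\bigr)
\]
is an isomorphism for all $s$, and by Nakayama together with the standard consequence of cohomology and base change one obtains $R^1\pi_*\CO_C(D_i)=0$. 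Applying the theorem one degree lower, the vanishing of $R^1$ implies that the base change map $\pi_*\CO_C(D_i)\otimes k(s)\to H^0\bigl(C_s,\CO_{C_s}(D_{i,s})\bigr)$ is an isomorphism. Since Lemma \ref{ggs}(a) gives $h^0\bigl(C_s,\CO_{C_s}(D_{i,s})\bigr)=2$ independently of $s$, the coherent sheaf $\pi_*\CO_C(D_i)$ has locally constant fiber dimension, and one final appeal to cohomology and base change shows it is locally free of rank $2$.

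For part (b), I would reduce to a fiberwise statement. The adjunction morphism $\alpha\colon\pi^*\pi_*\CO_C(D_i)\to\CO_C(D_i)$ is a morphism of coherent sheaves on $C$, so by Nakayama it suffices to show that $\alpha$ is surjective on every (scheme-theoretic) fiber $C_s$. By the base change isomorphism obtained in part~(a), the restriction of $\alpha$ to $C_s$ is identified with the natural evaluation map
\[
H^0\bigl(C_s,\CO_{C_s}(D_{i,s})\bigr)\otimes_{k(s)}\CO_{C_s}\longrightarrow\CO_{C_s}(D_{i,s}).
\]
This map is surjective if and only if $\CO_{C_s}(D_{i,s})$ is generated by its global sections, which is exactly Lemma \ref{ggs}(c). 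Hence $\alpha$ is surjective on every fiber and therefore surjective on $C$.

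The only point requiring a bit of care is checking that the hypotheses of cohomology and base change are in force, but these follow immediately from the fact that $\pi$ is proper and flat and that $\CO_C(D_i)$ is invertible; there is no genuine obstacle. The whole argument amounts to upgrading the pointwise statements of Lemma \ref{ggs} to the relative setting via standard semicontinuity and base change.
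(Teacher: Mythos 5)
Your argument is essentially the paper's own proof in the noetherian case: fiberwise vanishing of $H^1$ from Lemma \ref{ggs}(b) gives local freeness of $\pi_*\CO_C(D_i)$ via cohomology and base change, the rank is read off from Lemma \ref{ggs}(a), and fiberwise global generation from Lemma \ref{ggs}(c) gives surjectivity of the adjunction map after identifying its restriction to each fiber with the evaluation map. The chain of deductions (vanishing of $R^1$, hence base change in degree $0$ is an isomorphism, hence local freeness and the fiberwise identification of the adjunction map) is correct as stated over a noetherian base.

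The step you dismiss too quickly is precisely the one the paper spends half its proof on: $S$ is an arbitrary $\BF_q$-scheme, and the standard cohomology-and-base-change theorems (Grauert, Mumford \cite[Chp.~0\S5]{Mum}, Hartshorne III.12.11) are stated for noetherian schemes, so properness and flatness of $\pi$ alone do not put them ``in force.'' The paper closes this gap by reducing to the noetherian case: by Theorem \ref{moduli} the curve $C$ is the pullback of the universal stable $I$-marked curve $C_I$ over the noetherian scheme $M_I$, and Knudsen's Corollary 1.5 shows that the fiberwise global generation of Lemma \ref{ggs}(c) forces the formation of $\pi_*\CO_C(D_i)$ to commute with this base change; one then proves the lemma for the universal family and pulls back both the local freeness and the surjectivity of the adjunction morphism. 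Alternatively you could invoke a non-noetherian version of cohomology and base change, but those require $\pi$ to be of finite presentation, which again is most easily seen by pulling back from the noetherian situation --- so some form of this reduction is unavoidable and should be made explicit.
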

\begin{proof}$\,$

\noindent\emph{Reduction to noetherian base:}  By Theorem \ref{moduli}, the stable $I$-marked curve $C$ is isomorphic to the base change of the universal stable $I$-marked curve $C_I$ over $M_I$, which is noetherian. Let $D_{i,I}$ denote the effective Cartier divisor on $C_I$ corresponding to the $i$-marked section. Corollary 1.5 of \cite{Knud} says that if the base scheme is noetherian, then condition (c) of Lemma \ref{ggs} implies that the formation of $\pi_*\CO_C(D_i)$ commutes with base change. Thus, given the cartesian diagram
\begin{equation}\label{univpullback}\begin{gathered}\xymatrix{C\ar[r]^g\ar[d]^{\pi}&C_I\ar[d]^{\rho}\\S\ar[r]^f&M_I,}\end{gathered}\end{equation}
there is a natural isomorphism $$f^*\rho_*\CO_{C_I}(D_{i,I})\cong \pi_*g^*\CO_{C_I}(D_{i,I}).$$ Since $g^*\CO_{C_I}(D_{i,I})\cong \CO_{C}(D_i)$, we deduce that $\pi_*\CO_C(D_i)$ is locally free of rank 2 if $\rho_*\CO_{C_I}(D_{i,I})$ is. We also have a chain of natural isomorphisms 
$$\pi^*\pi_*\CO_{C}(D_i)\cong\pi^*\pi_*g^*\CO_{C_I}(D_{i,I})\cong \pi^*f^*\rho_*\CO_{C_I}(D_{i,I})\cong g^*\rho^*\rho_*\CO_{C_I}(D_{i,I}).
$$
The adjunction morphism $\pi^*\pi_*\CO_C(D_i)\to\CO_C(D_i)$ is thus obtained by applying $g^*$ to the adjunction morphism $\rho^*\rho_*\CO_{C_I}(D_{i,I})\to\CO_{C_I}(D_{i,I})$. Since $g^*$ is right exact, surjectivity of the latter implies surjectivity of the former. It follows that if the lemma holds for the universal family $C_I$, then it holds for $C$ as well. We may thus assume that $S$ is noetherian.
\vspace{2mm}

\noindent\emph{Proof for noetherian base:} Suppose $S$ is noetherian. For any closed point $s\in S$, the pullback of $\CO_C(D_i)$ to the fiber $C_s$ is equal to $\CO_{C_{s}}\bigl(D_{i,s}\bigr)$, and $D_{i,s}$ is the effective Cartier corresponding to the $i$-marked point on $C_s$. By Lemma \ref{ggs}, the following two conditions hold:
\begin{enumerate}
\item[(i)] For every closed point $s\in S$, we have $$H^1\bigl(C_s,\CO_{C_s}\bigl(D_{i,s}\bigr)\bigr)=0.$$
\item[(ii)] For every closed point $s\in S$, the sheaf $\CO_{C_s}\bigl(D_{i,s}\bigr)$ is generated by its global sections.
\end{enumerate}
The discussion in \cite[Chp. 0\S5]{Mum}, says that if condition (i) holds, then $\pi_*\CO_C(D_i)$ is locally free. The fact that the rank is $2$ then follows directly from Lemma \ref{ggs}. Corollary 1.5 of \cite{Knud} says that if both (i) and (ii) hold, then the adjunction morphism $\pi^*\pi_*\CO_C(D_i)\to\CO_C(D_i)$ is surjective, as desired.
\end{proof}
Let $\pi\colon C\to S$ denote the structure morphism, and define $$C^i:=\underline{\Proj}\bigl(\Sym\pi_*\CO_C(D_i)\bigr).$$ 
We say that a scheme over $S$ is a \emph{$\BP^1$-bundle} if it is Zariski locally isomorphic over $S$ to $\BP^1_S$. By Lemma \ref{pushfwd}.1, the direct image $\pi_*\CO_C(D_i)$ is locally free of rank $2$; hence $C^i$ is a $\BP^1$-bundle over $S$. Morphisms $C\to C^i$ over $S$ correspond to surjective morphisms of sheaves $\pi^*\pi_*\CO_C(D_i)\to \CL$, where $\CL$ is an invertible sheaf on $C$ (\cite[II.7.12]{Ha}). 
Lemma \ref{pushfwd}.2 thus implies that the adjunction homomorphism $\pi^*\pi_*\CO_C(D_i)\to\CO_C(D_i)$ induces a natural $S$-morphism 
$$f^i\colon C\to C^i.$$ Endowing $C^i$ with the $I$-marking $\lambda^i:=f^i\circ\lambda$ makes $(C^i,\lambda^i)$ into an $I$-marked curve.

\begin{proposition}\label{1-prop-characterizationCi}We have the following:
\begin{enumerate}
\item[(a)]for each $s\in S$, the morphism $f^i_s\colon C_s\to C^i_s$ contracts all irreducible components not meeting $D_{i,s}$;
\item[(b)]$f^i$ is a morphism of $I$-marked curves.
\end{enumerate}
\end{proposition}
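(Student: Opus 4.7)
My plan is to reduce both parts to the analysis of a single fiber, where $f^i_s$ becomes the morphism defined by the complete linear system $|D_{i,s}|$, and then analyze this linear system component-by-component. The proof of Lemma \ref{pushfwd} already shows that the formation of $\pi_*\CO_C(D_i)$ commutes with arbitrary base change, hence so does the formation of $C^i$ and of $f^i$. Thus for each $s \in S$, one has $C^i_s \cong \Proj(\Sym H^0(C_s,\CO_{C_s}(D_{i,s})))$, which is a copy of $\BP^1_{k(s)}$ by Lemma \ref{ggs}(a), and $f^i_s$ is exactly the morphism defined by $|D_{i,s}|$.

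Part (a) then follows quickly. If $E \subset C_s$ is an irreducible component disjoint from $D_{i,s}$, then $\CO_{C_s}(D_{i,s})|_E$ is trivial, and since $E \cong \BP^1_{k(s)}$ by Proposition \ref{P1fibs}, every global section of $\CO_{C_s}(D_{i,s})$ restricts to a constant on $E$. Hence $f^i_s|_E$ is constant.

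For part (b), I would apply the criterion of Proposition \ref{morphprops}(b): it suffices to check that on each fiber, $f^i_s$ induces an isomorphism $C_s \setminus f^{i,-1}_s(Z_s) \isoto C^i_s \setminus Z_s$. The key step is to show that the restriction of $f^i_s$ to the irreducible component $E_i$ containing the $i$-marked point $p_i := \lambda_i(s)$ is an isomorphism $E_i \isoto C^i_s$. Indeed $\CO_{C_s}(D_{i,s})|_{E_i} \cong \CO_{E_i}(p_i) \cong \CO_{\BP^1}(1)$; the tautological section of $\CO_{C_s}(D_{i,s})$ (the image of $1$ under $\CO_{C_s} \into \CO_{C_s}(D_{i,s})$) restricts to a section of $\CO_{E_i}(p_i)$ vanishing at $p_i$, while a second basis element of $H^0(C_s, \CO_{C_s}(D_{i,s}))$, obtained by lifting a generator of the quotient appearing in the short exact sequence from the proof of Lemma \ref{ggs}, does not vanish at $p_i$. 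These two sections are therefore linearly independent in $H^0(E_i, \CO(1))$, so they define a base-point-free pencil and $f^i_s|_{E_i}$ is an isomorphism onto $\BP^1$. Combined with part (a), the set $Z_s$ is precisely the image under $f^i_s|_{E_i}$ of the finite set $N_s \subset E_i$ of nodes at which other subtrees of $C_s$ attach, while $f^{i,-1}_s(Z_s) = N_s \cup \overline{C_s \setminus E_i}$. Consequently $f^i_s$ restricts to an isomorphism $E_i \setminus N_s \isoto C^i_s \setminus Z_s$, completing (b).

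The principal obstacle is the explicit identification of a basis of $H^0(C_s,\CO_{C_s}(D_{i,s}))$ restricting to a base-point-free pencil on $E_i$; this is what I would spend the most care on. The remainder amounts to unwinding the universal property of $\underline{\Proj}$ and invoking the already-established criteria for morphisms of $I$-marked curves.
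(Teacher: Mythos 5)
Your proof is correct and follows the same overall architecture as the paper's: reduce to a single fiber via base-change compatibility of $\pi_*\CO_C(D_i)$, show that components disjoint from $D_{i,s}$ are collapsed because the line bundle restricts trivially there, show that the component $E_i$ through the marked point maps isomorphically onto $C^i_s\cong\BP^1$, and deduce (b) from criterion (b) of Proposition \ref{morphprops}. The one step you handle differently is the isomorphism $E_i\isoto C^i_s$: you make the linear system explicit, exhibiting the tautological section together with a lift of a generator of $(f_i)_*k$ as a basis of $H^0\bigl(C_s,\CO_{C_s}(D_{i,s})\bigr)$ and checking that it restricts to a base-point-free pencil of degree $1$ on $E_i$; the paper instead argues softly that $f^i(E_i)$ is closed and not a point, hence all of the irreducible curve $C^i_s$, so $E_i\to C^i_s$ is a finite morphism which must be an isomorphism because $D_{i,s}$ has degree $1$ (citing Liu, Lemma 8.3.29, for the equivalence ``image is a point iff the restricted line bundle is trivial''). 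Your version is longer but more self-contained and constructive; the paper's is shorter at the cost of an external citation. One presentational caveat: the paper's definition of ``contracts'' already includes the requirement that $f^i_s$ induce an isomorphism from the complement of the collapsed components onto the complement of their images, so your part (a) as written only establishes half of statement (a); the missing half is exactly the isomorphism $E_i\setminus N_s\isoto C^i_s\setminus Z_s$ that you prove under the heading of (b), so nothing is actually missing from the argument, only the bookkeeping between (a) and (b) differs from the statement.
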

\begin{proof}For (a), we may assume that $S=\Spec k$ for a field $k$. Let $i\colon E\into C$ be an irreducible component. By \cite[Lemma 8.3.29]{Liu}, the image of $E$ under $f^i$ is a point if and only if $i^*\CO_{C}(D_i)\cong \CO_E$. This occurs precisely when $E$ does not meet $D_i$. Suppose $E$ is the unique irreducible component of $C$ meeting $D_i$. Since $f^i(E)$ is closed, not a equal to a point, and $C^i\cong\BP^1_k$ is irreducible, we must have $f^i(E)=C^i$. Thus $f^i$ induces a finite morphism $E\to C^i.$ Since $D_i$ has degree $1$, this is an isomorphism, proving (a).

 Statement (a) implies that $f^i_s$ induces an isomorphism $C_s\setminus (f^i_s)^{-1}(Z_s)\stackrel\sim\to C^i_s\setminus Z_s,$ where $Z_s\subset C^i_s$ is as defined in (\ref{1-eq-dim1fiber}). Hence (a) implies (b).
\end{proof}

\begin{definition}\label{i-comp}We call $(C^i,\lambda^i)$ along with the morphism $f^i\colon (C,\lambda)\to (C^i,\lambda^i)$ the \emph{contraction to the $i$-component} of $(C,\lambda)$.
\end{definition}
We now relate the contraction to the $i$-component to the notion of contraction from the preceding subsection. For each $I':=\{i,j,k\}\subset I$ of cardinality $3$, define
\begin{equation}\label{Salpha}S_{I'}:=\{s\in S\mid\lambda^i(i)\neq\lambda^i(j)\neq\lambda^i(k)\neq\lambda^i(i)\}.\end{equation}
Since the morphism $C^i\to S$ is separated and $S_{I'}$ is defined to be the locus where finitely many sections are not equal, it follows that $S_{I'}$ is an open subscheme of $S$.
\begin{proposition}\label{loccontr}
The $S_{I'}$ form an open cover of $S$ as $I'$ varies. Moreover, the $I$-marked curve $C^{i}\times_S S_{I'}$ is the contraction of $C\times_S S_{I'}$ with respect to ${I'}$.
\end{proposition}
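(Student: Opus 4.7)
The plan is to treat the two assertions separately: the covering of $S$ by the $\{S_{I'}\}$ will come from a geometric analysis of the fibers of $C$, while the identification with the contraction will follow from checking stability of $(C^i|_{S_{I'}},\lambda^i|_{I'})$ together with Proposition~\ref{1-prop-characterizationCi}(b).

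For the covering, fix $s\in S$ and let $\os$ be a geometric point above $s$. Let $E_i$ denote the irreducible component of $C_\os$ containing $\lambda_i(\os)$. By Proposition~\ref{1-prop-characterizationCi}(a), $f^i_\os$ restricts to an isomorphism $E_i\isoto C^i_\os$ and contracts each connected component of $\overline{C_\os\setminus E_i}$ to the node at which it meets $E_i$. The key claim is that every special point $p\in E_i$ lies in the image of $\lambda^i$: if $p=\lambda_j(\os)$ is already marked there is nothing to do, while if $p$ is a node, then the subtree of $C_\os$ attached to $E_i$ at $p$ is nonempty, hence (as a finite tree) has at least one leaf, and the stability of $C_\os$ forces every leaf component to carry at least two marked points; thus some $\lambda_j(\os)$ lies in this subtree and maps to $p$ under $f^i_\os$. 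Since stability of $C_\os$ forces $E_i$ to contain at least three special points, the image of $\lambda^i$ in $C^i_\os$ contains at least three distinct points, one of which is $\lambda^i(i)(\os)$. Picking $j,k\in I\setminus\{i\}$ so that $\lambda^i(i)(\os),\lambda^i(j)(\os),\lambda^i(k)(\os)$ are pairwise distinct yields $s\in S_{\{i,j,k\}}$, proving that the $S_{I'}$ cover $S$.

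For the second assertion, I invoke Definition~\ref{cntrct}: it suffices to verify that the restriction of $f^i$ over $S_{I'}$ is a morphism of $I$-marked curves and that $(C^i|_{S_{I'}},\lambda^i|_{I'})$ is a stable $I'$-marked curve. The first point follows from Proposition~\ref{1-prop-characterizationCi}(b), since condition~(a) of Proposition~\ref{morphprops} is plainly preserved by any base change. For the second, note that $C^i|_{S_{I'}}$ is a $\BP^1$-bundle over $S_{I'}$, so its fibers are smooth projective lines; conditions~(1)--(3) of Definition~\ref{stablecurve} are thus automatic, condition~(4) is exactly the defining condition of $S_{I'}$, and condition~(5) follows because each fiber is irreducible and carries three distinct smooth marked points $\lambda^i(i),\lambda^i(j),\lambda^i(k)$.

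The only substantive step is the geometric-combinatorial argument in the second paragraph, where one realizes every special point of $E_i$ as the image of some marked section; the rest is bookkeeping with the definitions and the properties of $\BP^1$-bundles.
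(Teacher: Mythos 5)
Your proof is correct and follows essentially the same route as the paper: exhibit, for each fiber, three indices whose images under $\lambda^i$ are pairwise distinct (which is where stability enters), and then observe that pairwise distinctness of the $i$-, $j$-, $k$-sections over $S_{I'}$ makes $(C^i|_{S_{I'}},\lambda^i|_{I'})$ stable, so that $f^i$ is a contraction by Definition \ref{cntrct} and Proposition \ref{1-prop-characterizationCi}(b). The only difference is cosmetic: where the paper runs a three-case analysis on the position of $E_i$ in $C_s$, you give a uniform argument showing that every special point of $E_i$ lies in the image of $\lambda^i$ and then use that $E_i$ carries at least three special points; both arguments rest on the same stability facts.
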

\begin{proof} Let $s\in S$, and let $E_i$ denote the irreducible component of $C_s$ containing the $i$-marked point. Since $C_s$ is stable, it satisfies exactly one of the following:
\begin{enumerate}
\item[(i)] The curve $C_s$ is smooth.
\item[(ii)] There are at least two connected components in $C_s\setminus E_i$, each containing at least two marked points.
\item[(iii)] The complement of $E_i$ is connected, contains at least two marked points, and there exists a marked point on $E_i$ distinct from the $i$-marked point.
\end{enumerate}
In case (i), let $j,k\in I\setminus\{i\}$ be arbitrary. In case (ii), choose $j$ and $k$ corresponding to marked points in distinct connected components of $C_s\setminus E^i$. In case (iii), take $j\neq i$ corresponding to a marked point on $E^i$ and $k$ corresponding to a marked point on any other irreducible component. In each case, for ${I'}:=\{i,j,k\}$, we have $s\in S_{I'}$. The $S_{I'}$ thus cover $S$, as desired. Since the $i$- and $j$- and $k$-marked sections of $C^{i}\times_S S_{I'}$ are distinct, it is stable as an $I'$-marked curve. The second statement in the lemma then follows directly from the definition of contractions.
\end{proof}
In fact, the contraction to the $i$-component $(C^i,\lambda^i)$ is uniquely determined by the fact that, on fibers, in contracts the irreducible components not meeting $D_i$.
\begin{corollary}\label{1-cor-uniquenessCi}Let $\overline{f}\colon (C,\lambda)\to(\overline{C},\overline\lambda)$ be a morphism of $I$-marked curves satisfying property (a) of Proposition \ref{1-prop-characterizationCi}. Then there exists a unique isomorphism $(C^i,\lambda^i)\stackrel\sim\to (\overline{C},\overline{\lambda})$. Moreover, the resulting diagram commutes:
$$\xymatrix{&C\ar[rd]^{\overline{f}}\ar[ld]_{f^i}&\\C^i\ar[rr]_\sim& &\overline C.}
$$
\end{corollary}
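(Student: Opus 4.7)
My plan is to reduce the statement to the uniqueness of contractions from Proposition \ref{1-prop-uniquenesscontractions} by working locally on the open cover $\{S_{I'}\}$ of $S$ provided by Proposition \ref{loccontr}, indexed by $3$-element subsets $I' = \{i,j,k\} \subset I$ containing $i$. On each such $S_{I'}$, the triple $(C^i|_{S_{I'}}, \lambda^i, f^i|_{S_{I'}})$ is a contraction of $(C|_{S_{I'}}, \lambda)$ with respect to $I'$. The goal is to prove that $(\overline C|_{S_{I'}}, \overline\lambda, \overline f|_{S_{I'}})$ is such a contraction as well, so that Proposition \ref{1-prop-uniquenesscontractions} will produce a unique isomorphism of $I$-marked curves $\varphi_{I'}\colon C^i|_{S_{I'}} \stackrel\sim\to \overline C|_{S_{I'}}$ satisfying $\overline f|_{S_{I'}} = \varphi_{I'} \circ f^i|_{S_{I'}}$. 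Once this is done, the $\varphi_{I'}$ will glue: on $S_{I'} \cap S_{I''}$ the restrictions $\varphi_{I'}$ and $\varphi_{I''}$ are both isomorphisms of stable $I'$-marked curves respecting the contraction from $C$, so the uniqueness in Proposition \ref{1-prop-uniquenesscontractions} (via Corollary \ref{uniquemorph}) forces them to coincide. The uniqueness of the resulting global isomorphism $\varphi$ follows by the same argument.

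The main work, and the hardest step, is verifying that $(\overline C|_{S_{I'}}, \overline \lambda|_{I'})$ is a stable $I'$-marked curve; since flatness and properness are inherited, this reduces to a check on each geometric fiber $\overline C_\os$ for $\os$ above $s \in S_{I'}$. Write $E_{i,\os} \subset C_\os$ for the unique irreducible component containing $\lambda_i(\os)$. By hypothesis, $\overline f_\os$ contracts every other irreducible component of $C_\os$ to a point. Lemma \ref{connfibers} forces each fiber of $\overline f_\os$ to be connected, which rules out the possibility that two distinct attachment points of $E_{i,\os}$ map to the same point of $\overline C_\os$: otherwise two disjoint contracted subtrees would appear in the same connected fiber. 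Hence $\overline f_\os|_{E_{i,\os}}$ is injective, and combined with the isomorphism property away from $Z_\os$ it is a proper bijective birational morphism from $E_{i,\os} \cong \BP^1$ onto $\overline C_\os$. Using that $\overline C_\os$ is reduced, connected, has at worst nodal singularities, and satisfies $h^1(\CO_{\overline C_\os}) = 0$, an arithmetic-genus computation rules out any node on $\overline C_\os$ (since the normalization is $\BP^1$, each node would contribute $1$ to $p_a$). Therefore $\overline C_\os \cong \BP^1_{k(\os)}$ and $\overline f_\os|_{E_{i,\os}}$ is an isomorphism.

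Under this isomorphism, each marked point $\overline\lambda_j(\os)$ for $j \in I' \setminus \{i\}$ corresponds either to $\lambda_j(\os)$ itself (if it lies on $E_{i,\os}$) or to the attachment point of the contracted subtree containing $\lambda_j(\os)$. Proposition \ref{1-prop-characterizationCi}(a) applied to $f^i_\os$ yields an entirely parallel description of $\lambda^i_j(\os)$ on $C^i_\os$ via the analogous isomorphism $E_{i,\os} \stackrel\sim\to C^i_\os$. Thus the positions of the three $I'$-marked points of $\overline C_\os$ coincide, under the canonical identification with $E_{i,\os}$, with those of $\lambda^i_i(\os), \lambda^i_j(\os), \lambda^i_k(\os)$ on $C^i_\os$. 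Since the latter are distinct by the definition of $S_{I'}$, so are the former, and the remaining stability conditions (smoothness at marked points, at least three special points on each component) follow because $\overline C_\os$ is smooth and the only component carries three distinct marked points. With stability on each $S_{I'}$ established, the invocation of Proposition \ref{1-prop-uniquenesscontractions} and the gluing step described in the first paragraph complete the proof.
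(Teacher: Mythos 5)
Your proposal is correct and follows essentially the same route as the paper: cover $S$ by the opens $S_{I'}$ of Proposition \ref{loccontr}, show that over each $S_{I'}$ both $(C^i,\lambda^i,f^i)$ and $(\overline C,\overline\lambda,\overline f)$ are contractions of $C$ with respect to $I'$, and conclude by the uniqueness of contractions (Proposition \ref{1-prop-uniquenesscontractions}) plus gluing. The paper compresses the verification that $\overline C|_{S_{I'}}$ is a stable $I'$-marked contraction into the phrase ``the exact same argument,'' whereas you carry it out fiberwise (irreducibility of $\overline C_\os$, the arithmetic-genus argument forcing $\overline C_\os\cong\BP^1$, and the matching of marked points); this is a faithful and correct elaboration of the same proof.
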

\begin{proof}
By Proposition \ref{loccontr}, the $I$-marked curve $(C^i,\lambda^i)$ is locally a contraction of $C$ with respect to $\{i,j,k\}\subset I$ for some $j,k\in I$. The exact same argument shows that the same holds for $(\overline{C},\overline\lambda)$. The statement then follows directly from the uniqueness of contractions (Proposition \ref{1-prop-uniquenesscontractions}).
\end{proof}
For each $j\in I$, we denote the image of $\lambda^i(j)$ by $Z_j\subset C^i$. 
\begin{corollary}Let $Z_{i^c}:=\bigcup_{j\in I\setminus\{i\}}Z_j$. The morphism $f^i$ induces an isomorphism $$C\setminus (f^i)^{-1}(Z_{i^c})\stackrel\sim\to C^i\setminus Z_{i^c}.$$
\end{corollary}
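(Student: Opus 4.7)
The plan is to reduce the claim to Proposition \ref{contriso} by invoking the local identification of $C^i$ with a contraction provided by Proposition \ref{loccontr}. Since being an isomorphism is local on the target, and $f^i$ is a single global morphism, no gluing data needs to be verified beyond checking the isomorphism property on an open cover.

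First, I would cover $S$ by the open subschemes $S_{I'}$ of Proposition \ref{loccontr}, where $I' = \{i, j, k\}$ ranges over the three-element subsets of $I$ containing $i$. On each such $S_{I'}$, Proposition \ref{loccontr} identifies $C^i \times_S S_{I'}$ with the contraction of $C \times_S S_{I'}$ with respect to $I'$, so Proposition \ref{contriso} (applied to the subset $I' \subset I$, whose complement in $I$ is $I \setminus I'$) yields an isomorphism
$$(C \times_S S_{I'}) \setminus (f^i)^{-1}(Z_{I \setminus I'}) \stackrel\sim\longto (C^i \times_S S_{I'}) \setminus Z_{I \setminus I'}$$
induced by $f^i$.

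Next, I would observe that $I \setminus I' \subset I \setminus \{i\}$, so $Z_{I \setminus I'} \subset Z_{i^c}$. Restricting the preceding isomorphism to the smaller open subscheme $(C^i \times_S S_{I'}) \setminus Z_{i^c}$ of the target (and correspondingly to its preimage in the source) yields a local isomorphism of the desired form over each $S_{I'}$. Since the $S_{I'}$ cover $S$, their pullbacks to $C^i$ cover $C^i$, and the restricted local isomorphisms, being restrictions of the same morphism $f^i$, patch together to give the global isomorphism $C \setminus (f^i)^{-1}(Z_{i^c}) \stackrel\sim\to C^i \setminus Z_{i^c}$.

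I do not anticipate any real obstacle here: the corollary is essentially a bookkeeping application of Propositions \ref{loccontr} and \ref{contriso}. The only mild subtlety worth spelling out is that the union $Z_{i^c} = \bigcup_{j \neq i} Z_j$ contains each $Z_{I \setminus I'}$, which is what allows the complements to shrink compatibly across the cover.
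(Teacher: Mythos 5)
Your proposal is correct and follows exactly the route the paper takes: its proof of this corollary is the one-line observation that it ``follows directly from Propositions \ref{contriso} and \ref{loccontr},'' which is precisely the covering-by-$S_{I'}$ and restriction argument you spell out. The only thing you add is the explicit bookkeeping that $Z_{I\setminus I'}\subset Z_{i^c}$ allows the complements to shrink compatibly, which the paper leaves implicit.
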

\begin{proof}This follows directly from Propositions \ref{contriso} and \ref{loccontr}.
\end{proof}

We conclude this subsection by showing that the construction of the contraction to the $i$-component commutes with base change. For any morphism $\phi\colon S'\to S$ we consider the following pullback diagram:
\begin{equation}\label{basechangediag}
\begin{gathered}
\xymatrix{C':=\phi^*C\ar[rr]^-{\phi'}\ar[d]^{\pi'}&&C\ar[d]^{\pi}\\S'\ar[rr]^\phi&&S.}
\end{gathered}
\end{equation}
Let $D_i':=(\phi')^*D_i$.
\begin{lemma}\label{basechangeiso}The formation of $\pi_*\CO(D_i)$ commutes with base change, i.e., there is a natural isomorphism $(\pi')_*\CO_C(D_i')\cong \phi^*\pi_*\CO_C(D_i).$
\end{lemma}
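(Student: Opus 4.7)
The plan is to deduce this from the cohomology-and-base-change machinery, using the same reduction to the noetherian situation that was employed in the proof of Lemma \ref{pushfwd}. The key input will be the fiberwise vanishing $H^1(C_s,\CO_{C_s}(D_{i,s}))=0$ established in Lemma \ref{ggs}(b), which is precisely condition (i) of Lemma \ref{pushfwd}; this is the property that makes the direct image play nicely with pullback.

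First I would reduce to the case where $S$ is noetherian. As in the proof of Lemma \ref{pushfwd}, Theorem \ref{moduli} gives a cartesian diagram (\ref{univpullback}) expressing $(C,\lambda)$ as the pullback of the universal family $(C_I,\lambda_I)$ over the noetherian scheme $M_I$, and $\CO_C(D_i)$ is the pullback of $\CO_{C_I}(D_{i,I})$. If I can verify that the formation of $(\rho)_*\CO_{C_I}(D_{i,I})$ commutes with arbitrary base change, then the same holds for $\pi_*\CO_C(D_i)$ by transitivity of pullbacks: applying base change along $S'\to S\to M_I$ in two steps yields the desired isomorphism for the given $\phi\colon S'\to S$. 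Thus it suffices to prove the lemma when the base is noetherian.

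Once in the noetherian setting, I would invoke the standard cohomology-and-base-change theorem (the version used in \cite[Chp.\ 0\S5]{Mum} and in Knudsen's Corollary 1.5, already cited above): if $\pi\colon C\to S$ is a proper flat morphism to a noetherian scheme, $\CF$ is a coherent sheaf on $C$ flat over $S$, and $H^1(C_s,\CF_s)=0$ for every $s\in S$, then $\pi_*\CF$ is locally free and its formation commutes with arbitrary base change. Here $\CF=\CO_C(D_i)$ is flat over $S$ (since $D_i$ is a relative effective Cartier divisor, both $\CO_C$ and $\CO_C(-D_i)$ are flat, and hence so is $\CO_C(D_i)$), the morphism $\pi$ is proper by definition, and the fiberwise $H^1$-vanishing holds by Lemma \ref{ggs}(b). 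The natural base change map $\phi^*\pi_*\CO_C(D_i)\to (\pi')_*\CO_{C'}(D_i')$ is therefore an isomorphism.

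The only mild obstacle is the bookkeeping to guarantee that the base change theorem applies in the (potentially) non-noetherian generality in which the lemma is stated; the reduction via the universal family handles this cleanly, since all the vanishing hypotheses are invariant under base change and the sheaf $\CO_C(D_i)$ is itself the pullback of the corresponding sheaf on $C_I/M_I$. No further calculation is needed beyond citing the base change theorem in its standard form.
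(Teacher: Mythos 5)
Your proposal is correct and follows essentially the same route as the paper: reduce to the universal family over the noetherian moduli scheme $M_I$ via the cartesian diagram (\ref{univpullback}), handle the noetherian case by cohomology and base change using the fiberwise conditions of Lemma \ref{ggs} (the paper cites Knudsen's Corollary 1.5 for this), and then obtain the general statement by applying the noetherian case along $S'\to S\to M_I$ in two steps and chaining the resulting natural isomorphisms. Your explicit remark that $\CO_C(D_i)$ is flat over $S$ because $D_i$ is a relative effective Cartier divisor is a detail the paper leaves implicit, but it does not change the argument.
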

\begin{proof}
We proved the lemma in the case where $S$ is noetherian in the proof of Lemma \ref{pushfwd}. For general $S$, consider the unique morphisms $g\colon S\to M_I$ and $g'\colon S'\to M_I$ and $h\colon C\to C_I$ and $h'\colon  C'\to C_I$ yielding commutative diagrams of the form (\ref{univpullback}). By uniqueness, we must have $g'=g\circ \phi$ and $h'=h\circ \phi'$. We again use $D_{i,I}$ to denote the image of the $i$-marked section of $C_I$. Since $M_I$ is noetherian, we can apply the compatibility with base change in the noetherian case to obtain the chain of natural isomorphisms
\begin{eqnarray*}
(\pi')_*\CO_C(D_i')&\cong&(\pi')_*(h')^*\CO_{C_I}(D_{i,I})\cong
\\
g'^*\rho_*\CO_{C_I}(D_{i,I})&\cong& \phi^*g^*\rho_*\CO_{C_I}(D_{i,I})\cong\\
\phi^*\pi_*h^*\CO_{C_I}(D_{i,I})&\cong& \phi^*\pi_*\CO_C(D_i).
\end{eqnarray*}
\end{proof}
\begin{proposition}[Compatibility with base change]\label{basechangeCi}There is an isomorphism 
$(C')^i\stackrel\sim\to (C^i)':=\phi^*(C^i)$ of $I$-marked curves over $S'$ such that the following diagram commutes:
\begin{equation}\label{AA}\begin{gathered}\xymatrix{&C'\ar[rd]\ar[ld]&\\(C')^i\ar[rr]_\sim& &(C^i)'.}\end{gathered}\end{equation}
\end{proposition}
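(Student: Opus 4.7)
The plan is to reduce to the uniqueness statement in Corollary \ref{1-cor-uniquenessCi}. Set $\overline C := \phi^*C^i$ with the $I$-marking $\overline\lambda$ obtained by base-changing $\lambda^i$, and let $\overline f := (\phi')^*f^i \colon C' \to \overline C$ be the base change of $f^i$ along $\phi$. By construction $\overline f \circ \lambda' = \overline\lambda$, where $\lambda' := (\phi')^* \lambda$. Note also that $(C',\lambda')$ is a stable $I$-marked curve over $S'$, because stability is a condition on geometric fibres, which are unchanged under base change of the base scheme; hence $(C')^i$ is defined.

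I then verify that $\overline C$ is an $I$-marked curve over $S'$ and that $\overline f$ is a morphism of $I$-marked curves satisfying property (a) of Proposition \ref{1-prop-characterizationCi}. The first is immediate from base change of the conditions in Definition \ref{stablecurve}. For the second, at each $s' \in S'$ with image $s := \phi(s')$, both $\overline C_{s'}$ and $C'_{s'}$ are obtained from $C^i_s$ and $C_s$ by extension of scalars, and the fibre morphism $\overline f_{s'}$ is the corresponding base change of $f^i_s$. Since irreducible components of stable $I$-marked curves are geometrically irreducible by Lemma \ref{1-lem-irred-comps}, they correspond bijectively under this base change, so the property that $\overline f_{s'}$ contracts precisely the irreducible components of $C'_{s'}$ not meeting $D'_{i,s'}$ follows from the analogous property of $f^i_s$ given by Proposition \ref{1-prop-characterizationCi}.(a). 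This simultaneously verifies condition (b) of Proposition \ref{morphprops}, so $\overline f$ is a morphism of $I$-marked curves.

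Corollary \ref{1-cor-uniquenessCi} now supplies a unique isomorphism $\Phi\colon (C')^i \stackrel{\sim}{\to} \overline C = \phi^*C^i$ of $I$-marked curves with $\Phi \circ (f')^i = \overline f = (\phi')^*f^i$, which is exactly the commutative diagram (\ref{AA}). The principal subtlety lies in the fibrewise check, namely that the contraction behaviour of $f^i_s$ is preserved under the field extension entering the base change to $s'$; this is handled by Lemma \ref{1-lem-irred-comps}, which guarantees that the bijection on irreducible components passes cleanly through such extensions. As an alternative route, one can argue directly via Lemma \ref{basechangeiso} together with the compatibility of $\underline{\Proj}$ with base change: from $(\pi')_*\CO_{C'}(D_i') \cong \phi^*\pi_*\CO_C(D_i)$ one deduces $(C')^i \cong \phi^*C^i$ by applying $\Sym$ and $\underline{\Proj}$, and commutativity of (\ref{AA}) follows from the naturality of the adjunction morphism under base change. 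This second approach bypasses the fibrewise analysis at the cost of more delicate bookkeeping with natural isomorphisms of adjunction maps.
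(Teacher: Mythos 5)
Your primary argument is correct, but it takes a genuinely different route from the paper. The paper's proof stays entirely at the level of sheaves: it takes the isomorphism $(\pi')_*\CO_{C'}(D_i')\cong\phi^*\pi_*\CO_C(D_i)$ of Lemma \ref{basechangeiso}, notes that $\underline\Proj\circ\Sym$ turns it into an isomorphism $(C')^i\stackrel\sim\to(C^i)'$, and obtains the commutativity of (\ref{AA}) from the naturality of the adjunction maps --- i.e.\ exactly your ``alternative route.'' You instead characterize $(C')^i$ by its universal property: you check that the base change $\phi^*f^i\colon C'\to\phi^*C^i$ (what you write as $(\phi')^*f^i$ is really $f^i\times_S\id_{S'}$) is a morphism of $I$-marked curves satisfying condition (a) of Proposition \ref{1-prop-characterizationCi} fibrewise, and then invoke the uniqueness statement of Corollary \ref{1-cor-uniquenessCi}. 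The fibrewise verification is sound: fibres of the base change are obtained by extension of scalars $k(s)\to k(s')$, Lemma \ref{1-lem-irred-comps} guarantees the irreducible components do not split, and the ``contracts'' condition (including the isomorphism off the contracted locus) is preserved under such extensions; stability of $(C',\lambda')$, needed so that $(C')^i$ is defined and the corollary applies, is a geometric-fibre condition and so is automatic. What your approach buys is that it avoids any bookkeeping with natural isomorphisms of pushforwards and adjunctions, at the cost of a fibrewise geometric analysis; the paper's approach is shorter here because Lemma \ref{basechangeiso} has already done the hard work, and it yields the isomorphism together with the commuting triangle in one stroke rather than via a uniqueness principle. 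Both are complete proofs.
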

\begin{proof} The isomorphism of sheaves from Lemma \ref{basechangeiso} yields an isomorphism $(C')^i\stackrel\sim\to (C^i)'$ over $S'$. The commutativity of (\ref{AA}) is equivalent to the commutativity of the following diagram:
$$\xymatrix{&\CO_{C'}(D_i')&\\(\pi')^*(\pi')_*\CO_{C'}(D_i')\ar[ur]^{(1)}& &(\phi')^*\pi^*\pi_*\CO_C(D_i)\ar[ul]_{(2)}\ar[ll]_{(3)}^\sim.}$$
Here (1) is the adjunction homomorphism, and (2) is $(\phi')^*$ applied to the adjunction composed with the natural isomorphism $(\phi')^*\CO_C(D_i)\stackrel\sim\to \CO_{C'}(D_i').$ The homomorphism (3) is from Lemma \ref{basechangeiso}, and the diagram commutes by the naturality of (1)-(3). Finally, we note that the $I$-markings on $(C')^i$ and $(C^i)'$ are both induced by the $I$-marking on $C'$; hence (\ref{AA}) is a diagram of $I$-marked curves. This proves the proposition.
\end{proof}

\subsection{Stabilizations}\label{stab}

The stabilization construction provides an inverse to the contraction obtained by forgetting a single section (\cite[Corollary 2.6]{Knud}). Fix an $I$-marked curve $(C,\lambda)$ and $i\in I$ such that $C$ is stable as an $(I\setminus\{i\})$-marked curve.
\begin{definition} A \emph{stabilization} of $(C,\lambda)$ is a stable  $I$-marked curve $(C^s,\lambda^s)$ with a morphism $f\colon C^s\to C$ making $C$ a contraction of $C^s$ with respect to $I\setminus\{i\}$.
\end{definition}

\begin{proposition}[\cite{Knud}, Theorem 2.4] There exists a stabilization of $(C,\lambda)$ and it is unique up to unique isomorphism.
\end{proposition}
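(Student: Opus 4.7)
The plan is to prove uniqueness first using the rigidity of stable marked curves, and then construct the stabilization via an explicit blow-up.

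For uniqueness, suppose $(C_1^s,\lambda_1^s,f_1)$ and $(C_2^s,\lambda_2^s,f_2)$ are two stabilizations of $(C,\lambda)$. Both $f_1$ and $f_2$ are morphisms of $I$-marked curves making $C$ a contraction of the respective $C_j^s$ with respect to $I\setminus\{i\}$. By Proposition \ref{morphprops}, each $f_j$ restricts to an isomorphism away from a closed subscheme $Z_j\subset C$ which is finite over $S$. Off of $Z_1\cup Z_2$, the composition $f_2^{-1}\circ f_1$ therefore yields a morphism between open subschemes of $C_1^s$ and $C_2^s$. To extend this to a morphism $\phi\colon C_1^s\to C_2^s$ on all of $C_1^s$, I would compare the local descriptions of $C_1^s$ and $C_2^s$ as blow-ups of $C$ at the problematic loci (constructed below) and use the universal property of blow-ups. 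Once such a $\phi$ exists, Proposition \ref{stableiso} forces it to be an isomorphism, and Corollary \ref{uniquemorph} yields uniqueness of $\phi$.

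For existence, I would work locally on $S$ and analyze, at each geometric point $\os$, the position of the $i$-marked section in the fiber $C_\os$. Three cases arise: either $\lambda_i(\os)$ is a smooth point of $C_\os$ distinct from every other marked point, in which case $(C,\lambda)$ is already stable near $\os$ and one sets $C^s=C$; or $\lambda_i(\os)=\lambda_j(\os)$ for a unique $j\neq i$ in the smooth locus; or $\lambda_i(\os)$ coincides with a node of $C_\os$. In the latter two cases, let $\CI\subset\CO_C$ be the ideal sheaf whose vanishing locus consists of the intersection of $\lambda_i(S)$ with the union of the other marked sections and the relative singular locus of $C/S$. Blow up $C$ along $\CI$ to produce a morphism $f\colon C^s\to C$. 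Over a neighborhood of $\os$, the exceptional fiber is a new $\BP^1$-component onto which the $i$-marked section lifts, separating it from the clashing marked section or from the two nodal branches.

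The main obstacle is to check that the blow-up $C^s$ is flat over $S$, that each geometric fiber is a connected nodal curve of arithmetic genus zero, and that the contraction of $C^s$ with respect to $I\setminus\{i\}$ recovers $C$. Flatness should be verified after passing to an \'etale-local model, where $C$ has the form $\Spec\CO_{S,s}[x]$ in the smooth case or $\Spec\CO_{S,s}[x,y]/(xy-t)$ with $t\in\CO_{S,s}$ in the nodal case; in both models the blow-up can be computed explicitly, and the fiber description follows by base change. The resulting exceptional $\BP^1$ contains exactly three special points (the lift of $\lambda_i$ together with two more), so $C^s$ is stable as an $I$-marked curve and the contraction of $C^s$ forgetting the $i$-section collapses only this new component, recovering $C$. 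The local constructions glue by the uniqueness already proven, producing a global stabilization.
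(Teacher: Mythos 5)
The paper offers no proof of this proposition; it is quoted directly from Knudsen (\cite{Knud}, Theorem 2.4), so the relevant comparison is with Knudsen's argument rather than with anything in the text. Your uniqueness reduction to Proposition \ref{stableiso} and Corollary \ref{uniquemorph} is fine once a morphism $C_1^s\to C_2^s$ is produced, but the existence half of your proposal has a genuine gap: the stabilization is \emph{not} the blow-up of $C$ along the locus where $\lambda_i$ meets the other sections or the nodes, and the flatness you defer to a local computation actually fails there. Take $S=\Spec k[\epsilon]/(\epsilon^2)$ and $C=\BP^1_S$ with markings at $0,1,\infty$ (stable) and the extra section $\lambda_i$ given by $x=\epsilon$, which collides with $x=0$ in the unique fiber. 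Your ideal is $(x,\epsilon)\subset k[\epsilon]/(\epsilon^2)[x]$; since the degree-one element $\epsilon$ of the Rees algebra is nilpotent, the blow-up is covered by the single chart $\Spec k[x,u]/(u^2)$ with $\epsilon\mapsto xu$. This is a finite modification of $C$ --- no exceptional $\BP^1$ appears --- and it is not flat over $S$, because $u$ annihilates $\epsilon=xu$ yet does not lie in $\epsilon\cdot k[x,u]/(u^2)$; its fiber $k[x,u]/(u^2,xu)$ is a line with an embedded point rather than the required two-component nodal curve. The correct local model of the stabilization here is $\Spec k[\epsilon]/(\epsilon^2)[x,y]/(xy-\epsilon)$, which is flat with nodal fiber and is simply not the blow-up of your ideal.

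The failure is structural rather than a matter of missing verification: blow-ups neither preserve flatness nor commute with base change, so the fibrewise case analysis and the step ``the fiber description follows by base change'' cannot be salvaged even in the \'etale-local models you name. This is exactly the difficulty Knudsen's proof is engineered around: he constructs $C^s$ as the relative $\Proj$ of the symmetric algebra of a coherent sheaf $\CK$ (an explicit extension built from the inverse ideal sheaves of the colliding sections and of the boundary), chosen so that $\CK$ is $S$-flat and its formation commutes with arbitrary base change; stability and the contraction property are then legitimately checked on geometric fibers. Over a reduced base with generically disjoint sections your picture describes the same space away from the bad locus, but the content of the theorem is precisely its validity in flat families over arbitrary (e.g.\ non-reduced) bases, and there the proposed construction breaks down.
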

 Note that stabilization does \textbf{not} provide a way to obtain a unique stable $I$-marked curve from an $I$-marked curve that is stable as an $I'$-marked curve for arbitrary $I'\subset~I$. The result of iterating the stabilization process depends at each step on how the remaining sections are lifted. See the figure below:
\begin{figure}[H]
\centering
  \caption{Example for $I=\{1,\ldots,5\}$. After stabilizing with respect to $\lambda_4$, a lift of $\lambda_5$ is either distinct from $\lambda_3$ and $\lambda_4$ or equal to one of them. In the first case, the resulting stabilization is already a stable $I$-marked curve. In the latter, we must stabilize further to obtain a stable $I$-marked curve.}
\includestandalone[scale=1.5]{Figures/Stabilization}
  \label{fig:stabilization}
  \end{figure}


\section{The scheme $B_V$ and some properties}\label{OVBV}
For the remainder of the text, we work within the category $\SchFq$ of schemes over $\Spec\BF_q$. All tensor products and fiber products will be taken over $\BF_q$ unless otherwise stated. 
Let $V$ be a finite dimensional $\BF_q$-vector space. In this section we recall the definition of $B_V$ and list several of its properties for later use. 
 Most of these facts are collected from \cite{P-Sch}, and the corresponding proofs can be found there.
\subsection{Preliminaries on $P_V$ and $\OV$}
The \emph{symmetric algebra} $S_V:=\Sym(V)$ of $V$ over $\BF_q$ is the quotient of the tensor algebra 
$T(V):=\bigoplus_{i=0}^\infty V^{\otimes i}$ by the ideal generated by all elements of the form $v\otimes w-w\otimes v$, where $v,w \in V$. Any choice of ordered basis of $V$ yields an isomorphism $S_V\stackrel\sim\to \BF_q[X_1,\ldots, X_n].$ Let $RS_V$ denote the localization of $S_V$ obtained by inverting all $v\in V\setminus\{0\}$. We then have
$$\OV\cong\Proj RS_V\cong \Spec RS_{V,0},$$
where $RS_{V,0}$ denotes the degree $0$ part of $RS_V$.
\begin{definition}\label{fiberwise} Let $S$ be a scheme and let  $\CL$ be an invertible sheaf on $S$. Consider a set $I$ and a map $\lambda\colon I\to \CL(S)$. If, for all $s\in S$, the composite map $$\xymatrix@C-=0.5cm{I\ar[r]^-\lambda&\CL(S)\ar[r]&\CL\otimes_{\CO_S}k(s)}$$ is
\begin{enumerate}
\item non-zero, we call $\lambda$ \emph{fiberwise non-zero,}
\item injective, we call $\lambda$ \emph{fiberwise injective.}
\end{enumerate}
\end{definition}
The scheme $P_V=\Proj(S_V)$ admits a natural fiberwise non-zero $\BF_q$-linear map 
$$\lambda_V\colon V\stackrel\sim\to\CO_{P_V}(1)(P_V).$$
\begin{proposition}[\cite{P-Sch}, Proposition 7.8-7.9]
$\,$
\begin{enumerate}[label=(\alph*)]
\item The scheme $P_V$ with the universal family $(\CO_{P_V}(1),\lambda_V)$ represents the functor which associates to a scheme $S$  over $\BF_q$ the set of isomorphism classes of pairs $(\CL,\lambda)$ consisting of an invertible sheaf $\CL$ on $S$ and a fiberwise non-zero $\BF_q$-linear map $\lambda\colon V\to\CL(S)$.
\item The open subscheme $\OV\subset P_V$ represents the subfunctor of fiberwise injective linear maps.
\end{enumerate}
\end{proposition}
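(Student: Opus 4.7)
The plan is to reduce the statement to the classical functor-of-points description of projective space (e.g.\ \cite[II.7.12]{Ha}), and then check that the two conditions ``fiberwise non-zero'' and ``fiberwise injective'' translate into the appropriate surjectivity/avoidance conditions.

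First I would recall the standard fact: for the scheme $P_V=\Proj(S_V)$, morphisms $S\to P_V$ over $\BF_q$ correspond bijectively to isomorphism classes of pairs $(\CL,\phi)$, where $\CL$ is an invertible sheaf on $S$ and $\phi\colon V\otimes_{\BF_q}\CO_S\onto\CL$ is a surjective $\CO_S$-linear map. The universal such pair on $P_V$ is $(\CO_{P_V}(1),\phi_V)$, where $\phi_V$ is induced by the inclusion of the degree-$1$ part $V\into S_V$. By the tensor-hom adjunction, giving an $\CO_S$-linear map $V\otimes_{\BF_q}\CO_S\to\CL$ is the same as giving an $\BF_q$-linear map $\lambda\colon V\to\CL(S)$, and the universal such $\lambda_V\colon V\to\CO_{P_V}(1)(P_V)$ is the canonical one. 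So the remaining content of (a) is that, under this bijection, surjectivity of $\phi$ corresponds to $\lambda$ being fiberwise non-zero.

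For this translation I would argue as follows. By Nakayama's lemma, surjectivity of the $\CO_S$-linear $\phi\colon V\otimes_{\BF_q}\CO_S\to\CL$ can be checked on fibers at all $s\in S$. Since the target $\CL\otimes_{\CO_S}k(s)$ is a $1$-dimensional $k(s)$-vector space, the induced $k(s)$-linear map $V\otimes_{\BF_q}k(s)\to\CL\otimes_{\CO_S}k(s)$ is surjective if and only if it is non-zero, which in turn happens if and only if there exists some $v\in V$ whose image in $\CL\otimes k(s)$ is non-zero. This is exactly the definition of $\lambda$ being fiberwise non-zero. Combining this with the adjunction, (a) follows at once from the classical description.

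For (b), I would use that $\OV$ is defined as $P_V\setminus\bigcup_{H} H$, where $H$ runs over the hyperplanes $H_v$ cut out by the non-zero elements $v\in V\subset S_V$ (i.e.\ the vanishing loci of the global sections $\lambda_V(v)\in\CO_{P_V}(1)(P_V)$). A morphism $f\colon S\to P_V$ corresponding to $(\CL,\lambda)$ factors through the complement $P_V\setminus H_v$ if and only if $f^*$ of the section cutting out $H_v$ is a trivializing (i.e.\ nowhere-vanishing) section of $f^*\CO_{P_V}(1)=\CL$; equivalently, $\lambda(v)\in\CL(S)$ is fiberwise non-zero. Therefore $f$ factors through $\OV$ if and only if $\lambda(v)$ is fiberwise non-zero for every $0\neq v\in V$, which is the same as $\lambda$ being fiberwise injective (since the composite $V\to\CL\otimes k(s)$ has target of dimension $1$ and the map is $\BF_q$-linear, injectivity is equivalent to the vanishing only at $0$, hence to every non-zero $v$ mapping to something non-zero). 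This gives (b).

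The main obstacle is not any deep step but purely bookkeeping: one must be precise that ``fiberwise non-zero'' on a line bundle really is Nakayama-equivalent to surjectivity of the induced sheaf map, and that the reduction from hyperplane avoidance to ``$\lambda(v)$ fiberwise non-zero for all $0\neq v$'' uses only that $\CL$ is locally free of rank one. Once these are unpacked, the proposition is immediate from the universal property of $P_V$.
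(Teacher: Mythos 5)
Your argument is correct: part (a) is the standard functor-of-points description of $\Proj(\Sym V)$ combined with the tensor-hom adjunction and the Nakayama-type observation that surjectivity onto an invertible sheaf is equivalent to fiberwise non-vanishing, and part (b) correctly translates avoidance of the hyperplane $V_+(v)$ into non-vanishing of $\lambda(v)$ on every fiber, hence into fiberwise injectivity. The paper does not prove this statement itself but imports it from \cite{P-Sch} (Propositions 7.8--7.9), and your proof is essentially the argument one would expect there, so nothing further is needed.
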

Let $S$ be a scheme. Given a fiberwise non-zero linear map $\lambda\colon V\to \CL(S),$ we obtain a surjection of coherent sheaves 
$$\lambda\otimes\id\colon V\otimes\CO_S\onto\CL,$$ whose kernel we denote by $\CE_V$.
\begin{proposition}[\cite{P-Sch}, Proposition 10.1]\label{PVrep}The scheme $P_V$ represents the functor which to an $\BF_q$-scheme $S$ associates the set of coherent subsheaves $\CE_V\subset V\otimes\CO_S$ such that $(V\otimes\CO_S)/\CE_V$ is invertible.
\end{proposition}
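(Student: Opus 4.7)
The plan is to deduce Proposition \ref{PVrep} directly from the preceding representability statement for $P_V$ (Propositions 7.8--7.9 of \cite{P-Sch}) by exhibiting a natural bijection between the two functors on $\SchFq$. Thus for each $\BF_q$-scheme $S$, I would construct mutually inverse maps
\[
\left\{\substack{(\CL,\lambda):\ \CL\text{ invertible on }S,\\ \lambda\colon V\to\CL(S)\text{ fiberwise non-zero}}\right\}\!/\!\cong \;\longleftrightarrow\; \left\{\substack{\CE_V\subset V\otimes\CO_S\text{ coherent},\\ (V\otimes\CO_S)/\CE_V\text{ invertible}}\right\}
\]
compatible with pullback along morphisms $T\to S$.

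In the forward direction, given $(\CL,\lambda)$, I would form the $\CO_S$-linear homomorphism $\lambda\otimes\id\colon V\otimes\CO_S\to\CL$ and set $\CE_V:=\ker(\lambda\otimes\id)$. The key point is that the fiberwise non-zero condition forces $\lambda\otimes\id$ to be surjective: since $\CL\otimes k(s)$ is one-dimensional and the fiberwise composite $V\to\CL(S)\to\CL\otimes k(s)$ is non-zero, the image of $V\otimes k(s)$ generates $\CL\otimes k(s)$, so $\lambda\otimes\id$ is surjective on all fibers and hence surjective by Nakayama. This gives a short exact sequence
\[
0\to\CE_V\to V\otimes\CO_S\to\CL\to 0,
\]
exhibiting $\CE_V$ as a coherent subsheaf with invertible quotient. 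Moreover the construction depends only on the isomorphism class of $(\CL,\lambda)$, since any isomorphism $\CL_1\isoto\CL_2$ matching the $\lambda_i$ identifies the two kernels inside $V\otimes\CO_S$.

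Conversely, given a coherent subsheaf $\CE_V\subset V\otimes\CO_S$ whose quotient $\CL:=(V\otimes\CO_S)/\CE_V$ is invertible, I would define $\lambda\colon V\to\CL(S)$ as the composite of the inclusion $V\longinto V\otimes\CO_S(S)$ with the quotient map. Surjectivity of $V\otimes\CO_S\onto\CL$ on fibers then forces $\lambda$ to be fiberwise non-zero, because the images of the elements of $V$ generate the one-dimensional space $\CL\otimes k(s)$. It is immediate from the definitions that these two constructions are mutually inverse: starting from $(\CL,\lambda)$ and forming $\CE_V$ recovers $\CL$ and $\lambda$ as the quotient and the induced map, and starting from $\CE_V$ we recover the same subsheaf as the kernel of $\lambda\otimes\id$.

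The only remaining point is naturality in $S$: for a morphism $f\colon T\to S$, pulling back the exact sequence $0\to\CE_V\to V\otimes\CO_S\to\CL\to 0$ along $f$ remains exact because $\CL$ is locally free, so $f^*\CE_V\subset V\otimes\CO_T$ with invertible quotient $f^*\CL$, matching the pullback of $(\CL,\lambda)$. This naturality means the bijection is an isomorphism of functors, so representability of the first functor by $P_V$ transfers to representability of the second. I expect no real obstacle; the one step that deserves care is the verification that $\lambda\otimes\id$ is surjective (equivalently, that $\CE_V$ arises as the kernel of a surjection onto an invertible sheaf), which is exactly where the fiberwise non-zero hypothesis and Nakayama intervene.
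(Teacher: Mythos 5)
Your proposal is correct and matches the intended argument: the paper itself gives no proof (it cites \cite[Proposition 10.1]{P-Sch}) but sets up exactly your correspondence in the preceding paragraph, defining $\CE_V$ as the kernel of the surjection $\lambda\otimes\id\colon V\otimes\CO_S\onto\CL$ and thereby reducing the statement to the representability result of \cite[Propositions 7.8--7.9]{P-Sch}. Your verification that the two constructions are mutually inverse and compatible with base change (using that the sequence is locally split since $\CL$ is invertible) is precisely the content needed.
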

\subsection{The scheme $B_V$}
Let $V'$ run over all non-zero $\BF_q$-subspaces of $V$. The product $\prod_{0\neq V'\subset V}P_{V'}$ represents tuples $\CE_\bullet=(\CE_{V'})_{V'}$ such that $(V'\otimes \CO_S)/\CE_{V'}$ is invertible for each $V'$.
\begin{proposition}[\cite{P-Sch}, \S 10]\label{BVrep} There exists a unique closed subscheme 
$$B_V\subset \prod_{0\neq V'\subset V}P_{V'}$$ representing the subfunctor of all $\CE_\bullet$ satisfying the closed condition
\begin{eqnarray}\label{BVcond}
\CE_{V''}\subset\CE_{V'}&\mbox{ for all $0\neq V''\subset V'\subset V.$}
\end{eqnarray}
\end{proposition}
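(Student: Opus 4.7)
Proof plan. The strategy is to realize $B_V$ as a scheme-theoretic intersection of pairwise closed conditions: for each inclusion $0\neq V''\subsetneq V'\subset V$, the containment $\CE_{V''}\subset\CE_{V'}$ will be shown to cut out a closed subscheme of $P_{V''}\times P_{V'}$. Since the product $\prod_{0\neq V'\subset V}P_{V'}$ represents the product functor parametrizing tuples $(\CE_{V'})_{V'}$ as in Proposition \ref{PVrep}, pulling these pairwise conditions back to the product and intersecting will give the desired subscheme.

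Step 1: reduction to one pair. Fix $0\neq V''\subsetneq V'\subset V$. On $X:=P_{V''}\times P_{V'}$, let $\pi_1,\pi_2$ denote the two projections, and write $\CL_{V'}:=(V'\otimes\CO_{P_{V'}})/\CE_{V'}$ for the universal quotient. The inclusion $V''\hookrightarrow V'$ induces an $\CO_X$-linear map $V''\otimes\CO_X\to V'\otimes\CO_X$, and composing the pulled-back inclusion $\pi_1^*\CE_{V''}\hookrightarrow V''\otimes\CO_X$ with this map and the pulled-back surjection $V'\otimes\CO_X\twoheadrightarrow\pi_2^*\CL_{V'}$ yields a morphism
\[
\alpha\colon \pi_1^*\CE_{V''}\longto \pi_2^*\CL_{V'}.
\]
By construction, a morphism $T\to X$ satisfies $\CE_{V''}|_T\subset\CE_{V'}|_T$ as subsheaves of $V'\otimes\CO_T$ if and only if the pullback $\alpha|_T$ vanishes.

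Step 2: the vanishing locus is closed. The sheaf $\pi_1^*\CE_{V''}$ is of finite type (being a coherent subsheaf of a finite free sheaf), and $\pi_2^*\CL_{V'}$ is invertible; for any morphism $\CF\to\CL$ of $\CO_X$-modules with $\CF$ finitely generated and $\CL$ invertible, there is a unique closed subscheme $Z_{V'',V'}\subset X$ such that $T\to X$ factors through $Z_{V'',V'}$ iff the pullback of the morphism is zero. Locally, after trivializing $\pi_2^*\CL_{V'}$ and picking finitely many local generators of $\pi_1^*\CE_{V''}$, this closed subscheme is cut out by the ideal generated by the images of those generators. Thus $Z_{V'',V'}$ represents the condition $\CE_{V''}\subset\CE_{V'}$ on $X$.

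Step 3: intersecting. For each pair $0\neq V''\subsetneq V'\subset V$, let $p_{V'',V'}\colon\prod_{0\neq W\subset V}P_{W}\to P_{V''}\times P_{V'}$ be the projection, and set
\[
B_V\;:=\;\bigcap_{0\neq V''\subsetneq V'\subset V} p_{V'',V'}^{-1}(Z_{V'',V'}),
\]
the scheme-theoretic intersection of closed subschemes, which is itself a closed subscheme of the product. A morphism $T\to\prod P_{W}$, corresponding to a tuple $(\CE_{V'})_{V'}$ by Proposition \ref{PVrep}, factors through $B_V$ iff it factors through every $p_{V'',V'}^{-1}(Z_{V'',V'})$, which by Step 2 is exactly the condition (\ref{BVcond}). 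Hence $B_V$ represents the claimed subfunctor.

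Step 4: uniqueness. If $B_V'$ is any other closed subscheme of $\prod P_{W}$ representing the same subfunctor, then by Yoneda the inclusion $B_V\hookrightarrow\prod P_{W}$ factors uniquely through $B_V'$ and vice versa; composing gives the identity on each side, so $B_V=B_V'$ as subschemes of the product. The main point to be careful about is ensuring that the sheaf-theoretic containment condition is genuinely a closed (not merely locally closed) condition on the product, which is handled cleanly in Step 2 by the fact that the target $\pi_2^*\CL_{V'}$ is locally free; no further obstacle arises.
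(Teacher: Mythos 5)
Your proof is correct: reducing to one pair $V''\subsetneq V'$, expressing the containment $\CE_{V''}\subset\CE_{V'}$ as the vanishing of the composite $\CE_{V''}\to V'\otimes\CO\onto (V'\otimes\CO)/\CE_{V'}$ into an invertible sheaf (a closed condition since the source is of finite type), and taking the scheme-theoretic intersection over all pairs is exactly the standard argument, and it is the one given in the cited reference \cite{P-Sch}, \S 10 (the paper itself only quotes the result without proof). The one point worth making explicit is that pulling back the universal short exact sequence stays exact because the quotient is invertible, hence flat, which is what identifies $f^*\CE_{V''}$ with the subsheaf attached to $f$; you use this implicitly and it is fine.
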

Slightly abusing notation, we also denote the functor $S\mapsto \CE_\bullet$ from Propositon \ref{BVrep} by $B_V$. For later use, we give the following alternative interpretation of $B_V$. Let $$\underline{\CB}_V\colon \mathop{\rm \underline{Sch}}\nolimits_{\BF_q}^{\op}\to \mathop{\rm \underline{Set}}$$
be the functor sending an $\BF_q$-scheme $S$ to the set of isomorphism classes of tuples $(\CL,\lambda,\psi)$ where $$\CL:=(\CL_{V'})_{0\neq V'\subset V}$$ is a family of invertible sheaves over $S$ and $\lambda:=(\lambda_{V'})_{0\neq V'\subset V}$ is a family of fiberwise non-zero $\BF_q$-linear maps 
$$\lambda_{V'}\colon V\to \CL_{V'}(S)$$
 and $\psi:=(\psi^{V'}_{V''})_{0\neq V''\subset V'\subset V}$ is a family of homomorphisms $$\psi^{V'}_{V''}\colon \CL_{V''}\to\CL_{V'}$$ such that the following diagram commutes:
\begin{equation*}\label{BVfunc}\xymatrix{V'\ar[r]^-{\lambda_{V'}}&\CL_{V'}(S)\\
V''\ar@{^{(}->}[u]\ar[r]^-{\lambda_{V''}}&\CL_{V''}(S)\ar[u]_{\psi_{V''}^{V'}(S)}.}\end{equation*}
Two such tuples $(\CL,\lambda,\psi)$ and $(\CM,\mu,\omega)$ are isomorphic if for all subspaces $0\neq V'\subset V$ there are isomorphisms $\CL_{V'}\stackrel\sim\to\CM_{V'}$ which are compatible with $\lambda_{V'}$ and $\mu_{V'}$. The data of an isomorphism class $[(\CL,\lambda,\psi)]$ and a tuple $\CE_\bullet$ satisfying (\ref{BVcond}) are equivalent, which yields the following lemma.
\begin{lemma}\label{AltRep}The scheme $B_V$ represents the functor $\underline{\CB}_V$.
\end{lemma}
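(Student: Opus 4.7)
The plan is to prove Lemma~\ref{AltRep} by directly exhibiting a natural bijection between $\underline{\CB}_V(S)$ and the functor represented by $B_V$ as described in Proposition~\ref{BVrep}, and then invoking the Yoneda lemma. The bijection is essentially a dictionary: giving a line bundle together with a surjection from $V'\otimes\CO_S$ is the same as giving the kernel of that surjection, and the compatibility data $\psi$ corresponds exactly to the containments $\CE_{V''}\subset\CE_{V'}$.

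First, I would define the forward map $\underline{\CB}_V(S)\to B_V(S)$. Given $(\CL,\lambda,\psi)\in\underline{\CB}_V(S)$, the map $\lambda_{V'}$ (read as $V'\to\CL_{V'}(S)$, as the commutative diagram makes clear) determines a morphism $\lambda_{V'}\otimes\id\colon V'\otimes\CO_S\to\CL_{V'}$. Fiberwise non-vanishing of $\lambda_{V'}$ together with the invertibility of $\CL_{V'}$ forces this to be surjective, so setting $\CE_{V'}:=\ker(\lambda_{V'}\otimes\id)$ yields a subsheaf with $(V'\otimes\CO_S)/\CE_{V'}$ invertible. For $V''\subset V'\subset V$, the commutativity of the defining square says that the two composites $V''\otimes\CO_S\to\CL_{V'}$, namely through $V'\otimes\CO_S$ and through $\CL_{V''}$, agree. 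Hence any local section of $V''\otimes\CO_S$ killed by $\lambda_{V''}\otimes\id$ is killed by $\lambda_{V'}\otimes\id$ after embedding into $V'\otimes\CO_S$, giving the inclusion $\CE_{V''}\subset\CE_{V'}$ required by~(\ref{BVcond}).

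Next, I would construct the inverse $B_V(S)\to\underline{\CB}_V(S)$. Given $\CE_\bullet$, set $\CL_{V'}:=(V'\otimes\CO_S)/\CE_{V'}$, let $\lambda_{V'}\colon V'\to\CL_{V'}(S)$ be induced by the quotient, and for $V''\subset V'$ let $\psi^{V'}_{V''}\colon\CL_{V''}\to\CL_{V'}$ be the homomorphism induced by the natural inclusion $V''\otimes\CO_S\hookrightarrow V'\otimes\CO_S$, which is well-defined precisely because $\CE_{V''}\subset\CE_{V'}$. The resulting $\lambda_{V'}$ is fiberwise non-zero since $\CL_{V'}$ is an invertible quotient, and the required diagram commutes by construction. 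I would then verify that these two assignments are mutually inverse up to the isomorphism relation in $\underline{\CB}_V(S)$: starting from $(\CL,\lambda,\psi)$, taking kernels and reconstructing a tuple gives canonical quotient isomorphisms $(V'\otimes\CO_S)/\CE_{V'}\isoto\CL_{V'}$ induced by $\lambda_{V'}$, and these are compatible with both $\lambda$ and $\psi$; starting from $\CE_\bullet$, the composite returns $\CE_\bullet$ on the nose.

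Finally, I would check naturality in $S$. Given $f\colon T\to S$, pullback of $\CE_\bullet$ along $f$ matches the kernel--formation after pullback because each quotient $(V'\otimes\CO_S)/\CE_{V'}$ is locally free, hence flat, so the defining short exact sequence stays exact under $f^*$; the diagrammatic data $\psi$ pulls back to the $\psi$ associated with $f^*\CE_\bullet$ for the same reason. I do not expect any serious obstacle: the only point that requires care is confirming that the ``fiberwise non-zero'' hypothesis on $\lambda_{V'}$ is exactly the condition that $(V'\otimes\CO_S)/\CE_{V'}$ remains invertible (as opposed to merely a quotient by a subsheaf), and that working modulo isomorphism of the $\CL_{V'}$ is what makes the two constructions strictly inverse. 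Once these bookkeeping points are checked, the proof is complete by Yoneda.
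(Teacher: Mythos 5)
Your proposal is correct and is exactly the argument the paper intends: the paper dispenses with the proof in one sentence ("the data of an isomorphism class $[(\CL,\lambda,\psi)]$ and a tuple $\CE_\bullet$ satisfying the closed condition are equivalent"), and your kernel/quotient dictionary, together with the checks of fiberwise non-vanishing, the compatibility $\CE_{V''}\subset\CE_{V'}$ versus $\psi$, and naturality under pullback, is precisely the content being asserted. No gaps; you have simply written out what the paper leaves implicit.
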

%

\subsection{The open subscheme $U_\CF$ and the boundary stratum $\Omega_\CF$}
Recall that we defined a flag of $V$ to be a set $\{V_0,\ldots, V_m\}$ of subspaces of $V$ such that $V_0=\{0\}$ and $V_m=V$ and $V_i\subsetneq V_j$ whenever $i<j$. For each flag $\CF$ of $V$, it is shown in \cite[\S 10]{P-Sch} that there exists a unique open subscheme $U_\CF\subset B_V$ representing the subfunctor satisfying (\ref{BVcond}) and the open condition 
\begin{equation}\label{UFRep}\begin{array}{cl}
V'\otimes \CO_S=\CE_{V'}+(V''\otimes\CO_S)&\mbox{ for all $0\neq V''\subset V'\subset V$ such that there}
\\ &\mbox{ exists no $W\in \CF$ with $V''\subset W$ and $V'\not\subset W$.}
\end{array}
\end{equation}
\begin{lemma}[\cite{P-Sch}, Lemma 10.7]\label{contains} For any two flags $\CF'\subset\CF$ of $V$, we have $U_{\CF'}\subset U_\CF$.
\end{lemma}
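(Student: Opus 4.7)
The plan is to argue entirely at the level of subfunctors of the functor represented by $B_V$. Recall from Proposition \ref{BVrep} that $B_V$ represents the functor of tuples $\CE_\bullet$ satisfying the closed condition (\ref{BVcond}), and that $U_\CF\subset B_V$ is the open subscheme representing the subfunctor of those $\CE_\bullet$ that additionally satisfy the open condition (\ref{UFRep}) with respect to $\CF$. Since $U_{\CF'}$ and $U_\CF$ are both open subschemes of the same scheme $B_V$, showing the containment of subfunctors $U_{\CF'}\subset U_\CF$ on the same base $\SchFq$ immediately yields $U_{\CF'}\subset U_\CF$ as open subschemes of $B_V$.

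The inclusion of subfunctors is then a purely combinatorial observation. First I would fix an $\BF_q$-scheme $S$ together with a tuple $\CE_\bullet$ on $S$ lying in $U_{\CF'}(S)$, so that
\begin{equation*}
V'\otimes\CO_S = \CE_{V'} + (V''\otimes\CO_S)
\end{equation*}
holds for every pair $0\neq V''\subset V'\subset V$ for which there exists no $W\in\CF'$ with $V''\subset W$ and $V'\not\subset W$. I then need to verify the same equality under the a priori weaker selection of pairs coming from $\CF$.

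The key step is to check that the set of pairs $(V'',V')$ required by condition (\ref{UFRep}) with respect to $\CF$ is contained in the set of pairs required by (\ref{UFRep}) with respect to $\CF'$. Concretely, suppose $(V'',V')$ is a pair for which no $W\in\CF$ satisfies $V''\subset W$ and $V'\not\subset W$. Since $\CF'\subset\CF$, any hypothetical $W'\in\CF'$ with $V''\subset W'$ and $V'\not\subset W'$ would lie in $\CF$ and violate this assumption. Hence no such $W'\in\CF'$ exists either, so $(V'',V')$ is already among the pairs for which (\ref{UFRep}) holds by virtue of $\CE_\bullet\in U_{\CF'}(S)$. This shows $\CE_\bullet\in U_\CF(S)$, completing the containment of subfunctors and therefore the lemma.

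There is no real technical obstacle here; the only thing one must be careful about is the direction of the implication, namely that enlarging the flag shrinks the collection of pairs for which the surjectivity equality (\ref{UFRep}) is demanded, so that the condition for $\CF'$ is the stronger one and cuts out the smaller open subscheme.
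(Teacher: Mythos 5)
Your argument is correct: the paper itself gives no proof and simply cites \cite[Lemma 10.7]{P-Sch}, and your direct verification --- that enlarging the flag only enlarges the set of potential separating subspaces $W$, hence shrinks the set of pairs $(V'',V')$ for which the surjectivity condition (\ref{UFRep}) is imposed, so that the condition defining $U_{\CF'}$ is the stronger one --- is exactly the intended reasoning. You also correctly handle the passage from an inclusion of subfunctors to an inclusion of open subschemes of $B_V$, so nothing is missing.
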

\begin{proposition}[\cite{P-Sch}, Corollary 10.10]\label{covering}
For varying $\CF$, the $U_\CF$ form an open covering of $B_V$. The $U_\CF$ where $\CF$ is complete form an open subcovering.
\end{proposition}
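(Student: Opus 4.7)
My plan is to prove the stronger assertion that the $U_\CF$ with $\CF$ complete already cover $B_V$; the first claim then follows at once from Lemma \ref{contains}, which gives $U_{\CF'}\subset U_\CF$ whenever $\CF'\subset\CF$. For this it suffices to produce, for each point $b\in B_V$ with residue field $k$, a complete $\BF_q$-flag $\CF_b$ of $V$ such that the pullback of the universal family $\CE_\bullet$ to $\Spec k$ satisfies the open condition (\ref{UFRep}); since $U_\CF$ is cut out by exactly that condition, this will give $b\in U_{\CF_b}$.

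The idea is to build $\CF_b$ recursively from the top, guided by the linear data at $b$. For every non-zero $\BF_q$-subspace $V'\subset V$, let $L_{V'}\subset V'$ denote the $\BF_q$-subspace of those $v$ with $v\otimes 1 \in \CE_{V',b}$; this is the kernel of the induced $\BF_q$-linear map $V'\to (V'\otimes k)/\CE_{V',b}$, which is non-zero because $\lambda_{V'}$ is fiberwise non-zero, so $\codim_{V'} L_{V'}\geq 1$. I set $V_n:=V$ and, for $i=n,n-1,\ldots,1$, choose $V_{i-1}$ to be any $\BF_q$-hyperplane of $V_i$ containing $L_{V_i}$; such a hyperplane exists by the codimension bound, and by construction $L_{V_i}\subset V_{i-1}$ for every $i$.

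To verify (\ref{UFRep}) for this $\CF_b$, take any pair $V''\subset V'$ for which no $W\in \CF_b$ satisfies $V''\subset W$ and $V'\not\subset W$. Let $i:=\min\{j:V''\subset V_j\}$, so that $V'\subset V_i$ and $V''\not\subset V_{i-1}$; pick $v\in V''\setminus V_{i-1}$. Since $L_{V_i}\subset V_{i-1}$, we have $v\notin L_{V_i}$, i.e., $v\otimes 1\notin \CE_{V_i,b}$. The nesting property (\ref{BVcond}) furnishes $\CE_{V',b}\subset \CE_{V_i,b}$ inside $V_i\otimes k$, so $v\otimes 1\notin \CE_{V',b}$, and because $\CE_{V',b}$ is a hyperplane in $V'\otimes k$ this forces $V'\otimes k=\CE_{V',b}+V''\otimes k$, as required.

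The main conceptual hurdle is that (\ref{UFRep}) is a condition on the $k$-subspaces $\CE_{V'}\subset V'\otimes k$ while flags consist of $\BF_q$-subspaces of $V$; the bridge is the $\BF_q$-subspace $L_{V'}=V'\cap \CE_{V',b}$, whose codimension bound is precisely what allows a compatible complete $\BF_q$-flag to be chosen. Once that translation is in place, the top-down construction together with (\ref{BVcond}) reduces the verification to a single transport-of-non-containment step.
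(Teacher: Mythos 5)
Your argument is correct. Note that the paper itself gives no proof of this statement: it is quoted from \cite{P-Sch}, Corollary 10.10, where it is deduced from the stratification recorded here as Proposition \ref{strata} --- every point lies in some $\Omega_\CF$, and $\Omega_\CF\subset U_\CF\subset U_{\OF}$ for any complete flag $\OF\supset\CF$. You take a genuinely different, self-contained route: for a point $b$ with residue field $k$, each fiber $\CE_{V',b}$ is a $k$-hyperplane of $V'\otimes k$, so the $\BF_q$-subspace $L_{V'}=\{v\in V'\mid v\otimes 1\in\CE_{V',b}\}$ is proper in $V'$, which lets you build a complete flag top-down with $L_{V_i}\subset V_{i-1}$; the open condition (\ref{UFRep}) at $b$ then follows from the nesting (\ref{BVcond}) together with the codimension-one property, since the chosen $v\otimes 1\notin\CE_{V_i,b}$ forces $v\otimes 1\notin\CE_{V',b}$ and a hyperplane plus any vector outside it is everything. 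I checked the details (in particular that $i\geq 1$ since $V''\neq 0$, and that a point of $B_V$ lies in the open subscheme $U_\CF$ exactly when the pullback of the universal tuple to $\Spec k(b)$ satisfies (\ref{UFRep})) and the argument goes through. What your approach buys is a proof from the bare functorial descriptions of $B_V$ and $U_\CF$, with no appeal to the strata; what it gives up is the finer information of which (canonical, minimal) flag a given point belongs to, since your $\CF_b$ involves arbitrary choices of hyperplanes. One cosmetic remark: deducing the first claim from the second does not use Lemma \ref{contains} at all --- if the $U_{\OF}$ with $\OF$ complete already cover $B_V$, then a fortiori the larger collection of all $U_\CF$ does; Lemma \ref{contains} is what one would invoke for the reverse reduction.
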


There is also a unique closed subscheme $B_\CF\subset B_V$ representing the subfunctor of $\CE_\bullet$ satisfying (\ref{BVcond}) and the closed condition
\begin{equation}
\begin{array}{cl}
V''\otimes\CO_S\subset\CE_{V'}&\mbox{ for all $0\neq V''\subset V'\subset V$ such that there}
\\ &\mbox{ exists $W\in \CF$ with $V''\subset W$ and $V'\not\subset W$.}
\end{array}
\end{equation}

Consider the locally closed subscheme $\Omega_\CF:=U_\CF\cap B_\CF$. For varying $\CF$, the $\Omega_\CF$ stratify $B_V$ in the following sense:
\begin{proposition}[\cite{P-Sch}, Theorem 10.8, Corollary 10.10]\label{strata}We have the following equalities of underlying sets:
\begin{eqnarray*}
B_V&=&\bigsqcup_{\CF}\Omega_\CF,\\
B_\CF&=&\bigsqcup_{\CF\subset\CF'}\Omega_{\CF'},\\
U_\CF&=&\bigsqcup_{\CF'\subset\CF}\Omega_{\CF'}.
\end{eqnarray*}

\end{proposition}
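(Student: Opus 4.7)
The plan is to prove all three equalities simultaneously by associating to each geometric point of $B_V$ a canonical flag and translating the defining conditions of $B_\CF$ and $U_\CF$ into combinatorial containments of flags. By Proposition~\ref{BVrep}, a geometric point of $B_V$ over an algebraically closed field $k$ is a tuple $\CE_\bullet=(\CE_{V'})_{0\neq V'\subset V}$ with each $\CE_{V'}$ a codimension-$1$ $k$-subspace of $V'\otimes_{\BF_q}k$ and $\CE_{V''}\subset\CE_{V'}$ whenever $V''\subset V'$. To such a tuple I would associate the subset of $\BF_q$-subspaces
$$\CF(\CE_\bullet):=\{W\subset V\mid W\otimes_{\BF_q}k\subset\CE_{V'}\text{ for every }V'\supsetneq W\}.$$
Two incomparable elements $W_1,W_2$ of $\CF(\CE_\bullet)$ would force $(W_1+W_2)\otimes k\subset\CE_{W_1+W_2}$, contradicting codimension $1$; hence $\CF(\CE_\bullet)$ is a flag of $V$. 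The goal is to prove the two pointwise equivalences
$$\CE_\bullet\in B_\CF\iff\CF\subset\CF(\CE_\bullet),\qquad\CE_\bullet\in U_\CF\iff\CF(\CE_\bullet)\subset\CF;$$
intersecting these gives $\CE_\bullet\in\Omega_\CF$ exactly when $\CF=\CF(\CE_\bullet)$, and all three claimed decompositions follow formally.

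The first equivalence is essentially the definition of $\CF(\CE_\bullet)$. For $(\Rightarrow)$, I apply the $B_\CF$ condition to $(V'',V')=(W,V')$ for each $W\in\CF$ and $V'\supsetneq W$. For $(\Leftarrow)$, given a separating $W\in\CF\subset\CF(\CE_\bullet)$ for the pair $(V'',V')$, either $W\subset V'$ and $V''\otimes k\subset W\otimes k\subset\CE_{V'}$ is immediate, or $W\not\subset V'$, in which case I set $V^{**}:=V'+W$ and observe that $V'\otimes k\not\subset\CE_{V^{**}}$ (otherwise $V^{**}\otimes k\subset\CE_{V^{**}}$); the codimension-$1$ identity $\CE_{V'}=(V'\otimes k)\cap\CE_{V^{**}}$ then gives $V''\otimes k\subset\CE_{V'}$. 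The $(\Rightarrow)$ direction of the second equivalence is similarly short: for $W\in\CF(\CE_\bullet)$, the $U_\CF$ condition applied to $(V'',V')=(W,W+L)$ for each $\BF_q$-line $L\not\subset W$ produces a flag element $W'_L\in\CF$ with $W\subset W'_L$ and $L\not\subset W'_L$; since $\CF$ is a chain, the smallest flag element $W^\star\in\CF$ containing $W$ must equal $W$, since otherwise a line $L\subset W^\star\setminus W$ would give $L\subset W^\star\subset W'_L$, contradicting $L\not\subset W'_L$.

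The $(\Leftarrow)$ direction of the second equivalence is the main obstacle and the heart of the argument. Arguing by contradiction, suppose $V''\otimes k\subset\CE_{V'}$ and no $W\in\CF$ separates $(V'',V')$; since $\CF(\CE_\bullet)\subset\CF$, it suffices to produce such a $W$ in $\CF(\CE_\bullet)$. Let $W_j\in\CF(\CE_\bullet)$ be the smallest flag element containing $V''$. If $V'\not\subset W_j$ I take $W=W_j$; otherwise let $W_{j-1}$ be the predecessor of $W_j$ in $\CF(\CE_\bullet)$ and define $W^\dag$ to be the largest subspace of $W_j$ containing $W_{j-1}$ with $W^\dag\otimes k\subset\CE_{W_j}$. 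Minimality of $W_j$ gives $V''\not\subset W_{j-1}$; combined with $V''\otimes k\subset\CE_{V'}\subset\CE_{W_j}$ and admissibility of $W_{j-1}$, this forces $W^\dag\supsetneq W_{j-1}$, while $W^\dag\subsetneq W_j$ for dimensional reasons. The key technical task is to verify $W^\dag\in\CF(\CE_\bullet)$: for any $U\supsetneq W^\dag$ I apply the codimension-$1$ identity $\CE_U=(U\otimes k)\cap\CE_{U'}$ (valid whenever $U\otimes k\not\subset\CE_{U'}$) twice — with $U':=W_j$ when $U\subset W_j$, using maximality of $W^\dag$ to exclude $U\otimes k\subset\CE_{W_j}$, and with $U':=U+W_j$ when $U\not\subset W_j$, using admissibility of $W_j\in\CF(\CE_\bullet)$ together with the same sum argument to exclude $U\otimes k\subset\CE_{U+W_j}$. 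The resulting strict inclusions $W_{j-1}\subsetneq W^\dag\subsetneq W_j$ in $\CF(\CE_\bullet)$ contradict consecutiveness, which completes the argument.
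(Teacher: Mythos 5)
Your argument is correct. Note that the paper does not prove this proposition at all --- it is imported from Pink--Schieder \cite{P-Sch} (Theorem 10.8 and Corollary 10.10) as a black box --- so what you have written is a genuine addition: a self-contained derivation using only the functorial descriptions the paper does record, namely Proposition \ref{BVrep} together with the conditions (\ref{BVcond}) and (\ref{UFRep}) and the analogous closed condition for $B_\CF$. Your key device, attaching to each geometric point $\CE_\bullet$ the canonical flag $\CF(\CE_\bullet)$ and establishing the two pointwise equivalences $\CE_\bullet\in B_\CF\Leftrightarrow\CF\subset\CF(\CE_\bullet)$ and $\CE_\bullet\in U_\CF\Leftrightarrow\CF(\CE_\bullet)\subset\CF$, is exactly the right mechanism (and is in the spirit of the original source); all three displayed decompositions then follow formally, including disjointness. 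I checked the individual steps and they hold: total orderedness of $\CF(\CE_\bullet)$ via the codimension-$1$ constraint on $\CE_{W_1+W_2}$; the repeated use of the identity $\CE_{V'}=(V'\otimes k)\cap\CE_{V^{**}}$, valid whenever $V'\subset V^{**}$ and $V'\otimes k\not\subset\CE_{V^{**}}$ because a codimension-$1$ subspace contained in a proper subspace equals it; the well-definedness of the maximal subspace $W^\dag$ (the admissible subspaces are closed under sums, and $W_{j-1}$ is among them); and the final contradiction with the consecutiveness of $W_{j-1}\subsetneq W_j$ in $\CF(\CE_\bullet)$. Two points are worth making explicit if you write this up: first, the reduction of the set-theoretic claim to geometric points over an algebraically closed field (standard for membership in open and closed subschemes, but it is doing real work); second, that $\CF(\CE_\bullet)$ contains $\{0\}$ and $V$ and is finite, so it is a flag in the paper's sense, and that the pairs $(V'',V')$ to which you apply the defining conditions always have $V''\neq 0$, as those conditions require.
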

\begin{proposition}[\cite{P-Sch}, Proposition 10.12] For the trivial flag $\CF_0:=\{0,V\}$, we have $\Omega_{\CF_0}=U_{\CF_0}$ and a natural isomorphism 
$$\Omega_{\CF_0}\stackrel\sim\to\OV,\,\CE_\bullet\mapsto\CE_V.$$
\end{proposition}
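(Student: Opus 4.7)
The plan is to unwind the open and closed conditions defining $U_{\CF_0}$ and $B_{\CF_0}$ for the trivial flag, then identify the resulting moduli data with a fiberwise injective $\BF_q$-linear map $V\to\CL_V(S)$ via Yoneda.

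First I would observe that for $\CF_0=\{0,V\}$ and any pair $0\neq V''\subset V'\subset V$, no $W\in\CF_0$ satisfies simultaneously $V''\subset W$ and $V'\not\subset W$: the choice $W=0$ fails because $V''\neq 0$, and $W=V$ fails because $V'\subset V$ always. Hence the closed condition defining $B_{\CF_0}$ inside $B_V$ is imposed on no pair and is vacuous, giving $B_{\CF_0}=B_V$; whereas the open condition (\ref{UFRep}) is imposed on \emph{every} such pair. It follows that $\Omega_{\CF_0}=U_{\CF_0}\cap B_{\CF_0}=U_{\CF_0}$, establishing the first claim.

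Next, I would produce the isomorphism as the morphism $\Omega_{\CF_0}\to\OV$ induced by the projection $B_V\hookrightarrow\prod_{0\neq V'\subset V}P_{V'}\twoheadrightarrow P_V$ onto the $V'=V$ factor, which on $S$-points sends $\CE_\bullet\mapsto\CE_V$. To verify that it lands in $\OV$, pick $0\neq v\in V$ and specialize (\ref{UFRep}) to $V''=\BF_q v$ and $V'=V$: the equality $V\otimes\CO_S=\CE_V+(\BF_q v)\otimes\CO_S$ shows that the image $\lambda_V(v)$ of $v$ generates $\CL_V:=(V\otimes\CO_S)/\CE_V$ on all of $S$, so in particular $\lambda_V(v)$ is fiberwise nonzero. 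Since this holds for every $v\neq 0$ and $\lambda_V$ is $\BF_q$-linear, the kernel of $\lambda_V$ in any fiber is trivial, so $\lambda_V$ is fiberwise injective and $\CE_V\in\OV(S)$.

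Finally, I would construct the inverse natural transformation $\OV\to\Omega_{\CF_0}$ by setting, for $\CE_V\in\OV(S)$ and each $0\neq V'\subset V$, $\CE_{V'}:=\ker(V'\otimes\CO_S\to\CL_V)$. The technical crux is that $(V'\otimes\CO_S)/\CE_{V'}$ is invertible: fiberwise injectivity of $\lambda_V$ forces the image of $V'$ in the one-dimensional $\CL_V\otimes k(s)$ to be nonzero, hence to span, so Nakayama promotes this to surjectivity of $V'\otimes\CO_S\twoheadrightarrow\CL_V$, identifying the quotient with $\CL_V$. Conditions (\ref{BVcond}) and (\ref{UFRep}) for $\CF_0$ then follow by direct inspection. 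To see that the two constructions invert each other, I would verify, for any $\CE_\bullet\in\Omega_{\CF_0}(S)$, the equality $\CE_{V'}=\CE_V\cap(V'\otimes\CO_S)$: the inclusion $\subset$ comes from (\ref{BVcond}), while the reverse uses the factorization $V'\otimes\CO_S\twoheadrightarrow\CL_{V'}\xrightarrow{\psi^V_{V'}}\CL_V$, whose composite is surjective by (\ref{UFRep}) with $V''=V'$; hence $\psi^V_{V'}$ is a surjection of invertible sheaves and therefore an isomorphism, forcing the desired equality of kernels. Yoneda then promotes these mutually inverse natural bijections to the claimed isomorphism of schemes. The main obstacle I expect is this last uniqueness step, specifically the combination of Nakayama with the observation that surjections of invertible sheaves are automatically isomorphisms; the flag unpacking in the earlier paragraphs is just combinatorial bookkeeping.
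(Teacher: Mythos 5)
Your proposal is correct. Note that the paper itself gives no proof of this statement, citing it directly from Pink--Schieder (Proposition 10.12), so there is no internal argument to compare against; your proof is a complete, self-contained verification along the expected lines. The two key observations you isolate are exactly the right ones: for the trivial flag no pair $(V'',V')$ admits a separating $W\in\CF_0$, so the closed condition is vacuous (giving $B_{\CF_0}=B_V$, hence $\Omega_{\CF_0}=U_{\CF_0}$) while the open condition holds for \emph{all} pairs; and the inverse functor $\CE_V\mapsto\bigl(\ker(V'\otimes\CO_S\to\CL_V)\bigr)_{V'}$ works because fiberwise injectivity plus Nakayama makes each $V'\otimes\CO_S\to\CL_V$ surjective, and a surjection of invertible sheaves is automatically an isomorphism, which pins down $\CE_{V'}=\CE_V\cap(V'\otimes\CO_S)$ for any point of $U_{\CF_0}$.
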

We identify $\Omega_{\CF_0}$ with $\Omega_V$ via this isomorphism.
Thus $U_\CF$ contains $\OV$ as an open subscheme by Lemma \ref{contains}. 
\subsection{Flag subquotients and some natural isomorphisms}
Let $\CF:=\{V_0,\ldots, V_m\}$ be a flag of $V$.
\begin{proposition}[\cite{P-Sch}, Proposition 10.18]\label{grafting-subquot} There is a natural isomorphism
$$B_\CF\stackrel\sim\to B_{V_1/V_0}\times\ldots \times B_{V_{m}/V_{m-1}},\,\,\CE_\bullet\mapsto\Bigl(\bigl(\CE_{V'}/(V_{i-1}\otimes\CO_S)\bigr)_{V'/V_{i-1}}\Bigr)^m_{i=1},$$
where $V'$ runs through all subspaces $V_{i-1}\subsetneq V'\subset V_i$.
\end{proposition}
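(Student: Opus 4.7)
The plan is to work throughout with the functor-of-points description from Proposition \ref{BVrep}: an $S$-point of $B_\CF$ is a tuple $\CE_\bullet = (\CE_{V'})_{0\neq V'\subset V}$ with $\CE_{V'}\subset V'\otimes\CO_S$ and $(V'\otimes\CO_S)/\CE_{V'}$ invertible, satisfying (\ref{BVcond}) and the closed $B_\CF$ condition. Since both sides of the statement are representable, it suffices to exhibit the displayed map $\Phi$ at the level of $S$-points and construct an explicit natural inverse $\Psi$. Well-definedness of $\Phi$ is immediate once we observe that, for $V_{i-1} \subsetneq V' \subset V_i$, the closed $B_\CF$ condition with $V'' := V_{i-1}$ and $W := V_{i-1}$ gives $V_{i-1}\otimes\CO_S \subset \CE_{V'}$. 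Hence $\CE_{V'}/(V_{i-1}\otimes\CO_S)$ is a subsheaf of $(V'/V_{i-1})\otimes\CO_S$ with quotient isomorphic to $(V'\otimes\CO_S)/\CE_{V'}$ and thus invertible, and (\ref{BVcond}) descends to the corresponding inclusions inside each $B_{V_i/V_{i-1}}(S)$.

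For the inverse, let $j(V')$ denote the minimal index with $V' \subset V_{j(V')}$ and set $\overline{V'} := V' + V_{j(V')-1}$, so that $V_{j(V')-1} \subsetneq \overline{V'} \subset V_{j(V')}$. Given $(\CE^{(i)}_\bullet)_{i=1}^m \in \prod_i B_{V_i/V_{i-1}}(S)$ and $0 \neq V' \subset V$, define $\CE_{V'}$ as the preimage, under the composite surjection
$$V'\otimes\CO_S \onto (V'/(V' \cap V_{j(V')-1}))\otimes\CO_S \cong (\overline{V'}/V_{j(V')-1})\otimes\CO_S,$$
of the subsheaf $\CE^{(j(V'))}_{\overline{V'}/V_{j(V')-1}}$. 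Then $(V'\otimes\CO_S)/\CE_{V'}$ is identified with the invertible sheaf $((\overline{V'}/V_{j(V')-1})\otimes\CO_S)/\CE^{(j(V'))}_{\overline{V'}/V_{j(V')-1}}$. Verifying that $\CE_\bullet$ lies in $B_\CF$ proceeds by case analysis. For (\ref{BVcond}) and any $V'' \subset V'$: if $j(V'') < j(V')$, then $V'' \subset V_{j(V'')} \subset V_{j(V')-1}$ lies in the kernel of the defining surjection, forcing $V''\otimes\CO_S \subset \CE_{V'}$; if $j(V'') = j(V')$, then $\overline{V''} \subset \overline{V'}$ and the inclusion descends from (\ref{BVcond}) applied inside $\CE^{(j(V'))}_\bullet$. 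For the closed $B_\CF$ condition, if $V'' \subset V_k \in \CF$ with $V' \not\subset V_k$, then necessarily $k < j(V')$, so $V'' \subset V_{j(V')-1}$ and $V''\otimes\CO_S \subset \CE_{V'}$ by the same kernel argument.

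Finally, $\Phi \circ \Psi = \id$ is immediate because for $V_{i-1}\subsetneq V' \subset V_i$ one has $\overline{V'} = V'$ and the preimage construction of $\Psi$ recovers $\CE^{(i)}_{V'/V_{i-1}}$ on the nose. For $\Psi \circ \Phi = \id$, one identifies $\tilde\CE_{V'} := \Psi(\Phi(\CE_\bullet))_{V'}$ with $(V'\otimes\CO_S) \cap \CE_{\overline{V'}}$ inside $\overline{V'}\otimes\CO_S$, using that $V_{j(V')-1}\otimes\CO_S \subset \CE_{\overline{V'}}$ by the closed condition applied to $\overline{V'}$. Condition (\ref{BVcond}) gives $\CE_{V'} \subset \tilde\CE_{V'}$, and equality follows from the observation that $(V'\otimes\CO_S)/\tilde\CE_{V'}$ injects into the invertible sheaf $(\overline{V'}\otimes\CO_S)/\CE_{\overline{V'}}$, and that this injection is surjective thanks to $V' + V_{j(V')-1} = \overline{V'}$; hence $(V'\otimes\CO_S)/\tilde\CE_{V'}$ is itself invertible, so the surjection of invertible sheaves $(V'\otimes\CO_S)/\CE_{V'} \onto (V'\otimes\CO_S)/\tilde\CE_{V'}$ is an isomorphism. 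I expect the principal obstacle to be the bookkeeping inside the case analysis for (\ref{BVcond}) and this final invertible-sheaf identification; conceptually no ingredient beyond the linear algebra of $\CF$ and the representability results of \cite{P-Sch} is required.
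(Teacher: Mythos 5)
Your proposal is correct and takes essentially the route the paper intends: the paper states this result as a citation to Pink--Schieder (Proposition 10.18) and, just after it, records exactly your preimage construction of the inverse in the special case $\CF=\{0,V',V\}$, deferring the verification to \emph{loc.\ cit.} You have carried out that same construction for a general flag and supplied complete checks of condition (\ref{BVcond}), the closed condition defining $B_\CF$, and the two composites, so nothing is missing.
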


\begin{proposition}[\cite{P-Sch}, Proposition 10.19]\label{stratum} The isomorphism from Proposition \ref{grafting-subquot} induces an isomorphism
$$\Omega_\CF\stackrel\sim\to \Omega_{V_1/V_0}\times\ldots \times\Omega_{V_{m}/V_{m-1}},\,\,\CE_\bullet\mapsto\bigl(\CE_{V_i}/(V_{i-1}\otimes\CO_S)\bigr)^m_{i=1}.$$
\end{proposition}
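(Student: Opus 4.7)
The strategy is to show that the isomorphism $\phi \colon B_\CF \isoto \prod_{i=1}^m B_{V_i/V_{i-1}}$ of Proposition \ref{grafting-subquot} identifies the open subscheme $\Omega_\CF = U_\CF \cap B_\CF$ with the product of open subschemes $\prod_i \Omega_{V_i/V_{i-1}}$, where each $\Omega_{V_i/V_{i-1}}$ is viewed inside $B_{V_i/V_{i-1}}$ via the trivial flag (i.e.\ as $U_{\{0, V_i/V_{i-1}\}}$, cf.\ the identification preceding Proposition \ref{grafting-subquot}). Since $\phi$ is a scheme isomorphism and both inclusions are open, it suffices to check equality of $S$-valued points for every $\BF_q$-scheme $S$; the formula $\CE_\bullet \mapsto (\CE_{V_i}/(V_{i-1}\otimes\CO_S))_{i=1}^m$ will then result from composing the resulting isomorphism with the projections $B_{V_i/V_{i-1}} \to P_{V_i/V_{i-1}}$ that retain only the top piece.

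Unpacking definitions, for $\CE_\bullet \in B_\CF(S)$ one has $\CE_\bullet \in U_\CF(S)$ iff $V' \otimes \CO_S = \CE_{V'} + V'' \otimes \CO_S$ holds for every $0 \neq V'' \subset V' \subset V$ admitting no $W \in \CF$ with $V'' \subset W$ and $V' \not\subset W$; by Proposition \ref{grafting-subquot} together with condition \eqref{UFRep} applied at each trivial flag $\{0, V_i/V_{i-1}\}$, the image $\phi(\CE_\bullet)$ lies in $\prod_i \Omega_{V_i/V_{i-1}}(S)$ iff the \emph{subquotient equations} $V' \otimes \CO_S = \CE_{V'} + V'' \otimes \CO_S$ hold for every $i$ and every $V_{i-1} \subsetneq V'' \subset V' \subset V_i$. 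The direction $U_\CF \Rightarrow$ subquotient is immediate: given $V_{i-1} \subsetneq V'' \subset V' \subset V_i$, the minimal $W \in \CF$ containing $V''$ must be $V_i$ (since $V''\not\subset V_{i-1}$), and $V_i$ also contains $V'$, so no $W$ separates.

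The main obstacle is the converse. Assume the subquotient equations and consider $V'' \subset V'$ satisfying the $U_\CF$ hypotheses; let $k$ be minimal with $V'' \subset V_k$. Then $V'' \not\subset V_{k-1}$, and if $V' \not\subset V_k$ then $W = V_k$ would separate, so $V' \subset V_k$. When $k = 1$, the subquotient equation at level $1$ applies directly with $V_0 = 0 \subsetneq V''$. For $k \geq 2$, however, $V''$ need not contain $V_{k-1}$, so the subquotient equation at level $k$ does not apply verbatim. The fix is to enlarge to $\tilde V'' := V'' + V_{k-1}$ and $\tilde V' := V' + V_{k-1}$, satisfying $V_{k-1} \subsetneq \tilde V'' \subset \tilde V' \subset V_k$; the subquotient equation then yields $\tilde V' \otimes \CO_S = \CE_{\tilde V'} + \tilde V'' \otimes \CO_S$.

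The delicate final step is to descend this back to $V'$ and $V''$, for which it is convenient to use the alternative description in Lemma \ref{AltRep}. The $B_\CF$ condition forces $\lambda_{\tilde V'}|_{V_{k-1}} = 0$, so $\lambda_{\tilde V'}$ factors through $\tilde V'/V_{k-1}$; since $V' + V_{k-1} = \tilde V'$, the restriction $\lambda_{\tilde V'}|_{V'}$ is then fiberwise surjective onto the invertible sheaf $\CL_{\tilde V'}$, which forces the transition homomorphism $\psi^{\tilde V'}_{V'} \colon \CL_{V'} \to \CL_{\tilde V'}$ to be an isomorphism. The same reasoning combined with the displayed subquotient equation shows $\lambda_{\tilde V'}|_{V''}$ is fiberwise non-zero; composing with $(\psi^{\tilde V'}_{V'})^{-1}$ then yields that $\lambda_{V'}|_{V''}$ is fiberwise non-zero, which is equivalent to $V' \otimes \CO_S = \CE_{V'} + V'' \otimes \CO_S$. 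This verifies the $U_\CF$ condition and completes the proof.
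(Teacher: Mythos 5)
Your proof is correct. Note that the paper itself gives no argument for this statement: it is quoted directly from Pink--Schieder (\cite[Proposition 10.19]{P-Sch}), so there is no in-paper proof to compare against, and what you have written is a self-contained verification in the spirit of the computations of \cite[\S 10]{P-Sch}. You correctly locate where the content lies. The translation of ``$\phi(\CE_\bullet)\in\prod_i\Omega_{V_i/V_{i-1}}(S)$'' into the subquotient equations is legitimate because $V_{i-1}\otimes\CO_S\subset\CE_{V'}$ for $V_{i-1}\subsetneq V'\subset V_i$ (itself a consequence of the closed condition defining $B_\CF$), so preimages of sums are sums of preimages; the forward implication is the one-line check that a pair $V_{i-1}\subsetneq V''\subset V'\subset V_i$ admits no separating $W\in\CF$; and the converse genuinely needs both the reduction $V'\subset V_k$ (with $k$ minimal such that $V''\subset V_k$, using the absence of a separating $W$) and the descent from the enlarged pair $(\tilde V'',\tilde V')$ back to $(V'',V')$. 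Your descent step is sound: the $B_\CF$ condition with $W=V_{k-1}$ kills $\lambda_{\tilde V'}|_{V_{k-1}}$, so $\lambda_{\tilde V'}|_{V'}$ remains surjective onto $\CL_{\tilde V'}$, forcing the transition map $\psi^{\tilde V'}_{V'}$ to be a surjection and hence an isomorphism of invertible sheaves, and fiberwise non-vanishing of $\lambda_{\tilde V'}|_{V''}$ (equivalent to the displayed subquotient equation, again because $\lambda_{\tilde V'}$ vanishes on $V_{k-1}$) then transports back to $\lambda_{V'}|_{V''}$, which is exactly the $U_\CF$ condition for the pair $(V'',V')$.
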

For later use (Proposition \ref{graftingmorph}), we describe the inverse of the isomorphism in Proposition \ref{grafting-subquot} explicitly in the case where $\CF:=\{0,V',V\}$. Let $\oV:=V/V'$. Let $S$ be a scheme and consider tuples $\CE'_\bullet$ and $\overline{\CE}_\bullet$ corresponding to $S$-valued points of $B_{V'}$ and $B_{\oV}$ respectively. Define a tuple $\CE_\bullet$ as follows: For each $W\subset V'$ we set $\CE_{W}:=\CE'_{W}$. For each $W\not\subset V'$ consider the natural surjection 
$$\pi_W\colon W\otimes\CO_S\onto (W+V')/V'\otimes\CO_S$$
 and define $\CE_W:=\pi^{-1}_W\big(\overline{\CE}_{(W+V')/V'}\big)$. By the proof of Proposition 10.18 in \cite{P-Sch}, the tuple $\CE_\bullet$ is an $S$-valued point of $B_V$ and the induced morphism $B_{V'}\times B_{\oV}\to B_V$ is a closed embedding with image equal to $B_\CF$.

\subsection{Affine coordinates on $U_\CF$}
Let $\CF$ be a flag, and choose a complete flag $\overline\CF=\{V_0,\ldots,V_n\}$ containing $\CF$. Fix an ordered basis $\CB:=(b_1,\ldots,b_n)$ of $V$ such that for all $1\leq j\leq n$ we have $V_j=\Span(b_1,\ldots,b_j)$.
\begin{definition}We call $\CB$ a \emph{flag basis associated to $\CF$}. 
\end{definition}
Consider an $S$-valued point $\CE_\bullet$ of $U_\CF$. For each $1\leq i\leq n$, define $U_i:=\BF_q\cdot b_i$. The natural inclusion $U_i\into V_i$ induces an isomorphism $U_i\otimes \CO_s\stackrel\sim\to(V_i\otimes\CO_S)/\CE_{V_i}.$ It follows from (\ref{UFRep}) that $V_i\otimes\CO_S=\CE_{V_i}\oplus (U_i\otimes\CO_S)$. Since $V_i=V_{i-1}\oplus U_i$, this shows that $\CE_{V_i}$ is the graph of a $\CO_S$-linear homomorphism $V_{i-1}\otimes\CO_S\to U_i\otimes\CO_S$. This homomorphism sends $b_{i-1}\otimes 1$ to $b_{i}\otimes t_{i-1}$ for a unique $t_i\in\CO_S(S)$. In other words, there is a unique section $t_i\in\CO_S(S)$ such that 
\begin{equation}\label{coord}b_{i-1}\otimes 1-b_{i}\otimes t_{i-1}\in\Gamma(S,\CE_{V_{i}}).\end{equation}
Varying $i=2,\ldots,n$, we thereby obtain a morphism 
\begin{eqnarray}\label{affcoord}U_\CF&\longto&\BA_{\BF_q}^{n-1},\\
\nonumber\CE_\bullet&\mapsto&(t_1,\ldots,t_{n-1}).
\end{eqnarray}
\begin{proposition}[\cite{P-Sch}, Proposition 10.14]\label{Coords}This is an open embedding.
\end{proposition}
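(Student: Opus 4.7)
The plan is to reduce to the case of the complete flag $\overline\CF$ and then construct an explicit inverse on an open subscheme of $\BA^{n-1}_{\BF_q}$. By Lemma \ref{contains}, $U_\CF$ is an open subscheme of $U_{\overline\CF}$, and the morphism (\ref{affcoord}) depends only on the data of $\overline\CF$ and the basis $\CB$. Hence it agrees with the restriction to $U_\CF$ of the analogous morphism $\rho\colon U_{\overline\CF}\to\BA^{n-1}$, so it will suffice to prove that $\rho$ is an open embedding.

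To construct an inverse, I would use the alternative functor $\underline{\CB}_V$ from Lemma \ref{AltRep}. Given sections $(t_1,\ldots,t_{n-1})\in\CO_S(S)^{n-1}$ and writing $i(V'):=\min\{i:V'\subset V_i\}$, set $\CL_{V'}:=\CO_S$ for every $0\neq V'\subset V$, define $\lambda_{V'}\colon V'\to\CO_S(S)$ by $\lambda_{V'}\bigl(\sum_j c_j b_j\bigr):=\sum_{j\leq i(V')} c_j\, t_j t_{j+1}\cdots t_{i(V')-1}$ (with empty product equal to $1$), and define the transitions $\psi^{V'}_{V''}\colon\CO_S\to\CO_S$ as multiplication by $t_{i(V'')}\cdots t_{i(V')-1}$. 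The compatibility diagrams in $\underline{\CB}_V$ then hold by direct computation. Over the open subscheme $W\subset\BA^{n-1}$ where every $\lambda_{V'}$ is fiberwise non-zero, this tuple defines an $S$-valued point of $B_V$. Since $\overline\CF$ is complete, one checks that (\ref{UFRep}) only needs to be verified for pairs $V''\subset V'$ with $i(V'')=i(V')$; in that case $\psi^{V'}_{V''}=\id_{\CO_S}$, so (\ref{UFRep}) reduces to the fiberwise non-vanishing of $\lambda_{V''}$, which holds by the definition of $W$. This will yield a morphism $\psi\colon W\to U_{\overline\CF}$.

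To conclude, I would verify that $\rho$ and $\psi$ are mutually inverse. The composition $\rho\circ\psi$ is the identity on $W$ because $\lambda_{V_i}(b_{i-1}-t_{i-1}b_i)=0$ shows $b_{i-1}\otimes 1-b_i\otimes t_{i-1}\in\CE_{V_i}$, so $\rho$ recovers the $t_i$'s. Conversely, given $\CE_\bullet\in U_{\overline\CF}(S)$, the extracted $t_i$'s determine each $\CE_{V_i}$ uniquely via (\ref{coord}); for a general $V'\subset V$, condition (\ref{UFRep}) forces $\psi^{V_{i(V')}}_{V'}$ to be a fiberwise, hence global, isomorphism of invertible sheaves, so $\CE_{V'}=(V'\otimes\CO_S)\cap\CE_{V_{i(V')}}$ is determined by the $\CE_{V_i}$'s and matches the tuple produced by $\psi(\rho(\CE_\bullet))$. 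Hence $\rho$ is an isomorphism onto its image $W$, which is open in $\BA^{n-1}$, so $\rho$ and therefore the original morphism is an open embedding. I anticipate the main obstacle to be the verification that $\psi\circ\rho$ is the identity on $U_{\overline\CF}$ at the level of all subsheaves $\CE_{V'}$: while the $\CE_{V_i}$ for flag subspaces are recovered directly from the $t_i$'s via (\ref{coord}), pinning down $\CE_{V'}$ for general $V'$ requires the careful use of (\ref{UFRep}) to force the relevant transition maps between invertible sheaves to be isomorphisms.
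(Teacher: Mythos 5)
Your argument is correct. Note that the paper itself gives no proof of this statement: it is quoted directly from Pink--Schieder (Proposition 10.14 of \cite{P-Sch}), so there is nothing internal to compare against; your proposal supplies a self-contained proof by the natural route of writing down an explicit inverse on the open locus $W$, which is essentially what the cited source does. The reduction to the complete flag is fine, the identification of the non-excluded pairs in (\ref{UFRep}) as exactly those with $i(V'')=i(V')$ is correct, and the verification that the tuple $(\CL,\lambda,\psi)$ lands in $U_{\overline\CF}$ over $W$ works as you describe. The only place where you should add a line is the claim that the extracted $t_i$ determine $\CE_{V_i}$ ``via (\ref{coord})'': equation (\ref{coord}) only pins down the image of $b_{i-1}\otimes 1$ under the graph homomorphism $V_{i-1}\otimes\CO_S\to U_i\otimes\CO_S$, and to determine the images of $b_j\otimes 1$ for $j<i-1$ you must use the chain of inclusions $\CE_{V_{j+1}}\subset\CE_{V_{j+2}}\subset\cdots\subset\CE_{V_i}$ from (\ref{BVcond}) to telescope $b_j\otimes 1-b_i\otimes(t_jt_{j+1}\cdots t_{i-1})\in\CE_{V_i}$; this is exactly what produces the products $\prod_{k=j}^{i-1}t_k$ in your formula for $\lambda_{V_i}$. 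With that computation inserted, the two compositions are the identity as you claim, and the morphism is an isomorphism onto the open subscheme $W$, hence an open embedding.
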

On the other hand, the choice of $\CB$ also yields the composite morphism 
\begin{equation}\label{projcoord}\OV\into P_V\stackrel\sim\to\BP^{n-1}_{\BF_q}=\Proj\BF_q[X_1,\ldots X_n].\end{equation}
This is determined by the global sections $b_1,\ldots, b_n$ of the invertible sheaf $\CO_{\OV}(1).$ Since $\OV$ is the complement of the union of all $\BF_q$-hyperplanes in $P_V$, the image of $\OV$ in $\BP^{n-1}_{\BF_q}$ is contained in the open subscheme defined by $X_i\neq 0$ for all $i$. 
\begin{lemma}\label{coordchange} Identify $\OV$ with its image in $\BP^{n-1}_{\BF_q}$ via (\ref{projcoord}). The composite 
$$\xymatrix@C-=0.5cm{\OV\ar@{^{(}->}[r]&U_\CF\ar@{^{(}->}[r]^-{(\ref{affcoord})}&\BA_{\BF_q}^{n-1}}$$
 is determined by the tuple $(T_1,\ldots, T_{n-1}),$ where $T_i:=\frac{X_i}{X_{i+1}}\in\Gamma(\OV,\CO_\OV)$.
\end{lemma}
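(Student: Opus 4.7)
The plan is to verify the identity of morphisms on the functor of points of $\OV$. Let $(\CL, \lambda)$ be an $S$-valued point of $\OV$, i.e., an invertible sheaf $\CL$ on $S$ together with a fiberwise injective $\BF_q$-linear map $\lambda\colon V \to \CL(S)$. Under the identification (\ref{projcoord}), this corresponds to the point of $\BP^{n-1}_{\BF_q}$ with homogeneous coordinates $X_i = \lambda(b_i) \in \CL(S)$, and fiberwise injectivity ensures that each $\lambda(b_{i+1})$ is nowhere vanishing on $S$. Consequently $T_i = X_i/X_{i+1}$ pulls back to the unique element of $\CO_S(S)$ satisfying $\lambda(b_i) = T_i \cdot \lambda(b_{i+1})$ in $\CL(S)$.

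Next I would identify the tuple $\CE_\bullet \in U_\CF(S)$ corresponding to this point. For each $0 \neq V' \subset V$, set $\CE_{V'} := \ker(\lambda|_{V'}\colon V' \otimes \CO_S \to \CL)$. Since $\lambda|_{V'}$ is fiberwise injective, it is fiberwise surjective onto the one-dimensional $\CL \otimes k(s)$, hence surjective by Nakayama, so $(V' \otimes \CO_S)/\CE_{V'} \cong \CL$ is invertible. The inclusions $\CE_{V''} \subset \CE_{V'}$ for $V'' \subset V'$ are immediate, and the open condition (\ref{UFRep}) for the trivial flag $\CF_0 = \{0, V\}$ reduces to the requirement that each nonzero $v \in V''$ satisfy $V' \otimes \CO_S = \CE_{V'} + v\CO_S$, which holds because $\lambda(v)$ generates $\CL$. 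Thus the tuple lies in $U_{\CF_0} = \Omega_{\CF_0}$ by \cite[Proposition 10.12]{P-Sch}, and the same proposition says $\CE_\bullet$ is determined by its component $\CE_V = \ker(\lambda)$, so it is the image of $(\CL, \lambda)$ under $\OV \cong \Omega_{\CF_0} \hookrightarrow U_{\CF_0} \subset U_\CF$, the last inclusion by Lemma \ref{contains}.

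Finally, the defining condition (\ref{coord}) for $t_i$ reads $b_i \otimes 1 - b_{i+1} \otimes t_i \in \Gamma(S, \CE_{V_{i+1}})$. Applying the surjection $\lambda|_{V_{i+1}}\colon V_{i+1} \otimes \CO_S \onto \CL$ yields $\lambda(b_i) = t_i \cdot \lambda(b_{i+1})$ in $\CL(S)$; comparing with the relation from the first paragraph and using that $\lambda(b_{i+1})$ is nowhere vanishing, we deduce that $t_i$ agrees with the pullback of $T_i$ along $S \to \OV$. Since this holds functorially in $S$, Yoneda's lemma gives the lemma. The only mildly subtle step is the explicit description of the tuple $\CE_\bullet$ associated to a point of $\OV$; everything else is direct translation through $\lambda$.
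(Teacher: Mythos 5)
Your proof is correct and follows essentially the same route as the paper: the key step in both is that condition (\ref{coord}) applied to $\CE_{V_{i+1}}=\ker\bigl(\lambda|_{V_{i+1}}\bigr)$ yields $\lambda(b_i)=t_i\cdot\lambda(b_{i+1})$, whence $t_i$ is the pullback of $X_i/X_{i+1}$. The only difference is that you spell out the routine verification that the point of $U_\CF$ attached to $(\CL,\lambda)$ is the tuple $\CE_{V'}=\ker(\lambda|_{V'})$, which the paper takes for granted.
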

\begin{proof}
Let $\CE_\bullet$ be an $S$-valued point of $\OV\subset U_\CF$. For each $1\leq i\leq n-1$, the sheaf $\CE_{V_{i+1}}$ is the kernel of a homomorphism $\lambda\colon V_{i+1}\otimes\CO_S\onto \CL$, for some invertible sheaf $\CL$ on $S$. The composite $S\to\OV\into\BA^{n-1}_{\BF_q}$ corresponds to $(t_1,\ldots,t_n)\in\CO_S(S)^{n-1}$, and (\ref{coord}) implies that $\lambda(b_{i})-\lambda(b_{i+1})t_{i}=0$ for each $i$. Thus $t_{i}=\frac{\lambda(b_{i})}{\lambda(b_{i+1})}$. Since $\frac{\lambda(b_{i})}{\lambda(b_{i+1})}$ is the pullback to $S$ of $\frac{X_{i}}{X_{+1}}$, this proves the lemma.
\end{proof}
\begin{remark}Let $S$ be an $\BF_q$-scheme and $\CL$ an invertible sheaf on $S$. Let $\lambda\colon V\to\Gamma(S,\CL)$ be a fiberwise injective linear map corresponding to an $S$-valued point of $\OV$. The open embedding (\ref{affcoord}) sends $\lambda$ to the tuple $\bigl(\frac{\lambda_{b_1}}{\lambda_{b_2}},\ldots,\frac{\lambda_{b_{n-1}}}{\lambda_{b_n}}\bigr)$, while the open embedding (\ref{projcoord}) sends $\lambda$ to the tuple $(\lambda_{b_1},\ldots,\lambda_{b_n})$.
\end{remark}
Write $\CF=\{V_{i_0},\ldots,V_{i_m}\}$ with $\{0\}=V_{i_0}\subsetneq V_{i_1}\subsetneq\cdots\subsetneq V_{i_m}=V$.
\begin{lemma} \label{kpoints} The image of $\Omega_\CF$ in $ \BA_{\BF_q}^{n-1}=\Spec(\BF_q[T_1,\ldots T_{n-1}])$ under the morphism (\ref{affcoord}) is the locally closed subscheme defined by
 \begin{enumerate}[label=(\alph*)]
 \item $T_i=0$ for $i=i_1,\ldots i_{m-1}$,
 \item for each $1\leq k\leq m$, any non-trivial $\BF_q$-linear combination of the elements $$\Big\{\textstyle{\prod_{i=i_{k-1}+1}^{i_{k}-1}T_i,\prod_{i=i_{k-1}+2}^{i_{k}-1}T_i,\ldots,T_{i_{k}-1},1}\Big\}$$ is non-zero. 
 \end{enumerate}
\end{lemma}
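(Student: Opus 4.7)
The strategy is to combine a direct analysis of the closed $B_\CF$-condition restricted to $U_\CF$ (yielding (a)) with the product decomposition $\Omega_\CF\cong\prod_{k=1}^m\Omega_{V_{i_k}/V_{i_{k-1}}}$ from Proposition \ref{stratum} (yielding (b)). Throughout, let $\CE_\bullet$ be an $S$-valued point of $U_\CF$ with affine coordinates $(t_1,\ldots,t_{n-1})$ determined by (\ref{coord}), and let $\overline b_j$ denote the image of $b_j$ in $V_{i_k}/V_{i_{k-1}}$ when $i_{k-1}<j\le i_k$.

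First I would verify condition (a). For each $k\in\{1,\ldots,m-1\}$, apply the defining $B_\CF$-condition to $V''=V_{i_k}$, $V'=V_{i_k+1}$, $W=V_{i_k}\in\CF$: this forces $b_{i_k}\otimes 1\in\CE_{V_{i_k+1}}$, and combining with (\ref{coord}) yields $b_{i_k+1}\otimes t_{i_k}\in\CE_{V_{i_k+1}}$. The direct sum decomposition $V_{i_k+1}\otimes\CO_S=\CE_{V_{i_k+1}}\oplus\BF_q\cdot b_{i_k+1}\otimes\CO_S$ coming from (\ref{UFRep}) then forces $t_{i_k}=0$.

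Next I would match the remaining coordinates with the affine coordinates on the factors appearing in Proposition \ref{stratum}. The $k$-th factor inherits the complete flag and basis $(\overline b_{i_{k-1}+1},\ldots,\overline b_{i_k})$ from $\overline\CF$; its affine coordinate $\tilde t_j$ (for $i_{k-1}+1\le j\le i_k-1$) is defined by $\overline b_j\otimes 1-\overline b_{j+1}\otimes\tilde t_j\in \CE_{V_{j+1}}/(V_{i_{k-1}}\otimes\CO_S)$. The $B_\CF$-condition with $V''=V_{i_{k-1}}$, $V'=V_{j+1}$, $W=V_{i_{k-1}}\in\CF$ supplies $V_{i_{k-1}}\otimes\CO_S\subset\CE_{V_{j+1}}$, which lifts this relation to $b_j\otimes 1-b_{j+1}\otimes\tilde t_j\in\CE_{V_{j+1}}$; by uniqueness in (\ref{coord}) we get $\tilde t_j=t_j$. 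Then applying Lemma \ref{coordchange} on each factor, and using that $\Omega_{V_{i_k}/V_{i_{k-1}}}$ lies entirely in the chart $X_{i_k}\ne 0$ of $P_{V_{i_k}/V_{i_{k-1}}}$ (since $V(X_{i_k})$ is an $\BF_q$-hyperplane removed in forming the Drinfeld period domain), every $S$-valued point of the $k$-th factor is a fiberwise injective linear map $\lambda_k$ that, after trivializing by $\lambda_k(\overline b_{i_k})$, sends $\overline b_j$ to $\prod_{i=j}^{i_k-1}t_i$. Fiberwise injectivity of $\lambda_k$ translates precisely into condition (b) for index $k$. Conversely, any tuple $(t_j)$ satisfying (a) and (b) determines an $S$-valued point of each factor and, via the inverse of Proposition \ref{stratum}, a unique $S$-point of $\Omega_\CF$ mapping to $(t_1,\ldots,t_{n-1})$.

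The main obstacle I anticipate is the coordinate-matching step above: Proposition \ref{stratum} is phrased in terms of abstract subquotient sheaves $\CE_{V_{i_k}}/(V_{i_{k-1}}\otimes\CO_S)$, so establishing that the affine coordinates on each factor coincide with the corresponding $t_j$'s on $U_\CF$ requires threading (\ref{coord}), (\ref{UFRep}), and the $B_\CF$-condition together carefully. Once this identification is in hand, both conditions (a) and (b) emerge transparently.
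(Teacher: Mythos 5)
Your proposal is correct and follows essentially the same route as the paper: both rest on the product decomposition $\Omega_\CF\cong\Omega_{W_1}\times\cdots\times\Omega_{W_m}$ from Proposition \ref{stratum}, identify the affine coordinates on each factor with the corresponding $t_j$, and translate fiberwise injectivity of each $\lambda_k$ into condition (b), with the same converse construction. The only (minor) divergence is that you derive (a) directly from the closed $B_\CF$-condition together with (\ref{coord}) and (\ref{UFRep}), whereas the paper reads it off from the shape of the immersion $\Omega_{W_1}\times\cdots\times\Omega_{W_m}\into\BA^{n-1}_{\BF_q}$; your version makes explicit the coordinate-matching that the paper leaves implicit.
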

\begin{proof} Let $\Omega_\CF^\sharp$ denote the locally closed subscheme of $\BA^{n-1}_{\BF_q}$ defined by (a) and (b). For each $1\leq k\leq m$, let $$W_k:=\Span(b_{i_{k-1}+1},\ldots, b_{i_k}).$$ Let $n_k:=\dim(W_k)$ and define $\Omega_{W_k}$ in analogy to $\OV$. The basis $\{ b_{i_{k-1}+1},\ldots, b_{i_k}\}$ of $\Omega_{W_k}$ determines an immersion $\Omega_{W_k}\into\BA^{n_k-1}_{\BF_q}$. Composing with the closed embedding $\BA^{n_k-1}_{\BF_q}\into\BA_{\BF_q}^{n-1}$ given by $$(T_1,\ldots,T_{n_k-1})\mapsto (0,\ldots, 0,\underbrace{T_1,\ldots,T_{n_k-1}}_{\mathclap{i_{k-1}+1,\ldots, i_k-1-\mbox{components}}},0,\ldots,0),$$ we obtain an immersion $\Omega_{W_k}\into \BA_{\BF_q}^{n-1}$. For varying $k$, these combine to an immersion $\Omega_{W_1}\times\cdots\times\Omega_{W_m}\into \BA^{n-1}_{\BF_q}$. By Proposition \ref{stratum}, we have a natural isomorphism $\Omega_{\CF}\cong\Omega_{W_1}\times\cdots\times\Omega_{W_m}$. The resulting composite $\Omega_{\CF}\cong\Omega_{W_1}\times\cdots\times\Omega_{W_m}\into \BA^{n-1}_{\BF_q}$ is equal to the immersion $\Omega_\CF\into U_\CF\into\BA_{\BF_q}^{n-1}$ determined by $\CB$. It follows that (a) holds on the image of $\Omega_\CF$.
\vspace{2mm}

Let $S$ be a scheme. Fix $k$ and let $\lambda_k\in\Omega_{W_k}(S)$. Since $\lambda_k$ is fiberwise injective, for all $s\in S$ the image in $k(s)$ of the set
$$\Bigl\{\frac{\lambda_k(b_{i_{k-1}+1})}{\lambda_k(b_{i_{k}})},\ldots, \frac{\lambda_k(b_{i_k-1})}{\lambda_k(b_{i_k})}\Bigr\}\subset\CO_S(S)$$
is $\BF_q$-linearly independent. For all $1\leq j\leq n_k$ we have
$$\frac{\lambda_k(b_{i_{k-1}+j})}{\lambda_k(b_{i_{k}})}=\prod_{i=i_{k-1}+j}^{i_{k}-1}\frac{\lambda_k(b_{i_{k-1}+j})}{\lambda_k(b_{i_{k-1}+j+1})}.$$ The $j$-th term on the left-hand side is precisely the $j$-th coordinate of the open embedding $\Omega_{W_k}\into\BA_{\BF_q}^{n_k-1}$. All together, this implies that the restriction of (\ref{affcoord}) to $\Omega_\CF$ factors through $\Omega_\CF^\sharp$.
\vspace{1mm}

On the other hand, given a scheme $S$ and a tuple $(t_1,\ldots, t_{n-1})\in\CO_S(S)^{n-1}$ satisfying (a) and (b), i.e., a morphism $S\to\Omega_\CF^\sharp$, the map 
$\lambda_k\colon W_k\to \CO_S(S)$ defined via $b_j\mapsto\prod_{i=j}^{i_k-1} t_i$ for $j=i_{k-1}+1,\ldots,i_k$ and extending by linearity is fiberwise injective and hence determines a morphism $S\to\Omega_{W_k}$. Varying $k$ and composing with the isomorphism $\Omega_{W_1}\times\cdots\times\Omega_{W_m}\stackrel\sim\to\Omega_\CF$, we obtain a morphism $\Omega_{\CF}^\sharp\to\Omega_\CF$ which is clearly inverse to the restriction of (\ref{affcoord}) to $\Omega_\CF$. Thus (\ref{affcoord}) induces an isomorphism $\Omega_\CF\stackrel\sim\to\Omega_\CF^\sharp$.
\end{proof}


 \section{$V$-ferns: definition and properties}
Recall that we defined $V$ to be a finite dimensional vector space over $\BF_q$ and $\hat V$ to be the set $V\cup\{\infty\}$.
\subsection{The group $G:=V\rtimes \BF_q^\times$}
We define $G$ to be the finite group $V\rtimes \BF_q^\times$. The identity element is given by $(0,1)$ and for $(v,\xi)$, $(w,\nu)\in G$, we have $(v,\xi)\cdot(w,\nu)=(\xi w+v,\xi\nu)$ and $(v,\xi)^{-1}=(-\xi^{-1}v,\xi^{-1})$. The group $G$ has a natural left action on the normal subgroup $V$ via $(v,\xi)\cdot w=\xi w+v.$ Defining $(v,\xi)\cdot\infty:=\infty$ for each $(v,\xi)\in G$ extends this to a left action on $\hat V$.
\subsection{Definition of a $V$-fern}Let $S$ be a scheme, and let $(C,\lambda)$ be a stable $\hat V$-marked curve over $S$. Let $s\in S$ and consider the fiber $C_s$.
\begin{definition} A \emph{chain} in $C_s$ is a sequence of pairwise distinct irreducible components $E_1,\ldots,E_m$ of $C_s$ such that $E_i\cap E_j\neq \emptyset$ if and only if $i=j\pm 1$. The integer $m$ is called the \emph{length} of the chain.
\end{definition}
Denote by $E_{\{0\}}\subset C_s$ the irreducible component containing $\lambda_0(s)$ and by $E_{\{\infty\}}\subset C_s$ the irreducible component containing $\lambda_\infty(s)$. There is a unique chain $$\SE_s:=E_{\{0\}}=:E_1,E_2\ldots,E_{m_s}:=E_{\{\infty\}}$$ connecting $E_{\{0\}}$ and $E_{\{\infty\}}.$
\begin{definition}We call $\SE_s$ the \emph{chain from $0$ to $\infty$} at $s$.
\end{definition}
Note that if two elements of $\SE_s$ have non-empty intersection, then they intersect at a single closed point of $C_s$.
\begin{definition}For each $1\leq i\leq m_s$, consider
$$\begin{array}{ccc}x_i:=\left\{\begin{array}{cl}
\lambda_0(s)&\mbox{if $i=1$,}\\
E_{i-1}\cap E_i&\mbox{otherwise,}
\end{array}\right.&,&y_i:=\left\{\begin{array}{cl}
\lambda_\infty(s)&\mbox{if $i=m_s$,}\\
E_{i}\cap E_{i+1}&\mbox{otherwise.}\end{array}\right.\end{array}$$
We call $x_i$ and $y_i$ the \emph{distinguished special points} of $E_i$.
\end{definition}
\begin{remark}
As points of $C_s$, we have $x_i=y_{i-1}$ for each $2\leq i\leq m$.
\end{remark}

\begin{proposition}The natural left action of $G$ on $V$ induces a right action on the set of $\hat V$-markings on the underlying curve $C$ as follows: For each $(v,\xi)\in G$ and $\hat V$-marking $\lambda$, define $\lambda\cdot(v,\xi)$ to be the $\hat V$-marking sending $w\in V$ to $\lambda_{(v,\xi)\cdot w}$ and $\infty$ to $\lambda_\infty$. \end{proposition}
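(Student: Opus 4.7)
The plan is to verify the two standard axioms of a right action, after first checking that $\lambda\cdot(v,\xi)$ is again an $\hat V$-marking. The underlying observation is the familiar principle that whenever a group $G$ acts on a set $X$ from the left, the assignment $(f,g)\mapsto f\circ(g\cdot\,)$ defines a right action on the set $\Hom(X,Y)$ for any set $Y$; here I just need to check that this general principle applies to $\hat V$-markings.

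First I would verify that the formula $(v,\xi)\cdot w:=\xi w+v$ and $(v,\xi)\cdot\infty:=\infty$ really does define a left action of $G$ on $\hat V$. This amounts to computing, for $(v,\xi),(w,\nu)\in G$ and $u\in V$,
\[
(v,\xi)\cdot\bigl((w,\nu)\cdot u\bigr)=(v,\xi)\cdot(\nu u+w)=\xi\nu u+\xi w+v=(\xi w+v,\xi\nu)\cdot u=\bigl((v,\xi)(w,\nu)\bigr)\cdot u,
\]
using the multiplication rule $(v,\xi)\cdot(w,\nu)=(\xi w+v,\xi\nu)$ stated above. The identity axiom is immediate since $(0,1)\cdot u=u$ for all $u\in V$, and the action on $\infty$ is trivially well-defined.

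Next I would check that $\lambda\cdot(v,\xi)$ is indeed a $\hat V$-marking, i.e.\ an injective map $\hat V\hookrightarrow C^{\mathrm{sm}}(S)$ whose geometric points satisfy the stability conditions inherited from $\lambda$. Because $u\mapsto (v,\xi)\cdot u$ is a bijection $\hat V\to\hat V$ (its inverse being multiplication by $(v,\xi)^{-1}=(-\xi^{-1}v,\xi^{-1})$) and $\lambda$ is injective by hypothesis, the composite $\lambda\cdot(v,\xi):u\mapsto\lambda_{(v,\xi)\cdot u}$ is again injective. Moreover it lands in $C^{\mathrm{sm}}(S)$ and the special-point count on each irreducible component of each geometric fiber is unchanged, since the set of marked sections is merely permuted.

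Finally I would verify the two right-action axioms. For the identity, $(0,1)\cdot u=u$ gives $(\lambda\cdot(0,1))_u=\lambda_{(0,1)\cdot u}=\lambda_u$ for all $u\in\hat V$, so $\lambda\cdot(0,1)=\lambda$. For associativity, using the left-action computation above, for all $u\in\hat V$,
\[
\bigl((\lambda\cdot(v,\xi))\cdot(w,\nu)\bigr)_u=(\lambda\cdot(v,\xi))_{(w,\nu)\cdot u}=\lambda_{(v,\xi)\cdot((w,\nu)\cdot u)}=\lambda_{((v,\xi)(w,\nu))\cdot u}=\bigl(\lambda\cdot((v,\xi)(w,\nu))\bigr)_u,
\]
and both sides send $\infty$ to $\lambda_\infty$. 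There is really no obstacle here; the entire content is the left-action identity for $G$ on $\hat V$, and I expect the write-up to take only a few lines.
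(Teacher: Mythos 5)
Your proof is correct and follows the same route as the paper: the whole content is the compatibility computation $\bigl(\lambda\cdot(v,\xi)\bigr)\cdot(u,\eta)=\lambda\cdot\bigl((v,\xi)(u,\eta)\bigr)$, reduced to the left-action identity for $G$ on $\hat V$. You are somewhat more thorough than the paper's proof, which takes the left action on $\hat V$ and the identity axiom as given and omits the (routine) check that $\lambda\cdot(v,\xi)$ is again a $\hat V$-marking, but the essential argument is identical.
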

\begin{proof}Let $(u,\eta)\in G$. To verify that this indeed defines a right action, we must check that $$\bigl(\lambda\cdot (v,\xi)\bigr)\cdot(u,\eta)=\lambda\cdot\bigl((v,\xi)\cdot(u,\eta)\bigr).$$ Let $w\in V$. Then by definition
$$\begin{array}{lllll}
\Bigl(\bigl(\lambda\cdot (v,\xi)\bigr)\cdot(u,\eta)\Bigr)_w&=&\bigl(\lambda\cdot (v,\xi)\bigr)_{(u,\eta)\cdot w}&=&\lambda_{(v,\xi)\cdot[(u,\eta)\cdot w]}\\&=&\lambda_{[(v,\xi)\cdot(u,\eta)]\cdot w}&=&\Bigl(\lambda\cdot\bigl((v,\xi)\cdot(u,\eta)\bigr)\Bigr)_w.
\end{array}$$
Thus
$$\Bigl(\bigl(\lambda\cdot (v,\xi)\bigr)\cdot(u,\eta)\Bigr)_w=\Bigl(\lambda\cdot\bigl((v,\xi)\cdot(u,\eta)\bigr)\Bigr)_w$$ for all $w\in \hat V$, as desired.
\end{proof}

We now define our main objects of study:
\begin{definition}\label{FernDef}
A \emph{$V$-fern} $(C,\lambda,\phi)$ over $S$ is a stable $\hat V$-marked curve $(C,\lambda)$ endowed with a faithful left group action 
$$\begin{array}{cccc}
\phi\colon &G&\longto& \Aut_S(C)\\
 &(v,\xi)&\mapsto& \phi_{v,\xi}
\end{array}$$
such that
\begin{enumerate}
\item for each $(v,\xi)\in G$, the automorphism $\phi_{v,\xi}$ of $C$ induces an isomorphism of $\hat V$-marked curves $(C,\lambda)\stackrel\sim\to\bigl(C,\lambda\cdot (v,\xi)\bigr)$.
\item for every $s\in S$ and $E_i\in\SE_s$, there exists an isomorphism $E_i\stackrel\sim\to\BP^1_{k(s)}$ with $x_i\mapsto (0:1)$ and $y_i\mapsto(1:0)$, such that the induced action of $\BF_q^\times$ on $\BP^1_{k(s)}$ is $$\xi\mapsto \begin{pmatrix}\xi &0\\0&1\end{pmatrix}\in\PGL_2(k(s)).$$
\end{enumerate}
\end{definition}
For $\phi_{v,\xi}$, we will write $\phi_v$ when $\xi=1$ and $\phi_\xi$ when $v=0$.
\begin{remark}Any two isomorphisms $E_i\to\BP^1_{k(s)}$ sending $x_i$ to $(0:1)$ and $y_i$ to $(1:0)$ differ by multiplication by some $\alpha\in k(s)^\times$. Condition (2) thus implies that $\BF_q^\times$ has the prescribed action under \textbf{any} such isomorphism.
\end{remark}
\begin{figure}[H]
\centering
  \caption{Example of a $V$-fern for $\dim V=3$ and $q=3$. The thick lines represent the chain from $0$ to $\infty$, and the large dots represent the distinguished special points.}
\includestandalone[scale=1.2]{Figures/Chain0inf}
  \label{fig:chainofcomps}
  \end{figure}
A \emph{morphism of $V$-ferns} is simply a morphism of the underlying stable $\hat V$-marked curves. We say that two $V$-ferns over $S$ are \emph{isomorphic} if there exists a morphism between them.
\begin{proposition}
Any morphism of $V$-ferns is automatically $G$-equivariant.
\end{proposition}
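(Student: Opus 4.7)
The plan is to exploit the uniqueness of morphisms between stable $\hat V$-marked curves (Corollary \ref{uniquemorph}). Let $f\colon (C,\lambda,\phi)\to (D,\mu,\psi)$ be a morphism of $V$-ferns, which by definition is a morphism of the underlying stable $\hat V$-marked curves, so $f\circ\lambda_w=\mu_w$ for every $w\in\hat V$. Fix $(v,\xi)\in G$; we want to show that $f\circ\phi_{v,\xi}=\psi_{v,\xi}\circ f$ as morphisms from $C$ to $D$.

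First I would note that $f$ may also be viewed as a morphism of stable $\hat V$-marked curves $(C,\lambda\cdot(v,\xi))\to (D,\mu\cdot(v,\xi))$: the underlying scheme morphism is unchanged, and on markings we have
\[
f\circ\bigl(\lambda\cdot(v,\xi)\bigr)_w \;=\; f\circ\lambda_{(v,\xi)\cdot w} \;=\; \mu_{(v,\xi)\cdot w} \;=\; \bigl(\mu\cdot(v,\xi)\bigr)_w
\]
for every $w\in\hat V$. Similarly, by condition (1) of Definition \ref{FernDef}, the automorphisms $\phi_{v,\xi}$ and $\psi_{v,\xi}$ are isomorphisms of stable $\hat V$-marked curves $(C,\lambda)\stackrel{\sim}{\to}(C,\lambda\cdot(v,\xi))$ and $(D,\mu)\stackrel{\sim}{\to}(D,\mu\cdot(v,\xi))$, respectively.

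Composing, both $f\circ\phi_{v,\xi}$ and $\psi_{v,\xi}\circ f$ are then morphisms of stable $\hat V$-marked curves from $(C,\lambda)$ to $(D,\mu\cdot(v,\xi))$ (in fact isomorphisms, being composites of two isomorphisms, cf.\ Proposition \ref{stableiso}). By Corollary \ref{uniquemorph}, any two such morphisms coincide, so $f\circ\phi_{v,\xi}=\psi_{v,\xi}\circ f$. Since $(v,\xi)\in G$ was arbitrary, this proves $G$-equivariance.

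There is essentially no main obstacle here: the entire content is the observation that the two candidate morphisms land between the same pair of stable $\hat V$-marked curves, after which Corollary \ref{uniquemorph} finishes the argument immediately. The only point requiring a touch of care is verifying that the markings on source and target match up correctly on both sides of the would-be equality, which is a direct unwinding of the definition of the right action $\lambda\mapsto\lambda\cdot(v,\xi)$.
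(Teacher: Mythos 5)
Your argument is correct and is essentially the paper's own proof: both reduce the claim to the uniqueness of morphisms between stable $\hat V$-marked curves (Corollary \ref{uniquemorph}), the paper phrasing it as $\psi_{v,\xi}\circ\pi\circ\phi_{v,\xi}^{-1}=\pi$ while you phrase it as $f\circ\phi_{v,\xi}=\psi_{v,\xi}\circ f$. Your version just makes explicit the bookkeeping of markings on source and target that the paper leaves implicit.
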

\begin{proof}By the uniqueness of morphisms of stable $\hat V$-marked curves (Corollary \ref{uniquemorph}), we must have $\psi_{v,\xi}\circ\pi\circ\phi_{v,\xi}^{-1}=\pi$ for all $(v,\xi)\in G$.
\end{proof}
\begin{remark}Proposition \ref{1-prop-category-stable-curves} implies that $V$-ferns and the morphisms between them form a category. The forgetful functor $(C,\lambda,\phi)\mapsto (C,\lambda)$ identifies the category of $V$-ferns over a base scheme $S$ with a full subcategory of the category of stable $\hat V$-marked curves over $S$.
\end{remark}

Let $(C,\lambda,\phi)$ be a $V$-fern over $S$. We will often simply write $C$ for the tuple $(C,\lambda,\phi)$. Let $f\colon S'\to S$ be a morphism. The fiber product $C\times_S S'$ has a natural $V$-fern structure $(C\times_S S',\lambda',\phi')$, where $\lambda':=(\lambda\circ f)\times \id_{S'}$ and $\phi'$ is obtained by pulling automorphisms of $C$ back to automorphisms of $C\times_S S'$. We call this the \emph{pullback of $C$ to $S'$.} Pulling back $V$-ferns preserves isomorphism classes, and we thus obtain a contravariant functor 
$$\Fern_V\colon \SchFq\to\mathop{\underline{\rm Set}}$$ sending a scheme $S$ to the set of isomorphism classes of $V$-ferns of $S$. This is a subfunctor of the functor $\CM_{\hat V}$ of stable $\hat V$-marked curves.
 \subsection{$V$-Ferns over $\Spec$ of a field and the associated flag}\label{GeoFib}
Let $(C,\lambda,\phi)$ be a $V$-fern over $\Spec k$, where $k$ is a field, and let $\SE:=E_1,\ldots, E_m$ be the chain from $0$ to $\infty$. The action of $V$ on $C$ obtained from $\phi|_V$ induces an action on the set of irreducible components of $C$. For each $1\leq i\leq m$ let $V_i:=\Stab_V(E_i)$. Define $V_0:=\{0\}\subset V$.
\begin{lemma}\label{subspace} For each $1\leq i\leq m$ the subgroup $V_i\subset V$ is an $\BF_q$-subspace.
\end{lemma}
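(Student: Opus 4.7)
The key fact will be that $V_i$ is automatically an additive subgroup (as a stabilizer), so the whole task is to verify closure under scalar multiplication by $\BF_q^\times$. The plan is to exploit the semidirect product relation $(0,\xi)(v,1)(0,\xi)^{-1}=(\xi v,1)$ inside $G$, together with the fact that condition (2) in Definition \ref{FernDef} forces $\phi_\xi$ to fix each $E_i$ in the chain from $0$ to $\infty$.

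First I would observe that $V$ acts on $C$ through $\phi|_V$, and the induced action on the finite set of irreducible components of $C$ gives $V_i$ as the stabilizer of $E_i$; this is a subgroup of $V$, so $V_i$ is automatically closed under addition and contains $0$. Next I would note that for any $\xi\in\BF_q^\times$, the automorphism $\phi_\xi$ stabilizes every element of the chain $\SE$: by condition (2) of Definition \ref{FernDef}, the induced action of $\BF_q^\times$ on each $E_j\in\SE$ corresponds to $\begin{pmatrix}\xi&0\\0&1\end{pmatrix}\in\PGL_2(k(s))$ under an identification $E_j\stackrel\sim\to\BP^1_{k(s)}$, and in particular $\phi_\xi$ restricts to an automorphism of $E_j$.

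The main step is then a short conjugation argument. A direct calculation in $G$ gives
\[
(0,\xi)\cdot(v,1)\cdot(0,\xi)^{-1}=(0,\xi)\cdot(v,1)\cdot(0,\xi^{-1})=(\xi v,1),
\]
and since $\phi$ is a group homomorphism this yields $\phi_{\xi v}=\phi_\xi\circ\phi_v\circ\phi_\xi^{-1}$ as automorphisms of $C$. Now if $v\in V_i$, then $\phi_v(E_i)=E_i$, so
\[
\phi_{\xi v}(E_i)=\phi_\xi\bigl(\phi_v(\phi_\xi^{-1}(E_i))\bigr)=\phi_\xi(\phi_v(E_i))=\phi_\xi(E_i)=E_i,
\]
using at the first and last step that $\phi_\xi$ stabilizes $E_i$. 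Hence $\xi v\in V_i$, which together with additive closure shows that $V_i$ is an $\BF_q$-subspace of $V$, as claimed.

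I do not expect any real obstacle here: the whole lemma is essentially the observation that the $\BF_q^\times$-action built into the axioms of a $V$-fern is precisely what upgrades a stabilizer subgroup to a sub-$\BF_q$-vector space. The only subtle point is making sure that $\phi_\xi$ stabilizes $E_i$ for \emph{every} $i$ in the chain, which is exactly what condition (2) of Definition \ref{FernDef} provides, so no further case analysis on components outside the chain is needed.
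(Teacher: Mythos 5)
Your proof is correct and follows essentially the same route as the paper: additive closure comes for free from the stabilizer structure, and scalar closure follows from the conjugation identity $\phi_{\xi v}=\phi_\xi\circ\phi_v\circ\phi_\xi^{-1}$ together with the fact that $\phi_\xi$ stabilizes each element of the chain $\SE$. The only cosmetic difference is in justifying that last fact: the paper deduces it from $\phi_\xi$ fixing $\lambda_0$ and $\lambda_\infty$ and the uniqueness of the chain from $0$ to $\infty$, whereas you read it out of condition (2) of Definition \ref{FernDef}, which (since it speaks of an ``induced action'' of $\BF_q^\times$ on each $E_i$) already presupposes it — both are acceptable.
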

\begin{proof} Fix $1\leq i\leq m$. We already know that $V_i$ is a subgroup of $V$ and hence closed under addition. It remains to show that $V_i$ is closed under scalar multiplication, i.e., that $\phi_{\xi v}(E_i)=E_i$ for all $v\in V_i$ and $\xi\in\BF_q$. Fix $v\in V_i$. If $\xi=0$, then $\phi_{\xi v}$ is the identity, and the assertion is trivial. 
Suppose $\xi\in\BF_q^\times$. Since $\phi|_{\BF_q^\times}$ fixes $\lambda_0$ and $\lambda_\infty$, it must stabilize each element of $\SE$. It follows that $\phi_{\xi}(E_i)=\phi_{\xi^{-1}}(E_i)=E_i$. Thus $\phi_{\xi v}(E_i)=\phi_{\xi}\circ\phi_{v}\circ\phi_{\xi^{-1}}(E_i)=E_i$ and hence $\xi v\in V_i$, as desired.
\end{proof}

\begin{lemma}\label{triv} For each $0\leq i\leq m$, the subspace $V_i$ acts trivially on $E_j$ for each $j>i$. If $i>0$, then $V_i/V_{i-1}$ acts faithfully on $E_i$.
\end{lemma}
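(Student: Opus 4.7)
The first step is to establish the chain of inclusions $V_0\subseteq V_1\subseteq\cdots\subseteq V_m=V$. Every $\phi_v$ fixes $\lambda_\infty$, which is a smooth marked point lying only on $E_m$, so $\phi_v(E_m)=E_m$ for every $v\in V$. For $v\in V_i$ one also has $\phi_v(E_i)=E_i$ by definition. Now the dual graph of $C$ is a tree (since $(C,\lambda)$ is a stable marked curve of genus $0$), and $\SE$ is the unique path in this tree from $E_1$ to $E_m$, so the subpath from $E_i$ to $E_m$ is the unique path between its endpoints. The component-permutation induced by $\phi_v$ is a graph automorphism that fixes both endpoints of this subpath, and hence fixes every vertex on it. This shows $\phi_v(E_j)=E_j$ for all $j\geq i$. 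The same argument shows $\phi_v$ fixes each of the distinguished special points $x_j=E_{j-1}\cap E_j$ and $y_j=E_j\cap E_{j+1}$ for $j>i$ (with $y_m=\lambda_\infty$ fixed directly).

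For the first claim, I would fix $v\in V_i$ and $j>i$. By condition (2) of Definition \ref{FernDef}, there exists an isomorphism $E_j\stackrel\sim\to\BP^1_k$ sending $x_j\mapsto(0:1)$ and $y_j\mapsto(1:0)$, and under this isomorphism the automorphism $\phi_v|_{E_j}$ fixes both $(0:1)$ and $(1:0)$, so must have the form $\bigl(\begin{smallmatrix}\alpha_v&0\\0&1\end{smallmatrix}\bigr)\in\PGL_2(k)$ for a unique $\alpha_v\in k^\times$. The relation $\phi_{v+w}=\phi_v\circ\phi_w$ then restricts to $E_j$ to yield a group homomorphism $\alpha\colon V_i\to k^\times$, $v\mapsto\alpha_v$. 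Since $V_i$ is a finite-dimensional $\BF_q$-vector space, it is an elementary abelian $p$-group for $p:=\charact\BF_q$, so the image of $\alpha$ consists of $p$-th roots of unity in $k^\times$. As $\charact k=p$, the only such root is $1$, so $\alpha$ is trivial and $\phi_v$ acts as the identity on $E_j$.

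For the faithfulness assertion, I would first apply the first claim with $(i-1,i)$ in place of $(i,j)$ to conclude that $V_{i-1}$ acts trivially on $E_i$, so the action of $V_i$ on $E_i$ descends to $V_i/V_{i-1}$. To conclude, suppose $v\in V_i$ satisfies $\phi_v|_{E_i}=\id_{E_i}$. If $i=1$, then $\phi_v(\lambda_0)=\lambda_0$; but also $\phi_v(\lambda_0)=\lambda_v$, and the distinctness of marked points in a stable $\hat V$-marked curve forces $v=0\in V_0$. If $i>1$, then $\phi_v$ fixes the node $x_i=E_{i-1}\cap E_i$; since $C$ has at worst ordinary double points, exactly the two components $E_{i-1}$ and $E_i$ pass through $x_i$, so $\phi_v$ permutes them, and since $\phi_v$ fixes $E_i$ it must also fix $E_{i-1}$, giving $v\in V_{i-1}$. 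I expect the main obstacle to be the tree-theoretic step in the first paragraph, establishing that $\phi_v$ stabilizes the full tail $E_i,E_{i+1},\ldots,E_m$ of the chain; once this filtration structure is in hand, both the character-triviality argument and the faithfulness argument fall out directly from the fern axioms and the nodal structure of $C$.
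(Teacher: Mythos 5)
Your proposal is correct and follows essentially the same route as the paper: stabilization of the tail of the chain from $E_i$ to $E_m$ via the tree structure, the diagonal form of the action on $E_j$ fixing the two distinguished points, triviality of $p$-th roots of unity in characteristic $p$, and faithfulness via the effect on the distinguished point $x_i$ (you phrase this last step as a contrapositive and split off $i=1$, while the paper argues directly that $v\in V_i\setminus V_{i-1}$ moves $x_i$, but these are the same argument).
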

\begin{proof}The statement is trivial for $i=0$. Fix $0< i<j\leq m$. Since $V_i$ fixes $\lambda_\infty$ and stabilizes $E_i$, it also stabilizes the unique chain from $E_i$ to $E_m$. It follows that $V_i$ sends $E_j$ to itself and fixes the distinguished special points $x_j$ and $y_j$. Choose an isomorphism $E_j\stackrel \sim\to\BP^1_k$ sending $x_j$ to $(0:1)$ and $y_j$ to $(1:0)$. The induced action of $V_i$ on $\BP^1_k$ fixes $0$ and $\infty$, and so each $v\in V_i$ acts via a matrix of the form $\binom{\zeta_v\,\,\, 0}{\,0\,\,\,\,\,\, 1}$ where $\zeta_v\in k^\times$. Let $p:=\ch(k)$. Since $p\cdot v=0$, we must have $(\zeta_v)^p=1$. It follows that $\zeta_v=1$ and $V_i$ acts trivially on $E_j$, as claimed.

For the second statement, we observe that the action of $V_i$ on $E_i$ factors through $V_i/V_{i-1}$ by the above argument. Any element $v\in V_i\setminus V_{i-1}$ sends the distinguished special point $x_i\in E_i$ to the intersection of $E_i$ and $\phi_v(E_{i-1})\neq E_{i-1}$ and hence does not fix $x_i$. The induced action of $V_i/V_{i-1}$ is thus faithful.
\end{proof}

\begin{proposition}\label{FlagFib}For each $0\leq i\leq m$, we have $V_i\subsetneq V_{i+1}$. Moreover, the set $\CF:=\{V_0,V_1,\ldots, V_m\}$ is a flag of $V$.
\end{proposition}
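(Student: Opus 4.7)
The plan is to verify the three defining properties of a flag: $V_0=\{0\}$, $V_m=V$, and strict inclusions $V_i\subsetneq V_{i+1}$ for $0\leq i\leq m-1$ (the index range in the statement must be read as stopping at $m-1$, since $V_{m+1}$ is not defined). The first equality holds by definition. For $V_m=V$: every $v\in V$ fixes $\lambda_\infty$, and since $\lambda_\infty$ is a smooth $\hat V$-marked point lying on the unique component $E_m$, we must have $\phi_v(E_m)\ni\lambda_\infty$, forcing $\phi_v(E_m)=E_m$ and hence $v\in V_m$. The non-strict inclusions $V_i\subseteq V_{i+1}$ are immediate from Lemma~\ref{triv}: any $v\in V_i$ acts trivially on $E_{i+1}$ and, in particular, stabilizes it.

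The crux is the strict inclusion. I would proceed by contradiction: suppose $V_i=V_{i+1}$ for some $0\leq i<m$. The key organizing observation is the $V$-action on the dual tree: for each $v\in V$, the image $\phi_v(\SE)$ of the chain from $0$ to $\infty$ is itself a chain from the component containing $\lambda_v$ to $E_m$, and it shares the suffix $E_{j(v)},\ldots,E_m$ with $\SE$ where $j(v):=\min\{j\mid v\in V_j\}$. Since $\phi_w(\SE)=\SE$ iff $w\in V_1$, distinct paths correspond to cosets of $V_1$ in $V$, and the number of distinct paths with $j(v)=j$ equals $(|V_j|-|V_{j-1}|)/|V_1|$. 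The strict inclusions $V_{j-1}\subsetneq V_j$ are therefore equivalent to the existence, for each $j\in\{1,\ldots,m\}$, of some $v\in V$ whose path $\phi_v(\SE)$ first meets $\SE$ at $E_j$.

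To rule out $V_i=V_{i+1}$, I would combine this combinatorial picture with the stability condition. All $V$-marked points $\{\lambda_v:v\in V\}$ lie on the single $V$-orbit of $E_1$, with exactly $|V_1|$ of them on each component of that orbit; every other component has at most the point $\lambda_\infty$ (which sits on $E_m$). Consequently every tail of $C$, having only one node, must carry at least two marked points, which constrains the tails to lie in the orbit of $E_1$ or to be $E_m$ (with some marked point available). Under the assumption $V_i=V_{i+1}$, Lemma~\ref{triv} forces $V_{i+1}$ to act trivially on $E_{i+1}$, so every component attached to $E_{i+1}$ is itself stabilized by all of $V_{i+1}$; using the $\BF_q^\times$-equivariance one further shows that $V_{j+1}/V_j$ embeds into $k$ by translation, pinning down how many orbit-of-$E_j$ components attach to $E_{j+1}$. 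The plan is to show that the missing ``new'' attachments at $E_{i+1}$ leave no room for enough marked points on side branches, which then must be supplied via an infinite cascade of further side branches -- contradicting the finiteness of the dual tree.

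The main obstacle is executing this final combinatorial contradiction cleanly, as it plausibly requires induction on $m$ or a careful case analysis depending on the position of $i$ and on whether $V_{i+1}$ equals $V$. Once the strict inclusions are established, the second assertion of the proposition -- that $\CF$ is a flag -- follows immediately, since the $V_j$ form a strictly increasing chain of $\BF_q$-subspaces (by Lemma~\ref{subspace}) from $\{0\}$ to $V$.
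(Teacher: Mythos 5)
Your reductions are sound: $V_0=\{0\}$ by definition, $V_m=V$ because every $\phi_v$ fixes $\lambda_\infty$ and hence stabilizes $E_m$, the non-strict inclusions follow from Lemma \ref{triv}, your reading of the index range as $0\leq i<m$ matches the intended statement, and your observation that every tail lies in the $V$-orbit of $E_1$ does yield $V_0\subsetneq V_1$ (translate a tail to $E_1$, which then carries a second marked point $\lambda_w$ with $w\in V_1\setminus\{0\}$), essentially as in the paper. The reformulation of strictness as the existence, for each $j$, of some $v$ with $\phi_v(\SE)$ first meeting $\SE$ at $E_j$ is also correct, but it is only a restatement of what must be proved.

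The genuine gap is the case $1\leq i<m$: your plan is to assume $V_i=V_{i+1}$ and derive a contradiction via ``an infinite cascade of further side branches,'' and you explicitly flag that executing this is the main obstacle. As written there is no argument there --- no mechanism is given that actually forces the cascade, and the coset-counting picture by itself does not produce the needed $v$. A direct argument closes the gap and is what the paper does: first, $E_{i+1}$ carries no $V$-marked point, since if $\lambda_v\in E_{i+1}$ then $\{E_{i+1},\ldots,E_m\}$ and $\phi_v(\SE)$ would both be the unique chain from $\lambda_v$ to $\lambda_\infty$ yet have different lengths. Stability then forces a third special point on $E_{i+1}$, necessarily a node $x\notin\{x_{i+1},y_{i+1}\}$; the connected component of $\overline{C\setminus E_{i+1}}$ attached at $x$ contains some $\lambda_v$, and since $\phi_v(\SE)$ is the unique chain from $\lambda_v$ to $\lambda_\infty$ it enters $E_{i+1}$ at $x$ rather than at $x_{i+1}$, giving $\phi_v(E_{i+1})=E_{i+1}$ but $\phi_v(E_i)\neq E_i$, i.e.\ $v\in V_{i+1}\setminus V_i$. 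No contradiction argument, induction on $m$, or case analysis on the position of $i$ is needed.
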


\begin{proof}The preceding lemma shows that $V_i\subset V_j$ whenever $i\leq j$. It only remains to show that $V_i$ is a proper subspace of $V_{i+1}$ for each $0\leq i<m$. For every $v\in V$, we have $\phi_v(\lambda_\infty)=\lambda_\infty$, which implies that $\phi_v(E_m)=E_m$ and hence $V_m=V$. Recall that we defined $V_0:=\{0\}$. If $m=1$, we are already done. Suppose $m>1$.
\vspace{2mm}

\noindent\emph{Case 1.} \underline{$i=0$}: We claim that there exists a $v\in V\setminus\{0\}$ such that $\lambda_v\in E_1$. To see this, we first recall that we defined a tail of $C$ to be an irreducible component corresponding to a leaf in the dual graph $\Gamma_C$. Any tail must contain at least two marked points by stability. Choose a tail $E\subset C$ and let $v\in V$ with $\lambda_v\in E$. Then $\phi_{-v}(E)=E_1$. Hence $E_1$ is itself a tail. There is thus a $w\in \hat V\setminus\{0\}$ with $\lambda_w\in E_1$. Since $m>1$, we must have $w\neq \infty$, and It follows that $w\in V_1$. Thus $V_0\subsetneq V_1$, as desired.
\vspace{2mm}

\noindent\emph{Case 2.} \underline{$i>0$}: We first observe that $E_{i+1}$ contains no non-$\infty$-marked points. Indeed, suppose there exists a $v\in V$ with $\lambda_v\in E_{i+1}$. Then $\{E_{i+1},\ldots, E_m\}$ is the unique chain from $\lambda_v$ to $\lambda_\infty$. But $\phi_v(\SE)$ is also a chain from $\lambda_v$ to $\lambda_\infty$. Since these chains have different lengths, this is a contradiction. 

The stability of $C$ thus implies that $E_{i+1}$ contains a singular point $x$ distinct from the distinguished special points $x_{i+1}$ and $y_{i+1}$. Let $C^\circ$ be the connected component of $\overline{C\setminus E_{i+1}}$ intersecting $E_{i+1}$ at $x$. Again by stability, there is a $v\in V$ with $\lambda_v\in C^\circ$. Since $\phi_v(\SE)$ is the unique chain from $\lambda_v$ to $\lambda_\infty$, and the latter contains $E_{i+1}$ but not $E_i$ it follows that $\phi_v(E_i)\neq E_i$ and that $\phi_v(E_{i+1})=E_{i+1}$. Thus $v\in V_{i+1}\setminus V_i$.
\end{proof}
Recall that a closed point on an irreducible component $E$ of $C$ is called \emph{special} if it is marked or singular.
\begin{corollary}\label{specialpoints}For each $1\leq i\leq m$, the special points on $E_i$  correspond to the elements of $(V_{i}/V_{i-1})\cup\{y_i\}$.
\end{corollary}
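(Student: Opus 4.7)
The plan is to explicitly describe the $V_i$-action on $E_i$ via the $\BP^1$-structure of Definition \ref{FernDef}(2), identify the $|V_i/V_{i-1}|+1$ special points coming from the $V_i$-orbit of $x_i$ together with $y_i$, and then rule out any further special points.

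First I would fix an isomorphism $E_i\isoto\BP^1_{k(s)}$ sending $x_i\mapsto 0$ and $y_i\mapsto\infty$ under which $\BF_q^\times$ acts by diagonal scalings. By Lemma \ref{triv}, the $V_i$-action on $E_i$ factors faithfully through $V_i/V_{i-1}$, and it fixes $y_i$ since $V_i\subset V_{i+1}=\Stab_V(E_{i+1})$ for $i<m$, while $y_m=\lambda_\infty$ is $V$-fixed. Hence each $v\in V_i/V_{i-1}$ acts as $\bigl(\begin{smallmatrix}a_v&b_v\\0&1\end{smallmatrix}\bigr)\in\PGL_2$; additive order $p$ forces $a_v=1$, so $v$ acts by translation $z\mapsto z+b_v$. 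The relation $\phi_\xi\phi_v\phi_\xi^{-1}=\phi_{\xi v}$ then makes $v\mapsto b_v$ an injective $\BF_q$-linear embedding $V_i/V_{i-1}\into k(s)$, so $V_i\cdot x_i$ has exactly $|V_i/V_{i-1}|$ elements, each special as the $G$-image of the special point $x_i$, and all distinct from $y_i$.

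Next I would rule out extra marked points. If $\lambda_w\in E_i$ for some $w\in V$, then $\phi_w(E_1)=E_i$, and the abelianness of $V$ gives $V_1=\Stab_V(\phi_w(E_1))=V_i$, forcing $i=1$. Hence for $1<i<m$ no marked point lies on $E_i$, while for $i=m$ only $\lambda_\infty=y_m$ occurs. For $i=1$, the argument of Case 1 in Proposition \ref{FlagFib}, together with the equivalence $\lambda_v\in E_1\iff\phi_v(E_1)=E_1\iff v\in V_1$ (the converse using that $\lambda_v$ is smooth while $E_1\cap\phi_v(E_1)$ is a node when $\phi_v(E_1)\neq E_1$), identifies the marked points on $E_1$ as exactly $V_1\cdot x_1$.

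The main obstacle is ruling out extra nodes. Suppose $p\in E_i$ were a node with $p\notin\{y_i\}\cup V_i\cdot x_i$, and let $B$ denote the connected component of $\overline{C\setminus E_i}$ meeting $E_i$ at $p$. A stability count over the $k$ components of $B$ (using that $\Gamma_B$ is a tree with $k-1$ internal edges, exactly one external edge to $E_i$, and each component contributing $\geq 3$ special points) yields at least $k+1\geq 2$ marked points in $B$. Since $\lambda_\infty\in E_m$ is reachable from $E_i$ only through $y_i\neq p$, one of these marked points must be $\lambda_w$ for some $w\in V$, and thus $\phi_w(E_1)\in B$. Because $V$ is abelian and $V_j=\Stab_V(E_j)$, one has $\phi_w(E_j)\in\SE$ if and only if $w\in V_j$; letting $j_0$ be minimal with $w\in V_{j_0}$, the unique path $\phi_w(\SE)$ from $\phi_w(E_1)$ to $E_m$ enters $\SE$ at $E_{j_0}$ through the node $\phi_w(x_{j_0})$ and then traverses $\SE$ upward to $E_m$. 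Since this path must exit $B$ through $p$ in order to reach $E_m$, it enters $E_i$ either at $\phi_w(x_i)$ (when $j_0=i$) or at $x_i$ (when $j_0<i$); both lie in $V_i\cdot x_i$, contradicting the choice of $p$.
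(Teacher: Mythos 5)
Your proof is correct and rests on the same core mechanism as the paper's: at any special point of $E_i$ other than $y_i$, stability produces a marked point $\lambda_w$ in the attached branch, and the uniqueness of the chain $\phi_w(\SE)$ from $\lambda_w$ to $\lambda_\infty$ identifies that special point as $\phi_w(x_i)$, while the stabilizer of $x_i$ in $V_i$ is $V_{i-1}$ by Lemma \ref{triv}. The paper packages this as "transitivity of $V_i$ on the special points $\neq y_i$" plus the stabilizer computation, whereas you compute the orbit of $x_i$ explicitly and then exclude extra marked points and nodes separately, but the content is the same.
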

\begin{proof}We first show that $V_i$ acts transitively on the set of special points of $E_i$ not equal to $y_i$. Let $x\in E_i$ be such a point, and choose $v\in V$ corresponding to any marked point on the connected component of $\overline{C\setminus E_i}$ which intersects $E_i$ at $x$. The automorphism $\phi_v$ sends $\mathscr{E}$ to the unique chain from $\lambda_v$ to $\lambda_\infty$, and it follows that $\phi_v(x_i)=x$.

It remains to show that an element $v\in V_i$ fixes the distinguished special point $x_i$ if and only if $v\in V_{i-1}$. But any $v\in V_{i-1}$ fixes $x_i$ by Lemma \ref{triv}, and we showed in the proof of that lemma that any $v\in V_i\setminus V_{i-1}$ does not fix $x_i$.
\end{proof}
 \subsection{$V$-ferns for $\dim V=1$}
\begin{proposition}\label{dim1fern}If $\dim V=1$, then there is exactly one $V$-fern over $S$ up to unique isomorphism.
\end{proposition}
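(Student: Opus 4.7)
My plan is to exhibit one explicit $V$-fern and show every $V$-fern is uniquely isomorphic to it. Upon choosing an isomorphism $V \cong \BF_q$, I would define $C_0 := \BP^1_S$ with $\hat V$-marking $\lambda_v := (v:1)$ for $v \in V = \BF_q$ and $\lambda_\infty := (1:0)$, and $G$-action
\[ \phi_{v,\xi} = \begin{pmatrix}\xi & v\\ 0 & 1\end{pmatrix} \in \PGL_2, \]
i.e., $x \mapsto \xi x + v$ in the affine chart. A short check against Definition \ref{FernDef} (the sections are pairwise disjoint since nonzero differences lie in $\BF_q^\times\subset \CO_S(S)^\times$, and the induced $\BF_q^\times$-action on the single fiber component is visibly diagonal) shows that $C_0$ is a $V$-fern. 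For the uniqueness of isomorphisms between $V$-ferns, Corollary \ref{uniquemorph} already guarantees that any morphism of stable $\hat V$-marked curves is unique, so it suffices to produce some isomorphism $C \cong C_0$ for an arbitrary $V$-fern $C$ over $S$.

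To identify the underlying scheme, I would first argue that every fiber $C_s$ is a smooth $\BP^1$. When $\dim V = 1$ the only flag of $V$ is $\{0,V\}$, so Proposition \ref{FlagFib} forces the chain $\SE_s$ from $0$ to $\infty$ to have length $m_s = 1$. Corollary \ref{specialpoints} then identifies the special points on the single component $E_1$ with $V \cup \{\infty\}$, all of which are already marked; hence $E_1$ contains no singular points, and by connectedness of $C_s$ we get $C_s = E_1 \cong \BP^1_{k(s)}$. Restricting the marking to $\{\lambda_0, \lambda_1, \lambda_\infty\}$ turns $C$ into a stable $3$-pointed curve of genus $0$, so the identification $M_3 \cong \Spec \BZ$ mentioned in the remark following Theorem \ref{moduli} produces a canonical isomorphism $C \cong \BP^1_S$ sending $(\lambda_0,\lambda_1,\lambda_\infty)$ to $\bigl((0:1),(1:1),(1:0)\bigr)$.

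Under this identification I would write $\lambda_v = (a_v:1)$ for each $v \in V \setminus \{0\}$. The automorphism $\phi_{0,\xi}$ fixes the $0$- and $\infty$-sections, so it is multiplication by some unit $u_\xi \in \CO_S(S)^\times$, and $\phi_{0,\xi}(\lambda_1) = \lambda_\xi$ yields $u_\xi = a_\xi$. The main obstacle is to upgrade the fiberwise equality $u_\xi = \xi$ (forced by condition (2) of Definition \ref{FernDef}) to a global one. The relation $\xi^{q-1} = 1$ forces $u_\xi^{q-1} = 1$, so $u_\xi$ is a $(q-1)$-th root of unity in $\CO_S(S)$. I would then invoke the Chinese Remainder decomposition $\BF_q[X]/(X^{q-1}-1) \cong \BF_q^{q-1}$: for any $\BF_q$-algebra $R$ the $(q-1)$-th roots of unity in $R$ are in natural bijection with locally constant maps $\Spec R \to \BF_q^\times$, so fiberwise constancy to $\xi$ forces the global equality $u_\xi = \xi$ in $\CO_S(S)$, hence $a_\xi = \xi$.

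With $a_\xi = \xi$ established for every $\xi \in \BF_q^\times = V \setminus \{0\}$, the remaining automorphism $\phi_{v,1}$ fixes $\infty$ and is therefore affine, of the form $x \mapsto \alpha x + \beta$. The conditions $\phi_{v,1}(\lambda_0) = \lambda_v$ and $\phi_{v,1}(\lambda_1) = \lambda_{v+1}$ pin down $\beta = v$ and $\alpha = 1$, so $\phi_{v,\xi}(x) = \xi x + v$ matches the standard action. This supplies the desired isomorphism $C \cong C_0$ of $V$-ferns, which combined with the first paragraph completes the proof.
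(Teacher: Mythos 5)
Your proof is correct and follows essentially the same route as the paper: both trivialize $C$ as $\BP^1_S$ using the three disjoint sections $\lambda_0,\lambda_1,\lambda_\infty$ and then use the $\BF_q^\times$-action to pin down the remaining marked sections. The only real difference is that where the paper asserts ``without loss of generality $\BF_q^\times$ acts by scalar multiplication,'' you justify this globalization of condition (2) of Definition \ref{FernDef} explicitly via the rigidity of the $(q-1)$-th roots of unity over $\BF_q$ --- a worthwhile detail, since a fiberwise equality of units does not by itself globalize over a non-reduced base.
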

\begin{proof}
Let $(C,\lambda,\phi)$ be a $V$-fern over $S$. Let $s\in S$ and consider the fiber $C_s$. Since the elements of $\SE_s$ correspond to distinct non-zero subspaces of $V$ by Proposition \ref{FlagFib}, it follows that $|\SE_s|=1$; hence $C_s\cong\BP^1_{k(s)}$ and $C$ is a $\BP^1$-bundle over $S$. Since $C$ possesses three disjoint sections, it is in fact a trivial $\BP^1$-bundle over $S$. Let $(D,\mu,\psi)$ be another $V$-fern. Fix $v\in V\setminus\{0\}$. There is a unique isomorphism $\pi\colon C\stackrel\sim\to D$ with $\lambda_0\mapsto \mu_0$ and $\lambda_v\mapsto \mu_v$ and $\lambda_\infty\mapsto\mu_\infty$. Without loss of generality, we may assume $C$ and $D$ are both equal to $\BP^1_S$ with $0$- and $\infty$-sections given by $(0:1)$ and $(1:0)$ respectively, and that $\BF_q^\times$ acts by scalar multiplication. Then $\pi\colon \BP^1_S\stackrel\sim\to\BP^1_S$ fixes $0$ and $\infty$ and hence commutes with the action of $\BF_q^\times$. Since each $w\in V$ is equal to $\xi\cdot v$ for some $\xi\in \BF_q$, it follows that $\pi$ preserves all $\hat V$-marked sections; hence $\pi$ is an isomorphism of $V$-ferns.
\end{proof}

\subsection{$V$-ferns for $\dim V=2$}
Suppose $\dim V=2$, and let $(C,\lambda,\phi)$ be a $V$-fern over a field $k$. The associated flag $\CF$ is either trivial or of the form $\{0,V',V\}$ for some $V'\subset V$ of codimension 1. In the first case, we have $C\cong \BP^1$, and in the second $C$ is singular with $V'$-marked points lying on $E_{\{0\}}$ and the singular points of $E_{\{\infty\}}$ corresponding to the elements of $V/V'.$

\begin{figure}[H]
\centering
\caption{A smooth $V$-fern for $q=3$.}
\includestandalone[scale=2]{Figures/VfernDim2sm}
 
  \label{fig:dim2sm}
  \end{figure}
  
  \begin{figure}[H]
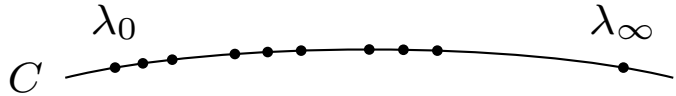

\centering
  \caption{A singular $V$-fern for $q=3$, where $v,w\in V$ are such that $v\not\equiv w\not\equiv 0\mod V'.$}
\includestandalone[scale=2]{Figures/VfernDim2sing}
  \label{fig:dim2sing}
  \end{figure}
If $C$ is a $V$-fern over an arbitrary scheme $S$, we will see that the locus in $S$ where the fibers $C_s$ are smooth form an open subscheme of $S$.

\section{Constructions involving $V$-ferns}
\subsection{Contractions of $V$-ferns} Let $0\neq V'\subset V$ be an $\BF_q$-linear subspace, and let $G':=V'\rtimes\BF^\times_q$. Let $(C,\lambda,\phi)$ be a $V$-fern over a scheme $S$. Let $$\gamma\colon (C,\lambda)\to (C',\lambda')$$ be the contraction with respect to $\hat{V'}$, as defined in \ref{stablecontractions}.
\begin{lemma}\label{contrequivar}There exists a unique homomorphism
\begin{eqnarray*}\phi'\colon G'&\longto &\Aut_S(C'),
\\ (v',\xi)&\mapsto& \phi'_{v',\xi}
\end{eqnarray*}
such that for all $(v',\xi)\in G'$, the following diagram commutes:
$$\xymatrix{(C,\lambda)\ar[rr]_-\sim^-{\phi_{v',\xi}}\ar[d]_\gamma&& (C,\ar[d]_\gamma\lambda\cdot(v',\xi))\\
(C',\lambda')\ar[rr]_-\sim^-{\phi'_{v',\xi}} && (C',\lambda'\cdot (v',\xi)).}$$
\end{lemma}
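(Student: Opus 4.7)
The plan is to construct each automorphism $\phi'_{v',\xi}$ individually by invoking the uniqueness of contractions (Proposition~\ref{1-prop-uniquenesscontractions}), and then to upgrade the family $\{\phi'_{v',\xi}\}_{(v',\xi)\in G'}$ to a group homomorphism via the uniqueness of morphisms of stable $\hat V'$-marked curves (Corollary~\ref{uniquemorph}).

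The first observation is that every $(v',\xi)\in G'$ fixes $\infty$ and preserves the subspace $V'\subset V$, so its action on $\hat V$ restricts to a bijection of $\hat V'$. Consequently $(\lambda'\cdot(v',\xi))|_{\hat V'}$ is merely a relabeling of $\lambda'|_{\hat V'}$, and $(C',(\lambda'\cdot(v',\xi))|_{\hat V'})$ is again a stable $\hat V'$-marked curve. Now fix $(v',\xi)\in G'$. Because $\phi_{v',\xi}$ is by definition an isomorphism of $\hat V$-marked curves $(C,\lambda)\stackrel\sim\to(C,\lambda\cdot(v',\xi))$, the composite $\gamma\circ\phi_{v',\xi}$ is a morphism of $\hat V$-marked curves from $(C,\lambda)$ to $(C',\lambda'\cdot(v',\xi))$, and by the previous sentence its target is stable after restriction to $\hat V'$. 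Together with the original $\gamma$, we therefore have two contractions of $(C,\lambda)$ with respect to $\hat V'$, and Proposition~\ref{1-prop-uniquenesscontractions} produces a unique isomorphism $\phi'_{v',\xi}\colon C'\stackrel\sim\to C'$ satisfying $\phi'_{v',\xi}\circ\gamma=\gamma\circ\phi_{v',\xi}$. A direct computation using $\lambda'_w=\gamma\circ\lambda_w$ then yields $\phi'_{v',\xi}(\lambda'_w)=\lambda'_{(v',\xi)\cdot w}$ for every $w\in\hat V$, so $\phi'_{v',\xi}$ is indeed an isomorphism of $\hat V$-marked curves $(C',\lambda')\stackrel\sim\to(C',\lambda'\cdot(v',\xi))$.

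To check that the resulting assignment is a group homomorphism, I would observe that for any $(v_1,\xi_1),(v_2,\xi_2)\in G'$, both $\phi'_{(v_1,\xi_1)(v_2,\xi_2)}$ and $\phi'_{v_1,\xi_1}\circ\phi'_{v_2,\xi_2}$ are morphisms of stable $\hat V'$-marked curves from $(C',\lambda'|_{\hat V'})$ to $(C',(\lambda'\cdot(v_1,\xi_1)(v_2,\xi_2))|_{\hat V'})$ satisfying the same compatibility with $\gamma$; Corollary~\ref{uniquemorph} then forces them to coincide, and likewise $\phi'_{(0,1)}=\id_{C'}$ because $\phi_{(0,1)}=\id_C$. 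Uniqueness of the whole homomorphism $\phi'$ is automatic by the same token: any candidate making the square commute for every $(v',\xi)$ must, entry by entry, agree with the $\phi'_{v',\xi}$ singled out by Proposition~\ref{1-prop-uniquenesscontractions}.

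The only real work is careful bookkeeping — tracking which $\hat V$-marking lives on which copy of $C'$ and verifying that the action of $G'$ on markings interacts with the contraction exactly as one expects — since the lemma is a formal consequence of the functoriality of contractions combined with the fact that $G'$ preserves $\hat V'\subset\hat V$. No genuinely geometric obstacle is anticipated.
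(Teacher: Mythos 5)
Your proposal is correct and follows essentially the same route as the paper: identify $\gamma\circ\phi_{v',\xi}$ as a second contraction of $(C,\lambda)$ with respect to $\hat V'$ (using that $G'$ permutes $\hat V'$, so the relabeled marking is still stable) and invoke the uniqueness of contractions to produce $\phi'_{v',\xi}$. The paper is terser about the homomorphism property, which you rightly pin down via the uniqueness of morphisms of stable marked curves, but there is no substantive difference in approach.
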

\begin{proof} We first note that the $\hat V'$-marking on $C'$ is given by $\lambda'=c\circ\lambda|_{\hat V'}$. For each $(v',\xi)\in G'$ and $w\in\hat V$, we have
$$\bigl(\lambda'\cdot(v',\xi)\bigr)_{w}:=\lambda'_{\xi w+v'}=\gamma\circ{\lambda_{\xi w+v'}}=(\gamma\circ\phi_{v',\xi})\circ\lambda_{w}.$$
The morphism $\gamma\circ\phi_{v',\xi}\colon (C,\lambda)\to (C',\lambda'\cdot (v',\xi))$ is therefore also a stable contraction of $(C,\lambda)$ with respect to $\hat V'$. By the uniqueness of contractions, there is a unique $\phi'_{v',\xi}\in\Aut_S(C')$ inducing an isomorphism $(C',\lambda')\stackrel\sim\to(C',\lambda'\cdot(v',\xi))$. The resulting homomorphism $\phi'\colon G'\to\Aut(C'),\, (v',\xi)\mapsto \phi'_{v',\xi}$ satisfies the conditions in the lemma.
 \end{proof}

\begin{proposition}\label{FernContr}The tuple $(C,\lambda'|_{\hat V'},\phi')$ is a $V'$-fern.
\end{proposition}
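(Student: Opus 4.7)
To verify that $(C',\lambda'|_{\hat V'},\phi')$ is a $V'$-fern, I would check the three items of Definition \ref{FernDef} in turn. The stable $\hat V'$-marked curve structure on $(C',\lambda')$ is immediate from the definition of the contraction, and Lemma \ref{contrequivar} already supplies a homomorphism $\phi'\colon G'\to\Aut_S(C')$ satisfying condition~(1) of that definition. Faithfulness of $\phi'$ should come quickly: if $\phi'_{v,\xi}=\id_{C'}$, then the commutativity in Lemma \ref{contrequivar} yields $\lambda'_{\xi w+v}=\lambda'_w$ for every $w\in V'$, so by the injectivity of $\lambda'$ on $\hat V'$ we get $(\xi-1)w+v=0$ for all $w\in V'$; since $V'\neq 0$ this forces $(v,\xi)=(0,1)$.

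The substantive step is condition~(2), which I would verify fiberwise. Fix $s\in S$ and compare the chains $\SE_s$ in $C_s$ and $\SE'_s$ in $C'_s$. The key geometric claim is that the non-contracted components of $\SE_s$, taken in order, map via $\gamma_s$ isomorphically onto the components of $\SE'_s$, and that under each such isomorphism the distinguished special points $x_i,y_i$ go to $x'_j,y'_j$. To prove this I would argue: (a)~every component of $C'_s$ is the image of some component of $C_s$ because $\gamma_s$ is surjective, and any non-contracted component is mapped isomorphically since both source and target are copies of $\BP^1_{k(s)}$ by Proposition~\ref{P1fibs} and $\gamma_s$ is birational by Proposition~\ref{contriso}; (b)~components of $C_s$ lying off the main chain $\SE_s$ are separated from $\lambda_\infty$ by some $E_i\in\SE_s$, and their images therefore cannot lie on $\SE'_s$; (c)~the $x$- and $y$-markers are preserved because the $0$-branch and $\infty$-branch at $E_i$ are mapped to the corresponding branches at $\gamma_s(E_i)$.

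Granting this geometric identification, condition~(2) follows immediately: for each $E'_j\in\SE'_s$, pick the corresponding non-contracted $E_i\in\SE_s$ and compose the $V$-fern isomorphism $E_i\isoto\BP^1_{k(s)}$ sending $x_i\mapsto(0{:}1)$ and $y_i\mapsto(1{:}0)$ with $(\gamma_s|_{E_i})^{-1}$; the $\BF_q^\times$-equivariance of $\gamma_s$ (again from Lemma~\ref{contrequivar}) guarantees that this induced isomorphism $E'_j\isoto\BP^1_{k(s)}$ intertwines $\phi'_\xi$ with the required diagonal matrix in $\PGL_2(k(s))$. The main obstacle I anticipate is formalising the geometric identification in steps (b)--(c); I expect a short induction on the number of contracted components of $\SE_s$ to do the job, since each contraction step either deletes an interior component of the chain—merging two neighbouring non-contracted components at a single node that is the common image of $y_i$ and $x_{i+1}$—or is vacuous.
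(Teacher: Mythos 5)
Your proposal is correct and follows essentially the same route as the paper: the paper likewise reduces to verifying condition (2) of Definition \ref{FernDef} fiberwise, identifies each $E'\in\SE'_s$ with the unique component $E\in\SE_s$ mapping isomorphically onto it, and transports the isomorphism $E\isoto\BP^1_{k(s)}$ through $(\gamma_s|_E)^{-1}$ using the $\BF_q^\times$-equivariance from Lemma \ref{contrequivar}. Your additional explicit checks (faithfulness of $\phi'$, and the matching of distinguished special points in steps (b)--(c)) are points the paper leaves implicit, and your arguments for them are sound.
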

\begin{proof}
It suffices to verify condition (2) of Definition \ref{FernDef}. Let $s\in S$ and let $\SE_s'\subset C'_s$ be the chain from $0$ to $\infty$. Let $E'\in\SE'_s$ and let $E$ be the unique irreducible component of $C_s$ mapping isomorphically onto $E'$ under $\gamma_s$. Lemma \ref{contrequivar} implies that $\gamma_s$ is $\BF_q^\times$-equivariant, and it follows that $E\in\SE_s$. There is thus an isomorphism $E\stackrel\sim\to\BP^1_{k(s)}$ satisfying condition (2) of Definition \ref{FernDef}. Pre-composing with the inverse of the isomorphism $E\stackrel\sim\to E'$ induced by $\gamma_s$ yields the desired isomorphism $E'\stackrel\sim\to \BP^1_{k(s)}$.
\end{proof}
To avoid cumbersome notation, we will often refer to $(C',\lambda',\phi')$ as a $V'$-fern in place of $(C',\lambda'|_{\hat V'},\phi')$. We usually view $(C',\lambda',\phi')$ as a $V'$-fern endowed with extra sections corresponding to $V\setminus V'$. 
\begin{definition}The $V'$-fern $(C',\lambda',\phi')$ along with the morphism $\gamma$ is called the \emph{contraction of $(C,\lambda,\phi)$} with respect to $\hat V'$. 
\end{definition}
\begin{remark}The contraction construction on stable $\hat V$-marked curves is functorial with respect to morphisms of stable $\hat V$-marked curves over a fixed base scheme as well as changing the base scheme. More specifically, for a morphism of stable $\hat V$-marked curves $(C,\lambda)\stackrel\sim\to(D,\mu)$ over a scheme $S$, there is a unique morphism of $\hat V$-marked curves $(C',\lambda')\stackrel\sim\to(D',\mu')$ over $S$ such that the following diagram commutes
$$\xymatrix{(C,\lambda)\ar[r]\ar[d]&(D,\mu)\ar[d]\\(C',\lambda')\ar[r]&(D',\mu').}$$ The commutativity with base change means the following: for a morphism $f\colon T\to S$, there is a unique morphism of $\hat V$-marked curves $\bigl((f^*C)',(f^*\lambda)'\bigr)\stackrel\sim\to\bigl(f^*(C'),f^*(\lambda')\bigr)$ over $T$ yielding a commutative diagram
$$\xymatrix{&(f^*C,f^*\lambda)\ar[dl]\ar[dr]&\\
\bigl((f^*C)',(f^*\lambda)'\bigr)\ar[rr]^\sim&&\bigl(f^*(C'),f^*(\lambda')\bigr).}$$
This implies that contraction corresponds to a natural transformation of moduli functors $\CM_{\hat V}\to\CM_{\hat V'}$. The contraction of $V$-ferns has the same functorial properties and thus yields a natural transformation
$\Fern_V\to\Fern_{V'}.$
\end{remark}

%
%
%

\subsection{Line bundles associated to $V$-ferns}\label{LineBundles}
Let $(C,\lambda, \phi)$ be a $V$-fern over $S$, and consider the contraction to the $\infty$-component $(C^\infty,\lambda^\infty)$ as in Definition \ref{i-comp}. The image of the $\infty$-section of $C^\infty$ is a relative effective Cartier divisor, whose complement we denote by $L$. The $\hat V$-marking on $C^\infty$ induces a $V$-marking on $L$. Slightly abusing notation, we denote the corresponding map by
$$\lambda: V\to L(S),\, v\mapsto \lambda_v.$$
 It follows from Corollary \ref{1-cor-uniquenessCi} that the left action of $G$ on $C$ induces a left action 
$$\phi^\infty\colon G\to \Aut_S(C^\infty).$$ 
This action fixes the infinity section, and restricts to and action on $L$. Again abusing notation, we denote the corresponding homomorphism by 
$$\phi\colon G\to\Aut_S(L).$$
\begin{proposition}\label{lbstr} There exists a natural structure of line bundle on $L$ such that
\begin{enumerate}[label=(\alph*)]
\item The zero section is $\lambda_0$.
\item For each $v\in V$ the morphism $\phi_v$ acts as translation by $\lambda_v$.
\item For each $\xi\in\BF_q^\times$, the automorphism $\phi_\xi$ acts as scalar multiplication by $\xi$.
\end{enumerate}
\end{proposition}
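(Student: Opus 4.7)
The plan is to construct the line bundle structure on $L$ using the standard correspondence between $\BP^1$-bundles with two disjoint sections and rank-$2$ decomposable bundles, then to deduce the compatibility with $\phi$ by arguing fiberwise and then globally via representation-theoretic identities inside $G$. More precisely, by Lemma \ref{pushfwd} the scheme $C^\infty = \underline{\Proj}\bigl(\Sym \pi^\infty_*\CO_{C^\infty}(\lambda_\infty(S))\bigr)$ is a $\BP^1$-bundle over $S$ with two disjoint sections $\lambda_0,\lambda_\infty$. The induced surjections $\pi^\infty_*\CO_{C^\infty}(\lambda_\infty(S))\onto\CL_0$ and $\pi^\infty_*\CO_{C^\infty}(\lambda_\infty(S))\onto\CL_\infty$ to line bundles on $S$ combine (by rank and the disjointness of the sections) into an isomorphism $\pi^\infty_*\CO_{C^\infty}(\lambda_\infty(S))\cong\CL_0\oplus\CL_\infty$, yielding a canonical identification $C^\infty\cong \BP_S(\CL_0\oplus\CL_\infty)$ under which $\lambda_0$ and $\lambda_\infty$ are the two coordinate sections. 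The complement $L=C^\infty\setminus\lambda_\infty(S)$ is then canonically the geometric line bundle associated to $\CM:=\CL_\infty\otimes\CL_0^{-1}$, with $\lambda_0$ as its zero section, which establishes (a) by construction.

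For (c), the automorphism $\phi^\infty_\xi$ fixes both distinguished sections $\lambda_0,\lambda_\infty$, so it preserves the decomposition $\CL_0\oplus\CL_\infty$ and restricts to scalar multiplication by a unit $u_\xi\in\CO_S(S)^\times$ on $\CM$, hence on $L$. Fiberwise, condition~(2) of Definition~\ref{FernDef} combined with Proposition~\ref{1-prop-characterizationCi} and Corollary~\ref{1-cor-uniquenessCi} identifies $\phi^\infty_\xi$ on $C^\infty_s\cong \BP^1_{k(s)}$ with $\mathrm{diag}(\xi,1)$, so $u_\xi(s)=\xi$ for every $s\in S$. Since $\phi^\infty$ is a group homomorphism, $u_\xi^{q-1}=1$ in $\CO_S(S)$; the polynomial $X^{q-1}-1=\prod_{\mu\in\BF_q^\times}(X-\mu)$ splits with distinct roots over $\BF_q$, so the idempotent decomposition of $\CO_S(S)$ determined by $u_\xi$ has all non-$\xi$-components zero (by the pointwise constraint), forcing $u_\xi=\xi\in\BF_q^\times\subset\CO_S(S)^\times$ globally.

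For (b), since $G$ acts on $\hat V$ fixing $\infty$, the automorphism $\phi^\infty_v$ fixes $\lambda_\infty$ and hence restricts to an automorphism of $L$ that preserves the $\infty$-section of $C^\infty$; such automorphisms are precisely the affine ones $t\mapsto a_v t+b_v$ with $a_v\in\CO_S(S)^\times$ and $b_v\in\Gamma(S,\CM)$. Evaluating at $\lambda_0=0$ gives $b_v=\phi^\infty_v(\lambda_0)=\lambda_v$, as required. It remains to prove $a_v=1$. The homomorphism property $\phi^\infty_{v+w}=\phi^\infty_v\phi^\infty_w$ yields $a_{v+w}=a_va_w$, and the identity $(0,\xi)(v,1)=(\xi v,1)(0,\xi)$ in $G$, together with (c), yields $a_{\xi v}=a_v$ for all $\xi\in\BF_q^\times$. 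When $q\geq 3$ we may pick $\xi\in\BF_q^\times$ with $1+\xi\in\BF_q^\times$ (any $\xi\neq -1$ works, and such $\xi$ exists since $|\BF_q^\times|\geq 2$); then $a_v^2=a_va_{\xi v}=a_{(1+\xi)v}=a_v$ because $(1+\xi)v$ lies in the $\BF_q^\times$-orbit of $v$, forcing $a_v=1$. When $q=2$, the relation $\phi^\infty_v\circ\phi^\infty_v=\mathrm{id}$ (coming from $v+v=0$) gives $a_v^2=1$ and $(a_v+1)\lambda_v=0$; but $\lambda_v$ is a nowhere-vanishing section of $\CM$ (as $\lambda_v,\lambda_0$ are fiberwise disjoint sections of $C^\infty$), so $\lambda_v$ trivializes $\CM$ and we deduce $a_v+1=0$, i.e.\ $a_v=1$ in characteristic $2$.

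The naturality of the construction — both with respect to isomorphisms of $V$-ferns over a fixed base and with respect to base change — follows because the line bundle structure is assembled solely from the intrinsic data $(C^\infty,\lambda_0,\lambda_\infty)$, whose formation commutes with base change by Proposition~\ref{basechangeCi}, together with the formation of direct images and projective bundles. The main technical obstacle is the global identification $a_v=1$ in step (b): the a~priori available fiberwise equality together with the relation $a_v^p=1$ only shows $a_v-1$ is nilpotent, so the structural identity $a_{\xi v}=a_v$ (for $q\geq 3$) or the trivialization of $\CM$ by $\lambda_v$ (for $q=2$) is essential.
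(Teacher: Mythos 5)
Most of your argument is sound, and in places cleaner than the paper's: the canonical identification $C^\infty\cong\BP_S(\CL_0\oplus\CL_\infty)$ avoids the paper's gluing of local trivializations, the idempotent argument for $u_\xi=\xi$ replaces the paper's reliance on connected affine pieces, and your relation $a_v^2=a_va_{\xi v}=a_{(1+\xi)v}=a_v$ for $q\geq 3$ is a nice hands-on substitute for the paper's appeal to $[G,G]=V$.

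The $q=2$ case, however, contains a genuine gap. You assert that $\lambda_v$ is a nowhere-vanishing section of $\CM$ ``as $\lambda_v,\lambda_0$ are fiberwise disjoint sections of $C^\infty$.'' The sections $\lambda_v$ and $\lambda_0$ are disjoint in $C$ by stability, but the contraction $C\to C^\infty$ collapses every irreducible component not meeting the $\infty$-section, and it identifies $\lambda_v^\infty(s)$ with $\lambda_0^\infty(s)$ precisely when $v$ lies in the second-to-last step $V_{m-1}$ of the flag $\CF_s$ (for $\dim V=2$, $q=2$ and a singular fiber with flag $\{0,V',V\}$, every $v'\in V'\setminus\{0\}$ gives $\lambda_{v'}(s)=\lambda_0(s)=0$ in $L_s$). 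So $\lambda_v$ can vanish on a nonempty closed locus, and for such $v$ the equation $(a_v+1)\lambda_v=0$ yields nothing there; as you yourself note, the fiberwise information only makes $a_v-1$ nilpotent. Moreover there need not exist any single $v$ with $\lambda_v$ nowhere vanishing (distinct fibers can have distinct second-to-last steps whose union is all of $\mrV$), so the claim cannot be repaired by a better choice of $v$. The repair is the paper's: work Zariski-locally around each $s\in S$, choose $v$ outside the second-to-last step of $\CF_s$ so that $\lambda_v$ is invertible on a neighborhood (Corollary \ref{fibnonzero}), conclude $a_v=1$ there from $(a_v+1)\lambda_v=0$, and then propagate to every other $w\in V$ via the commutation $\phi^\infty_v\circ\phi^\infty_w=\phi^\infty_w\circ\phi^\infty_v$, which gives $(a_w-1)\lambda_v=0$ and hence $a_w=1$ on that neighborhood; these local identities glue since $a_w$ is a global section.
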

\begin{proof}
By construction, the curve $C^\infty$ is a $\BP^1$-bundle with disjoint sections $\lambda^\infty_0$ and $\lambda^\infty_\infty$. Suppose first that $S=\Spec R$ is affine and connected, that $C^\infty=\BP^1_R$ with $0$-section and $\infty$-section corresponding to $(0:1)$ and $(1:0)$ respectively. Then $L=\BA^1_R\subset C^\infty$ is the standard affine chart around $(0:1)$. We endow $L$ with the standard line bundle structure with the usual addition and scalar multiplication. Sections of $L$ correspond bijectively to elements of $R$, and we identify each section $\lambda_v$ with the associated element of $R$. By construction, we have $\lambda_0=0$ and (a) holds.

Since the $\infty$-section is fixed by $G$, the homomorphism $\phi^\infty\colon G\to \Aut_S(C^\infty)=\PGL_2(R)$ factors through $(\BG_m\ltimes \BG_a)(R)$, which we identify with the matrices of the form $\binom{a\ b}{ 0\ 1}$ with $a\in R^\times$ and $b\in R$. Since the action of $\BF_q^\times$ fixes the zero section, the restriction $\phi^\infty|_{\BF_q^\times}$ has image in $\BG_m(R)$. Moreover, each $\phi^\infty_\xi$ has order dividing $q-1$, so that $\phi^\infty_\xi\in \mu_{q-1}(R)$, the group of $(q-1)$-roots of unity in $R$. Our assumptions imply that $\mu_{q-1}(R)=\BF_q^\times$. Since $\phi|_{\BF_q^\times}$ is faithful, it must restrict to an automorphism of $\BF_q^\times$. We have the following cases.
\vspace{5mm}

\noindent\emph{Case 1. $\underline{(q\neq2)}$}: In this case, we have $[G,G]=V$. It follows that $\phi^\infty|_V\subset \Bigl[\binom{*\ *}{0\ 1},\binom{*\ *}{0\ 1}\Bigr]=\binom{1\ *}{0\ 1}$. Thus $V$ acts through $\BG_a(R)$. For every $v\in V$, we have $\phi^\infty_v\bigl((0:1)\bigr)=(\lambda_v:1)$, and it follows that $\phi^\infty_v=\binom{1\ \lambda_v}{0\ 1}$. Thus $\phi_v$ acts via translation by $\lambda_v$ on $L$ and (b) holds.

Let $s\in S$. Let $E:=E_{m_s}\subset C_s$ be the irreducible component containing the $\infty$-marked point, and let $x:=x_{m_s}$ and $y:=y_{m_s}$ be the distinguished special points on~$E$. The contraction morphism $C_s\to C^\infty_s$ restricts to a $G$-equivariant isomorphism $f\colon E\stackrel\sim\to C^\infty_s=\BP^1_k$ such that $f(x)=(0:1)$ and $f(y)=(1:0)$. It follows from the definition of $V$-ferns that $\BF_q^\times$ acts on $L_s$, and hence $L$, by scalar multiplication, so (c) holds.
\vspace{5mm}

\noindent\emph{Case 2. $\underline{(q=2)}$}: Pick any $v\in V$ such that $\lambda_v\neq0$. Shrinking $S$ if necessary, we may assume that $\lambda_v\in R^\times$. Write $\phi^\infty_v=\binom{a_v\ b_v}{0\ 1}$. Since $\phi^\infty_v\bigl((0:1)\bigr)=(\lambda_v:1)$, we deduce that $b_v=\lambda_v$. Moreover, the automorphism $\phi^\infty_v$ has order two, so
$$
\begin{pmatrix}a_v & \lambda_v\\0 &1\end{pmatrix}^2=\begin{pmatrix}a_v^2 & (a_v+1)\lambda_v\\0 &1\end{pmatrix}=\begin{pmatrix}1& 0\\0 &1\end{pmatrix}.
$$
 Since $\lambda_v\in R^\times$, it follows that $(a_v+1)=0$; hence $a_v=1$. For arbitrary $w\in V$, write $\phi^\infty_w=\binom{a_w\ b_w}{0\ 1}$. By the same reasoning as above, we have $b_w=\lambda_w$. Since $\phi^\infty_v$ and $\phi^\infty_w$ commute, we find that $\lambda_va_w+\lambda_w=\lambda_v+\lambda_w.$ Thus $a_w=1$ and (b) holds. Also, in this case (c) holds trivially. 
\bigskip

For general $S$, let $\pi^\infty\colon C^\infty\to S$ and $\pi\colon L\to S$ be the corresponding structure morphisms. We may choose an affine open covering $S=\cup_{\alpha}(U_\alpha=\Spec R_\alpha)$ with isomorphisms 
$$\psi^\infty_\alpha\colon (\pi^\infty)^{-1}(U_\alpha)\stackrel\sim\to \BP^1_{R_\alpha}$$
 sending the $0$- and $\infty$-sections to $(0:1)$ and $(1:0)$ respectively. These restrict to isomorphisms 
 $$\psi_\alpha\colon L_\alpha:=\pi^{-1}(U_\alpha)\stackrel\sim\to \BA^1_{R_\alpha}.$$ The above argument provides a line bundle structure on each $L_\alpha$ with the desired properties. For each $\alpha$ and $\beta$, let $U_{\alpha\beta}:=U_\alpha\cap U_\beta$. By construction, the automorphism $\psi_\alpha\circ\psi_\beta^{-1}$ of $\BA^1_{U_{\alpha\beta}}$ extends to an automorphism of $\BP^1_{U_{\alpha\beta}}$ and is therefore linear. The line bundle structures on the $L_\alpha$ thus glue to the desired line bundle structure on $L$.
\end{proof}
\begin{corollary}\label{fibnonzero}The map $\lambda\colon V\to L(S)$ is $\BF_q$-linear and fiberwise non-zero.
\end{corollary}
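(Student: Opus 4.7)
The plan is to deduce both assertions from the description of the line bundle structure on $L$ given in Proposition \ref{lbstr}, combined with the identity $\phi_{v,\xi}(\lambda_w)=\lambda_{\xi w+v}$ built into Definition \ref{FernDef}(1). Everything takes place in the open subscheme $L\subset C^\infty$, on which both the $G$-action and the sections $\lambda_v$ for $v\in V$ are naturally defined.

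For $\BF_q$-linearity, I would combine $\phi_v(\lambda_w)=\lambda_{w+v}$ with Proposition \ref{lbstr}(b), which says that $\phi_v$ acts on $L$ as translation by $\lambda_v$; this immediately gives $\lambda_{w+v}=\lambda_w+\lambda_v$ in $L(S)$. Similarly, $\phi_\xi(\lambda_w)=\lambda_{\xi w}$ for $\xi\in\BF_q^\times$, together with Proposition \ref{lbstr}(c) (scalar multiplication by $\xi$), yields $\lambda_{\xi w}=\xi\lambda_w$. The case $\xi=0$ reduces to $\lambda_0=0$, which is Proposition \ref{lbstr}(a). Combining these gives $\lambda_{\xi w+v}=\xi\lambda_w+\lambda_v$, i.e., $\BF_q$-linearity.

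For fiberwise non-zero, I would fix $s\in S$ and exhibit some $v\in V$ with $\lambda_v(s)\neq 0$ in $L_s$. Let $V_0\subsetneq V_1\subsetneq\cdots\subsetneq V_m=V$ be the flag attached to the fiber $C_s$ by Proposition \ref{FlagFib}, and pick any $v\in V_m\setminus V_{m-1}$, which exists since $V_{m-1}\subsetneq V_m$. By Lemma \ref{triv}, the action of $V_m$ on the $\infty$-component $E_m\subset C_s$ factors through $V_m/V_{m-1}$ faithfully, so $\phi_v|_{E_m}$ is non-trivial, and hence so is its restriction to the open dense subscheme $L_s=E_m\setminus\{\lambda_\infty(s)\}$. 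On the other hand, Proposition \ref{lbstr}(b), applied fiberwise, shows that this restriction acts as translation by $\lambda_v(s)\in L_s$. A non-trivial translation of an affine line has a non-zero translation vector, so $\lambda_v(s)\neq 0$.

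The hard part, such as it is, is confirming that the line bundle structure and $G$-action of Proposition \ref{lbstr} restrict sensibly to the fibers. This should be immediate from the construction, since the trivializations $L|_{U_\alpha}\stackrel\sim\to\BA^1_{R_\alpha}$ used in the proof of Proposition \ref{lbstr} commute with base change, and consequently the translation-by-$\lambda_v$ description of $\phi_v$ descends to each $L_s$.
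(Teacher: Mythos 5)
Your proof is correct and follows essentially the same route as the paper: $\BF_q$-linearity from the translation/scaling description in Proposition \ref{lbstr} together with $\phi_{v,\xi}(\lambda_w)=\lambda_{\xi w+v}$, and fiberwise non-vanishing by choosing $v$ outside the second-to-last step of $\CF_s$. Your detour through Lemma \ref{triv} (faithfulness of the $V/V_{m-1}$-action on the $\infty$-component, hence a non-trivial translation with non-zero vector) is just a more explicit justification of the paper's bare assertion that $\lambda_w(s)\neq\lambda_0(s)$ in $L_s$.
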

\begin{proof} Let $s\in S$, and choose $w\in V$ not contained in the second to last step of $\CF_s$. Then $\lambda_w(s)\neq \lambda_0(s)$ in $L_s$; hence $\lambda$ is fiberwise non-zero. Since $\phi_v\circ\phi_w=\phi_{v+w}$, it follows from Proposition \ref{lbstr}.2 that $\lambda_{v+w}=\lambda_{v}+\lambda_{w}$ for all $v,w\in V$. The action of $\BF_q^\times$ on $L(S)$ extends to $\BF_q$, and Proposition \ref{lbstr}.3 implies that $\lambda_{\xi v}=\xi\cdot \lambda_v$ for all $\xi\in \BF_q$ and $v\in V$. The $\BF_q$-linearity follows.
\end{proof}
\begin{proposition}[Compatibility with base change]\label{basechangelb}Let $f\colon T\to S$ be a morphism and let $M\to T$ be the line bundle associated to the $V$-fern $f^*C$ as in Proposition \ref{lbstr}. Then there is a unique isomorphism $M\stackrel\sim\to f^*L$ such that the following induced diagram commutes:
\begin{equation}\label{basechangelbdiag}
\begin{gathered}\xymatrix{V\ar[r]\ar[rd]& M(T)\ar[d]^\wr\\&f^*L(T).}\end{gathered}
\end{equation}
\end{proposition}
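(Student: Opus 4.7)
The plan is to reduce everything to Proposition \ref{basechangeCi} (base change compatibility of the contraction to the $i$-component) and then verify linearity by arguing locally on $S$.

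First, apply Proposition \ref{basechangeCi} with $i=\infty$ to obtain a unique isomorphism $\gamma\colon (f^*C)^\infty \stackrel\sim\to f^*(C^\infty)$ of $\hat V$-marked curves over $T$, compatible with the contraction morphisms. Because $\gamma$ matches the $\hat V$-markings, it sends the $\infty$-section of $(f^*C)^\infty$ to the pullback of the $\infty$-section of $C^\infty$. Removing these sections from both sides yields an isomorphism $M \stackrel\sim\to f^*L$ of $T$-schemes. The $V$-marking $\lambda^{(T)}\colon V \to M(T)$ associated to $f^*C$ is the restriction of the $\hat V$-marking on $(f^*C)^\infty$ to $V$, which corresponds under $\gamma$ to the pulled-back map $f^*\lambda\colon V \to (f^*L)(T)$; this gives the commutativity of \eqref{basechangelbdiag}.

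Second, verify that the resulting scheme isomorphism respects the line bundle structures. Cover $S$ by affine opens $U_\alpha = \Spec R_\alpha$ admitting trivializations $\psi^\infty_\alpha\colon C^\infty|_{U_\alpha} \stackrel\sim\to \BP^1_{R_\alpha}$ of the form used in the proof of Proposition \ref{lbstr}, sending the $0$- and $\infty$-sections to $(0:1)$ and $(1:0)$. Pulling back by $f$ and composing with $\gamma$ yields analogous trivializations of $(f^*C)^\infty$ over $f^{-1}(U_\alpha)$, precisely of the form required to apply the construction of Proposition \ref{lbstr} to $f^*C$. Under these matched trivializations, the line bundle structures on both $M|_{f^{-1}(U_\alpha)}$ and $f^*L|_{f^{-1}(U_\alpha)}$ are the standard structure on $\BA^1_{f^{-1}(U_\alpha)}$; hence the isomorphism is linear on each $f^{-1}(U_\alpha)$, and the gluing is consistent since both sides use the same transition functions (pulled back from those of $C^\infty$).

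Finally, uniqueness: any two isomorphisms making \eqref{basechangelbdiag} commute differ by a line bundle automorphism of $f^*L$ over $T$ fixing every $\lambda_v$. Since $\lambda\colon V \to L(S)$ is fiberwise non-zero by Corollary \ref{fibnonzero}, there is locally on $T$ some $v \in V$ with $\lambda_v$ nowhere vanishing, and a line bundle automorphism fixing a nowhere-vanishing section must be the identity. The main technical care is in the second step, where one must verify that the local trivializations arising from the two applications of Proposition \ref{lbstr} match up after pullback via $f$; this is routine once one unwinds the definitions, but is the only place where the proof requires real bookkeeping.
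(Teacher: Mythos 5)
Your proposal is correct and follows essentially the same route as the paper: invoke Proposition \ref{basechangeCi} to identify $(f^*C)^\infty$ with $f^*(C^\infty)$ compatibly with the $\hat V$-markings, restrict away from the $\infty$-sections, deduce linearity from the fact that the isomorphism extends to the ambient $\BP^1$-bundles and fixes the $0$-section, and get uniqueness from the $V$-marked sections generating the line bundles. The extra local bookkeeping you do in the linearity step and the explicit "fiberwise non-zero section" argument for uniqueness are just more detailed versions of the paper's one-line justifications.
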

\begin{proof}By Proposition \ref{basechangeCi}, there is an isomorphism $(f^*C)^\infty\stackrel\sim\to f^*(C^\infty)$ compatible with the $\hat V$-markings. This induces an isomorphism of $T$-schemes $M\stackrel\sim\to f^*L$ such that (\ref{basechangelbdiag}) commutes. Since it is induced by an isomorphism of $\BP^1$-bundles and preserves the $0$-marked section, it is in fact an isomorphism of line bundles. The uniqueness follows from the fact that the $V$-marked sections generate $M$ and $f^*L$.
\end{proof}

\subsection{Grafting ferns}\label{grafting-section}
Let $0\neq V'\subsetneq V$ and denote the quotient space $V/V'$ by $\oV$. Let $(C',\lambda',\phi')$ and $(\oC,\overline{\lambda},\overline{\phi})$ be $V'$- and $\oV$-ferns respectively over a scheme $S$. We construct a $V$-fern $(C,\lambda,\phi)$ from $C'$ and $\oC$. On fibers $C$ will be given by attaching a copy of $C'$ to each $\oV$-marked point of $\oC$. We then use the $V'$- and $\oV$-fern structures on $C'$ and $\oC$ to define a $\hat V$-marking and $G$-action on $C$.

\subsubsection*{Clutching}
In \cite{Knud} a \emph{prestable curve} $\pi\colon X\to S$ is defined to be a flat and proper morphism such that the geometric fibres of $\pi$ are reduced curves with at most ordinary double points. Such curves are \textbf{not} assumed to be connected. Let $\pi\colon X\to S$ be a prestable curve with $\lambda_1,\lambda_2\colon S\to X$ non-crossing sections such that $\pi$ is smooth at the points $\lambda_i(s)$ for all $s\in S$. Knudsen proves the following, which is known as the \emph{clutching operation}:

\begin{proposition}[\cite{Knud},Theorem 3.4]\label{clutching} There is a commutative diagram $$
    \xymatrix{
              X\ar[r]^{p}\ar[d]^\pi & X'\ar[d]^{\pi'} \\ S\ar@{=}[r] &S}
$$
such that
\begin{enumerate}[label=(\alph*)]
\item $p\circ\lambda_1=p\circ\lambda_2$ and $p$ is universal with respect to this property, i.e., for all $S$-morphisms $q\colon X\to Y$ such that $q\circ\lambda_1=q\circ\lambda_2$, there exists a unique $S$-morphism $q'\colon X'\to Y$ such that $q=q'\circ p$.
\item $p$ is a finite morphism.
\item If $\os$ is a geometric point of $S$, the fibre $X'_\os$ is obtained from $X_\os$ by identifying $\lambda_1(\os)$ and $\lambda_2(\os)$ in such a way that the image point is an ordinary double point.
\item As a topological space $X'$ is the quotient of $X$ under the equivalence relation $\lambda_1(s)=\lambda_2(s)$ for all $s\in S$.
\item If $U'\subset X'$ is open and $U:=p^{-1}(U')$, then $$\Gamma(U',\CO_{X'})=\{f\in\Gamma(U,\CO_X)\mid \lambda_1^*(f)=\lambda_2^*(f)\}.$$
\item The morphism $X'\to S$ is flat; hence $X'$ is again a prestable curve by (c).
\end{enumerate}
\end{proposition}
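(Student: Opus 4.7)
The plan is to construct $X'$ explicitly as a ringed space by taking the topological quotient and defining the structure sheaf via the equalizer in (e), then to verify this ringed space is a scheme by producing explicit affine local models. First I would set $|X'|$ to be the quotient of $|X|$ by the relation $\lambda_1(s) \sim \lambda_2(s)$ for all $s\in S$ with quotient map $p$, which immediately yields (d), and define
$$\CO_{X'}(U') := \{f \in \CO_X(p^{-1}U') \mid \lambda_1^* f = \lambda_2^* f\}$$
for each open $U' \subset X'$, giving (e). The universal property (a) is then formal from the equalizer description: any $S$-morphism $q\colon X \to Y$ with $q\circ\lambda_1 = q\circ\lambda_2$ pulls $\CO_Y$ back into $\CO_{X'}$, so factors uniquely through $X'$ as a morphism of locally ringed spaces.

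The serious step is to show $(|X'|, \CO_{X'})$ is a scheme. Away from $T := p(\lambda_1(S) \cup \lambda_2(S))$ the map $p$ is a homeomorphism onto its image and $\CO_{X'}$ agrees with $p_*\CO_X$, so $X'$ is locally isomorphic to $X$ there. Near a point $x' \in T$, using smoothness of $\pi$ at the $\lambda_i(s)$ together with the non-crossing hypothesis, I would work Zariski-locally on $S$ and, after passing to an \'etale neighborhood if necessary, produce disjoint affine opens $U_i \ni \lambda_i(s)$ with $U_i \cong \Spec A[t_i]$ and $\lambda_i = \{t_i = 0\}$. The candidate local model for $X'$ is then $\Spec R$ with $R := A[u,v]/(uv)$, and the ring map $R \hookrightarrow A[t_1] \times A[t_2]$ sending $u \mapsto (t_1, 0)$ and $v \mapsto (0, t_2)$ identifies $R$ with exactly the equalizer $\{(f,g) \mid f(0) = g(0)\}$. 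Matching this affine model with the ringed-space $X'$ on $p(U_1 \cup U_2)$ and descending from the \'etale cover back to $S$ exhibits $X'$ as a scheme. From this local description the remaining properties are direct: $A[t_1] \times A[t_2]$ is generated as an $R$-module by $1$ and $(1,0)$, giving (b); the geometric fibre $\Spec \kappa[u,v]/(uv)$ is the standard ordinary double point with $\lambda_1(\os)$ and $\lambda_2(\os)$ both mapping to its origin, giving (c); and $R$ is a free $A$-module with basis $\{u^i\}_{i\geq 0} \cup \{v^j\}_{j\geq 1}$, hence flat over $A$, which (since the $U_i$ cover $X'$ near $T$) gives flatness of $X'$ over $S$, i.e.\ (f).

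The principal obstacle will be the rigorous matching of the sheaf-theoretic quotient with the explicit affine local model, particularly when $\lambda_1(S)$ and $\lambda_2(S)$ lie on the same connected component of $X$ and when an \'etale descent is required to reach the standard coordinate presentation. The non-crossing hypothesis is used precisely to separate $U_1$ and $U_2$ in $X$, and smoothness at the sections is what furnishes the local coordinate $t_i$ presenting the completion of $X$ along $\lambda_i(S)$ as $A[\![t_i]\!]$. Once these local identifications are in place, gluing the affine charts near $T$ with the isomorphic complement produces $X'$ globally, and all of (a)--(f) follow from the construction.
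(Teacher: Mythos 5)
The statement you are proving is one the paper does not prove at all: it is quoted verbatim from Knudsen (\cite{Knud}, Theorem 3.4), so the only comparison available is with Knudsen's construction. Your overall strategy --- take the topological quotient, define $\CO_{X'}$ as the equalizer subsheaf of $p_*\CO_X$, deduce (a), (d), (e) formally, and then verify schemeness by exhibiting affine local models near the glued sections --- is exactly the standard pinching construction and is in the same spirit as Knudsen's proof. The formal part is fine: the equalizer presheaf is a kernel of a map of sheaves, hence a sheaf, and the universal property is immediate.

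The genuine gap is the local model. You cannot, in general, find an affine open $U_i\ni\lambda_i(s)$ with $U_i\cong\Spec A[t_i]$, neither Zariski-locally nor after an \'etale base change: smoothness of $\pi$ along the section only gives an \'etale map to $\BA^1_A$ near $\lambda_i(S)$, and (for instance) no open of a positive-genus fiber embeds in $\BA^1$. Consequently the identification of the local model with $\Spec A[u,v]/(uv)$ is not available, and the statements you derive from it ((b), (c), (f)) are left unproved as written. The fix is to drop the polynomial-ring normalization entirely: after shrinking $S$ to an affine $\Spec A$ one finds (using properness of the fibers and non-crossing of the sections --- itself a point worth a sentence) disjoint affine opens $U_i=\Spec B_i$ containing $\lambda_i(S)$ with surjections $B_i\onto A$, and the correct local model is $\Spec R$ with $R:=B_1\times_A B_2$. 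All the needed facts hold at this level of generality: $B_1\times B_2$ is generated over $R$ by $(1,1)$ and $(1,0)$, giving (b); the exact sequence $0\to R\to B_1\times B_2\to A\to 0$ with flat outer terms gives flatness of $R$ over $A$, i.e.\ (f); and the topological quotient description (d) follows because the conductor of $R$ in $B_1\times B_2$ is $I_1\times I_2$. The model $k[[u,v]]/(uv)$ only ever appears on \emph{completed} local rings of geometric fibers, via $k[[t_1]]\times_k k[[t_2]]\cong k[[u,v]]/(uv)$, which is precisely what (c) requires and is where your appeal to formal coordinates is legitimate. With that correction your outline closes up; as written, the step ``produce $U_i\cong\Spec A[t_i]$'' fails.
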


\subsubsection*{Grafting}
Let $U\subset V$ be a subspace such that $V=U\oplus V'$.  We consider the prestable curve
$$\tilde{C}:= \oC\sqcup\Bigl(\bigsqcup_{u\in U} C'\Bigr).$$
For each $u\in U$, denote corresponding copy of $C'$ in $\tilde C$ by $(C')_u$, with infinity section denoted by $\infty_{u}$. We define a $(u+V')$-marking on $(C')_u$ by sending each $u+v'$ to the $v'$-marked section of $(C')_u$. Defining the $\infty$-section of $\tilde{C}$ to be the $\infty$-section of $\oC\subset \tilde C$, we obtain a $\hat V$-marking $$\tilde\lambda\colon \hat V\to \tilde C(S).$$ Now define a left $G$-action on $\tilde C$ as follows: Let $(v,\xi)\in G$ and write $v$ uniquely as $v=u+v'$ with $u\in U$ and $v'\in V'$. Then $(v,\xi)$ acts on $\tilde{C}$ via
\begin{enumerate} 
\item $\overline{\phi}_{\ov,\xi}$ on $\oC\subset \tilde{C}$, where $\ov$ is the image of $v$ in $\oV.$
\item mapping $(C')_r$ identically to $(C')_{\xi r+u}$ for each $r\in U$ and then acting via $\phi'_{v',\xi}.$
\end{enumerate}
This defines a left $G$-action on $\tilde C$ that is compatible with the $\hat V$-marking. Denote the corresponding homomorphism by 
$$\tilde \phi\colon G\to \Aut _S(\tilde C).$$

By iterating the clutching operation, for each $u\in U$ we identify $\infty_{u}$ and $\lambda_{\overline{u}}$, where $\overline{u}$ is the image of $u$ in $\oV$, and obtain a connected prestable curve $C$ with a morphism $p\colon \tilde C\to C$. Moreover, for each $(v,\xi)=(u+v',\xi)\in G$ and $r\in U$, we have 
$$(p\circ\tilde\phi_{v,\xi})\circ\tilde\lambda_{r}=p\circ\tilde\lambda_{\xi r+u}=p\circ\infty_{\xi r +u}=(p\circ\tilde\phi_{v,\xi})\circ \infty_r.$$
 By the universality of $p$ (Proposition \ref{clutching}.1), it follows that there exists a unique $\phi_{v,\xi}\in\Aut_S(C)$ making the following diagram commute: $$
    \xymatrix{
              \tilde C\ar[r]^{\tilde\phi_{v,\xi}}\ar[d]^p & \tilde C\ar[d]^p \\ C \ar[r]_{\phi_{v,\xi}} &C.}
$$ We thus obtain a faithful left $G$-action
$$\phi\colon G\to \Aut _S(C).$$
 In addition, the $\hat V$-marking $\tilde\lambda$ induces a $\hat V$-marking 
 $$\lambda\colon\hat V\to C(S).$$ 

\begin{proposition}\label{grafting} The triple $(C,\lambda,\phi)$ is a $V$-fern.
\end{proposition}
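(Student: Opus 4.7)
The plan is to verify the two defining conditions of a $V$-fern in turn, after first establishing that the underlying marked curve $(C, \lambda)$ is a stable $\hat V$-marked curve of genus~$0$. The key observation is that the construction is essentially a fiberwise gluing: each fiber $C_s$ is obtained from $\oC_s$ by replacing each $\overline{u}$-marked point (for $u \in U$) with a nodal attachment of the copy $(C')_u$ of $C'_s$, where the $\infty$-section of $(C')_u$ becomes the attachment node.

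For the stability of $(C, \lambda)$, I argue fiberwise. Flatness and properness of $C$ over $S$ follow from Proposition \ref{clutching} applied iteratively, since $\tilde C$ is flat and proper over $S$ as a finite disjoint union of such. Each fiber $C_s$ is a tree of $\BP^1$'s (hence geometrically reduced, connected, with $\dim H^1(\CO)=0$ and only ordinary nodal singularities), because $\tilde C_s$ is a disjoint union of trees of $\BP^1$'s and clutching two smooth sections in separate connected components yields a new tree. The $\hat V$-marked points in $C_s$ are pairwise distinct and smooth: a marked point $\lambda_{u+v'}$ lies in the smooth locus of $(C')_u$, and distinct choices of $u \in U$ or $v' \in V'$ give points on different components or on different sections of a common component. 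For the counting condition, any irreducible component $E$ of $C_s$ comes either from $\oC_s$ or from some $(C')_u$. In the first case, each $\overline{v}$-marked point on $E$ becomes a node while all other special points remain special, so the count of special points is preserved and hence $\geq 3$. In the second case, the $\infty$-marked point on $E$ (if any) becomes a node while the remaining $V'$-marked points stay marked; again the count is preserved and is $\geq 3$ by stability of $C'$.

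For the $G$-action, faithfulness is immediate: if $\phi_{v,\xi}=\id_C$, then pulling back the marked sections forces $\xi w + v = w$ for all $w \in V$, so $(v,\xi)=(0,1)$. Condition $(1)$ of Definition \ref{FernDef} reduces to checking $\phi_{v,\xi}\circ\lambda_w = \lambda_{\xi w + v}$ for every $w \in V$ (the case $w=\infty$ is clear). Writing $v = u + v'$ and $w = r + w'$ with $u, r \in U$ and $v', w' \in V'$, the action $\tilde\phi_{v,\xi}$ sends the $w'$-marked section of $(C')_r$ to the $(\xi w' + v')$-marked section of $(C')_{\xi r + u}$. After passing through the clutching morphism this is precisely $\lambda_{\xi w + v}$, since $\xi w + v = (\xi r + u) + (\xi w' + v')$ is the unique decomposition in $U \oplus V'$.

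Condition $(2)$ requires identifying the chain from $0$ to $\infty$ in $C_s$ together with the induced $\BF_q^\times$-action on its components. I would show that this chain is the concatenation of the chain from $0$ to $\infty$ in $(C')_0$ (the copy indexed by $0 \in U$) followed by the chain from $\overline{0}$ to $\overline{\infty}$ in $\oC_s$, the two being joined at the node $p(\infty_0)=p(\overline\lambda_{\overline 0})$. On components coming from $(C')_0$, the required isomorphism to $\BP^1_{k(s)}$ and the prescribed $\BF_q^\times$-action come from the $V'$-fern structure of $C'$, since $(0,\xi)$ maps $(C')_0$ identically to itself and then acts as $\phi'_{0,\xi}$. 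On components coming from $\oC_s$, the required data come from the $\oV$-fern structure of $\oC$ via $\overline{\phi}_{\overline{0},\xi}=\overline{\phi}_\xi$. The main obstacle is justifying this chain decomposition rigorously: one must verify that no irreducible component of $\tilde C_s$ outside the two asserted subchains can lie on the path from $\lambda_0$ to $\lambda_\infty$, which reduces to the fact that the natural morphisms $\oC_s \to C_s$ and $(C')_0 \to C_s$ are closed immersions onto closed subcurves meeting in the single node $p(\infty_0)$.
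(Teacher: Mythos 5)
Your proposal is correct and takes essentially the same route as the paper: the paper dismisses stability and the compatibility of the $G$-action with the marking as immediate from the construction, and verifies condition (2) of Definition \ref{FernDef} exactly as you do, by observing that each component of the chain from $0$ to $\infty$ in $C_s$ lifts (via the clutching morphism $p$) to a component of the chain in $(C')_0$ or in $\oC_s$, whence the required isomorphism to $\BP^1_{k(s)}$ with the prescribed $\BF_q^\times$-action. Your additional fiberwise verification of stability and of condition (1) just fills in details the paper leaves implicit.
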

\begin{proof}
It is clear that $C$ is a stable $\hat V$-marked curve. It remains to verify condition (2) in Definiton \ref{FernDef}. Let $s\in S$, and let $\SE_s$ be the chain from $0$ to $\infty$ in $C_s$. Let $E\in\SE_s$. Since the $G$-action on $C$ is defined so that $p\colon \tilde C\to C$ is $G$-equivariant, it follows that the unique irreducible component $\tilde E\subset \tilde C_s$ mapping isomorphically onto $E$  is contained in either $\SE'_s$ or $\overline{\SE}_s$, the chains from $0$ to $\infty$ in $(C'_s)_0$ and $\oC$ repectively. There thus exists an isomorphism $\tilde E\stackrel\sim\to \BP^1_{k(s)}$ satisfying condition (2) in Definiton \ref{FernDef}. The composite 
$$\xymatrix{E\ar[r]_-\sim^{p_s^{-1}}&\tilde E\ar[r]_-\sim&\BP^1_{k(s)}}$$ provides the desired isomorphism for $E$.
\end{proof}

\begin{definition} We refer to the construction of $(C,\lambda,\phi)$ as \emph{grafting} and call $C$ the \emph{graft} of $C'$ and $\oC$.
\end{definition}

Note that the grafting construction depends on the choice of complementary subspace $U\subset V$ with $V=U\oplus V'.$ However, the isomorphism class of the resulting $V$-fern is independent of this choice. Indeed, let $T\subset V$ be another such subspace. With similar notation as above let $(C^U,\lambda^U,\phi^U)$ and $(C^T,\lambda^T,\phi^T)$ be the corresponding grafts of $C'$ and $\oC$. Then we have:

\begin{proposition} The $V$-ferns $C^U$ and $C^T$ are isomorphic.
\end{proposition}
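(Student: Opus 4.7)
The plan is to construct a single explicit isomorphism $C^U \stackrel\sim\to C^T$ of stable $\hat V$-marked curves and then rely on the automatic $G$-equivariance of such morphisms between $V$-ferns to conclude. The starting observation is that the composites $U\into V\onto \oV$ and $T\into V\onto \oV$ are both isomorphisms, which yields a canonical bijection $\sigma\colon U\to T$ characterized by $u-\sigma(u)\in V'$ for every $u\in U$.

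First I would build a candidate isomorphism $\tilde f\colon\tilde C^U\stackrel\sim\to\tilde C^T$ on the prestable curves that exist \emph{before} clutching. On the common summand $\oC$ I set $\tilde f$ to be the identity, and on the summand $(C')_u\subset\tilde C^U$ I take $\tilde f$ to be the automorphism $\phi'_{u-\sigma(u),1}$ of $C'$, viewed as an isomorphism $(C')_u\stackrel\sim\to (C')_{\sigma(u)}$. Since $\sigma$ is a bijection and each $\phi'_{u-\sigma(u),1}\in\Aut_S(C')$, the map $\tilde f$ is an isomorphism of $S$-schemes.

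Next I would descend $\tilde f$ through the iterated clutching. The clutching identifies $\infty_u\in(C')_u$ with $\overline\lambda_{\bar u}\in\oC$ in $\tilde C^U$, and $\infty_t\in(C')_t$ with $\overline\lambda_{\bar t}\in\oC$ in $\tilde C^T$. Compatibility of $\tilde f$ with these identifications reduces to two checks: on the one hand, condition~(1) of Definition~\ref{FernDef} applied to $w=\infty$ forces $\phi'_{v',1}(\lambda'_\infty)=\lambda'_\infty$, so $\tilde f(\infty_u)=\infty_{\sigma(u)}$; on the other hand, $\bar u=\overline{\sigma(u)}$ in $\oV$ by the defining property of $\sigma$, so $\overline\lambda_{\bar u}=\overline\lambda_{\overline{\sigma(u)}}$ and this point is preserved by the identity on $\oC$. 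The universal property of clutching (Proposition~\ref{clutching}(a)) then produces a unique $S$-morphism $f\colon C^U\to C^T$, which is an isomorphism by Lemma~\ref{fibiso} applied fiberwise.

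Finally I would verify that $f$ preserves the $\hat V$-markings. For $v=u+v'$ with $u\in U$ and $v'\in V'$, the $v$-section of $C^U$ is the $v'$-section of $(C')_u$; rewriting $v=\sigma(u)+w'$ with $w':=v'+(u-\sigma(u))\in V'$, the $v$-section of $C^T$ is the $w'$-section of $(C')_{\sigma(u)}$, and $\phi'_{u-\sigma(u),1}(\lambda'_{v'})=\lambda'_{w'}$ by the definition of the fern action on markings. The $\infty$-marked sections of both $C^U$ and $C^T$ come from the $\infty$-section of $\oC$, so they also agree. Thus $f$ is a morphism of stable $\hat V$-marked curves between $V$-ferns, and by the remark following Corollary~\ref{uniquemorph} it is automatically $G$-equivariant, giving the desired isomorphism of $V$-ferns. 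The only step with real content is the compatibility of $\tilde f$ with clutching, which rests entirely on the fact that any $V$-fern structure fixes the $\infty$-marked section.
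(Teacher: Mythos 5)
Your proposal is correct and follows essentially the same route as the paper: build the isomorphism $\tilde f$ on the disjoint union before clutching by using the identity on $\oC$ and the translation $\phi'_{u-\sigma(u)}$ on each $(C')_u$, check compatibility with the markings and the clutching identifications, and descend via the universal property of Proposition \ref{clutching}(a). The marking computation $v=u+v'_u=\sigma(u)+\bigl(v'_u+(u-\sigma(u))\bigr)$ is exactly the one in the paper's proof.
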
 
\begin{proof} For each $u\in U$ there exists a unique $t\in T$ such that $u\equiv t$ mod $V'$. Write $u=t+v'$ with $v'\in V'$. Consider the isomorphism $$f_u\colon (C')_u\subset \tilde C^U\stackrel\sim\to (C')_t\subset \tilde C^T$$ induced by the action of $v'$ on $C'$. For each $v\in V$, there exist unique $v_u',v_t'\in V'$ such that $v=u+v_u'=t+v'_t$. Since $u=t+v'$ we have $t+(v'+v'_u)=t+v'_t$ and hence $v'_t=v'+v'_u$. It follows that $f_u\circ\tilde\lambda^U_v=\tilde\lambda^T_v$ for all $v\in V$. We may thus combine the $f_u$ for varying $u$ with the identity on $\oC\subset \tilde C^U\stackrel\sim\to \oC\subset\tilde C^{T}$ to obtain an isomorphism $\tilde f\colon \tilde C^U\to \tilde C^T$ preserving $\hat V$-marked sections. Using the universality from Proposition \ref{clutching}.1 of the morphisms $p_U\colon \tilde C^U\to C^U$ and $p_T\colon \tilde C^T\to C^T$, this induces the desired isomorphism $f\colon C^U\stackrel\sim\to C^T$ of stable $\hat V$-marked curves.
\end{proof}
\begin{remark}The fact that the clutching operation is functorial (\cite[\S10.8]{Arb}) implies that grafting yields a natural transformation $\Fern_{V'}\times\Fern_{\oV}\to \Fern_V$.
\end{remark}
We can generalize the construction in the following way: Let $\CF=\{V_0,\ldots, V_m\}$ be a flag and, for each $0<i\leq m$, let $C_i$ be a $(V_i/V_{i-1})$-fern. By iteratively grafting, one obtains a $V$-fern $C$ from the $C_i$.

\section{$\CF$-ferns and flag coverings}
Let $(C,\lambda,\phi)$ be a $V$-fern over $S$. For each $s\in S$, we obtain a flag $\CF_s$ from the fiber $C_s$ using Proposition \ref{FlagFib}. 
\begin{definition}
We call $\CF_s$ the \emph{flag associated to $s$} (or to $C_s$, in case of ambiguity).
\end{definition}
\subsection{Contraction and the flags associated to fibers}
For any flag $\CF=\{V_0,\ldots,V_m\}$ of $V$, we obtain a flag of $V'$ given by $$\CF\cap V':=\{V_0\cap V',\ldots,V_m\cap V'\}.$$ 

\begin{proposition}\label{flgctr} Let $(C',\lambda',\phi')$ be the $V'$-fern obtained from $C$ by contraction. Let $s\in S$, and let $\CF:=\CF_s=\{V_0,\ldots, V_m=V\}$ and $\CF':=\CF'_s=\{V'_0,\ldots,V'_\ell=V'\}$ be the flags associated to $C_s$ and $C'_s$ respectively. Then $\CF'=\CF\cap V'$.
\end{proposition}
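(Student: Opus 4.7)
The plan begins by reducing to the case $S=\Spec k$ for a field $k$, since the flags $\CF_s$ and $\CF'_s$ are defined intrinsically on fibers. By Lemma \ref{contrequivar} the contraction morphism $\gamma_s\colon C_s\to C'_s$ is $V'$-equivariant and sends $\lambda_0,\lambda_\infty$ to $\lambda'_0,\lambda'_\infty$; hence the image $\gamma_s(\SE_s)$ is a path in $C'_s$ joining $\lambda'_0$ to $\lambda'_\infty$ and must contain the chain $\SE'_s$. Writing $0=:i_0<i_1<\cdots<i_\ell$ for the indices of the components $E_i\in\SE_s$ that $\gamma_s$ does not contract to a point, each restriction $\gamma_s|_{E_{i_j}}$ is an isomorphism onto $E'_j$. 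By $V'$-equivariance one has $V'_j=\Stab_{V'}(E'_j)=V'\cap\Stab_V(E_{i_j})=V_{i_j}\cap V'$, so the proposition reduces to the following criterion: $E_i$ is contracted by $\gamma_s$ if and only if $V_i\cap V'=V_{i-1}\cap V'$.

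To verify this criterion, I would count the \emph{$\hat V'$-essential} special points of $E_i$, meaning points of $E_i$ which are either $\hat V'$-marked or nodes of $C_s$ such that the connected component of $\overline{C_s\setminus E_i}$ meeting $E_i$ at that node contains at least one $\hat V'$-marked point. A component $E_i$ survives in $C'_s$ precisely when it carries at least three such essential points. For $i\geq 2$, Corollary \ref{specialpoints} identifies the non-$y_i$ special points on $E_i$ with the $V_i$-orbit of $x_i$; using that $V_{i-1}$ fixes $E_i$ pointwise (Lemma \ref{triv}) and permutes each side branch as a whole, the side branches are indexed by the non-zero cosets of $V_{i-1}$ in $V_i$, and the branch attached at $\phi_v(x_i)$ carries precisely the markings $\lambda_w$ with $w\in v+V_{i-1}$. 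Such a branch meets a $\hat V'$-marking iff $(v+V_{i-1})\cap V'\neq\emptyset$, and by the modular law (applied to $V_{i-1}\subset V_i$ and $V'$) the cosets for which this holds form exactly the image of $(V_i\cap V')/(V_{i-1}\cap V')$ in $V_i/V_{i-1}$. Together with the two chain-side contributions at $x_i$ and $y_i$, which are always essential because $\lambda_0$ and $\lambda_\infty$ persist, $E_i$ carries $|(V_i\cap V')/(V_{i-1}\cap V')|+1$ essential points; this is $\geq 3$ exactly when $V_i\cap V'\supsetneq V_{i-1}\cap V'$. The edge cases $i=1$ and $i=m$ are analogous, with $\lambda_0$ and $\lambda_\infty$ themselves supplying essential markings on $E_1$ or $E_m$ in place of certain side branches.

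The main technical obstacle is the characterization of contracted components via this essential-point count. Rigorously, one must argue that a subtree of $C_s$ free of $\hat V'$-markings is collapsed to a single point by the stable $\hat V'$-contraction, and then that the essential-point count on $E_i$ coincides with the number of special points of $\gamma_s(E_i)$ in $C'_s$. This should be established by induction on the number of irreducible components of the subtree, using that an unmarked leaf has only one $\hat V'$-special point and is therefore pruned at each step. A secondary subtlety is to confirm that the $x_i$- and $y_i$-connections of a preserved $E_{i_j}$ remain singular in $C'_s$, which follows because $\lambda_0$ and $\lambda_\infty$ lie on distinct surviving components of $\SE'_s$, so neither side of $E_{i_j}$ is fully collapsed.
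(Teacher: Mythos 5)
Your proposal is correct in outline but takes a genuinely different route from the paper on the harder half of the argument. For the inclusion $\CF'\subset\CF\cap V'$ you and the paper do the same thing: each $E'_j$ is the isomorphic image of a unique chain component $E_{i_j}$, and $V'$-equivariance of $\gamma$ (Lemma \ref{contrequivar}) gives $V'_j=V'\cap\Stab_V(E_{i_j})=V_{i_j}\cap V'$. The divergence is in the reverse inclusion. You prove an exact combinatorial criterion --- $E_i$ survives iff $V_i\cap V'\supsetneq V_{i-1}\cap V'$ --- by counting ``$\hat V'$-essential'' special points, and your count is right: by Corollary \ref{specialpoints} the side branches of $E_i$ are indexed by the nonzero cosets of $V_{i-1}$ in $V_i$, the branch at $\phi_v(x_i)$ carries exactly the markings from $v+V_{i-1}$, and the modular law gives $\bigl|(V_i\cap V')/(V_{i-1}\cap V')\bigr|+1$ essential points, so the stability threshold is exactly proper containment. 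The paper instead avoids the full characterization: for each $i$ it picks the maximal $k\le i$ with $V'\cap(V_k\setminus V_{k-1})\neq\emptyset$ (so $V_i\cap V'=V_k\cap V'$) and only needs the easy direction --- the subtree $T_k$ around $E_k$ contains a $\hat V'$-marked point, so $E_k$ cannot be collapsed --- which follows from the fact that $\lambda'_0,\lambda'_{v'},\lambda'_\infty$ stay distinct in the at-worst-nodal curve $C'_s$. The price of your sharper statement is the lemma you yourself flag as the main obstacle: that a component survives precisely when it has at least three essential points, and that subtrees with no $\hat V'$-markings collapse to single points. This is true and your pruning induction (an unmarked leaf has at most two special points after forgetting markings, hence is contracted) does work, with Propositions \ref{equivdefs} and \ref{contriso} as the available tools, but it is the one step that still needs to be written out in full; the payoff is a complete description of $\SE'_s$ in terms of $\SE_s$, from which both inclusions drop out simultaneously.
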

\begin{proof} We may assume that $S=\Spec(k)$, where $k$ is a field. Let $\SE:=\{E_1,\ldots E_m\}$ and $\SE':=\{E'_1,\ldots E'_\ell\} $ be the chains from $0$ to $\infty$ in $C$ and $C'$ respectively. Fix $1\leq j\leq \ell$. Let $\gamma\colon C\to C'$ denote the contraction map, and let $E$ be the unique irreducible component of $C$ such that $\gamma(E)=E_j'$. Since $\gamma$ preserves the $0$- and $\infty$-marked sections, we must have $\gamma(\SE)=\SE'$. Therefore $E=E_i$ for some $i$. Let $v\in V_i\cap V'.$ We have \begin{equation}\label{veq}E'_j=\gamma(E_i)=\gamma(\phi_v E_i)=\phi'_v \gamma(E_i)=\phi'_v E'_j,\end{equation} where we use the $V'$-equivariance of $\gamma$ given by Lemma \ref{contrequivar}. Thus $v$ stabilizes $E'_j$, and hence $v\in V'_j$. We conclude that $V_i\cap V'\subset V_j'$. Conversely, suppose $v\in V_j'$. Again using the $V'$-equivariance, we obtain $E_j'=\phi'_v \gamma(E_i)=\gamma(\phi_v E_i)$. Since $E_i$ is the unique irreducible component mapping onto $E_j'$, we have $\phi_v E_i= E_i$ and hence $v\in V_i\cap V'.$ It follows that $V'_j=V_i\cap V'$; hence $\CF'\subset \CF\cap V'$.
\vspace{2mm}

For the reverse inclusion, fix $1\leq i\leq m$. Either $V'\cap V_i=\{0\}\in \CF'$, or there exists a maximal $1\leq k\leq i$ such that $V'\cap (V_k\setminus V_{k-1})\neq \emptyset$. In the latter case, one has $V_i\cap V'=V_k\cap V'$. Consider the subtree $T_k$ of $C$ consisting of the connected component of $C\setminus (E_{k-1}\cup E_{k+1})$ containing $E_k$, where $E_0$ and $E_{m +1}$ are understood to be empty. The non-$\infty$-marked points on $T_k$ are precisely the $(V_k\setminus V_{k-1})$-marked points. Since $V'\cap (V_k\setminus V_{k-1})\neq \emptyset$, the contraction morphism $\gamma$ does not contract $E_k$ to a point. Hence $\gamma(E_k)=E'_j$ for some $j$. As before, we deduce that $V'_j=V_k\cap V'=V_i\cap V'.$ Thus $\CF\cap V'\subset \CF'$. This completes the proof.
\end{proof}

\subsection{Flag covering}
For an arbitrary flag $\CF$ of $V$, we define $$S_\CF:=\{s\in S\mid \CF_s\subset \CF\}.$$

\begin{lemma}\label{Sv} Let $v\in V$ and let $S_v\subset S$ be the locus of points $s\in S$ such that $v$ is not contained in the second to last step of $\CF_s$. Then $S_v$ is an open subscheme of $S$.
\end{lemma}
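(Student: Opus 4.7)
The plan is to identify $S_v$ with the complement of the vanishing locus of a global section of the line bundle $L$ constructed in Subsection \ref{LineBundles}. Since the vanishing locus of any section of a line bundle is a closed subscheme of the base, its complement will automatically be open.

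First I would invoke the construction of Section \ref{LineBundles} to obtain from the $V$-fern $(C,\lambda,\phi)$ the line bundle $L$ on $S$ together with the fiberwise non-zero $\BF_q$-linear map $\lambda\colon V\to L(S)$. Each $\lambda_v$ is then a global section of the invertible sheaf of sections $\CL$ of $L$, and its vanishing locus $Z_v\subset S$ is a closed subscheme. The plan reduces to showing $S_v=S\setminus Z_v$.

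The heart of the argument is the pointwise identity: for every $s\in S$ with associated flag $\CF_s=\{V_0,\ldots,V_{m_s}\}$ and chain $\SE_s=E_1,\ldots,E_{m_s}$, one has $\lambda_v(s)=0$ in $L_s$ if and only if $v\in V_{m_s-1}$. To establish this I would first recall Proposition \ref{1-prop-characterizationCi}: the contraction $C_s\to C^\infty_s$ collapses every irreducible component of $C_s$ other than the $\infty$-component $E_{m_s}$, which maps isomorphically onto $C^\infty_s$. In particular $\lambda_0$ is collapsed to the distinguished special point $x_{m_s}\in E_{m_s}$, and by Proposition \ref{lbstr} this point is the zero of $L_s$. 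For arbitrary $w\in V$, the automorphism $\phi_w$ sends $\SE_s$ to the unique chain from $\lambda_w$ to $\lambda_\infty$; this chain enters $E_{m_s}$ at the point $\phi_w(x_{m_s})$, and therefore $\lambda_w^\infty(s)=\phi_w(x_{m_s})$. By Lemma \ref{triv} together with the proof of Corollary \ref{specialpoints}, the stabilizer of $x_{m_s}$ for the action of $V$ on $E_{m_s}$ is exactly $V_{m_s-1}$. Hence $\phi_w(x_{m_s})=x_{m_s}$ if and only if $w\in V_{m_s-1}$. Applying this with $w=v$ yields the pointwise identity, so $S_v=S\setminus Z_v$ is open.

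The main technical subtlety will lie in locating $\lambda_w$ on $C_s$ when it does not sit directly on the chain $\SE_s$: the argument handles this by noting that $\phi_w$ is an isomorphism of $C_s$ carrying $\SE_s$ to the chain from $\lambda_w$ to $\lambda_\infty$, so the entire subtree containing $\lambda_w$ attaches to $E_{m_s}$ at $\phi_w(x_{m_s})$ and is collapsed onto that point by the contraction to the $\infty$-component.
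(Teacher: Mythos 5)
Your proof is correct, but it takes a different route from the paper's. The paper contracts $C$ with respect to $\{0,v,\infty\}$ to get a stable three-pointed curve $C^v$ (which depends on $v$), and identifies $S_v$ with the locus where \emph{all} the sections $\lambda^v_w$, $w\in V$, stay away from $\lambda^v_\infty$; openness then follows because $V$ is finite and the non-coincidence locus of two sections of a separated scheme is open. You instead use the $v$-independent contraction to the $\infty$-component and its associated line bundle $(L,\lambda)$ from Section \ref{LineBundles}, and identify $S_v$ with the non-vanishing locus of the \emph{single} section $\lambda_v\in\CL(S)$. Your pointwise computation is sound: for $m_s>1$ the subtree of $C_s$ containing $\lambda_w(s)$ attaches to $E_{m_s}$ at $\phi_w(x_{m_s})$ and is collapsed there, $\lambda_0$ goes to $x_{m_s}$ (the zero of $L_s$ by Proposition \ref{lbstr}(a)), and the stabilizer of $x_{m_s}$ in $V$ is $V_{m_s-1}$ by Lemma \ref{triv} and the proof of Corollary \ref{specialpoints}; the degenerate case $m_s=1$ also checks out since then $x_1=\lambda_0(s)$ and $V_0=\{0\}$. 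What your approach buys is a cleaner openness criterion (complement of the zero scheme of one line-bundle section, rather than a finite intersection of non-coincidence loci) and reuse of machinery already built for the representing morphism; what the paper's buys is independence from the line-bundle construction of Proposition \ref{lbstr}, so Lemma \ref{Sv} can be stated and proved with only the contraction formalism of Section \ref{stablecontractions}.
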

\begin{proof} Consider the contraction $C^v$ of $C$ with respect to $\{0,v,\infty\}$ and denote the $\hat V$-marked sections of $C^v$ by $(\lambda^v_w)_{w\in \hat V}$. We claim that 
$$S_v=\{s\in S\mid \lambda^v_w(s)\neq \lambda^v_\infty(s)\mbox{ for all $w\in V$}\}.$$ 
Indeed, let $s\in S$ and write $\CF_s=\{V_0,\ldots, V_m\}$ with corresponding chain $E_1,\ldots, E_m$ from $0$ to $\infty$. Let $1\leq i\leq m$ be minimal such that $v\in V_i$. Then $C_s\to C^v_s$ contracts all irreducible components of $C_s$ except for $E_{i}$. Thus $v\not\in V_{m-1}$ if and only if the contraction morphism $C_s\to C^v_s$ contracts all irreducible components of $C_s$ except $E_m$, which is equivalent to $\lambda^v_w(s)\neq \lambda^v_\infty(s)\mbox{ for all $w\in V$}$, and the claim follows. Since the locus where two sections are distinct is open and $V$ is finite, the desired result follows.
\end{proof}

\begin{proposition}\label{flagcov} For each flag $\CF$ of $V$, the set $S_\CF\subset S$ is an open subscheme, and $S=\bigcup_{\CF} S_\CF.$
\end{proposition}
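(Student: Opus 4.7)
The covering $S = \bigcup_\CF S_\CF$ is immediate, since every $s \in S$ lies in $S_{\CF_s}$. For the openness of each $S_\CF$, my plan is to induct on $\dim V$, using Lemma~\ref{Sv} together with the contraction formula $\CF'_s = \CF_s \cap V'$ from Proposition~\ref{flgctr}.

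The base case $\dim V = 1$ is trivial: the only flag is $\{0,V\}$ and $S_\CF = S$ by Proposition~\ref{dim1fern}. For $\dim V > 1$, the trivial flag $\CF = \{0,V\}$ must be treated separately: here $S_\CF$ is exactly the locus on which $\CF_s$ has length $1$, equivalently on which the second to last step of $\CF_s$ is $\{0\}$. This is visibly $\bigcap_{v \in V\setminus\{0\}} S_v$, a finite intersection of open subsets by Lemma~\ref{Sv}.

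For the inductive step, write $\CF = \{V_0, \ldots, V_m\}$ with $m \geq 2$, set $V' := V_{m-1}$ (a nonzero proper subspace of $V$), and let $\CF' := \{V_0, \ldots, V_{m-1}\}$, which is a flag of $V'$. The contraction $C'$ is a $V'$-fern over $S$, and by the induction hypothesis applied to $C'$ (noting $\dim V' < \dim V$), the locus $S^{C'}_{\CF'} := \{s \in S \mid \CF'_s \subset \CF'\}$ is open in $S$. The main task is then to establish the identity
\[
S_\CF \;=\; \Bigl(\bigcap_{v \in V \setminus V'} S_v\Bigr) \;\cap\; S^{C'}_{\CF'},
\]
from which openness of $S_\CF$ follows by combining Lemma~\ref{Sv} with the induction hypothesis. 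For ``$\subset$'': if $\CF_s = \{W_0,\ldots,W_{m_s}\} \subset \CF$ then $W_{m_s-1}$ is a proper subspace in $\CF$, hence $W_{m_s-1} \subset V_{m-1} = V'$, which gives $s \in S_v$ for every $v \notin V'$ and, via Proposition~\ref{flgctr}, $\CF'_s = \CF_s \cap V' = \{W_0,\ldots,W_{m_s-1}\} \subset \CF'$. For ``$\supset$'': the first intersection forces $W_{m_s-1} \subset V'$, so all $W_i$ with $i < m_s$ lie in $V'$ and $\CF'_s = \{W_0,\ldots,W_{m_s-1}\}$; membership in $S^{C'}_{\CF'}$ then places each such $W_i$ in $\CF' \subset \CF$, so $\CF_s \subset \CF$. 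The only genuine obstacle in the entire argument is this bookkeeping verification; everything else follows formally from the cited results.
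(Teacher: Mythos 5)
Your proof is correct and follows essentially the same route as the paper: the identity $S_\CF=\bigl(\bigcap_{v\in V\setminus V'}S_v\bigr)\cap S^{C'}_{\CF'}$ with $V'=V_{m-1}$, combined with Lemma \ref{Sv} and Proposition \ref{flgctr}, is exactly the paper's argument, the only (immaterial) difference being that you induct on $\dim V$ while the paper inducts on the length of $\CF$. One cosmetic slip: $\CF'_s=\CF_s\cap V'$ always contains $V'$ as its top step, so it equals $\{W_0,\ldots,W_{m_s-1},V'\}$ rather than $\{W_0,\ldots,W_{m_s-1}\}$ when $W_{m_s-1}\subsetneq V'$; since $V'=V_{m-1}\in\CF'$ this does not affect either inclusion.
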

\begin{proof}We proceed by induction on the length of $\CF$. Suppose first that $\CF=\{0,V\}$ is trivial. For each $v\in V$, let $S_v$ be as in Lemma \ref{Sv}. Now $S_\CF=\bigcap_{v\in V\setminus\{0\}} S_v$ is open since $V$ is finite.

Now suppose $\CF=\{V_0,V_1,\ldots, V_m\}$ with $m> 1$. Let $V':=V_{m-1}$ and denote by $C'$ the $V'$-fern obtained from $C$ by contraction. Consider the flag of $V'$ given by $\CF':=\CF\cap V'$. Then $S_{\CF'}\subset S$ is open by the induction hypothesis. By Proposition \ref{flgctr}, the open subscheme $S_{\CF'}$ is precisely the locus of points $s\in S$ such that $\CF_s\cap V'\subset \CF'$. If $\CF_s\subset \CF$, then $\CF_s\cap V'\subset\CF'$ and therefore $S_\CF\subset S_{\CF'}$.  The condition that $\CF_s\cap V'\subset\CF'$ is equivalent to $\CF_s$ is being a subflag of a flag of the form $$0=V_0\subsetneq V_1\subsetneq\cdots\subsetneq V_{m-1}=V'\subsetneq W_1\subsetneq\cdots\subsetneq W_k=V.$$
A point $s\in S_{\CF'}$ is thus in $S_\CF$ if and only if there does not exist a $W\in \CF_s$ such that $V'\subsetneq W\subsetneq V$. This is equivalent to every $v\in V\setminus V'$ not being contained in the second to last step of $\CF_s$. We thus have
$$S_\CF=\Big(\bigcap_{v\in V\setminus V'}S_v\Big)\cap S_{\CF'}.$$ Again applying Lemma \ref{Sv}, it follows that $S_\CF$ is open. The fact that the $S_\CF$ cover $S$ is clear. 
\end{proof}

\begin{definition} We call a $V$-fern $C\to S$ an \emph{$\CF$-fern} if $S=S_\CF$.
\end{definition}
Proposition \ref{flagcov} implies that the functor sending a scheme $S$ to the set of isomorphism classes of $\CF$-ferns over $S$ is an open subfunctor of $\Fern_V$. We denote this functor by $\Fern_\CF$.


%


\section{The universal $V$-fern}
In this section we \textbf{construct} a $V$-fern $(\CC_V,\lambda_V,\phi_V)$ over $B_V$. 
\subsection{The scheme $\CC_V$}
Let $\Sigma:=V\times(V\setminus \{0\}).$ We define
\begin{eqnarray*}
P^\Sigma:=\prod_{(v,w)\in\Sigma}\BP^1.
\end{eqnarray*}
Denote the projective coordinates on the copy of $\BP^1$ in the $(v,w)$-component of $P^\Sigma$ by $(X_{vw}:Y_{vw})$. Let $S$ be scheme. Recall that an element of $\OV(S)$ corresponds to a fiberwise injective linear map 
$$\lambda\colon V\to \CL(S),\, v\mapsto \lambda_v.$$ 
Moreover, an element of $\BA^1_{\BF_q}(S)$ corresponds to the choice of some $t\in\Gamma(S,\CO_S)$. Fix $v_0\in V\setminus\{0\}$ and define the following morphism:
\begin{eqnarray}\nonumber\alpha_V\colon\OV\times\BA^1 &\longto& \OV\times P^\Sigma
\\\nonumber(\lambda,t)&\mapsto& \bigl(\lambda,(\lambda_{v_0}t-\lambda_{v}:\lambda_{w})_{v,w}\bigr).
\end{eqnarray}
To see that $\alpha_V$ is well-defined, consider a pair $(\CM,\mu)$ in the same isomorphism class as $(\CL,\lambda)$, i.e., corresponding to the same element of $\OV(S)$. This means that there is an isomorphism $\theta\colon\CL\stackrel\sim\to \CM$ such that the following diagram commutes:
$$\xymatrix{V\ar[r]^{\lambda}\ar[rd]_\mu&\CL(S)\ar[d]^\theta_\wr\\&\CM(S).}$$
For $v,w\in\Sigma$ and $t\in \CO_S(S)$, we have
\begin{eqnarray*}
\theta(\lambda_{v_0}t-\lambda_v)&=&\mu_{v_0}t-\mu_v;\\
\theta(\lambda_w)&=&\mu_w.
\end{eqnarray*}
Thus the tuples $(\CL,\lambda_{v_0}t-\lambda_v,\lambda_w)$ and $(\CM,\mu_{v_0}t-\mu_v,\mu_w)$ determine the same morphism $S\to\BP^1$, and $\alpha_V$ is well-defined.

Consider the composite 
\begin{equation}\label{theMap}
\xymatrix@C-=0.8cm{f_V\colon\OV\times\BA^1\ar[r]^-{\alpha_V}&\OV\times P^\Sigma\ar@{^{(}->}[r]&B_V\times P^\Sigma.}\end{equation}
The projective scheme $\CC_V$ over $B_V$  is defined to be the scheme-theoretic closure of $\Image f_V$.
 \subsection{The scheme $\CC_\OF$} 
Fix a complete flag $\overline\CF:=\{V_0,\ldots, V_n\}$ of $V$ and an associated flag basis $\CB=\{b_1,\ldots,b_n\}$. To simplify notation, let $\UT:=(T_1,\ldots, T_{n-1})$. By Proposition \ref{Coords}, the choice of $\CB$ yields an open embedding $U_\OF\into\BA^{n-1}_{\BF_q}=\Spec \BF_q[\UT]$. We identify $U_\OF$ with its image in $\BA^{n-1}_{\BF_q}$. In this subsection, we construct a $V$-fern $(\CC_\OF,\lambda_\OF,\phi_\OF)$ over $U_\OF$. We will show (Propositions \ref{localCV} and \ref{VfernStr}) that $\CC_\OF$ is isomorphic to $\CC_V\cap(U_\OF\times P^\Sigma)$ and that the $V$-fern structures for varying $\OF$ combine to yield the desired structure of $V$-fern on $\CC_V$.
\vspace{2mm}

For each $1\leq k\leq n$ and each $v\in V_k$, write $v=\sum_{i=1}^k c_i b_i$ with $c_i\in\BF_q$, and define 
$$Q^k_v(\UT):=\sum_{i=1}^k c_i\Bigl(\prod_{j=i}^{k-1}T_j\Bigr)\in\BF_q[\UT].$$
\begin{remark}The $Q^k_\bullet$ are $\BF_q$-linear in $V_k$, i.e., for all $v,w\in V_k$ and $\xi\in\BF_q,$ we have $Q^k_{\xi v+w}=\xi Q^k_v+Q^k_w.$ 
Moreover, for $v\in V_k$ and $k\leq m\leq n$, we have 
\begin{equation}\label{lowerdim}Q^m_v(\UT)=Q^k_v(\UT)\cdot\prod_{j=k}^{m-1}T_j.
\end{equation}
\end{remark}
We define $\CC_\OF\subset U_\OF\times P^\Sigma$ to be the closed subscheme determined by the following equations:
\begin{equation}\label{defeqs}Q^{k}_w(\UT)X_{vw}Y_{v'w'}+Q^{k}_{v-v'}(\UT)Y_{vw}Y_{v'w'}=Q^k_{w'}(\UT)X_{v'w'}Y_{vw},\end{equation} 
where $1\leq k\leq n$ and $w,w'\in V_k\setminus\{0\}$ and $(v'-v)\in V_k.$ 
\vspace{2mm}

Let $\pi_\OF\colon \CC_\OF\to U_\OF$ denote the natural projection. For each subflag $\CF\subset\OF$, the scheme $U_\CF$ is an open subscheme of $U_\OF$, and hence inherits the affine coordinates $T$ from the latter. Let $\CC_\CF:=\pi_{\OF}^{-1}(U_\CF)$. It is precisely the closed subscheme of $U_\CF\times P^\Sigma$ defined by (\ref{defeqs}). We denote the restriction of $\pi_{\OF}$ to $\CC_\CF$ by $\pi_\CF$.

\begin{lemma}\label{cont}The image 
 of $f_V$ is contained in $\CC_\CF$ for each $\CF\subset\OF$.
\end{lemma}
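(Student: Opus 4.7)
The plan is a direct computational verification that the defining equations (\ref{defeqs}) of $\CC_\CF\subset U_\CF\times P^\Sigma$ are satisfied on the image of $f_V$. I would proceed in three steps.

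First, I would observe that the image of $f_V$ already lies in $U_\CF\times P^\Sigma$. Since the trivial flag $\{0,V\}$ is contained in every flag of $V$, Lemma \ref{contains} gives $\OV=U_{\{0,V\}}\subset U_\CF$, and hence $f_V$ factors through $U_\CF\times P^\Sigma$. The only remaining task is to check that $f_V$ further factors through the closed subscheme $\CC_\CF$, i.e., that the equations (\ref{defeqs}) hold on $S$-valued points of the image.

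Second, I would express the polynomials $Q^k_v(\UT)$, after pullback along $f_V$, in terms of $\lambda$. For an $S$-valued point $(\lambda,t)$ of $\OV\times\BA^1$, where $\lambda\colon V\to\CL(S)$ is fiberwise injective, Lemma \ref{coordchange} identifies the pullback of $T_i$ with $\lambda_{b_i}/\lambda_{b_{i+1}}\in\CO_S(S)$; this ratio is a well-defined section of $\CO_S$ because $\lambda_{b_{i+1}}$ is fiberwise non-vanishing on $\OV$. For $v=\sum_{i=1}^k c_i b_i\in V_k$, a telescoping product followed by the $\BF_q$-linearity of $\lambda$ then yields
$$Q^k_v(\UT)\;=\;\sum_{i=1}^k c_i\prod_{j=i}^{k-1}\frac{\lambda_{b_j}}{\lambda_{b_{j+1}}}\;=\;\sum_{i=1}^k c_i\,\frac{\lambda_{b_i}}{\lambda_{b_k}}\;=\;\frac{\lambda_v}{\lambda_{b_k}}.$$

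Third, I would substitute $X_{vw}=\lambda_{v_0}t-\lambda_v$ and $Y_{vw}=\lambda_w$, regarded as sections of $\CL$ generating it fiberwise, into (\ref{defeqs}). Each side of (\ref{defeqs}) is then a section of $\CL^{\otimes 2}$. Using the formula above and the $\BF_q$-linearity of $\lambda$, the left-hand side becomes
$$\frac{\lambda_w\lambda_{w'}}{\lambda_{b_k}}\bigl[(\lambda_{v_0}t-\lambda_v)+\lambda_{v-v'}\bigr]\;=\;\frac{\lambda_w\lambda_{w'}}{\lambda_{b_k}}(\lambda_{v_0}t-\lambda_{v'}),$$
which matches the right-hand side $(\lambda_{w'}/\lambda_{b_k})(\lambda_{v_0}t-\lambda_{v'})\lambda_w$. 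Hence (\ref{defeqs}) holds on $S$-valued points of the image, as required.

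The computation itself is essentially routine; the main subtlety is purely sheaf-theoretic bookkeeping---ensuring that ratios such as $\lambda_v/\lambda_{b_k}$ genuinely live in $\CO_S$ on $\OV$ (which follows from the fiberwise injectivity of $\lambda$) and that the bihomogeneous structure of (\ref{defeqs}) is compatible with interpreting $X_{vw},Y_{vw}$ as sections of a common invertible sheaf rather than as scalars.
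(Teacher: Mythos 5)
Your proposal is correct and follows essentially the same route as the paper: a direct verification that the equations (\ref{defeqs}) pull back to the identity $\lambda_w\lambda_{w'}(\lambda_{v_0}t-\lambda_v+\lambda_{v-v'})=\lambda_w\lambda_{w'}(\lambda_{v_0}t-\lambda_{v'})$, using Lemma \ref{coordchange} to identify the pullbacks of the coordinates $T_i$. The only (harmless) difference is organizational: you compute the pullback of $Q^k_v(\UT)$ directly as $\lambda_v/\lambda_{b_k}$ by telescoping, whereas the paper first verifies a projective version of the equations with coefficients $P_v(\UX)$ and then divides by $X_n$ and by $\prod_{i=k}^{n-1}T_i$ (both invertible on $\OV$) to reach (\ref{defeqs}).
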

\begin{proof}
Let $S$ be a scheme. The morphism $f_V$ maps a point $(\lambda,t)\in(\OV\times\BA^1)(S)$ to the point $$\bigl(\lambda,(\lambda_{v_0}t-\lambda_v:\lambda_w)_{v,w}\bigr)\in(\OV\times  P^\Sigma)(S).$$ Recall that $\lambda$ is an $\BF_q$-linear map from $V$ to the global sections of an invertible sheaf $\CL$ over $S$. Define $x_{vw}:=\lambda_{v_0}t-\lambda_v$ and $y_{vw}:=\lambda_w$, so that the morphism from $S$ to the $(v,w)$-component of $ P^\Sigma$ is determined by the tuple $(x_{vw}:y_{vw})$. We compute
\begin{eqnarray*}
\lambda_wx_{vw}y_{v'w'}+\lambda_{v-v'}y_{vw}y_{v'w'}&=\\
\lambda_w(\lambda_{v_0}t-\lambda_v)(\lambda_{w'})+\lambda_{v-v'}\lambda_{w}\lambda_{w'}&=\\
\lambda_w\lambda_{w'}(\lambda_{v_0}t-\lambda_v+\lambda_{v-v'})&=\\
\lambda_w\lambda_{w'}(\lambda_{v_0}t-\lambda_{v'})&=
\\\lambda_{w'}x_{v'w'}y_{vw}.&
\end{eqnarray*}
Thus for all pairs $(v,w),(v',w')\in \Sigma$, we have
\begin{equation}\label{eqsIm}\lambda_wx_{vw}y_{v'w'}+\lambda_{v-v'}y_{vw}y_{v'w'}=\lambda_{w'}x_{v'w'}y_{vw}
\end{equation}
in $\Gamma(S,\CL^{\otimes 3})$. Our choice of flag basis $\CB=\{b_1,\ldots,b_n\}$ induces projective coordinates $\UX=(X_1:\ldots:X_n)$ on $P_V$, hence an open embedding $j\colon\OV\into\BP^{n-1}_{\BF_q}.$
For each $v=\sum_{i=1}^n c_ib_i$ in $V$, define 
$$P_v(X):=\sum_{i=1}^n c_iX_i\in\BF_{q}[\UX].$$ 
Then $P_v(\UX)\in \Gamma(\BP_{\BF_q}^{n-1},\CO(1))$ and $j^*P_v(\UX)=v\in\Gamma(\OV,\CO(1))$. It follows that under the composite $$\xymatrix{S\ar[r]^\lambda&\OV\ar@{^{(}->}[r]^j&\BP^{n-1}_{\BF_q},}$$ the section $P_v(\UX)$ pulls back to $\lambda_v$ for all $v\in V$.
By equation (\ref{eqsIm}), we deduce that $f_V$ factors through the closed subscheme defined by
\begin{equation}\label{eqsProj}P_w(\UX)X_{vw}Y_{v'w'}+P_{v-v'}(\UX)Y_{vw}Y_{v'w'}=
P_{w'}(\UX)X_{v'w'}Y_{vw},
\end{equation} for all $(v,w),(v',w')\in \Sigma$. Using Lemma \ref{coordchange}, we observe that $j^*(P_v(\UX)/X_n)=Q^n_v(\UT)$ over $\OV$. Dividing (\ref{eqsProj}) by $X_n$ therefore yields
\begin{equation}\label{eqsImCoord}Q^{n}_w(\UT)X_{vw}Y_{v'w'}+Q^{n}_{v-v'}(\UT)Y_{vw}Y_{v'w'}=Q^n_{w'}(\UT)X_{v'w'}Y_{vw}
\end{equation}
This is almost (\ref{defeqs}), but differs in the superscripts of the $Q^n$. Fix $1\leq k\leq n$. Since $j(\OV)\subset \cap_iD^+(X_i)\subset \BP_{\BF_q}^{n-1}$, the product $\prod_{i=k}^{n-1}T_i=j^*\frac{X_k}{X_n}$ is invertible on $\OV$. If $w,w'$ and $(v'-v)$ are in $V_k$, we can thus divide both sides of (\ref{eqsImCoord}) by $\prod_{i=k}^{n-1}T_i$. Using equation (\ref{lowerdim}), this yields (\ref{defeqs}). The image of $f_V$ is thus contained in $\CC_\CF$, as desired.
\end{proof}
\subsection{$G$-action on $\CC_\OF$}\label{SubSectAction}
Recall that $\OV\cong\Spec RS_{V,0}$, where $RS_V$ is the localization of $\Sym(V)$ by all $v\in V\setminus \{0\}$. Let $A:=RS_{V,0}$. Our definition of 
$f_V\colon \OV\times \BA^1\to B_V\times P^\Sigma$
depended on a choice of $v_0\in V\setminus \{0\}$. Consider the faithful right $G$-action on the polynomial ring $A[X]$ where an element $(u,\xi)\in G$ acts via the $A$-algebra homomorphism
$$A[X]\to A[X],\, X\mapsto \xi X+\frac{u}{v_0}.$$
This induces a left $G$-action on $\OV\times \BA^1$ over $\OV$ given by $$(\lambda,t)\mapsto \Bigl(\lambda,\xi t+\frac{\lambda_u}{\lambda_{v_0}}\Bigr).$$
On the other hand, we endow $B_V\times P^\Sigma$ with the left $G$-action over $B_V$ where $(u,\xi)\in G$ acts via the automorphism
\begin{eqnarray*}
B_V\times P^\Sigma&\stackrel\sim\longto& B_V\times P^\Sigma\\
\nonumber(x_{vw}:y_{vw})_{v,w}&\mapsto & (x_{\tilde{v}\tilde{w}}:y_{\tilde{v}\tilde{w}})_{v,w},
\end{eqnarray*}
where
\begin{eqnarray}\label{transl}\tilde v&:=&\xi^{-1}(v-u);
\\\nonumber\tilde w&:=&\xi^{-1}w.
\end{eqnarray}
\begin{lemma}\label{AffAct}The morphism $f_V$ is $G$-equivariant.
\end{lemma}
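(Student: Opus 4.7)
The plan is to reduce the claim to an elementary calculation in projective coordinates. Since the $G$-action on $B_V \times P^\Sigma$ is trivial on the $B_V$-factor, and the open immersion $\OV \hookrightarrow B_V$ is therefore compatible with the trivial action on $\OV$, it suffices to show that $\alpha_V\colon \OV \times \BA^1 \to \OV \times P^\Sigma$ is $G$-equivariant. Both actions act trivially on the first factor $\OV$, so the equality of the $\OV$-components is immediate; only the $P^\Sigma$-components require attention.

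The heart of the argument is a direct computation on $S$-valued points. Fix $(u,\xi) \in G$ and an $S$-valued point $(\lambda, t)$ of $\OV \times \BA^1$. On the one hand, applying $(u,\xi)$ first and then $\alpha_V$ gives the point whose $(v,w)$-component is
$$\bigl(\lambda_{v_0}(\xi t + \lambda_u/\lambda_{v_0}) - \lambda_v : \lambda_w\bigr) \;=\; \bigl(\xi \lambda_{v_0} t - \lambda_{v-u} : \lambda_w\bigr),$$
using that $\lambda$ is $\BF_q$-linear. On the other hand, applying $\alpha_V$ first and then $(u,\xi)$ yields, at the $(v,w)$-component, the point $(\lambda_{v_0} t - \lambda_{\tilde v} : \lambda_{\tilde w})$ with $\tilde v = \xi^{-1}(v - u)$ and $\tilde w = \xi^{-1} w$, that is,
$$\bigl(\lambda_{v_0} t - \xi^{-1} \lambda_{v-u} : \xi^{-1} \lambda_w\bigr).$$

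Multiplying both projective coordinates by $\xi \in \BF_q^\times$ (which is allowed because we are working in $\BP^1$) turns the second expression into $(\xi \lambda_{v_0} t - \lambda_{v-u} : \lambda_w)$, matching the first. Since this equality holds in every $(v,w)$-component, the two composites of $S$-valued points agree, and by the Yoneda lemma $\alpha_V$ is $G$-equivariant. The only potentially subtle point is keeping track of the inverse $\xi^{-1}$ appearing in the translation formulas~\eqref{transl} and clearing it via the projective equivalence; once this is noted, the verification is formal.
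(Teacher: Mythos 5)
Your proposal is correct and follows essentially the same route as the paper: both reduce to the $P^\Sigma$-components and verify equivariance by the same direct computation in projective coordinates, using $\BF_q$-linearity of $\lambda$ and clearing the factor $\xi^{-1}$ by projective rescaling. The only cosmetic difference is that you compute both composites at the $(v,w)$-component and compare, whereas the paper computes one composite and rescales it into the form $(\lambda_{v_0}t-\lambda_{\tilde v}:\lambda_{\tilde w})$; the content is identical.
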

\begin{proof}Recall that $f_V$ was defined to be the composite of the natural inclusion $\OV\times P^\Sigma\into B_V\times P^\Sigma$ with the morphism
\begin{eqnarray}\nonumber\alpha_V\colon\OV\times\BA^1 &\longto& \OV\times P^\Sigma
\\\nonumber(\lambda,t)&\mapsto& \Bigl(\lambda,\bigr(\lambda(v_0)t-\lambda(v):\lambda(w)\bigr)_{v,w}\Bigr).
\end{eqnarray} Under $\alpha_V$, we have
$$
\Bigl(\lambda,\xi t+\frac{\lambda_u}{\lambda_{v_0}}\Bigr)\mapsto \Bigl(\lambda,\Bigl(\lambda_{v_0}\Bigl(\xi t+\frac{\lambda_u}{\lambda_{v_0}}\Bigr)-\lambda_v:\lambda_w\Bigr)_{v,w}\Bigr),
$$
The lemma then follows from the following computation:
\begin{eqnarray*}
\Bigl(\lambda_{v_0}\Bigl(\xi t+\frac{\lambda_u}{\lambda_{v_0}}\Bigr)-\lambda_v:\lambda_w\Bigr)&=&\bigr(\lambda_{v_0}t+\xi^{-1}(\lambda_u-\lambda_v):\xi^{-1}\lambda_w\bigr)\\
&=&(\lambda_{v_0}t-\lambda_{\xi^{-1}(v-u)}:\lambda_{\xi^{-1}w})
\\&=&(\lambda_{v_0}t-\lambda_{\tilde v}:\lambda_{\tilde w}).
\end{eqnarray*}
\end{proof}
\begin{proposition}\label{Action} The left $G$-action on $B_V\times P^\Sigma$ restricts to a left $G$-action on $\CC_\OF$ over $U_\OF$. Moreover, the morphism $\OV\times\BA^1\to \CC_\OF$ induced by $f_V$ is $G$-equivariant.
\end{proposition}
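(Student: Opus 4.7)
The plan is to prove the two assertions separately: first that the ideal in $\CO_{U_\OF \times P^\Sigma}$ cutting out $\CC_\OF$ is stable under the $G$-action, and second that the equivariance of the induced morphism $\OV \times \BA^1 \to \CC_\OF$ is immediate from what is already proved. Since the action on $B_V \times P^\Sigma$ is over $B_V$, the $T_i$ are fixed, so the entire content of part one is the behavior of the defining equations \eqref{defeqs} under the coordinate substitution.

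For the first part, I would fix $(u,\xi) \in G$ and apply its pullback to the defining equation
$$Q^k_w(\UT)\, X_{vw} Y_{v'w'} + Q^k_{v-v'}(\UT)\, Y_{vw} Y_{v'w'} - Q^k_{w'}(\UT)\, X_{v'w'} Y_{vw}$$
for a triple with $w, w' \in V_k \setminus \{0\}$ and $v' - v \in V_k$. By definition of the action, $X_{vw} \mapsto X_{\tilde v \tilde w}$ and $Y_{vw} \mapsto Y_{\tilde v \tilde w}$ with $\tilde v = \xi^{-1}(v-u)$, $\tilde w = \xi^{-1} w$, while the coefficients $Q^k_\bullet(\UT)$ are untouched. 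Now $w = \xi \tilde w$, $w' = \xi \tilde w'$, and $v - v' = \xi(\tilde v - \tilde v')$, and the remark after the definition of $Q^k_\bullet$ asserts $\BF_q$-linearity in the lower index; so $Q^k_w = \xi\, Q^k_{\tilde w}$, $Q^k_{w'} = \xi\, Q^k_{\tilde w'}$, and $Q^k_{v-v'} = \xi\, Q^k_{\tilde v - \tilde v'}$. Thus the pulled-back equation equals $\xi$ times
$$Q^k_{\tilde w}(\UT)\, X_{\tilde v \tilde w} Y_{\tilde v' \tilde w'} + Q^k_{\tilde v - \tilde v'}(\UT)\, Y_{\tilde v \tilde w} Y_{\tilde v' \tilde w'} - Q^k_{\tilde w'}(\UT)\, X_{\tilde v' \tilde w'} Y_{\tilde v \tilde w}.$$
Because each $V_k$ is an $\BF_q$-subspace, the new triple $(\tilde v, \tilde w, \tilde v', \tilde w')$ again satisfies $\tilde w, \tilde w' \in V_k \setminus \{0\}$ and $\tilde v' - \tilde v = \xi^{-1}(v' - v) \in V_k$, so this is itself one of the defining equations of $\CC_\OF$. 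Hence the ideal is $G$-stable, and the action on $B_V \times P^\Sigma$ restricts to an action on $\CC_\OF$ over $U_\OF$.

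For the second part, Lemma \ref{cont} (applied with $\CF = \OF$) shows that $f_V$ factors through the closed subscheme $\CC_\OF \subset U_\OF \times P^\Sigma$, noting that the first coordinate of $f_V$ is contained in $\OV \subset U_\OF$ by Lemma \ref{contains}. Since the action on $\CC_\OF$ is by definition the restriction of the action on $B_V \times P^\Sigma$, the $G$-equivariance of $f_V$ established in Lemma \ref{AffAct} passes immediately to the factored morphism $\OV \times \BA^1 \to \CC_\OF$.

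There is no real obstacle here; the entire argument is a bookkeeping check that the substitution $(v,w) \mapsto (\tilde v, \tilde w)$ preserves the admissibility conditions on triples and that the $\BF_q$-linearity of $Q^k_\bullet$ produces a common scalar factor of $\xi$. The remark about $\BF_q$-linearity of $Q^k_\bullet$ and the fact that each $V_k$ is $\BF_q$-linear are the two small inputs that make the verification go through cleanly.
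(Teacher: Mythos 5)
Your proposal is correct and follows essentially the same route as the paper: both verify that applying $(u,\xi)$ to a defining equation of $\CC_\OF$ yields $\xi$ times another defining equation, using the $\BF_q$-linearity of $Q^k_\bullet$, the identities $w=\xi\tilde w$ and $v-v'=\xi(\tilde v-\tilde v')$, and the fact that the admissibility conditions on index pairs are preserved because each $V_k$ is an $\BF_q$-subspace. The second assertion is likewise deduced in the paper exactly as you do, directly from Lemma \ref{AffAct} together with the factorization of $f_V$ through $\CC_\OF$ from Lemma \ref{cont}.
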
 
\begin{proof}
The action of an element $(u,\xi)\in G$ sends $\CC_\OF$ to the closed subscheme of $U_\OF\times P^\Sigma$ defined by the equations 
\begin{equation}\label{1}Q^{k}_w(\UT)X_{\tilde v\tilde w}Y_{\tilde v'\tilde w'}+Q^{k}_{v-v'}(\UT)Y_{\tilde v\tilde w}Y_{\tilde v'\tilde w'}=Q^k_{w'}(\UT)X_{\tilde v'\tilde w'}Y_{\tilde v\tilde w},
\end{equation}
for $1\leq k\leq n$ and $w, w'\in V_k\setminus\{0\}$ and $v,v'\in V$ such that $v'-v\in V_k$, and with $\tilde v$, $\tilde v'$, $\tilde w$ and $\tilde w'$ as in (\ref{transl}). We denote this scheme by $(u,\xi)\cdot C_\OF$. To show that the action restricts to $C_\OF$, we must show that $C_\OF=(u,\xi)\cdot C_\OF$, which is equivalent to showing that the equations (\ref{1}) hold on $C_\OF$.

By (\ref{defeqs}), the following equations hold on $C_\OF$ for varying $v, w, v'$ and $w':$ 
\begin{equation}\label{2}Q^{k}_{\tilde w}(\UT)X_{\tilde v\tilde w}Y_{\tilde v'\tilde w'}+Q^{k}_{\tilde v-\tilde v'}(\UT)Y_{\tilde v\tilde w}Y_{\tilde v'\tilde w'}=Q^k_{\tilde w'}(\UT)X_{\tilde v'\tilde w'}Y_{\tilde v\tilde w}.
\end{equation}
Since $$\tilde v-\tilde v'=\xi^{-1}(v-u)-\xi^{-1}(v'-u)=\xi^{-1}(v-v'),$$ and $Q_{\xi^{-1} x}^k=\xi^{-1} Q_x^k$ for all $x\in V_k$, we obtain (\ref{1}) from (\ref{2}) by multiplying by $\xi^{-1}$. This proves the first part of the proposition. The fact that the morphism $\OV\times\BA^1\into \CC_\OF$ is $G$-equivariant follows directly from Lemma \ref{AffAct}.
\end{proof}
We denote the left $G$-action in Proposition \ref{Action} by
$$\phi_\OF\colon G\to\Aut_{U_\OF}(\CC_\OF).$$

\subsection{$\hat V$-Marked Sections of $\CC_\OF\to U_\OF$}
Sections of the projection $\OV\times \BA^1\to\OV$ correspond bijectively to elements of $A:=RS_{V,0}$. The inclusion
$$V\into R,\,\,v\mapsto \frac{v}{v_0},$$ where $v_0\in V\setminus\{0\}$ is the vector we fixed in defining $f_V$, thus induces a $V$-marking
$$\lambda\colon V\to (\OV\times\BA^1)(\OV),\,\,v\mapsto \lambda_v.$$
Furthermore, let
$$\phi\colon G\to \Aut_\OV(\OV\times\BA^1)$$ be the left $G$-action on $\OV\times\BA^1$ as described in Subsection \ref{SubSectAction}.
\begin{lemma}\label{AffSects}The $V$-marking on $\OV\times\BA^1$ is compatible with the action of $G$, i.e., the equality $\phi_{w,\xi}\circ\lambda_v=\lambda_{\xi v+w}$ holds for all $(w,\xi)\in G$. \end{lemma}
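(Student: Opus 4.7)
The plan is to work entirely at the level of $A$-algebras, where $A := RS_{V,0}$, since both $\OV = \Spec A$ and $\OV \times \BA^1 = \Spec A[X]$ are affine and every ingredient of the statement has been specified ring-theoretically in the preceding paragraphs. Under the usual correspondence, sections of the projection $\OV\times\BA^1\to \OV$ are identified with $A$-algebra homomorphisms $A[X]\to A$, equivalently with elements of $A$ via the image of $X$. By the construction recalled just before the lemma, $\lambda_v$ corresponds to $X\mapsto v/v_0\in A$, and $\lambda_{\xi v+w}$ corresponds to $X\mapsto (\xi v+w)/v_0\in A$.

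Next I would translate the automorphism $\phi_{w,\xi}$ into ring language. In Subsection~\ref{SubSectAction} the left $G$-action on $\OV\times\BA^1$ is defined via the $A$-algebra endomorphism of $A[X]$ sending $X\mapsto \xi X+w/v_0$; call this ring map $\phi_{w,\xi}^{\#}$. Since passing from schemes to rings is contravariant, the composite $\phi_{w,\xi}\circ\lambda_v\colon \OV\to\OV\times\BA^1$ corresponds to the composite of $A$-algebra homomorphisms $A[X]\xrightarrow{\phi_{w,\xi}^{\#}}A[X]\xrightarrow{\lambda_v^{\#}}A$.

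Tracking $X$ through this composite gives $X\mapsto \xi X+w/v_0 \mapsto \xi(v/v_0)+w/v_0=(\xi v+w)/v_0$, which is precisely $\lambda_{\xi v+w}^{\#}(X)$. Hence $\phi_{w,\xi}\circ\lambda_v=\lambda_{\xi v+w}$, as required.

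There is no genuine obstacle: the lemma is a formal unwinding of the definitions. The only care needed is to remember that a left action on $\OV\times\BA^1$ corresponds contravariantly to a right action on $A[X]$, so that the endomorphism formula $X\mapsto \xi X+w/v_0$ recorded in the definition of $\phi_{w,\xi}$ is exactly the one to use when computing the composite on the ring side; using the $\BF_q$-linearity of $v\mapsto v/v_0$ in the last step is automatic.
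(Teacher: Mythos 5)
Your proof is correct and follows exactly the same route as the paper: identify sections of $\OV\times\BA^1\to\OV$ with $A$-algebra homomorphisms $A[X]\to A$, note that $\phi_{w,\xi}$ corresponds to $X\mapsto \xi X+w/v_0$, and compute $\lambda_v^\sharp\circ\phi_{w,\xi}^\sharp(X)=\xi(v/v_0)+w/v_0=(\xi v+w)/v_0=\lambda_{\xi v+w}^\sharp(X)$. Your explicit remark about the contravariance between the scheme-level left action and the ring-level endomorphism is a point the paper glosses over, but the argument is the same.
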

\begin{proof}
Let $(w,\xi)\in G$. For each $v\in V$, the section $\lambda_v$ corresponds to the homomorphism $\lambda_v^\sharp\colon A[X]\to A,\,\, X\mapsto \frac{v}{v_0}$. We denote by $\phi^\sharp$ the left $G$-action on $A[X]$ corresponding to $\phi$. Then $$\lambda_v^\sharp\circ\phi_{w,\xi}^\sharp(X)=\xi \Bigl(\frac{v}{v_0}\Bigr)+\frac{w}{v_0}=\frac{\xi v+w}{v_0}=\lambda_{\xi v+w}^\sharp(X).$$ The desired result follows.
\end{proof}
\begin{lemma}\label{unit}Let $v\in V\setminus\{0\}$ and let $\CF=\{V_{i_0},\ldots, V_{i_m}\}$ be a subflag of $\OF$. Let $1\leq \ell\leq m$ be minimal such that $v\in V_{i_\ell}$. Then $Q^{i_\ell}_v(\UT)\in \Gamma(U_\CF,\CO_{U_\CF})^\times$.
\end{lemma}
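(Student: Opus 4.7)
The plan is to show that $Q^{i_\ell}_v(\UT)$ does not vanish at any point of $U_\CF$. Since $U_\CF$ is an integral scheme of finite type over $\BF_q$ (being open in $\BA^{n-1}_{\BF_q}$ by Proposition \ref{Coords}), this is equivalent to $Q^{i_\ell}_v(\UT)$ being a unit in $\Gamma(U_\CF,\CO_{U_\CF})$, and it is enough to verify non-vanishing at every geometric point. I will use the stratification $U_\CF = \bigsqcup_{\CF'\subset\CF}\Omega_{\CF'}$ of Proposition \ref{strata} and check non-vanishing on each stratum via the coordinate description in Lemma \ref{kpoints}.

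Fix a subflag $\CF' = \{V_{j_0},\ldots,V_{j_{m'}}\}$ of $\CF$ and a geometric point $x$ of $\Omega_{\CF'}$, and let $p$ be minimal with $v \in V_{j_p}$. Since $V_{j_{p-1}}$ and $V_{j_p}$ both lie in $\CF$, while $V_{j_p}\supseteq V_{i_\ell}$ and $v\notin V_{j_{p-1}}$, a short check gives $V_{j_{p-1}}\subseteq V_{i_{\ell-1}}$ and $V_{j_p}\supseteq V_{i_\ell}$, so in particular $j_{p-1}\leq i_{\ell-1}<i_\ell\leq j_p$. Writing $v = \sum_{j=1}^{i_\ell} c_j b_j$ and grouping the terms of $Q^{i_\ell}_v = \sum_j c_j \prod_{r=j}^{i_\ell-1} T_r$ by the $\CF'$-block of $j$, each contribution from a block $k' < p$ factors as $Q^{j_{k'}}_{v'_{k'}}(\UT)\cdot\prod_{r=j_{k'}}^{i_\ell-1} T_r$, where $v'_{k'}$ denotes the block-$k'$ part of $v$; the outer product contains the factor $T_{j_{k'}}$, which vanishes at $x$ by Lemma \ref{kpoints}(a). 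Hence at $x$ only the block-$p$ contribution $\sum_{j_{p-1} < j \leq i_\ell} c_j \prod_{r=j}^{i_\ell-1} T_r$ remains.

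If $V_{i_\ell}\in\CF'$, so $j_p = i_\ell$, this residual sum equals $Q^{i_\ell}_{v'_p}$, which is a non-trivial $\BF_q$-linear combination of the basis $\{\prod_{r=j}^{j_p-1}T_r : j_{p-1} < j \leq j_p\}$ appearing in Lemma \ref{kpoints}(b) for block $p$; its non-triviality follows from $v'_p \neq 0$, itself a consequence of $v\notin V_{j_{p-1}}$. Hence it is non-zero at $x$ by (b). If $V_{i_\ell}\notin\CF'$, so $j_{p-1} < i_\ell < j_p$, the residual sum is truncated relative to block $p$, so I multiply it by $\prod_{r=i_\ell}^{j_p-1} T_r$ to obtain $\sum_{j_{p-1}<j\leq i_\ell}c_j\prod_{r=j}^{j_p-1}T_r$, which is a non-trivial combination in the block-$p$ basis. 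The non-triviality uses $j_{p-1}\leq i_{\ell-1}$, which places the nonzero indices $i_{\ell-1}<j\leq i_\ell$ inside the summation range, and the multiplier $\prod_{r=i_\ell}^{j_p-1} T_r$ is itself one of the basis elements for block $p$. Both factors are non-zero at $x$ by (b), so $Q^{i_\ell}_v(x)\neq 0$.

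The main obstacle will be the case $V_{i_\ell}\notin\CF'$, in which the residual sum is not directly a combination of the basis in Lemma \ref{kpoints}(b); the key maneuver is the auxiliary multiplication by the missing tail $\prod_{r=i_\ell}^{j_p-1}T_r$, which is itself a unit at $x$ and extends the truncated sum to a full block-$p$ combination to which (b) applies.
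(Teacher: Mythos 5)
Your proof is correct and follows essentially the same route as the paper's: stratify $U_\CF$ by the strata $\Omega_{\CF'}$, kill the low-index terms of $Q^{i_\ell}_v$ using Lemma \ref{kpoints}(a), and recognize what survives as a non-trivial combination to which Lemma \ref{kpoints}(b) applies. Your handling of the case $V_{i_\ell}\notin\CF'$ --- multiplying the surviving sum by the tail product $\prod_{r=i_\ell}^{j_p-1}T_r$ so that the products literally become the basis elements of Lemma \ref{kpoints}(b) --- is in fact slightly more careful than the paper's, which applies (b) to products truncated at $i_\ell-1$ without this normalization.
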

\begin{proof}Let $s\in U_\CF$. Then by Proposition \ref{strata}, we know that $s\in\Omega_{\CF'}$ for some $\CF'\subset\CF$. Choose $0\leq j\leq \ell$ maximal such that $V_{i_j}\in\CF'$. Let $t=(t_1,\ldots,t_{n-1})$ denote the image of $T$ in $k(s)$. Then Lemma \ref{kpoints}.1 implies that
$$\prod_{i=i_{j'}}^{i_\ell-1}t_i=0,\mbox{ for all } j'\leq j.$$
It follows that $Q^{i_\ell}_w(\Ut)$ is a non-trivial $\BF_q$-linear combination of the elements
$$\Big\{\prod_{i=i_{j}+1}^{i_\ell-1}t_i,\prod_{i=i_{j}+2}^{i_\ell-1}t_i,\ldots,t_{i_\ell-1},1\Big\}.$$
By Lemma \ref{kpoints}.2, such a linear combination is non-zero, so $Q^k_w(\Ut)\neq 0$ in $k(s)$. It follows that $Q^k_w(\UT)\in (\CO_{U_\CF,s})^\times.$ Since $s$ was arbitrary, this proves the lemma.
\end{proof}

Let $\CF_0:=\{0,V\}$ be the trivial flag. Composing each $\lambda_v$ with $f_V$, we obtain a $V$-marking $\lambda_{\CF_0}\colon V\to \CC_{\CF_0}(U_{\CF_0})$. This marking is compatible with the action of $G$ by Lemma \ref{AffSects} and the fact that $f_V$ is $G$-equivariant. 
For a scheme $S$ and $\UT\in U_{\OF}(S)$, we observe that the point $$\bigl(\UT,(1:0)_{(v,w)\in\Sigma}\bigr)\in (U_\OF\times P^\Sigma)(S)$$ satisfies (\ref{defeqs}). We may thus extend $\lambda_{\CF_0}$ to a $\hat V$-marking by defining
\begin{equation}\label{6-eq-triv-flag-inf-section}\lambda_{\CF_0}(\infty)\colon U_{\CF_0}\to \CC_{\CF_0},\,\,\,\,\,\UT\mapsto \bigl(\UT,(1:0)_{(v,w)\in\Sigma}\bigr).\end{equation} Since $G$ acts on $\CC_\OF$ by permuting components, we see that $\lambda_{\CF_0}(\infty)$ is fixed by the $G$-action, so the resulting $\hat V$-marking on $C_{\CF_0}$ is compatible with the action of $G$.
\begin{proposition}\label{Vsects} The $\hat V$-marking $\lambda_{\CF_0}$ on $C_{\CF_0}$ extends uniquely to a $\hat{V}$-marking
$$\lambda_\OF\colon \hat V\to \CC_\OF(U_\OF).$$
Furthermore, this $\hat V$-marking is compatible with the action of $G$.
\end{proposition}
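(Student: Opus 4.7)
The plan is to get uniqueness from separatedness of $\CC_\OF\to U_\OF$ together with density of $U_{\CF_0}$ in $U_\OF$, and existence by writing down explicit projective coordinates for each section and verifying they satisfy the defining equations~(\ref{defeqs}).

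For uniqueness: $\CC_\OF$ is a closed subscheme of $U_\OF\times P^\Sigma$, so $\CC_\OF\to U_\OF$ is separated. Since $B_V$ is the closure of $\OV$ in $\prod_{V'} P_{V'}$, the open $U_{\CF_0}=\OV$ is dense in $U_\OF$. Any two sections of a separated morphism which agree on a dense open subscheme coincide, so an extension of $\lambda_{\CF_0}$ is unique if it exists; in particular its compatibility with the $G$-action from Proposition~\ref{Action} will follow automatically from the compatibility already established over $U_{\CF_0}$.

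For existence, for each $v\in V$ and each $(v',w')\in\Sigma$ I would set
$$k=k(v,v',w'):=\max\bigl(k_0(w'),\,k_0(v-v')\bigr),$$
where $k_0(u)$ denotes the smallest index $i$ with $u\in V_i$ (using the convention $k_0(0)=0$, $Q^0_0=0$), and define the $(v',w')$-component of $\lambda_\OF(v)$ as the morphism $U_\OF\to \BP^1$ determined by the pair
$$\bigl(X_{v'w'},\,Y_{v'w'}\bigr):=\bigl(Q^{k}_{v-v'}(\UT),\,Q^{k}_{w'}(\UT)\bigr).$$
By Lemma~\ref{unit} at least one of $Q^{k_0(w')}_{w'}$ or $Q^{k_0(v-v')}_{v-v'}$ is a global unit on $U_\OF$, and by (\ref{lowerdim}) the pair above --- which is this unit times a product of $T_j$'s, paired with another unit --- generates the unit ideal at every point of $U_\OF$, producing a well-defined morphism to $\BP^1$. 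For $v=\infty$ I take the constant section $(1{:}0)$ in every factor, extending (\ref{6-eq-triv-flag-inf-section}).

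That $\lambda_\OF(v)$ factors through $\CC_\OF$ is a direct substitution: by (\ref{lowerdim}) the pair $(Q^k_{v-v'}{:}Q^k_{w'})$ defines the same morphism to $\BP^1$ as $(Q^n_{v-v'}{:}Q^n_{w'})$, so it suffices to verify (\ref{defeqs}) with the uniform superscript $n$, and after expansion both sides of that equation reduce, using $\BF_q$-linearity $Q^n_{v-v'}+Q^n_{v'-v''}=Q^n_{v-v''}$, to $Q^n_{w'}Q^n_{w''}Q^n_{v-v''}$. Restricted to $U_{\CF_0}$, all $T_j$ are units, and Lemma~\ref{coordchange} identifies $Q^n_x$ with $\lambda_x/\lambda_{b_n}$; hence our section agrees with $\lambda_{\CF_0}(v)$ there, as given by $\alpha_V$.

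For the $G$-compatibility claim, the $(v',w')$-component of $\phi_\OF(u,\xi)\circ\lambda_\OF(v)$ is by definition the $(\xi^{-1}(v'-u),\xi^{-1}w')$-component of $\lambda_\OF(v)$, which equals $(Q^{k}_{v-\xi^{-1}(v'-u)}:Q^{k}_{\xi^{-1}w'})=(Q^{k}_{(\xi v+u)-v'}:Q^{k}_{w'})$ by $\BF_q$-linearity and a common factor of $\xi^{-1}$; this is exactly the $(v',w')$-component of $\lambda_\OF(\xi v+u)$. The $\infty$-section is manifestly fixed by $G$. The main obstacle I anticipate is bookkeeping the index $k$: one has to check that the index defining $\phi_\OF(u,\xi)\circ\lambda_\OF(v)$ at $(v',w')$ matches the one defining $\lambda_\OF(\xi v+u)$ at $(v',w')$, which works because $k_0$ is invariant under scaling by $\BF_q^\times$ and under the simultaneous translation $(v',v)\mapsto (\xi^{-1}(v'-u),v)$ vs.~$(v',\xi v+u)$.
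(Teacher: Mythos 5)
Your proof is correct and follows the same overall skeleton as the paper's: uniqueness (and hence $G$-compatibility) from reducedness of $U_\OF$, separatedness of $\CC_\OF$ over $U_\OF$, and density of $U_{\CF_0}=\OV$; existence from explicit formulas whose well-definedness rests on Lemma \ref{unit} and whose agreement with $\lambda_{\CF_0}$ over $\OV$ rests on Lemma \ref{coordchange}. The one genuine difference is in how the non-zero sections are produced: the paper writes down only $\lambda_\OF(0)$ explicitly and then \emph{defines} $\lambda_\OF(v):=\phi_\OF(v)\circ\lambda_\OF(0)$ using the $G$-action of Proposition \ref{Action}, so that no further verification of the defining equations is needed for $v\neq 0$; you instead give a closed formula $\bigl(Q^k_{v-v'}(\UT):Q^k_{w'}(\UT)\bigr)$ with $k=\max\bigl(k_0(w'),k_0(v-v')\bigr)$ for every $v$ and check (\ref{defeqs}) directly. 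Your reduction to the superscript-$n$ identity via (\ref{lowerdim}) is legitimate because $\BF_q[\UT]$ is a domain, and the identity then collapses by linearity of $Q^n_\bullet$, so the computation goes through (your index names in that display are garbled, but the intended identity $Q^n_wQ^n_{w'}(Q^n_{u-v}+Q^n_{v-v'})=Q^n_wQ^n_{w'}Q^n_{u-v'}$ is the right one). What your route buys is a self-contained verification that each section actually factors through the closed subscheme $\CC_\OF$, which the paper leaves implicit (there it follows from agreement over the dense open $\OV$ together with $\CC_\OF$ being closed in $U_\OF\times P^\Sigma$); what the paper's route buys is that the $G$-equivariance of the $V$-marking is built into the definition rather than checked by the bookkeeping of indices you describe at the end.
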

\begin{proof}It follows from the definition of $f_V$ that the $0$-marked section of $\OV\times\BA^1\to\OV$ is mapped to the morphism $\OV\to C_{\CF_0}\subset \OV\times P^\Sigma$ given by $\bigl( -\lambda_v:\lambda_w\bigr)_{v,w}$.
Define a section
$$\lambda_\OF(0)\colon U_\OF\to \CC_\OF\subset U_\OF\times P^{\Sigma}$$
as follows: For each $(v,w)\in\Sigma$, the $(v,w)$-component of $\lambda_\OF(0)$ is the morphism $U_\OF\to U_\OF\times\BP^1$ given by $\UT\mapsto \bigl(-Q^\ell_v(\UT):Q^\ell_{w}(\UT)\bigr),$ where $\ell\in\{1,\ldots,n\}$ is minimal such that $v, w\in V_\ell$. By Lemma \ref{unit}, the choice of $\ell$ insures that either $Q^\ell_v(\UT)$ or $Q^\ell_w(\UT)$ is an element of $\Gamma(U_{\OF},\CO_{U_\OF}^\times)$; hence $\lambda_\OF(0)$ is well-defined. The inclusion $\OV\into U_\CF\into \BA^{n-1}_{\BF_q}$ maps $\frac{\lambda_v}{\lambda_{b_n}}$ to $Q^n_v(\UT)$ for all $v\in V$. Note that $$Q^n_v(\UT)=Q^\ell_v(\UT)\cdot \prod_{i=\ell}^{n-1}T_i,$$ and that $\prod_{i=\ell}^{n-1}T_i$ is invertible on $\OV$. It follows that $\bigl(-Q^\ell_v(\UT):Q^\ell_{w}(\UT)\bigr)$ and  $\bigl(-\lambda_v:\lambda_w\bigr)$ define the same morphism $\OV\to \BP^1.$ Hence $\lambda_\OF(0)$ extends the zero section on $C_{\CF_0}$, as desired.

Let $\phi_\OF$ denote the left $G$-action on $\CC_\OF$ as described in Proposition \ref{Action}. For each $v\in V$, we define $$\lambda_\OF(v):=\phi_\OF(v)\circ \lambda_\OF(0).$$ This extends the $v$-marked section on $C_{\CF_0}$. We define the infinity section 
$\lambda_\OF(\infty)\colon U_\OF\to \CC_\OF$
 on each component via $\UT\mapsto (1:0)$. This clearly extends  the $\infty$-section (\ref{6-eq-triv-flag-inf-section}) on $\CC_{\CF_0}$, and we thus obtain a $\hat V$-marking $\lambda_\OF$ extending $\lambda_{\CF_0}$. The uniqueness follows from the fact that $\CC_{\CF_0}$ is dense in $\CC_\OF$, and the latter is separated over $U_\CF$ and reduced (see Corollary \ref{CFProps}). The compatibility of the $\hat V$-marking with the action of $G$ then follows from the uniqueness of the extended sections and the compatibility of the $V$-marking on $\CC_{\CF_0}$.
\end{proof}

\subsection{Elimination of redundant equations}\label{redund}
Let $\CF:=\{V_{i_0},\ldots,V_{i_m}\}$ be a subflag of $\OF$. For each $v=\sum_{i=1}^n c_ib_i\in V$ and $1\leq k\leq n$, we define 
\begin{eqnarray*}v^{\leq k}&:=&\sum_{i=1}^k c_ib_i\in V_k,
\\v^{>k}&:=&\sum_{i=k+1}^n c_i b_i=v-v^{\leq k}.
\end{eqnarray*} 
Consider the set $$\Sigma_\CF:=\{(v,k)\in V\times \{1,\ldots,m\}\mid v^{\leq i_k}=0\},$$
and define
 $$P^{\Sigma_\CF}:=\prod_{(v,k)\in\Sigma_\CF}\BP^1.$$ 
For each $(v,k)\in\Sigma_\CF$, we reindex the $(v,b_{i_k})$-component of $ P^\Sigma$ by $(v,k)$. We then have the natural projection $p_\CF\colon P^\Sigma\to P^{\Sigma_\CF}$.
Write $(X_{vk}:Y_{vk})$ for the projective coordinates on the $(v,k)$-component of $ P^{\Sigma_\CF}$. These correspond to the projective coordinates on the $(v,b_{i_k})$-component of $ P^\Sigma$, which we previously denoted by $(X_{vb_{i_k}}:Y_{vb_{i_k}})$. The next lemma simplifies the situation by reducing the number of equations defining $\CC_\CF$. 
\begin{proposition} \label{RelComps}The composite
 $$\xymatrix{\CC_\CF\ar@{^`->}[r]& U_\CF\times P^\Sigma\ar[r]^{p_\OF}& U_\CF\times  P^{\Sigma_\CF}}$$ 
 is an isomorphism onto the closed subscheme given by the equations
\begin{equation}\label{eqsFlag}Q^{i_\ell}_{b_{i_k}}(\UT)X_{vk}Y_{v'k'}+Q^{i_\ell}_{v-v'}(\UT)Y_{vk}Y_{v'k'}=Q^{i_\ell}_{b_{i_{k'}}}(\UT)X_{v'k'}Y_{vk},
\end{equation} where $1\leq (k', k)\leq\ell\leq m$ and $v-v'\in V_{i_\ell}$.
\end{proposition}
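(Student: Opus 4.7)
The plan is to construct an explicit inverse to the composite $p\colon \CC_\CF \to U_\CF \times P^{\Sigma_\CF}$ and to show that its image lies in the closed subscheme $Z \subset U_\CF \times P^{\Sigma_\CF}$ cut out by (\ref{eqsFlag}). First I would observe that $p$ factors through $Z$: the equations (\ref{eqsFlag}) are exactly the instances of (\ref{defeqs}) obtained by setting $k = i_\ell$, $w = b_{i_k}$, $w' = b_{i_{k'}}$ with $k,k' \leq \ell$ and $v - v' \in V_{i_\ell}$, and these hold on $\CC_\CF$ by definition.

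Next I would construct a candidate inverse $q\colon Z \to U_\CF \times P^\Sigma$. For each $(v, w) \in \Sigma$, let $\ell = \ell(w) \in \{1,\ldots,m\}$ be the unique index with $w \in V_{i_\ell} \setminus V_{i_{\ell - 1}}$ (such an $\ell$ exists because $V_{i_0}=0$ and $V_{i_m}=V$). Since $(v^{>i_\ell}, \ell) \in \Sigma_\CF$, I would define the $(v,w)$-component of $q$ by
$$(X_{vw}:Y_{vw}) := \bigl(Q^{i_\ell}_{b_{i_\ell}}(\UT)\,X_{v^{>i_\ell},\ell} - Q^{i_\ell}_{v^{\leq i_\ell}}(\UT)\,Y_{v^{>i_\ell},\ell} \,:\, Q^{i_\ell}_w(\UT)\,Y_{v^{>i_\ell},\ell}\bigr).$$
By Lemma \ref{unit}, both $Q^{i_\ell}_{b_{i_\ell}}(\UT)$ and $Q^{i_\ell}_w(\UT)$ are units on $U_\CF$, so the two homogeneous coordinates never vanish simultaneously, making $q$ a well-defined $U_\CF$-morphism. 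When $(v,w) \in \Sigma_\CF$ (so $w = b_{i_\ell}$ and $v^{\leq i_\ell} = 0$), the formula collapses to $(X_{vw}:Y_{vw}) = (X_{v,\ell}:Y_{v,\ell})$, whence $p \circ q = \id_Z$.

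The bulk of the work is to verify that $q$ factors through $\CC_\CF$ and satisfies $q \circ p = \id_{\CC_\CF}$. For the former, one substitutes the formula for $q$ into a general equation (\ref{defeqs}); after clearing the common factor $Y_{v^{>i_\ell},\ell}\cdot Y_{(v')^{>i_{\ell'}},\ell'}$ (which one may treat chart by chart on $P^\Sigma$, flipping to the $X$-coordinate where needed), the equation reduces, using the $\BF_q$-linearity of $Q^k_\bullet$ and identity (\ref{lowerdim}), to an equation belonging to (\ref{eqsFlag}). For $q \circ p = \id_{\CC_\CF}$, note that the instance of (\ref{defeqs}) with $(v', w', k) = (v^{>i_\ell}, b_{i_\ell}, i_\ell)$ holds on $\CC_\CF$ and is precisely the relation encoded in the definition of $q$, so the two agree.

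The main obstacle will be the case analysis needed to verify that every equation in (\ref{defeqs}) is a formal consequence of (\ref{eqsFlag}) under the substitution $q$. One must distinguish whether $\ell(w) = \ell(w')$ and carefully track how the decomposition $v = v^{\leq i_\ell} + v^{>i_\ell}$ interacts with the constraint $v - v' \in V_k$, in particular comparing $k$ with $\max\bigl(i_{\ell(w)}, i_{\ell(w')}\bigr)$. Each case reduces by direct substitution to a polynomial identity in the $Q^k_\bullet$'s together with an instance of (\ref{eqsFlag}), but the bookkeeping is delicate and is where the technical weight of the proposition lies.
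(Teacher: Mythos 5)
Your proposal is correct and follows essentially the same route as the paper: there, too, the key step is the instance of (\ref{defeqs}) pairing $(v,w)$ with $(v^{>i_\ell},b_{i_\ell})$, which together with Lemma \ref{unit} shows that each $(v,w)$-component is determined by, and recoverable from, the corresponding $\Sigma_\CF$-component, after which the image is identified with the locus cut out by (\ref{eqsFlag}). If anything, you are slightly more explicit than the paper about the one point it leaves implicit, namely the verification that the candidate inverse actually lands in $\CC_\CF$.
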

\begin{proof} For simplicity in the indices, we only provide the proof in the case where $\CF=\overline\CF$. The general case is similar. Consider $(v,w)\in \Sigma$. There exists a minimal $k$ such that $w\in V_k$. Let
$v':=v^{>k}$ and consider the equation in (\ref{defeqs}) corresponding to $(v',b_k)$ and $(v,w)$:
\begin{equation}\label{canceleq}X_{v'k}Y_{vw}+Q^k_{v'-v}(\UT)Y_{v'k}Y_{vw}=Q^k_w(\UT)X_{vw}Y_{v'k}.
\end{equation}
By Lemma \ref{unit}, we have $Q^k_w(\UT)\in \Gamma(U_\CF,\CO_{U_\CF})^\times$. We may thus rewrite (\ref{canceleq}) as
\begin{equation}\label{canceleqII}Y_{vw}\bigl(X_{v'k}+Q^k_{v'-v}(\UT)Y_{v'k}\bigr)\cdot\bigl(Q^k_w(\UT)\bigr)^{-1}=X_{vw}Y_{v'k}.
\end{equation}
Let $S$ be a $U_\CF$-scheme. A morphism $S\to \CC_\OF$ over $U_\CF$ corresponds to an isomorphism a set of tuples $\{\bigl(\CL_{rs},(x_{rs}:y_{rs})\bigr)\}_{(r,s)\in\Sigma}$, where each $\CL_{rs}$ is  an invertible sheaf  on $S$ with generating global sections $x_{rs}$ and $y_{rs}$. The $(x_{rs}:y_{rs})$ satisfy (\ref{defeqs}) and, in particular, equation (\ref{canceleqII}). Define $$\tilde x_{v'k}:=\bigl(x_{v'k}+Q^k_{v'-v}(\UT)y_{v'k}\bigr)\cdot\bigl(Q^k_w(\UT)\bigr)^{-1}\in \Gamma(S,\CL_{v'k}).$$ Then $\tilde x_{v'k}$ and $y_{v'k}$ also generate $\CL_{v'k}$. Moreover, equation (\ref{canceleqII}) implies that for all $p\in S$, we have $(x_{vw})_p\in \Fm_p\CL_{vw,p}$ if and only if $(\tilde x_{v'k})_p\in\Fm_p\CL_{v'k,p}$, and similarly for $(y_{vw})_p$ and $(y_{v'k})_p.$ Let $S_x\subset S$ be the open subscheme where $x_{vw}$ (resp. $\tilde x_{v'k}$) generates $\CL_{vw}$ (resp. $\CL_{v'k}$). Define $S_y$ analogously and let $S_{xy}:=S_x\cap S_y$. By (\ref{canceleqII}), the following diagram commutes:
$$\xymatrix{\CO_{S_x}\cdot x_{vw}|_{S_{xy}}\ar[r]^\sim\ar[d]^{\cdot y_{vw}x_{vw}^{-1}}&  \CO_{S_x}\cdot \tilde x_{v'k}|_{S_{xy}}\ar[d]^{\cdot y_{v'k}\tilde x_{v'k}^{-1}}
\\\CO_{S_y}\cdot y_{vw}|_{S_{xy}}\ar[r]^\sim&  \CO_{S_y}\cdot y_{v'k}|_{S_{xy}}.}
$$
We thus obtain an isomorphism $\CL_{vw}\stackrel\sim\to\CL_{v'k}$ sending $(x_{vw}:y_{vw})$ to $(\tilde x_{v'k}:y_{v'k})$. It follows that the $(v,w)$-component of the morphism $S\to \CC_\CF$ is determined by the $(v',k)$-component. Hence $\CC_\CF$ is an isomorphism onto its image in $U_\CF\times P^{\Sigma_\CF}$. The image is determined by the subset of equations in (\ref{defeqs}) involving the elements of $\Sigma_\CF$, which yields (\ref{eqsFlag}).
\end{proof}
\begin{lemma}\label{immersion}The morphism $f_V$ induces an open immersion $\OV\times \BA^1\into C_{\CF_0}$.
\end{lemma}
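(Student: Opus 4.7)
The plan is to apply Proposition~\ref{RelComps} to the trivial flag $\CF_0=\{0,V\}$ to obtain an explicit description of $\CC_{\CF_0}$, and then to verify the lemma by a direct calculation in the resulting coordinates.

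First I would identify $\CC_{\CF_0}$. Since $\CF_0$ has length $m=1$ with $i_0=0$ and $i_1=n$, the indexing set reduces to $\Sigma_{\CF_0}=\{(v,1)\mid v\in V,\;v^{\leq n}=0\}=\{(0,1)\}$, so $P^{\Sigma_{\CF_0}}=\BP^1$, whose projective coordinates $(X_{01}:Y_{01})$ correspond to the $(0,b_n)$-component of $P^\Sigma$. The defining relations~(\ref{eqsFlag}) reduce to the single case $v=v'=0$, $k=k'=\ell=1$, which collapses to a tautology since $Q^n_{b_n}=1$ and $Q^n_0=0$. Hence Proposition~\ref{RelComps} yields a canonical isomorphism $\CC_{\CF_0}\stackrel\sim\to\OV\times\BP^1$.

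Next I would compute $f_V$ in these coordinates. By definition the $(0,b_n)$-component of $f_V(\lambda,t)$ equals $(\lambda_{v_0}t-\lambda_0:\lambda_{b_n})=(\lambda_{v_0}t:\lambda_{b_n})$, using $\lambda_0=0$. Since $b_n$ and $v_0$ both lie in $V\setminus\{0\}$, the sections $\lambda_{b_n}$ and $\lambda_{v_0}$ are nowhere-vanishing on $\OV$, hence generators of the relevant invertible sheaf. Therefore the image of $f_V$ is contained in the open subscheme $D_+(Y_{01})\cap\CC_{\CF_0}$, which under the identification above corresponds to the open subset $\OV\times\BA^1\subset\OV\times\BP^1$ with affine coordinate $X_{01}/Y_{01}$.

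Under this identification, the induced morphism $\OV\times\BA^1\to\OV\times\BA^1$ sends $(\lambda,t)$ to $(\lambda,(\lambda_{v_0}/\lambda_{b_n})\cdot t)$. Since $\lambda_{v_0}/\lambda_{b_n}$ is a global unit on $\OV$, this is multiplication by an invertible regular function on the $\BA^1$-factor, hence an automorphism. Thus $f_V$ factors as an isomorphism $\OV\times\BA^1\stackrel\sim\to\OV\times\BA^1$ followed by the open inclusion $\OV\times\BA^1\hookrightarrow\CC_{\CF_0}$, giving the desired open immersion. The only nontrivial input will be the identification $\CC_{\CF_0}\cong\OV\times\BP^1$ coming from Proposition~\ref{RelComps}; once this is in hand, the remainder is an elementary computation resting solely on the fact that $\lambda_v$ is a unit on $\OV$ whenever $v\neq 0$.
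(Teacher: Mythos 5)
Your proposal is correct and follows essentially the same route as the paper's proof: both identify $\CC_{\CF_0}\cong\OV\times\BP^1$ via Proposition \ref{RelComps} and then compute that $f_V$ lands in the affine chart as $(\lambda,t)\mapsto(\lambda_{v_0}t:\lambda_{b_n})$. The only cosmetic difference is that the paper normalizes $\lambda_{b_n}/\lambda_{v_0}=1$ using triviality of $\CL$ over $\OV$, whereas you keep the unit $\lambda_{v_0}/\lambda_{b_n}$ and observe it gives an automorphism of the $\BA^1$-factor; both are fine.
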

\begin{proof}For the trivial flag $\CF_0$, we have $\Sigma_{\CF_0}=\{(0,1)\}$ and Proposition \ref{RelComps} yields an isomorphism $\rho_0\colon C_{\CF_0}\stackrel\sim\to\OV\times\BP^1$. The composite $\rho_0\circ f_V$ corresponds to the morphism $\OV\times \BA^1\to \OV\times\BP^1$ given by $$(\lambda, t)\mapsto \Bigl(\lambda,\bigl(\lambda_{v_0}t:\lambda_{b_n}\bigr)\Bigr).$$ Here $\lambda$ is a fiberwise injective map $V\to \CL(S)$ for a scheme $S$ and invertible sheaf $\CL$ on $S$. Since $\lambda$ is fiberwise injective, the invertible sheaf $\CL$ is trivial, and we may assume that $\CL=\CO_S$ and $\frac{\lambda_{b_n}}{\lambda_{v_0}}=1$. Thus $\rho_0\circ f_V$ is the open immersion $\OV\times\BA^1\into \OV\times\BP^1$ given by $t\mapsto (t:1)$. It follows that $f_V$ induces an open immersion $\OV\times\BA^1\into C_{\CF_0}$, as desired.
\end{proof}

\subsection{Properties of $\CC_\OF$}
We begin by defining particular open subschemes of $\CC_\OF$ whose translates under the $V$-action form an open covering $\CU$ of $\CC_\OF$ (Lemma \ref{opencov}). The cover $\CU$ will be useful in establishing flatness over $U_\OF$, reducedness and several other properties of $\CC_\OF$ (Corollary \ref{CFProps}).
 Let $\CF=\{V_{i_0},\ldots,V_{i_{m}}\}\subset \OF$ be an arbitrary subflag. Recall that $U_\CF$ is an open subscheme of $U_{\OF}$, and $\pi_\OF$ restricts to the projection $\pi_\CF\colon \CC_\CF\to U_\CF.$ We identify $\CC_\CF$ with its image in $U_\CF\times P^{\Sigma_\CF}$ via the isomorphism in Proposition \ref{RelComps}. Fix $1\leq k\leq m$. For each $v\in V$, let $k\leq k^v\leq m$ be minimal such that $v\in V_{i_{k^v}}$. Let $W_\CF^k\subset \CC_\CF\subset \CC_\OF$ be the open subscheme defined by
 \begin{eqnarray}\label{smoothnbhd}Q^{i_{k^v}}_{b_{i_k}}(\UT)X_{0k}&\neq& Q^{i_{k^v}}_{v}(\UT)Y_{0k},\mbox{ for all $v\in V$,}\\ \nonumber
 Y_{0k}&\neq& 0,\mbox{ if $k<m$.}
 \end{eqnarray}
Similarly, let $Z_\CF^k\subset \CC_\CF\subset \CC_\OF$ be the open subscheme defined by
 \begin{eqnarray}\label{nodenbhd}
Q^{i_{k^v}}_{b_{i_k}}(\UT)X_{0k}&\neq& Q^{i_{k^v}}_v(\UT)Y_{0k},\mbox{ for all $v\in V\setminus V_{i_{k-1}}$},\\\nonumber
X_{0k-1}&\neq& Q_{v}^{i_{k-1}}(\UT)Y_{0k-1},\mbox{if $k>1$ and for all $v\in V_{i_{k-1}}$},
\\ \nonumber Y_{0k}&\neq& 0.
\end{eqnarray}
\begin{remark}To motivate the definitions of $W^k_\CF$ and $Z^k_\CF$, we provide the following description on the fiber $C_s$ over a point $s\in \Omega_\CF\subset U_\OF$: We will show that $C_s$  is a nodal curve of genus 0 with irreducible components indexed by the elements of $\Sigma_\CF$ (Lemma \ref{irredcomps}). If $k>1$, the open subscheme $W_\CF^k$ is defined so that the base change to $\Spec k(s)$ is precisely the smooth part of the $(0,k)$-component of $C_s$. Similarly, the base change of $Z_\CF^k$ to $\Spec k(s)$ consists of the $(0,k)$- and $(0,k-1)$-components minus their singular points, with the exception of the unique nodal point at which they intersect. The base change of $Z^\CF_1$ is the smooth part of the $(0,1)$-component minus the $(V_{i_1}\setminus\{0\})$-marked points.
\begin{figure}[H]
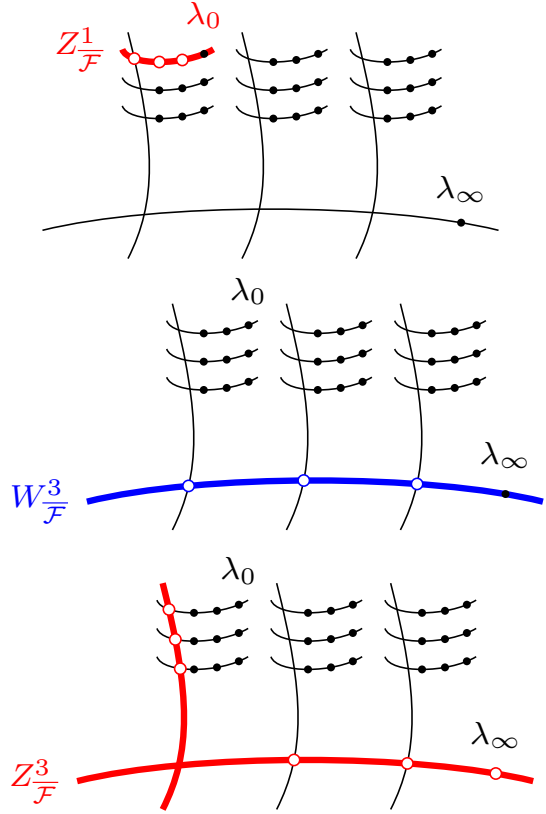

\centering
  \caption{Examples for $\dim V=3$, $q=3$ over a point $s\in\Omega_{\OF}$. The open subschemes $W^k_\OF$ and $Z^k_\OF$ are represented by thick lines. Unfilled points are not included.}
\includestandalone[scale=1.5]{Figures/Z1OF}

\includestandalone[scale=1.5]{Figures/W3OF}

\includestandalone[scale=1.5]{Figures/Z3OF}
  \label{fig:nbhds}
  \end{figure}
\end{remark}
Note that $k$ is fixed in (\ref{smoothnbhd}) and (\ref{nodenbhd}) while $v\in V$ varies. Considering the copy of $\BP^1$ in the $(0,k)$-component of $P^{\Sigma_\OF}$, we can thus make sense of taking the open subscheme of $\BP^1$ defined by (\ref{smoothnbhd}). Similary, looking at the copy of $\BP^1\times\BP^1$ in the $(0,k)$- and $(0,k-1)$-components of $P^{\Sigma_\OF}$, we can consider the open subscheme of $\BP^1\times\BP^1$ defined by (\ref{nodenbhd}).
\begin{lemma}\label{opennbhds} Fix $1\leq k\leq m$.
\begin{enumerate}[label=(\alph*)] 
\item The projection $\CC_\CF\to U_\CF\times\BP^1$ onto the $(0,k)$-component restricts to an isomorphism from $W_\CF^k$ to the open subscheme defined by (\ref{smoothnbhd}).
\item The projection $\CC_\CF\to U_\CF\times\BP^1$ onto the $(0,1)$-component restricts to an isomorphism from $Z_\CF^1$ to the open subscheme defined by (\ref{nodenbhd}).
\item If $k\geq 2,$ the projection $\CC_\CF\to U_\CF\times(\BP^1\times\BP^1)$ onto the $(0,k-1)$- and $(0,k)$-components restricts to an isomorphism from $Z_\CF^k$ to the locally closed subscheme defined by (\ref{nodenbhd}) and
$$X_{0k}Y_{0k-1}=Q^{i_k}_{b_{i_{k-1}}}(\UT)X_{0k-1}Y_{0k}.$$
\end{enumerate}
\end{lemma}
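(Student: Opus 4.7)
In each of the three parts the projection in question is manifestly a morphism, being the restriction to $W^k_\CF$ or $Z^k_\CF$ of the composite of the inclusion $\CC_\CF\hookrightarrow U_\CF\times P^{\Sigma_\CF}$ with a projection onto one or two factors. The strategy is to produce an explicit inverse $\sigma$ into $\CC_\CF$ by solving the defining equations \eqref{eqsFlag} coordinate by coordinate on the indicated open (resp.\ locally closed) subscheme of $U_\CF\times\BP^1$ (resp.\ $U_\CF\times\BP^1\times\BP^1$).

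For part (a), I would specialize \eqref{eqsFlag} to the pair $(v,k)=(0,k)$ and an arbitrary $(v',k')\in\Sigma_\CF$, with $\ell$ minimal subject to $\ell\geq \max(k,k')$ and $v'\in V_{i_\ell}$. Using $Q^{i_\ell}_{-v'}=-Q^{i_\ell}_{v'}$ by $\BF_q$-linearity, the equation collapses to
\[
\bigl[Q^{i_\ell}_{b_{i_k}}(\UT)X_{0k}-Q^{i_\ell}_{v'}(\UT)Y_{0k}\bigr]\,Y_{v'k'} \;=\; Q^{i_\ell}_{b_{i_{k'}}}(\UT)\,Y_{0k}\,X_{v'k'}.
\]
Lemma~\ref{unit} guarantees that $Q^{i_\ell}_{b_{i_{k'}}}$ is a unit on $U_\CF$, while the non-vanishing conditions in \eqref{smoothnbhd}, reconciled across different minimal $\ell$'s via the scaling identity \eqref{lowerdim}, ensure that the two coefficients of $(X_{v'k'}:Y_{v'k'})$ never vanish simultaneously. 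Setting
\[
(X_{v'k'}:Y_{v'k'}) := \bigl(Q^{i_\ell}_{b_{i_k}}X_{0k}-Q^{i_\ell}_{v'}Y_{0k} \,:\, Q^{i_\ell}_{b_{i_{k'}}}Y_{0k}\bigr)
\]
for each $(v',k')\in\Sigma_\CF$ defines a morphism $\sigma$ from the subscheme cut out by \eqref{smoothnbhd}. A direct computation, again relying on the $\BF_q$-linearity of $Q^\ell_\bullet$, verifies that the tuple produced by $\sigma$ satisfies every equation in \eqref{eqsFlag}; hence $\sigma$ factors through $\CC_\CF$, lands visibly inside $W^k_\CF$, and is a two-sided inverse of the projection.

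Parts (b) and (c) follow the same template. Part (b) is the $k=1$ case of (a) after dropping the instance of \eqref{smoothnbhd} at $v=0$: the formula above remains well-defined, and at $X_{01}=0$ it returns $(0:1)$, the image of the $0$-marked section. For part (c) with $k\geq 2$, I would partition $\Sigma_\CF$ into $\{v'\notin V_{i_{k-1}}\}$, where the $(0,k)$-formula of (a) is applied, and $\{v'\in V_{i_{k-1}}\}$, which, together with $(v',k')\in\Sigma_\CF$, forces $v'=0$ or $k'<k$; in the latter range I would use the analogous specialization of \eqref{eqsFlag} to the pair $(0,k{-}1),(v',k')$, with the second condition in \eqref{nodenbhd} taking the role of \eqref{smoothnbhd}. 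The supplementary equation $X_{0k}Y_{0k-1}=Q^{i_k}_{b_{i_{k-1}}}(\UT)X_{0k-1}Y_{0k}$ appearing in (c) is itself \eqref{eqsFlag} for the pair $(0,k{-}1),(0,k)$, so it is both necessary for $\sigma$ to land in $\CC_\CF$ and sufficient to ensure that the two branches of the definition agree on their overlap and that the image of $\sigma$ is exactly the asserted locally closed subscheme.

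\textbf{Main obstacle.} The bookkeeping for part (c) is the principal technical hurdle: one must choose, for every $(v',k')\in\Sigma_\CF$, a privileged pair of coordinates whose relevant $Q$-coefficient is a unit on $Z^k_\CF$; confirm that the pieces of $\sigma$ so assembled jointly solve \emph{every} equation in \eqref{eqsFlag} (not merely those used in their derivation); and verify that the image coincides exactly with the described locally closed subscheme rather than a larger one. The systematic use of Lemma~\ref{unit} to identify invertible $Q$-factors on $U_\CF$, and of \eqref{lowerdim} to compare different minimal choices of $\ell$, will be the recurring tools throughout.
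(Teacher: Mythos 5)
Your proposal is correct and takes essentially the same route as the paper: both arguments solve the relations \eqref{eqsFlag} to express every component other than the $(0,k)$- (resp.\ $(0,k-1)$- and $(0,k)$-) components in terms of the distinguished one(s), using the dichotomy between the case $\ell=k^{v'}$, where the coefficient $Q^{i_\ell}_{b_{i_k}}(\UT)X_{0k}-Q^{i_\ell}_{v'}(\UT)Y_{0k}$ is invertible by \eqref{smoothnbhd} or \eqref{nodenbhd}, and the case $\ell=k'$, where $Q^{i_{k'}}_{b_{i_{k'}}}=1$ and $Y_{0k}$ is invertible (the latter case not occurring for $k=m$). One small correction: Lemma \ref{unit} makes $Q^{i_\ell}_{b_{i_{k'}}}(\UT)$ a unit on $U_\CF$ only when $\ell=k'$, not for all $\ell$, so the non-vanishing of the second coefficient must come from exactly this dichotomy rather than from an unconditional invertibility of that factor.
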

\begin{proof}
For (a), consider $(u,j)\in\Sigma_\CF\setminus\{(0,k)\}$. Recall that $k^u\geq k$ was defined to be minimal such that $u\in V_{i_{k^u}}$. Let $\ell:=\max(j,k^{u})$. Equation (\ref{eqsFlag}) yields
\begin{eqnarray}\label{releq}Q^{i_\ell}_{b_{i_k}}(\UT)X_{0k}Y_{uj}-Q^{i_\ell}_{u}(\UT)Y_{0k}Y_{uj}
=\\\nonumber\underbrace{\Bigl(Q^{i_\ell}_{b_{i_k}}(\UT)X_{0k}-Q^{i_\ell}_{u}(\UT)Y_{0k}\Bigr)}_{(*)}Y_{uj}=\underbrace{Q^{i_\ell}_{b_{i_{j}}}(\UT)Y_{0k}}_{(**)}X_{uj}.\end{eqnarray} We have the following two cases:
\vspace{2mm}

\noindent\emph{Case 1. $(\underline{\ell=k_u})$}: By construction, when $\ell=k^{u}$ then $(*)$ is invertible on $W_\CF^k$. Let $S$ be a $U_\CF$-scheme. A morphism $S\to W_\CF^k\subset \CC_\CF$ over $U_\CF$ is equivalent to an isomorphism class of a set of tuples $\{\bigl(\CL_{wi},(x_{wi}:y_{wi})\bigr)\}_{(w,i)\in\Sigma_\CF}$, where each $\CL_{wi}$ is an invertible sheaf over $S$ with global generating sections $x_{wi}$ and $y_{wi}$ satisfying  (\ref{eqsFlag}) and (\ref{smoothnbhd}). It follows that $(x_{0k}:y_{0k})$ and $(x_{uj}:y_{uj})$ satisfy (\ref{releq}). Since $(*)$ is invertible, this implies that $y_{uj}=F\cdot x_{uj}$ for some $F\in\CO_S(S)$ depending only on $(x_{0k}:y_{0k}).$ Since $x_{uj}$ and $y_{uj}$ generate $\CL_{uj}$, it follows that $x_{uj}$ vanishes nowhere. Hence $\CL_{uj}\cong \CO_S$. Without loss of generality, we may assume $x_{uj}=1$ and $y_{uj}=F.$ 
\vspace{5mm}

\noindent\emph{Case 2. $(\underline{\ell=j})$}: In this case, either $k=m=j$, which implies that $u=0$, or $k<m$. Since $(u,j)\neq (0,k)$ by assumption, the former does not occur. We observe that $Q^{i_j}_{b_{i_{j}}}(\UT)=1$ and that $Y_{0k}$ is invertible on $W_\CF^k$ by construction. It follows that $(**)$ is invertible on $W_\CF^k$. An analogous argument as in case 1 shows that we may take $\CL_{uj}=\CO_S$ and $y_{uj}=1$ and $x_{uj}=G$, where $G\in\CO_S(S)$ depends only on $(x_{0k},y_{0k})$. 
\vspace{5mm}

It follows that $(\CL_{uj},x_{uj},y_{uj})$ is determined by $(\CL_{0k},x_{0k},y_{0k})$ for all $(u,j)\neq (0,k)$. The composite 
$$C_{\CF}\into U_{\CF}\times P^{\Sigma_\CF}\to U_{\CF}\times\BP^1$$ therefore restricts to an isomorphism from $W_\CF^k$ to its image. The image is precisely the open subscheme defined by (\ref{smoothnbhd}), as desired. The proof of (b) is analogous.

\vspace{2mm}
For (c), consider $(u,j)\in\Sigma_\CF\setminus\{(0,k-1),(0,k)\}$. 
Let $S$ be a $U_\CF$-scheme. If $u\in V\setminus V_{i_{k-1}}$, then the same argument as in (a) shows that the $(u,j)$-component of a morphism $S\to Z_\CF^k$ over $U_\CF$ is determined by the $(0,k)$-component. Now suppose $u\in V_{i_{k-1}}.$ If $j\geq k$, then, since $u^{\leq i_j}=0$ by assumption, it follows that $u=0$. From (\ref{eqsFlag}) we obtain the following:
$$Q^{i_j}_{b_{i_k}}(\UT)X_{0k}Y_{0j}=X_{0j}Y_{0k}.$$ Since $Y_{0k}$ is invertible over $Z_\CF^k$, we apply a similar argument is in the proof of (a) to deduce that the $(0,j)$-component of a morphism $S\to Z_\CF^k$ over $U_\CF$ is determined by the $(0,k)$-component. 

If $j<k$, then (\ref{eqsFlag}) yields
$$\underbrace{(X_{0k-1}-Q^{i_{k-1}}_u(\UT)Y_{0k-1})}_{(*)}Y_{uj}=Q^{i_{k-1}}_{b_{i_j}}(\UT)X_{uj}Y_{0k-1}.$$
By construction $(*)$ is invertible on $Z_\CF^k$, and we once again apply a similar argument as in the proof of (a) to conclude that the $(u,j)$-component of a morphism $S\to Z_\CF^k$ over $U_\CF$ is determined by the $(0,k-1)$-component. All together, we have shown that any morphism $S\to Z_\CF^k$ over $U_\CF$ is determined by the $(0,k)$- and $(0,k-1)$-components. The image is defined by (\ref{nodenbhd}) in addition to the equation in (\ref{eqsFlag}) relating the $(0,k)$- and $(0,k-1)$-components. This proves (c).
\end{proof}

Let $s\in U_{\OF}$. The canonical morphism $\Spec k(s)\to U_{\OF}$ corresponds to a point $t\in k(s)^{n-1}$. Denote the fiber over $s$ of $\pi_\OF\colon \CC_{\OF}\to U_{\OF}$ by $C_s$. By Proposition \ref{strata}, there is a unique subflag $\CF=\{V_{i_0},\ldots,V_{i_{m}}\}$ of $\OF$ such that $s\in\Omega_{\CF}\subset U_{\OF}$. 

For each $1\leq k\leq m$, let $$W_s^k:=W_\CF^k\times\Spec k(s)\subset C_s,$$ 
and
 $$Z_s^k:=Z_\CF^k\times\Spec k(s)\subset C_s.$$
The $G$-action (Proposition \ref{Action}) on $\CC_{\OF}$ induces an action
$$\phi_s\colon G\to \Aut_{k(s)}(C_s).$$
\begin{lemma}\label{pointcov}
Let $p\in C_s$ be a closed point. Then there exist $v\in V$ and $1\leq k\leq m$ such that $p\in \phi_s(v)(W_s^k)$ or $p\in \phi_s(v)(Z_s^k)$.
\end{lemma}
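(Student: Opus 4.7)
The plan is to combine an explicit coordinate analysis with the transitivity of the $V$-action on the irreducible components of $C_s$, exhibiting $\{\phi_s(v)(W_s^k),\phi_s(v)(Z_s^k)\}_{v\in V,\,1\leq k\leq m}$ as an open cover of $C_s$. Since $s\in\Omega_\CF$, Lemma~\ref{kpoints} makes the specializations $t_i\in k(s)$ of the coordinates $T_i$ completely explicit. Combined with Lemma~\ref{opennbhds} and Lemma~\ref{unit}, this identifies $W_s^k$ with the $(0,k)$-component $\BP^1_{k(s)}$ with finitely many special points removed (and with the point $Y_{0,k}=0$ removed when $k<m$); identifies $Z_s^k$, for $k\geq 2$, with the union of the $(0,k)$- and $(0,k-1)$-components minus their special points except the unique intersection node $\{X_{0,k}=0\}=\{Y_{0,k-1}=0\}$; and identifies $Z_s^1$ with the $(0,1)$-component minus its $(V_{i_1}\setminus\{0\})$-marked points.

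Next I would analyse the $V$-action. By (\ref{transl}), the element $v\in V$ acts on the $P^\Sigma$-coordinates via $(x_{u,w})\mapsto(x_{u-v,w})$; together with the relations (\ref{eqsFlag}) and Proposition~\ref{RelComps} this translates into an action on the $P^{\Sigma_\CF}$-coordinates in which, for each $(w,k)\in\Sigma_\CF$, one can choose $v\in V$ (concretely, any $v$ satisfying $w=(-v)^{>i_k}$) whose action carries the $(0,k)$-coordinate component onto the $(w,k)$-coordinate component. Consequently the translates $\phi_s(v)(W_s^k)$ and $\phi_s(v)(Z_s^k)$ collectively sweep through every $\BP^1$-component indexed by $\Sigma_\CF$, together with every node joining two such components.

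Given any closed point $p\in C_s$, I pick a pair $(w,k)\in\Sigma_\CF$ with $Y_{w,k}(p)\neq 0$; this is possible because the $\BP^1$-components indexed by $\Sigma_\CF$ cover $C_s$ and each such component has a unique point where $Y=0$. Translating by a suitable $v\in V$ as above, I reduce to the case $w=0$. If the $(0,k)$-coordinate of $p$ avoids all the special points identified in the first paragraph, then $p\in W_s^k$; if it equals the node $X_{0,k}=0$ (when $k\geq 2$), or if $k=1$ and $p$ is the $0$-marked point, then $p\in Z_s^k$. In the remaining cases — $p$ is a $(V_{i_k}\setminus V_{i_{k-1}})$-marked point on $(0,k)$, or a node joining $(0,k)$ to a component $(w',k-1)$ with $w'\neq 0$ — a further translation by a suitable $u\in V$ moves the offending special point off the central chain, reducing to one of the preceding cases. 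The main obstacle is to justify this covering argument without invoking the irreducible decomposition of $C_s$ (the later Lemma~\ref{irredcomps} uses results building on the present lemma); I would sidestep this by working purely with projective coordinates, interpreting each geometric assertion about components and nodes as an explicit statement about vanishing or non-vanishing of the $X_{v,k}$ and $Y_{v,k}$ via Lemma~\ref{opennbhds} and (\ref{eqsFlag}).
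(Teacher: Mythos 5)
Your strategy---explicit coordinates plus translation by $V$---is the same as the paper's, but the execution has a genuine gap at the step where you select the pair $(w,k)$. The paper's proof hinges on choosing $k$ \emph{minimal} such that $y_{uk}\neq 0$ for some $(u,k)\in\Sigma_\CF$ (with the convention $(u,k):=(0,m)$ at the $\infty$-marked point, where \emph{every} $y_{uk}$ vanishes---a point at which your covering claim ``each component has a unique point where $Y=0$'' does not produce a usable pair). Minimality is what makes the dichotomy close up: after translating so that $y_{0k}\neq 0$ and $y_{0\ell}=0$ for all $\ell<k$, either $x_{0k}\neq Q^{i_k}_{v'}(\Ut)\,y_{0k}$ for all $v'\in V_{i_k}$, whence $p\in W^k_s$, or a further translation by $-v'$ puts the $(0,k)$-coordinate at $(0:1)$, and the second line of (\ref{nodenbhd}), namely $x'_{0,k-1}\neq Q^{i_{k-1}}_{v}(\Ut)\,y'_{0,k-1}$, holds precisely because minimality forces $y'_{0,k-1}=0$. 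Without minimality your node case is false: if $p$ lies on the chain strictly below the $(0,k-1)$-component (say a generic point of $E_{0,1}$ with $k=m=3$), one still has $(x_{0k}:y_{0k})=(0:1)$ with $y_{0k}\neq 0$, so your recipe declares $p\in Z^k_s$, yet the condition on the $(0,k-1)$-coordinate in (\ref{nodenbhd}) fails (there $X_{0,k-1}=Q^{i_{k-1}}_0(\Ut)Y_{0,k-1}=0$), and indeed no $V$-translate of $Z^k_s$ contains $p$. Your proposed escape for the remaining cases---``a further translation by a suitable $u\in V$ moves the offending special point off the central chain''---is an assertion, not an argument: you neither exhibit $u$ nor show the process terminates, and this is exactly where the minimal choice of $k$ is doing the work in the paper.

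A second, related problem is that your selection step and your description of where the nodes sit appeal to the decomposition of $C_s$ into the components $E_{w\ell}$ and to the location of their intersections. In the paper these facts (Lemmas \ref{irredcomps} and \ref{int}) come after Proposition \ref{localisos}, which rests on Lemma \ref{opencov} and hence on the present lemma, so invoking them here is circular. You acknowledge this and promise a coordinate-only ``sidestep,'' but that sidestep is precisely the missing content: the paper's proof never mentions components at all and works only with the relations (\ref{defeqs})/(\ref{eqsFlag}) together with Lemmas \ref{kpoints} and \ref{unit}. To repair your argument, replace ``pick any $(w,k)$ with $Y_{wk}(p)\neq 0$'' by the paper's minimal choice (with the $\infty$-point convention), and carry out the two-case analysis on $x_{0k}$ versus the values $Q^{i_k}_{v'}(\Ut)\,y_{0k}$ directly from the equations.
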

\begin{proof}The morphism $$\Spec k(p)\to C_s\into P^{\Sigma_\CF}_s$$ corresponds to a point $$(x_{w\ell}:y_{w\ell})_{(w,\ell)\in\Sigma_\CF}\in  P^{\Sigma_{\CF}}_{s}\bigl(k(p)\bigr),$$
where the $(x_{w\ell},y_{w\ell})\in k(p)^2\setminus \{(0,0)\}$ and $\bigl(t,(x_{w\ell}:y_{w\ell})_{w\ell}\bigr)$ satisfies (\ref{eqsFlag}). If possible, choose $k$ minimal such that there exists a $(u,k)\in\Sigma_\CF$ with $y_{uk}\neq 0$. Otherwise, let $(u,k):=(0,n)$. Let $\tilde p:=\phi_s(-u)(p),$ with corresponding $(\tilde x_{w\ell}:\tilde y_{w\ell})_{w\ell}\in P^{\Sigma_{\CF}}_{s}\bigl(k(p)\bigr)$. The description of the $G$-action in \ref{SubSectAction} shows that $(\tilde x_{0k}:\tilde y_{0k})=(x_{uk},y_{uk})$.  It suffices to prove the lemma for $\tilde p$ in place of $p$, so we may suppose without loss of generality that $y_{0k}\neq 0$ and $y_{0\ell}=0$ for all $\ell<k$. We have the following two cases:
\vspace{2mm}

\noindent\emph{Case 1.} \underline{$\bigl(x_{0k}\neq Q^{i_k}_{v'}(\Ut)y_{0k}$ for all $v'\in V_{i_k}\bigr)$}: Using the defining equations (\ref{smoothnbhd}) of $W_\CF^k$, we see that $p\in W_s^k$ if and only if
 \begin{eqnarray}\label{smoothnbhdfib}Q^{i_{k^v}}_{b_{i_k}}(\Ut) x_{0k}&\neq& Q^{i_{k^v}}_{v}(\Ut) y_{0k},\mbox{ for all $v\in V$,}\\ \nonumber
 y_{0k}&\neq& 0,\mbox{ if $k<m$.}
 \end{eqnarray}
Let $v\in V$. Recall that we defined $k^v\geq k$ to be minimal such that $v\in V_{i_{k^v}}$. If $k_v=k$, then (\ref{smoothnbhdfib}) says that $ x_{0k}\neq Q^{i_k}_{v}(\Ut) y_{0k}$, which is true by assumption. If $k_v>k$, then Lemma \ref{kpoints}.1 implies that $Q^{i_{k^v}}_{b_{i_k}}(\Ut)=0$, so (\ref{smoothnbhdfib}) reads $Q^{i_{k^v}}_{v}(\Ut) y_{0k}\neq 0$. By Lemma \ref{kpoints}.2, we have $Q^{i_{k^v}}_{v}(\Ut)\neq 0$. Since $y_{0k}\neq 0$ by assumption, it follows that (\ref{smoothnbhdfib}) holds. Since $v$ was arbitrary, we conclude that $p\in W^k_s$.
\vspace{5mm}

\noindent\emph{Case 2.} \underline{$\bigl(x_{0k}= Q^{i_k}_{v'}(\Ut) y_{0k}$ for some $v'\in V_{i_k}\bigr)$}: Let $p':=\phi_s(-v')(p)$, with corresponding $(x'_{w\ell}:y'_{w\ell})_{(w,\ell)\in\Sigma_\CF}\in P^{\Sigma_\CF}_s\bigl(k(p)\bigr)$. From the defining equations (\ref{nodenbhd}) of $Z_\CF^k$, we deduce that $p'\in Z_s^k$ if and only if  
\begin{eqnarray}\label{nodenbhdfib}
Q^{i_{k^v}}_{b_{i_k}}(\Ut)x'_{0k}&\neq& Q^{i_{k^v}}_v(\Ut)y'_{0k},\mbox{ for all $v\in V\setminus V_{i_{k-1}}$},\\\nonumber
x'_{0k-1}&\neq& Q_{v}^{i_{k-1}}(\Ut)y'_{0k-1},\mbox{if $k>1$, and for all $v\in V_{i_{k-1}}$},
\\ \nonumber y'_{0k}&\neq& 0.
\end{eqnarray}
Considering $C_s$ as a closed subscheme of $P^\Sigma_s$, the morphism $\Spec k(p)\to C_s$ with image $p$ corresponds to some $(x_{vw}:y_{vw})_{(v,w)\in\Sigma}\in P^\Sigma_s\bigr(k(p)\bigl)$ such that $(x_{wb_{i_\ell}}:y_{wb_{i_\ell}})=(x_{w\ell}:y_{w\ell})$ for all $(w,\ell)\in\Sigma_\CF$. By (\ref{defeqs}), we have 
$$\underbrace{(x_{0k}-Q^{i_k}_{v'}(\Ut)y_{0k})}_{(*)}y_{v'b_{i_k}}=x_{v'b_{i_k}}y_{0k}.$$
By assumption $(*)$ is zero. We thus have $x_{v'b_{i_k}}y_{0k}=0$.  It follows that $(x_{0k}:y_{0k})=(0:1)$ and that $$(x'_{0k}:y'_{0k})=(x_{v'b_{i_k}}:y_{v'b_{i_k}})=(0:1).$$ Our choice of $k$ implies that $y'_{0k-1}=0$ if $k>1$. Moreover, Lemma \ref{kpoints} implies that $Q_v^{i_{k^v}}(\Ut)\neq 0$. We then see directly that (\ref{nodenbhdfib}) holds. Thus $p'\in Z_s^k$.
\vspace{5mm}

It follows that every closed point $p\in C_s$ is contained in a neighborhood of the form perscribed in the lemma, as desired.
\end{proof}
Recall that we defined a left $G$-action $\phi_\OF$ on $\CC_{\OF}$. For each subflag $\CF\subset\OF$, let $m_\CF$ denote the length of $\CF$.
\begin{lemma}\label{opencov} The set
$$\CU:=\{ \phi_\OF(v)\bigl(W_\CF^k\bigr), \phi_\OF(v)\bigl(Z_\CF^k\bigr)\mid v\in V\mbox,\,\CF\subset\OF,\mbox{ and }1\leq k\leq m_\CF\}$$ is an open cover of $\CC_{\OF}$.
\end{lemma}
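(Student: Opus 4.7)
\medskip

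The plan is to deduce the lemma essentially from Lemma \ref{pointcov}, reducing a scheme-theoretic covering statement to the pointwise statement already proven fiber-by-fiber. The first step is simply to observe that each element of $\CU$ is open in $\CC_\OF$: indeed, for every subflag $\CF\subset\OF$, the scheme $\CC_\CF=\pi_\OF^{-1}(U_\CF)$ is open in $\CC_\OF$ because $U_\CF$ is open in $U_\OF$, and the subsets $W_\CF^k$ and $Z_\CF^k$ are open in $\CC_\CF$ by construction; finally $\phi_\OF(v)$ is an automorphism of $\CC_\OF$ for every $v\in V$.

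Next, I would show that $\CU$ covers $\CC_\OF$ set-theoretically. Fix an arbitrary point $p\in\CC_\OF$ and set $s:=\pi_\OF(p)\in U_\OF$. By Proposition \ref{strata} there is a unique subflag $\CF\subset\OF$ with $s\in\Omega_\CF$, and since $\Omega_\CF\subset U_\CF$ we have $p\in\CC_\CF$. The fiber $C_s$ is a closed subscheme of $P^{\Sigma_\CF}_{k(s)}$ and therefore of finite type over the field $k(s)$; hence $p$ admits a specialization to some closed point $p_0\in C_s$. Applying Lemma \ref{pointcov} to $p_0$ yields $v\in V$ and $1\leq k\leq m_\CF$ such that $p_0$ lies in $\phi_s(v)(W_s^k)$ or $\phi_s(v)(Z_s^k)$, which are by definition the fibers over $s$ of $\phi_\OF(v)(W_\CF^k)$ or $\phi_\OF(v)(Z_\CF^k)$ respectively.

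Finally, I would upgrade this from the closed point $p_0$ back to the original $p$ using the general fact that an open subset of a scheme containing a point also contains every generalization of that point. Concretely, since $p_0$ is a specialization of $p$, we have $p_0\in\overline{\{p\}}$. If $p$ were not contained in the open set $U\in\CU$ witnessing $p_0$, then $p$ would lie in the closed complement $\CC_\OF\setminus U$, hence $\overline{\{p\}}\subset \CC_\OF\setminus U$, contradicting $p_0\in U$. Therefore $p$ itself lies in $U$, and $\CU$ covers $\CC_\OF$.

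The only mildly subtle point is the specialization-to-a-closed-point argument: one needs to know that every point of $C_s$ specializes to a closed point, which holds because $C_s$ is a scheme of finite type (in fact projective) over the field $k(s)$, so it is Jacobson and every nonempty closed subset contains a closed point of $C_s$. Once this is in hand the rest is formal, and the main content of the lemma is really packaged in Lemma \ref{pointcov} combined with the $G$-equivariance of the construction.
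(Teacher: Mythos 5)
Your proposal is correct and follows essentially the same route as the paper: both reduce the statement to Lemma \ref{pointcov} and then pass from closed points of each fiber $C_s$ to all points using the fact that $C_s$ is of finite type over $k(s)$ (the paper cites density/very-density of closed points, you phrase it via specialization to a closed point and stability of open sets under generalization, which is the same Jacobson-type argument). Your explicit remark that openness of each member of $\CU$ follows from openness of $\CC_\CF$ in $\CC_\OF$ and the fact that $\phi_\OF(v)$ is an automorphism is a detail the paper leaves implicit, but there is no substantive difference.
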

\begin{proof} For each $s\in U_{\OF}$, let $$\CU_s:=\{U\times_{U_{\OF}}{k(s)}\mid U\in \CU\}.$$ Lemma \ref{pointcov} implies that every closed point in the fiber $C_s$ is contained in an element of $\CU_s$. Since $C_s$ is of finite type over $k(s)$, the closed points in $C_s$ are dense (\cite{GW}, Proposition 3.35). Hence $\CU_s$ is an open cover of $C_s$ for each $s\in S$. This implies that $\CU$ is an open cover of $\CC_{\OF}$.
\end{proof}
\begin{proposition}\label{localisos} The projection $\CC_{\OF}\to U_{\OF}$ is Zariski locally isomorphic to one of the following:
\begin{enumerate}[label=(\alph*)]
\item The projection $\BA_{\BF_q}^n\to\BA_{\BF_q}^{n-1}$ onto the first $n-1$ components.
\item The composite $$Z\into\BA_{\BF_q}^{n+1}=\Spec\BF_q[\UT,X,Y]\to\BA^{n-1}_{\BF_q}=\Spec \BF_q[\UT],$$ where $Z$ is the closed subscheme defined by the equation
\begin{equation}\label{b}XY=P(\UT),\end{equation}
for some $P(\UT)\in\BF_q[\UT]$.
\end{enumerate}
\end{proposition}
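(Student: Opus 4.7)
The plan is to exploit the open cover $\CU$ of $\CC_\OF$ provided by Lemma \ref{opencov}, together with the explicit descriptions of $W_\CF^k$ and $Z_\CF^k$ given in Lemma \ref{opennbhds}. Because the $G$-action $\phi_\OF$ is by $U_\OF$-automorphisms, any $G$-translate of an open subscheme on which $\pi_\OF$ satisfies condition (a) or (b) is again of the same type. It therefore suffices to show that every neighborhood of the form $W_\CF^k$ or $Z_\CF^k$, for some subflag $\CF \subset \OF$ and $1 \leq k \leq m_\CF$, can itself be covered by open subschemes on which $\pi_\OF$ is isomorphic to (a) or (b).

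First consider $W_\CF^k$. By Lemma \ref{opennbhds}(a), projection onto the $(0,k)$-component identifies $W_\CF^k$ with an open subscheme of $U_\CF \times \BP^1$. Covering $\BP^1$ by its two standard affine charts, $W_\CF^k$ becomes Zariski locally an open subscheme of $U_\CF \times \BA^1$; since $U_\CF$ is itself an open subscheme of $\BA^{n-1}_{\BF_q}$ by Proposition \ref{Coords}, this realises each piece as an open subscheme of $\BA^n_{\BF_q}$ whose projection to $U_\OF$ agrees with the restriction of $\BA^n_{\BF_q} \to \BA^{n-1}_{\BF_q}$ onto the first $n-1$ coordinates. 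This gives case (a). The argument for $Z_\CF^1$ is identical: Lemma \ref{opennbhds}(b) places it already in the chart $Y_{01}\neq 0$, so it is an open subscheme of $U_\CF \times \BA^1$ and case (a) applies without further covering.

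For $Z_\CF^k$ with $k \geq 2$, invoke Lemma \ref{opennbhds}(c): $Z_\CF^k$ is the locally closed subscheme of $U_\CF \times \BP^1 \times \BP^1$ cut out by the open conditions (\ref{nodenbhd}) together with the single bihomogeneous equation
$$X_{0k}Y_{0k-1} \;=\; Q^{i_k}_{b_{i_{k-1}}}(\UT)\, X_{0k-1} Y_{0k}. \qquad (\star)$$
Since $Y_{0k}$ is invertible on $Z_\CF^k$, set $y := X_{0k}/Y_{0k}$. Cover the $\BP^1$-factor carrying $(X_{0k-1}:Y_{0k-1})$ by its two affine charts $D(Y_{0k-1})$ and $D(X_{0k-1})$. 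On the first, setting $x := X_{0k-1}/Y_{0k-1}$, the equation $(\star)$ reduces to $y = Q^{i_k}_{b_{i_{k-1}}}(\UT)\cdot x$, exhibiting the subchart as an open subscheme of $\Spec \BF_q[\UT,x] = \BA^n_{\BF_q}$ projecting to $\BA^{n-1}_{\BF_q}$ by forgetting $x$; this is case (a). On the second, setting $x' := Y_{0k-1}/X_{0k-1}$, equation $(\star)$ becomes $y\cdot x' = Q^{i_k}_{b_{i_{k-1}}}(\UT)$, exhibiting the subchart as an open subscheme of the hypersurface $XY = P(\UT)$ in $\BA^{n+1}_{\BF_q}$ with $P(\UT) := Q^{i_k}_{b_{i_{k-1}}}(\UT)$; this is case (b). The two subcharts cover $Z_\CF^k$ because $D(Y_{0k-1}) \cup D(X_{0k-1}) = \BP^1$.

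There is no serious obstacle here: once the explicit affine equations for $W_\CF^k$ and $Z_\CF^k$ have been extracted in Lemma \ref{opennbhds}, the proof is essentially a chart-by-chart bookkeeping. The only mild subtlety is remembering that for $k = 1$ there is no equation $(\star)$ to dehomogenise and only case (a) arises, whereas for $k \geq 2$ both cases (a) and (b) appear depending on whether one works in the chart $Y_{0k-1}\neq 0$ or $X_{0k-1}\neq 0$.
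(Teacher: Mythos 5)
Your proof is correct and follows essentially the same route as the paper: cover $\CC_\OF$ by the $G$-translates of the $W^k_\CF$ and $Z^k_\CF$ via Lemma \ref{opencov}, then read off the local models from Lemma \ref{opennbhds}. The only (harmless) difference is that your chart $D(Y_{0k-1})$ on $Z^k_\CF$ is redundant, since the defining inequalities (\ref{nodenbhd}) with $v=0$ already force $X_{0k-1}\neq 0$, so all of $Z^k_\CF$ lies in the single chart where $(\star)$ dehomogenises to $XY=Q^{i_k}_{b_{i_{k-1}}}(\UT)$, which is how the paper handles it.
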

\begin{proof} Let $\CF=\{V_{i_0},\ldots, V_{i_m}\}\subset \OF$ be a subflag and fix $1\leq k\leq m$. Since the projection $U_\CF\times \BP^1\to U_\CF$ is locally isomorphic to the projection $\BA^n_{\BF_q}\to\BA^{n-1}_{\BF_q}$, the same holds for $W_\CF^k\to U_\CF$ and $Z_\CF^1\to U_\CF$ by Lemma \ref{opennbhds}.(a),(b).

By (\ref{eqsFlag}), the equation 
$$X_{0k}Y_{0k-1}=Q^{i_k}_{b_{i_k}}(\UT)X_{0k-1}Y_{0k}$$ holds on $\CC_\OF$. On $Z_\CF^k$ we may divide by $Y_{0k}$ and $X_{0k-1}.$ With $X:=X_{0k}/Y_{0k}$ and $Y:=Y_{0k-1}/X_{0k-1}$, this yields 
\begin{equation}\label{nodeeq}XY=Q^{i_k}_{b_{i_{k-1}}}(\UT).
\end{equation}
 It follows from Lemma \ref{opennbhds}(c) that $Z_\CF^k$ is locally isomorphic to the subscheme of $$U_\CF\times\bigl(D(X_{0k-1})\cap D(Y_{0k})\bigr)\subset U_\CF\times(\BP^1\times\BP^1)$$ defined by (\ref{nodeeq}). We apply Lemma \ref{opencov} to conclude the proof.
\end{proof}

\begin{corollary}\label{CFProps}The scheme $\CC_{\OF}$ has the following properties:
\begin{enumerate}[label=(\alph*)]
\item $\dim_{\CC_\OF}=n$;
\item $\CC_\OF$ is reduced;
\item $\CC_\OF$ is irreducible;
\item $\CC_\OF$ is flat over $U_\OF$;
\end{enumerate}
\end{corollary}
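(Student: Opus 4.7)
My plan is to deduce (a), (b), (c) and (d) from two inputs: the Zariski-local model of $\pi_\OF\colon\CC_\OF\to U_\OF$ supplied by Proposition \ref{localisos}, and the explicit form of $\CC_{\CF_0}$ over the dense open $\OV=U_{\CF_0}\subset U_\OF$.

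Properties (a), (b) and (d) are local on $\CC_\OF$, so it will suffice to verify each of them for the two model situations. The polynomials $P(\UT)$ that appear in the proof of Proposition \ref{localisos} are the monomials $Q^{i_k}_{b_{i_{k-1}}}(\UT)=\prod_{j=i_{k-1}}^{i_k-1}T_j$, which are nonzero elements of $\BF_q[\UT]$. Hence the hypersurface $Z=V(XY-P(\UT))\subset\BA^{n+1}_{\BF_q}$ is integral of dimension $n$: as a polynomial in $X$ it is linear with leading coefficient $Y$ and constant term $-P$ coprime to $Y$ in $\BF_q[\UT,Y]$ (because $P$ does not involve $Y$), so by Gauss's Lemma $XY-P$ is irreducible in $\BF_q[\UT,X,Y]$. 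Together with the obvious corresponding properties for $\BA^n_{\BF_q}$, this yields (a) and (b). For (d), $\BA^n_{\BF_q}\to\BA^{n-1}_{\BF_q}$ is flat, while the $\BF_q[\UT]$-algebra $\BF_q[\UT,X,Y]/(XY-P)$ is free on the basis $\{1,X,X^2,\ldots,Y,Y^2,\ldots\}$: surjectivity comes from the relation $X^iY^j=P^{\min(i,j)}X^{i-j}$ for $i\geq j$ (and symmetrically for $j>i$), and linear independence follows from a short direct argument using that $XY-P$ has no factor lying in $\BF_q[\UT]$. Freeness implies flatness of this model, so (d) holds globally.

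The main obstacle is (c), since irreducibility is not a local property. My plan is to combine the flatness from (d) with a direct identification of the generic fiber of $\pi_\OF$. By Proposition \ref{Coords}, $U_\OF$ is isomorphic to an open subscheme of $\BA^{n-1}_{\BF_q}$, hence irreducible; let $\eta$ denote its generic point. Since $\OV=U_{\CF_0}$ is a nonempty open of $U_\OF$, we have $\eta\in U_{\CF_0}$. For the trivial flag $\CF_0=\{0,V\}$ one computes $\Sigma_{\CF_0}=\{(0,1)\}$ and the equations in (\ref{eqsFlag}) become vacuous, so Proposition \ref{RelComps} identifies $\CC_{\CF_0}$ with $U_{\CF_0}\times\BP^1$. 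Consequently the fiber $(\CC_\OF)_\eta=(\CC_{\CF_0})_\eta$ equals $\BP^1_{k(\eta)}$, which has a unique generic point $\xi$. Now flatness of $\pi_\OF$ implies going-down: every point of $\CC_\OF$ admits a generization lying over $\eta$. In particular, every generic point of $\CC_\OF$ must lie in the generic fiber, and is thus equal to $\xi$. Therefore $\CC_\OF$ has a single generic point, which gives (c).
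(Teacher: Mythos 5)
Your argument is correct. For (a), (b) and (d) you and the paper take the same basic route---reduce everything to the two Zariski-local models of Proposition \ref{localisos}---but you actually carry out the verifications that the paper leaves implicit: integrality of the hypersurface $XY-P(\UT)$ (using that the relevant $P$ are nonzero monomials, via Gauss's lemma) and flatness via freeness of $\BF_q[\UT,X,Y]/(XY-P)$ over $\BF_q[\UT]$ on the basis $\{1,X,X^2,\ldots,Y,Y^2,\ldots\}$. The genuine divergence is in (c), where irreducibility cannot be checked locally. The paper argues topologically with the explicit open cover $\CU$ of Lemma \ref{opencov}: each member is irreducible by the local description and meets the irreducible dense open $\CC_{\CF_0}\cong\OV\times\BP^1$, so any two members intersect, and a scheme covered by pairwise-meeting irreducible opens is irreducible (\cite{Stacks}, Tag 01OM). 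You instead feed the flatness from (d) into going-down: every maximal point of $\CC_\OF$ has a generization over the generic point $\eta$ of the irreducible scheme $U_\OF$, hence is itself a maximal point of the generic fiber, which by Proposition \ref{RelComps} is $\BP^1_{k(\eta)}$ and has a single generic point. Your route buys independence from the combinatorics of the cover (you never need to know that the members of $\CU$ are irreducible or that they pairwise intersect), at the cost of invoking going-down for flat morphisms; the paper's route stays purely topological once the local models are understood. Both are complete proofs.
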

\begin{proof}By Proposition \ref{localisos}, the scheme $\CC_\OF$ admits an open covering by $n$-dimensional, reduced subschemes. This implies (a) and (b). Moreover, each $U\in \CU$ has a non-empty intersection with $C_{\CF_0}\cong \OV\times\BP^1$. Since the latter is irreducible, it follows that any two elements of $\CU$ have a non-empty intersection. A scheme is irreducible if and only if it admits a covering by irreducible open subschemes whose pairwise intersections are non-empty (\cite{Stacks}, Tag 01OM). Thus the irreducibility of each $U\in\CU$ implies (c). Finally, property (d) follows from the fact that both of the morphisms in Proposition \ref{localisos} are flat.
\end{proof}

\subsection{The fibers of $\CC_\OF\to U_\OF$}
Let $s\in U_\OF$. In our chosen coordinates on $U_\OF$, the point $s$ corresponds to some $\Ut\in k(s)^{n-1}$.
\begin{proposition}\label{geomred}
The fiber $C_s$ is a geometrically reduced curve with at worst nodal singularities.
\end{proposition}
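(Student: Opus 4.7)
The plan is to exploit the local model provided by Proposition \ref{localisos} together with the open cover from Lemma \ref{opencov}. Since geometric reducedness and the property of having at worst nodal singularities are both local on the source, I will verify them on each element of the cover $\mathcal{U}$ separately, and $G$-translates can be ignored because the $G$-action on $\mathcal{C}_{\overline{\mathcal{F}}}$ is by automorphisms over $U_{\overline{\mathcal{F}}}$, hence preserves fibers up to isomorphism.

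First, on open subsets of $\mathcal{C}_{\overline{\mathcal{F}}}$ of type (a) in Proposition \ref{localisos}, the projection is Zariski-locally isomorphic to $\mathbb{A}^n_{\mathbb{F}_q} \to \mathbb{A}^{n-1}_{\mathbb{F}_q}$. The fiber over $s$ is therefore locally isomorphic to $\mathbb{A}^1_{k(s)}$, which is smooth and geometrically integral; nothing further is needed here.

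Second, on open subsets of type (b), the projection is locally isomorphic to the closed subscheme $Z \subset \mathbb{A}^{n+1}_{\mathbb{F}_q}$ cut out by $XY = P(\underline{T})$, projected onto $\mathbb{A}^{n-1}_{\mathbb{F}_q}$ by forgetting $X$ and $Y$. (Inspecting the proof of Proposition \ref{localisos}, the polynomial $P(\underline{T})$ is one of the $Q^{i_k}_{b_{i_{k-1}}}(\underline{T})$.) The fiber over $s$ is then the closed subscheme of $\mathbb{A}^2_{k(s)}$ cut out by
\[
XY = P(\underline{t}),
\]
where $\underline{t} \in k(s)^{n-1}$ is the image of $\underline{T}$. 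I split into two cases according to whether $P(\underline{t}) \neq 0$ or $P(\underline{t}) = 0$. In the first case, the substitution $Y = P(\underline{t})/X$ identifies the fiber with $\mathrm{Spec}\, k(s)[X, X^{-1}]$, which is smooth and geometrically integral. In the second case, the polynomial $XY \in k(s)[X,Y]$ is a product of two distinct prime factors $X$ and $Y$, so the fiber is the reduced union of two affine lines meeting transversally at the origin; after base change to $\overline{k(s)}$ this description persists verbatim, yielding geometric reducedness and a single ordinary double point at the origin.

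Combining the two cases over all members of the cover $\mathcal{U}$, the fiber $C_s$ is covered by geometrically reduced schemes with at worst nodal singularities, and the proposition follows. I do not expect a serious obstacle here: the only point that requires a moment's care is verifying that the polynomial $P(\underline{T})$ produced by the local model in Proposition \ref{localisos}(b) is genuinely $Q^{i_k}_{b_{i_{k-1}}}(\underline{T})$ (so that the case $P(\underline{t}) = 0$ is allowed to occur and corresponds to honest nodes on the fiber), but this is already implicit in equation (\ref{nodeeq}) in the proof of Proposition \ref{localisos}, so the argument is essentially a direct application of the local model.
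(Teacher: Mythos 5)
Your proposal is correct and follows essentially the same route as the paper: the paper's proof likewise invokes Proposition \ref{localisos} to see that $C_s$ is locally isomorphic to $\BA^1_{k(s)}$ or to the curve $XY=Q^k_{b_j}(\Ut)$ in $\BA^2_{k(s)}$, both of which are geometrically reduced, and notes that the latter is singular only when $Q^k_{b_j}(\Ut)=0$, in which case it is visibly nodal. Your extra case split on $P(\Ut)\neq 0$ versus $P(\Ut)=0$ just spells out the same observation in slightly more detail.
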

\begin{proof}By Proposition \ref{localisos}, the fiber $C_s$ is locally isomorphic to $\BA^1_{k(s)}$ or the curve $Z$ in $\BA^2_{k(s)}$ defined by $XY=Q^k_{b_j}(\Ut)$, for some $1\leq j<k\leq n$. Since both $\BA^1_{k(s)}$ and $Z$ are geometrically reduced curves, the same holds for $C_s$. Moreover, the curve $Z$ is singular if and only if $Q^k_{b_j}(\Ut)=0$, in which case $Z$ is clearly nodal.
\end{proof}
For a unique $\CF=\{V_{i_0},\ldots, V_{i_m}\}\subset \OF$, we have $s\in\Omega_\CF\subset U_\OF$. We identify $C_s$ with its image in $P^{\Sigma_\CF}_s$ via Proposition \ref{RelComps}.
\begin{lemma}\label{fibeqs}The fiber $C_s$ is the closed subscheme of $P^{\Sigma_{\CF}}_{s}$ defined by the equations
\begin{equation}\label{I}\big(X_{vk}-Q^{i_k}_{v'^{\leq i_k}}(\Ut)Y_{vk}\big)Y_{v'k'}=0,
\end{equation}
for $1\leq k'<k\leq m$ and $v\equiv v'\mod V_{i_k}$, and
\begin{equation}\label{II}Y_{vk}Y_{v'k'}=0,\end{equation}
for $1\leq k'\leq k\leq m$ and $v\not\equiv v'\mod V_{i_k}$.
\end{lemma}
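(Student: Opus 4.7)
The plan is to specialize the defining equations (\ref{eqsFlag}) of $\CC_\CF\subset U_\CF\times P^{\Sigma_\CF}$ (coming from Proposition \ref{RelComps}) at the point $s$ and show that, after exploiting the vanishing/non-vanishing information provided by Lemmas \ref{kpoints} and \ref{unit}, they reduce exactly to the two families (I) and (II). First I would observe that $C_s\subset P^{\Sigma_\CF}_s$ is cut out by the images of (\ref{eqsFlag}) under $\UT\mapsto\Ut$, and verify by a direct substitution using $Q^{i_\ell}_{v'-v}=-Q^{i_\ell}_{v-v'}$ that (\ref{eqsFlag}) is invariant under swapping the pairs $(v,k)\leftrightarrow(v',k')$, so one may restrict to $k'\leq k$.

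The key evaluations come as follows: since $s\in\Omega_\CF$, Lemma \ref{kpoints} gives $t_{i_j}=0$ for $j=1,\ldots,m-1$, and for any non-zero $u\in V$ Lemma \ref{unit} gives $Q^{i_\ell}_u(\Ut)\in k(s)^\times$ when $\ell$ is the minimal index with $u\in V_{i_\ell}$. By (\ref{lowerdim}) one computes
$$Q^{i_\ell}_{b_{i_r}}(\Ut)=\prod_{j=i_r}^{i_\ell-1}t_j,$$
which vanishes whenever $r<\ell$, because the product then contains $t_{i_r}$ with $i_r\in\{i_1,\ldots,i_{m-1}\}$.

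I would then split into two cases. In Case A, where $v-v'\in V_{i_k}$, take $\ell=k$. Since $(v,k)\in\Sigma_\CF$ forces $v^{\leq i_k}=0$, the relation $v-v'\in V_{i_k}$ gives $v-v'=-v'^{\leq i_k}$, whence $Q^{i_k}_{v-v'}(\Ut)=-Q^{i_k}_{v'^{\leq i_k}}(\Ut)$. For $k'=k$ the same identities force $v=v'$ and the equation is trivial; for $k'<k$ the coefficient $Q^{i_k}_{b_{i_{k'}}}(\Ut)$ vanishes and, noting $Q^{i_k}_{b_{i_k}}(\Ut)=1$, (\ref{eqsFlag}) collapses to exactly (I). In Case B, where $v-v'\notin V_{i_k}$, let $\ell>k$ be minimal with $v-v'\in V_{i_\ell}$. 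Then $Q^{i_\ell}_{b_{i_k}}(\Ut)=Q^{i_\ell}_{b_{i_{k'}}}(\Ut)=0$ while $Q^{i_\ell}_{v-v'}(\Ut)\in k(s)^\times$ by Lemma \ref{unit}, so the equation becomes $Q^{i_\ell}_{v-v'}(\Ut)\,Y_{vk}Y_{v'k'}=0$, which generates the same principal ideal as (II).

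Finally, I would check that equations (\ref{eqsFlag}) taken with a non-minimal $\ell$ are redundant at $s$: by (\ref{lowerdim}), each coefficient of such an equation acquires an extra factor $\prod t_{i_j}$ containing a vanishing $t_{i_j}$, so the whole equation specializes to $0=0$ and contributes nothing. This shows that the defining ideal of $C_s$ in the homogeneous coordinates of $P^{\Sigma_\CF}_s$ is generated by (I) and (II), proving the lemma. The main obstacle is purely bookkeeping: organizing the case split on $(k,k',\ell)$ and keeping careful track of which $t_{i_j}$ factors appear in each $Q$-polynomial after specialization, together with verifying the $k'\leq k$ symmetry reduction.
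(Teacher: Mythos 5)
Your proposal is correct and follows essentially the same route as the paper's proof: specialize (\ref{eqsFlag}) at $s$, use Lemma \ref{kpoints} to evaluate the $Q$-polynomials, and split into the cases $v\equiv v'$ and $v\not\equiv v'$ modulo $V_{i_k}$ to obtain (I) and (II) respectively. Your explicit checks of the $(v,k)\leftrightarrow(v',k')$ symmetry and of the redundancy of the equations with non-minimal $\ell$ are points the paper leaves implicit, but they do not change the argument.
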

\begin{proof}
Proposition \ref{RelComps} implies that the fiber $C_s$ is the closed subscheme of $P^{\Sigma_\CF}_s$ cut out by the equations obtained from (\ref{eqsFlag}) by setting $T=t$:
\begin{equation}\label{eqsFlagPoint}Q^{i_\ell}_{b_{i_k}}(\Ut)X_{vk}Y_{v'k'}+Q^{i_\ell}_{v-v'}(\Ut)Y_{vk}Y_{v'k'}=Q^{i_\ell}_{b_{i_{k'}}}(\Ut)X_{vk}Y_{v'k'},
\end{equation} where $1\leq k'\leq k\leq\ell\leq m$ and $v-v'\in V_{i_\ell}$.
Consider distinct $(v,k),(v',k')\in\Sigma_\CF$ with $k'\leq k$. Write $v=\sum_{i=1}^n c_ib_i$ and $v'=\sum_{i=1}^n c_i'b_i$. We have the following possibilities:
\vspace{3mm}

\noindent\emph{Case 1.} \underline{$v\equiv v'$ mod $V_{i_k}$}: If $k'=k$, then (\ref{eqsFlagPoint}) is trivial. Otherwise, since $t_{i_{k'}}=0$ by Lemma \ref{kpoints}.1, we have $$Q^{i_k}_{b_{i_{k'}}}(\Ut)=\prod_{j=i_{k'}}^{i_k-1}t_j=0.$$ Then (\ref{eqsFlagPoint}) yields 
$$X_{vk}Y_{v'k'}+Q^{i_k}_{v-v'}(\Ut)Y_{vk}Y_{v'k'}=Q^{i_k}_{b_{i_{k'}}}(\Ut)X_{v'k'}Y_{vk}=0.$$
 Since $c_i=0$ for all $i\leq i_k$, we also have 
 $$Q^{i_k}_{v-v'}(\Ut)=\sum_{i=1}^{i_k}(c_i- c_i')\Big(\prod_{j=i}^{i_k-1}t_j\Big)=-\sum_{i=1}^{i_k}c_i'\Big(\prod_{j=i}^{i_k-1}t_j\Big)=-Q^{i_k}_{v'^{\leq i_k}}(\Ut).$$ 
 Combining these gives (\ref{I}).

\vspace{5mm}
\noindent\emph{Case 2.} \underline{$v\not\equiv v'$ mod $V_{i_k}$}: In this case there is a minimal $\ell>k$ such that $v\equiv v'$ mod $V_{i_\ell}$. Again by Lemma \ref{kpoints}.1, we have 
$$Q^{i_\ell}_{b_{i_k}}(\Ut)=Q^{i_\ell}_{b_{i_{k'}}}(\Ut)=0.$$
Equation (\ref{eqsFlagPoint}) then yields $Q^{i_\ell}_{v-v'}(\Ut)Y_{vk}Y_{v'k'}=0.$ By Lemma \ref{kpoints}.2, we have $Q^{i_\ell}_{v-v'}(\Ut)\neq 0$; hence we may divide by it to obtain (\ref{II}).
\end{proof}
For each $(w,\ell)\in\Sigma_\CF$, let $E_{w\ell}\subset P^{\Sigma_\CF}_s$ be the closed subscheme defined by:
\begin{eqnarray}\label{comps}
X_{vk}-Q^{i_{k}}_{w^{\leq i_k}}(\Ut)Y_{vk}&=&0,\mbox{\,\,\,\,\,if $k>\ell$ and $v\equiv w$ mod $V_{i_{k}}$},\\
\nonumber Y_{vk}&=&0,\mbox{\,\,\,\,\,for all other $(v,k)\neq (w,\ell)$}.
\end{eqnarray}

\begin{lemma}\label{compP1}The projection $P^{\Sigma_\CF}_s\to\BP^1_{k(s)}$ onto the $(w,\ell)$-component induces an isomorphism $E_{w\ell}\stackrel\sim\to\BP^1_{k(s)}$.
\end{lemma}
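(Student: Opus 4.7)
The plan is to observe that the defining equations (\ref{comps}) of $E_{w\ell}$ involve only the homogeneous coordinates on factors $(v,k)\neq(w,\ell)$, and each such equation cuts out a single $k(s)$-rational point in its $\BP^1$-factor. So I will show that $E_{w\ell}$ is, tautologically, a ``slice'' of the product, and the projection is an isomorphism.

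First, I would decompose
\[
P^{\Sigma_\CF}_s \;=\; \BP^1_{k(s)} \times Q, \qquad Q := \prod_{(v,k)\in \Sigma_\CF,\,(v,k)\neq(w,\ell)} \BP^1_{k(s)},
\]
with the first factor being the $(w,\ell)$-component. None of the equations (\ref{comps}) involves the coordinates $X_{w\ell}, Y_{w\ell}$; each equation involves only the two coordinates $X_{vk},Y_{vk}$ of a single factor $(v,k)\neq (w,\ell)$. For each such $(v,k)$, the prescribed equation is linear and homogeneous of degree $1$ in $(X_{vk}:Y_{vk})$, hence defines a single $k(s)$-rational point of that $\BP^1$-factor, namely $(Q^{i_k}_{w^{\leq i_k}}(\Ut):1)$ in the case $k>\ell$ and $v\equiv w\bmod V_{i_k}$, or $(1:0)$ otherwise. (In either case the coefficient vector is nonzero, so the point is well defined.)

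Collecting these points over all $(v,k)\neq(w,\ell)$ yields a single $k(s)$-rational point $q_0\in Q(k(s))$, and the defining equations of $E_{w\ell}$ cut out precisely the closed subscheme $\BP^1_{k(s)}\times\{q_0\}\subset \BP^1_{k(s)}\times Q$. The projection onto the first factor is then an isomorphism $E_{w\ell}\stackrel{\sim}{\to}\BP^1_{k(s)}$, which is by construction the restriction to $E_{w\ell}$ of the projection $P^{\Sigma_\CF}_s\to \BP^1_{k(s)}$ onto the $(w,\ell)$-component.

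There is no real obstacle here: the argument is a direct unpacking of the definition (\ref{comps}) once one notices that the equations decouple across the product. The only small thing to verify is that in the first case the point $(Q^{i_k}_{w^{\leq i_k}}(\Ut):1)$ makes sense in $\BP^1_{k(s)}$, which is automatic since the second homogeneous coordinate is $1$.
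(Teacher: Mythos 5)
Your argument is correct and is exactly the paper's proof, just spelled out in more detail: the paper observes in one line that the equations (\ref{comps}) determine each $(v,k)$-component with $(v,k)\neq(w,\ell)$ and place no constraint on the $(w,\ell)$-component, which is precisely your decomposition $E_{w\ell}=\BP^1_{k(s)}\times\{q_0\}$.
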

\begin{proof}The equations in (\ref{comps}) determine the $(v,k)$-component for all $(v,k)\neq (w,\ell)$ and place no constraints on the $(w,\ell)$-component.
\end{proof}

\begin{lemma}\label{irredcomps}The equality $$C_s=\bigcup_{(w,\ell)\in\Sigma_\CF} E_{w\ell}\subset P^{\Sigma_\CF}_s$$ holds.
\end{lemma}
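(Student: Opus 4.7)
The plan is to establish both inclusions at the level of underlying sets and then promote to scheme-theoretic equality by reducedness.

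For $\bigcup_{(w,\ell)\in\Sigma_\CF} E_{w\ell}\subset C_s$, fix $(w,\ell)\in\Sigma_\CF$ and verify that the defining equations (\ref{I}) and (\ref{II}) of $C_s$ hold on $E_{w\ell}$. For each pair $(v,k),(v',k')$ subject to the hypotheses of (\ref{I}) or (\ref{II}), the verification splits into cases. If exactly one of the two pairs equals $(w,\ell)$, substituting the values of the non-free coordinates prescribed by (\ref{comps}) reduces the equation to a trivial identity. If neither pair equals $(w,\ell)$ and either $Y_{vk}$ or $Y_{v'k'}$ is identically zero on $E_{w\ell}$, the equation is automatic. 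In the remaining case both $Y_{vk}$ and $Y_{v'k'}$ are nonzero on $E_{w\ell}$, which by (\ref{comps}) forces $k,k'>\ell$ together with $v\equiv w\mod V_{i_k}$ and $v'\equiv w\mod V_{i_{k'}}$; since $V_{i_{k'}}\subset V_{i_k}$ this gives $v\equiv v'\mod V_{i_k}$, contradicting the hypothesis of (\ref{II}) and, for (\ref{I}), reducing the question to the polynomial identity $Q^{i_k}_{w^{\le i_k}}(\Ut)=Q^{i_k}_{v'^{\le i_k}}(\Ut)$ on $\Omega_\CF$. This identity follows from $v'-w\in V_{i_{k'}}$: expanding $Q^{i_k}_{v'-w}(\Ut)$ one sees that every term carries the factor $t_{i_{k'}}$, which vanishes on $\Omega_\CF$ by Lemma \ref{kpoints}(a).

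For the reverse containment $C_s\subset\bigcup E_{w\ell}$ on closed points, let $p\in C_s$ have homogeneous coordinates $(x_{vk}:y_{vk})$. If $y_{vk}(p)=0$ for all $(v,k)\in\Sigma_\CF$, then $p=\bigl((1:0)\bigr)_{(v,k)}\in E_{0m}$, noting that $(0,m)\in\Sigma_\CF$ and that the defining equations of $E_{0m}$ reduce to $Y_{vk}=0$ for $(v,k)\ne(0,m)$. Otherwise choose $\ell$ minimal such that $y_{w\ell}(p)\ne 0$ for some $(w,\ell)\in\Sigma_\CF$; the uniqueness of this $w$ is a direct consequence of equation (\ref{II}) with $k=k'=\ell$ combined with the constraint $w^{\le i_\ell}=0$, which forces any two candidates to coincide. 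I then claim $p\in E_{w\ell}$, and the claim can be verified by four direct checks: minimality of $\ell$ gives $y_{vk}(p)=0$ for $k<\ell$; uniqueness of $w$ gives $y_{v\ell}(p)=0$ for $v\ne w$; equation (\ref{II}) paired against $(w,\ell)$ kills $y_{vk}(p)$ whenever $k>\ell$ and $v\not\equiv w\mod V_{i_k}$; and equation (\ref{I}) paired against $(w,\ell)$ supplies the linear relation $x_{vk}(p)=Q^{i_k}_{w^{\le i_k}}(\Ut)\,y_{vk}(p)$ when $k>\ell$ and $v\equiv w\mod V_{i_k}$.

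Each $E_{w\ell}$ is reduced (being a copy of $\BP^1_{k(s)}$ by Lemma \ref{compP1}), and $C_s$ is reduced by Proposition \ref{geomred}. Consequently the set-theoretic equality of two reduced closed subschemes of $P^{\Sigma_\CF}_s$ with the same underlying topological space upgrades to scheme-theoretic equality. The main obstacle is purely combinatorial bookkeeping: for each $(v,k)\ne(w,\ell)$ one must decide whether $Y_{vk}$ is forced to vanish on $E_{w\ell}$ or instead satisfies a linear relation, and in the latter case invoke the vanishing of $t_{i_{k'}}$ on $\Omega_\CF$ furnished by Lemma \ref{kpoints}(a) to identify the polynomials $Q^{i_k}_{w^{\le i_k}}(\Ut)$ and $Q^{i_k}_{v'^{\le i_k}}(\Ut)$.
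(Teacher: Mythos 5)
Your proof is correct and follows essentially the same route as the paper: both inclusions are checked on closed points via a case analysis of the defining equations, with the key identity $Q^{i_k}_{w^{\le i_k}}(\Ut)=Q^{i_k}_{v'^{\le i_k}}(\Ut)$ reduced to the vanishing of $t_{i_{k'}}$ on $\Omega_\CF$ from Lemma \ref{kpoints}, and the set-theoretic equality is upgraded by reducedness. Your organization of the cases by which coordinates $Y_{vk}$ are forced to vanish, and the explicit uniqueness argument for $w$ via (\ref{II}) with $k=k'=\ell$, are slightly more detailed than the paper's but amount to the same computation.
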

\begin{proof}
We first observe that $\bigcup_{(w,\ell)\in\Sigma_\CF} E_{w\ell}$ is reduced. Since $C_s$ is also reduced by Corollary \ref{geomred}, it suffices to show equality on the level of closed points. Let $(w,\ell)\in\Sigma_\CF$ and let $P\in E_{w\ell}$ be a closed point, which corresponds to some $$p:=(x_{vk}:y_{vk})_{v,k}\in E_{w\ell}\bigl(k(P)\bigr)$$ satisfying (\ref{comps}). Fix $(v,k),(v',k')\in\Sigma_\CF$ distinct with $k'\leq k$. We have the following two cases:
\vspace{2mm}

\noindent\emph{Case 1.} \underline{$\bigl(v'\equiv v\mod V_{i_k}\bigr)$}: Since $(v,k)\neq(v',k')$, we must have $k'<k$. Suppose $\ell<k'$. If $v'\equiv w\mod V_{i_{k'}}$, then $v\equiv w\mod V_{i_k}$ and by (\ref{comps}) we have $$x_{vk}-Q^{i_k}_{w^{\leq i_k}}(\Ut)y_{vk}=0.$$   The equality $Q^{i_k}_{w^{\leq i_k}}(\Ut)=Q^{i_k}_{v'^{\leq i_k}}(\Ut)$ holds by Lemma \ref{kpoints}. It follows that 
$$\bigl(x_{vk}-Q^{i_k}_{v'^{\leq i_k}}(\Ut)y_{vk}\bigr)y_{v'k'}=0;$$ hence $p$ satisfies (\ref{I}). If $v'\not\equiv w\mod V_{i_{k'}}$ or $k'\leq\ell$, then (\ref{comps}) says that $y_{v'k'}=0$. Again we see that $p$ is a solution to (\ref{I}).
\vspace{3mm}

\noindent\emph{Case 2.} \underline{$\bigl(v'\not\equiv v\mod V_{i_k}\bigr)$}: Suppose $k'\leq\ell$. By (\ref{comps}) we have $y_{v'k'}=0.$ Hence $y_{vk}y_{v'k'}=0$ and $p$ is a solution to (\ref{II}). Now suppose $k'>\ell$. If $v'\equiv w\mod V_{i_{k'}}$, then $v\not\equiv w\mod V_{i_k}$ since otherwise $v'\equiv v\mod V_{i_k}$. Thus $y_{vk}=0$ by (\ref{comps}). If $v'\not\equiv w\mod V_{i_{k'}}$, then (\ref{comps}) says that $y_{v'k'}=0$. In either case, we have $y_{vk}y_{v'k'}=0$ and $p$ is a solution to (\ref{II}).
\vspace{3mm}

We conclude that $p$ satisfies (\ref{I}) and (\ref{II}) for all possible choices of $(v,k)$ and $(v',k')$. It follows from Lemma \ref{fibeqs} that $P\in C_s$. Thus $\bigcup_{(w,\ell)\in\Sigma_\CF}E_{w,\ell}\subset C_s$.
\vspace{4mm}

For the reverse inclusion, suppose $P\in C_s$ and take the corresponding $$p:=(x_{vk}:y_{vk})_{v,k}\in C_{\os}\bigl(k(P)\bigr).$$
 If $y_{vk}=0$ for all $(v,k)$, then $p$ is a solution to (\ref{comps}) for $(w,\ell)=(0,m)$ and thus $p\in (E_{0,m})\bigl(k(P)\bigr)$. Otherwise, let $1\leq\ell\leq m$ be minimal such that there exists a $w$ with $y_{w\ell}\neq 0$. By Lemma \ref{fibeqs}, we know that $p$ is a solution to (\ref{I}) and (\ref{II}). We observe that dividing the equations in (\ref{I}) and (\ref{II}) that involve $(w,\ell)$ by $Y_{w\ell}$ yields precisely the equations (\ref{comps}) defining $E_{w\ell}$. Hence $p\in E_{w\ell}\bigl(k(P)\bigr)$. It follows that $C_s\subset \bigcup_{(w,\ell)\in\Sigma_\CF} E_{w\ell}$. This proves the lemma.
\end{proof}
\begin{remark}The $E_{w\ell}$ for varying $(w,\ell)\in \Sigma_\CF$ are distinct. Indeed, suppose $(w,\ell)$ and $(w',\ell')$ are distinct elements of $\Sigma_\CF$. The $(w,\ell)$-component of any point $(x_{vk}:y_{vk})_{v,k}$ of $E_{w\ell}$ is free while the $(w,\ell)$-component of a point in $E_{w'\ell'}$ is fixed by (\ref{comps}). Hence $E_{w\ell}$ and $E_{w'\ell'}$ cannot be equal. The above lemma thus implies that the irreducible components of $C_s$ are indexed by the $\Sigma_\CF$.
\end{remark}
\begin{lemma}\label{int} For distinct pairs $(v,k)$ and $(v',k')$ with $k'\leq k$, we have the following:
$$E_{vk}\cap E_{v'k'}=\begin{cases}\mbox{a nodal point, if $k'=k-1$ and $v'\equiv v$ mod $V_k$,}\\
\emptyset,\mbox{ otherwise.}\end{cases}$$
\end{lemma}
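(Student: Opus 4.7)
The plan is to exploit Lemma \ref{compP1}, which identifies $E_{w\ell}$ with $\BP^1$ via projection onto its indexing component and shows, via (\ref{comps}), that every other component $(u,j)\in\Sigma_\CF$ is pinned down: at $\bigl(Q^{i_j}_{w^{\leq i_j}}(\Ut):1\bigr)$ when $j>\ell$ and $u\equiv w\mod V_{i_j}$, and at $(1:0)$ otherwise. Hence for distinct $(v,k),(v',k')\in\Sigma_\CF$ with $k'\le k$, a closed point of $E_{vk}\cap E_{v'k'}$ is prescribed componentwise, and the intersection is set-theoretically at most one point; it is nonempty exactly when the fixed $\BP^1$-values from $E_{vk}$ and $E_{v'k'}$ agree on every $(u,j)\ne(v,k),(v',k')$.

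First I would dispose of the empty cases. If $k'=k$ and $v\ne v'$, pick $r>k$ minimal with $v\equiv v'\mod V_{i_r}$; such $r$ exists since $V_{i_m}=V$. The component $(v^{>i_r},r)=((v')^{>i_r},r)$ is assigned $\bigl(Q^{i_r}_{v^{\leq i_r}}:1\bigr)$ by $E_{vk}$ and $\bigl(Q^{i_r}_{(v')^{\leq i_r}}:1\bigr)$ by $E_{v'k}$, which forces $Q^{i_r}_{v-v'}(\Ut)=0$; but $v-v'\in V_{i_r}\setminus V_{i_{r-1}}$, so Lemma \ref{unit} (at the point $s$) rules this out. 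If $k'<k-1$, then for any $j$ with $k'<j<k$ the component $((v')^{>i_j},j)$ receives $(1:0)$ from $E_{vk}$ (since $j<k$) but $\bigl(Q^{i_j}_{(v')^{\leq i_j}}:1\bigr)$ from $E_{v'k'}$, impossible in $\BP^1$. If $k'=k-1$ and $v\not\equiv v'\mod V_{i_k}$, the same argument applied to $((v')^{>i_k},k)$, which is distinct from $(v,k)$, yields the analogous conflict.

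In the surviving case $k'=k-1$ and $v\equiv v'\mod V_{i_k}$, I would check consistency of every other component. For $j\le k-1$ and for $(u,k)\ne(v,k)$, both sides yield $(1:0)$ by inspection. For $j>k$, the equivalence $v\equiv v'\mod V_{i_k}$ forces $v\equiv v'\mod V_{i_j}$ and $v^{>i_j}=(v')^{>i_j}$, so matching reduces to $Q^{i_j}_{v-v'}(\Ut)=0$; by (\ref{lowerdim}) this equals $Q^{i_k}_{v-v'}(\Ut)\cdot\prod_{l=i_k}^{i_j-1}t_l$, and the factor $t_{i_k}$ vanishes by Lemma \ref{kpoints}(a) (we are in $\Omega_\CF$, and $k\le m-1$ since $j>k$ exists; when $k=m$ the subcase is vacuous). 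All components therefore agree, and the intersection is the unique reduced point with $(v,k)$-coordinate $\bigl(Q^{i_k}_{(v')^{\leq i_k}}(\Ut):1\bigr)$ and $(v',k-1)$-coordinate $(1:0)$, all other coordinates being fixed by (\ref{comps}).

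Finally, this point lies on two distinct irreducible components of $C_s$, so it is singular in $C_s$; Proposition \ref{geomred} tells us every singular point of $C_s$ is an ordinary node, which concludes the proof. (One can also give a local model directly: under the isomorphism of Lemma \ref{opennbhds}(c), a suitable $G$-translate of the point sits at the origin of $Z_\CF^k$, where the local equation is $XY=Q^{i_k}_{b_{i_{k-1}}}(\Ut)=0$ by the proof of Proposition \ref{localisos}.) The main obstacle is purely bookkeeping: exhibiting, in each of the three empty subcases, the single component whose $\BP^1$-values disagree, and in the surviving case verifying the vanishing $Q^{i_j}_{v-v'}(\Ut)=0$ for every $j>k$ via the key fact $t_{i_k}=0$.
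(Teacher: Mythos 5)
Your proposal is correct and follows essentially the same strategy as the paper: use the fact that (\ref{comps}) pins every coordinate of $E_{w\ell}$ other than the $(w,\ell)$-th to a single point of $\BP^1$, exhibit one conflicting coordinate in each empty case, and in the case $k'=k-1$, $v\equiv v'\bmod V_{i_k}$ verify consistency of all coordinates using $t_{i_k}=0$ on $\Omega_\CF$ together with (\ref{lowerdim}), concluding nodality from Proposition \ref{geomred}. Your case split and choice of witnessing components differ slightly from the paper's (which organizes the empty cases by whether $v\equiv v'\bmod V_{i_k}$), but this is only bookkeeping, not a different method.
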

\begin{proof}
If $v'\not\equiv v$ mod $V_{i_k}$, there exists a minimal $\ell>k$ such that $v'\equiv v\mod V_{i_\ell}$. Write $v=\sum_{i=1}^nc_ib_i$. Recall that $v^{>i_\ell}:=\sum_{i=i_{\ell}+1}^n c_ib_i.$ On the $(v^{>\ell},\ell)$-components we have $(E_{vk})_{v^{>\ell},\ell}=(Q^{i_\ell}_{v^{\leq i_\ell}}(\Ut):1)$ and $(E_{v'k'})_{v^{>\ell},\ell}=(Q^{i_\ell}_{v'^{\leq i_\ell}}(\Ut):1)$. However, the choice of $\ell$ and Lemma \ref{kpoints} together imply that 
$$Q^{i_\ell}_{v^{\leq i_\ell}}(\Ut)=\sum_{i=i_{\ell-1}+1}^{i_\ell} c_i\Big(\prod_{j=i}^{i_\ell-1}t_j\Big)\neq \sum_{i=i_{\ell-1}+1}^{i_\ell} c'_i\Big(\prod_{j=i}^{i_\ell-1}t_j\Big)=Q^{i_\ell}_{v'^{\leq i_\ell}}(\Ut),$$ 
Thus $(E_{vk})_{v^{>\ell},\ell}\neq (E_{v'k'})_{v^{>\ell},\ell}$, whence $E_{vk}\cap E_{v'k'}=\emptyset.$

\vspace{2mm}
Suppose  $k'\neq k-1$ and $v'\equiv v$ mod $V_{i_k}$. Let $v'':=(v')^{>i_{k'+1}}$. Then by definition $(E_{vk})_{v'',k'+1}=(1:0)$ and $(E_{v'k'})_{v'',k'+1}=(Q_{v'}^{i_{k'+1}}(\Ut):1)$. Again we conclude that $E_{vk}\cap E_{v'k'}=\emptyset.$

\vspace{2mm}
If $k'=k-1$ and $v'\equiv v$ mod $V_{i_k}$, it follows from Lemma \ref{kpoints} that $Q^{i_{\ell}}_{v^{\leq i_\ell}}(\Ut)=Q^{i_\ell}_{v'^{\leq i_\ell}}(\Ut)$ for all $\ell>k$. We then see directly from (\ref{comps}) that $E_{vk}\cap E_{v',k-1}$ is a single closed point $x=(x_{w\ell}:y_{w\ell})_{w,\ell}\in C_s(k(s))$ with
$$(x_{w\ell}:y_{w\ell})=\begin{cases}(Q^{i_\ell}_{v^{\leq i_\ell}}(\Ut):1),\mbox{ if $\ell\geq k$ and $w\equiv v$ mod $V_{i_\ell}$,}\\
(1:0),\mbox{ otherwise.}\end{cases}$$ 
The fact that $x$ is nodal follows from Corollary \ref{geomred}. 
\end{proof}
\begin{figure}[H]
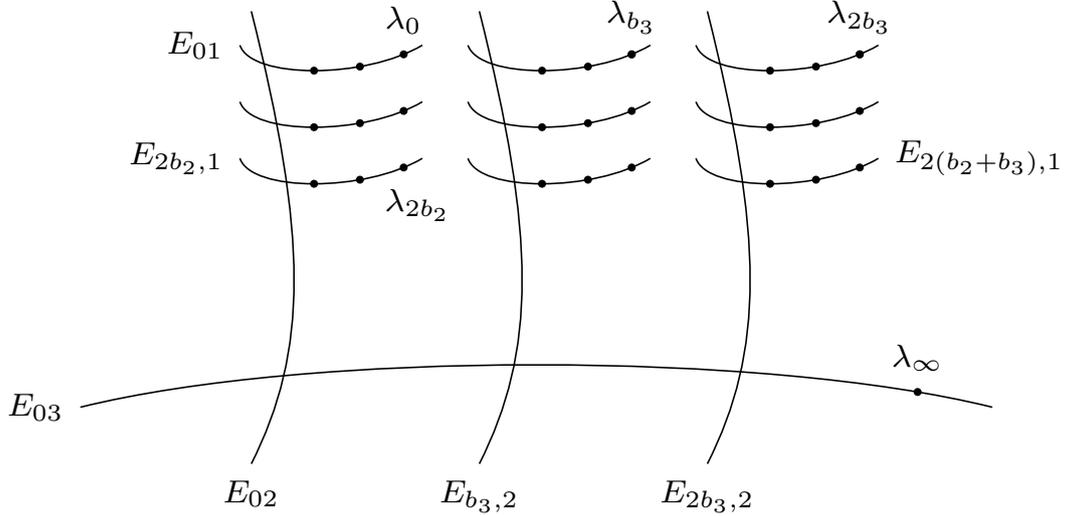

\centering
  \caption{Example for $s\in\Omega_\OF$, $V=\Span\{b_1,b_2,b_3\}$, and $q=3$.}
\includestandalone[scale=1.5]{Figures/IrrComps}
  \label{fig:nbhds}
  \end{figure}

 \begin{proposition}\label{fibs} The geometric fibers of $\CC_\OF\to U_\OF$ are connected, reduced curves of genus 0 with at worst nodal singularities.
 \end{proposition}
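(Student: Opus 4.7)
The plan is to leverage the detailed combinatorial description of the geometric fibers provided by Lemmas~\ref{compP1}, \ref{irredcomps}, and \ref{int}, together with Proposition~\ref{geomred}. The proposition has four assertions---geometrically reduced, geometrically connected, genus $0$, and at worst nodal singularities---but Proposition~\ref{geomred} already handles reducedness and the nodal condition, and the statements there are easily seen to descend to geometric fibers since the argument was pointwise. So the real content is connectedness and arithmetic genus $0$.

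First, I would fix $s\in U_\OF$ with unique $\CF = \{V_{i_0},\ldots,V_{i_m}\}\subset\OF$ such that $s\in\Omega_\CF$, and let $\os$ be a geometric point centered at $s$. By Lemmas~\ref{irredcomps} and \ref{compP1}, the fiber $C_\os$ is the union of the components $E_{w\ell}\cong \BP^1_{k(\os)}$ for $(w,\ell)\in\Sigma_\CF$, and by Lemma~\ref{int} two distinct components $E_{vk}$ and $E_{v'k'}$ (with $k'\leq k$) meet transversely at a single nodal point precisely when $k'=k-1$ and $v'\equiv v\pmod{V_{i_k}}$, and are disjoint otherwise. I then form the dual graph $\Gamma$ of $C_\os$ with vertex set $\Sigma_\CF$ and edges given by the nonempty intersections above.

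Second, I would prove that $\Gamma$ is a tree. The unique vertex at level $m$ is $(0,m)$ (since $V_{i_m}=V$ forces $v^{\leq i_m}=0$ to be just $0$); this will be the root. For any vertex $(v',k-1)$ with $k\leq m$, I claim there is exactly one vertex $(v,k)\in\Sigma_\CF$ with $v\equiv v'\pmod{V_{i_k}}$: writing $v = v'+u$ with $u\in V_{i_k}$, the requirement $v^{\leq i_k}=0$ forces $u=-(v')^{\leq i_k}$, which lies in $V_{i_k}$ and is uniquely determined, giving $v = (v')^{>i_k}$. Hence every non-root vertex has a unique parent, so $\Gamma$ is connected and acyclic. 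Connectedness of $C_\os$ follows immediately.

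Third, for the genus $0$ conclusion, I would use the normalization short exact sequence
\[
0 \longrightarrow \CO_{C_\os} \longrightarrow \nu_*\CO_{\tilde C_\os} \longrightarrow \bigoplus_{p\in \mathrm{Sing}(C_\os)} k(p) \longrightarrow 0,
\]
where $\nu\colon \tilde C_\os = \bigsqcup_{(w,\ell)}E_{w\ell}\to C_\os$ is the normalization. Taking Euler characteristics and using $\chi(\BP^1)=1$ together with $|\mathrm{Sing}(C_\os)|=|E(\Gamma)|=|\Sigma_\CF|-1$ (since $\Gamma$ is a tree), yields $\chi(\CO_{C_\os}) = |\Sigma_\CF| - (|\Sigma_\CF|-1)=1$; combined with connectedness this gives $h^1(C_\os,\CO_{C_\os})=0$.

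The argument is essentially straightforward combinatorics once the three preceding lemmas are in hand, and I don't anticipate a substantive obstacle; the only place requiring care is the uniqueness-of-parent computation, where the definitions of $\Sigma_\CF$ (via $v^{\leq i_k}=0$) and of the intersection condition (via $v\equiv v'\pmod{V_{i_k}}$) must be matched up precisely to conclude that $\Gamma$ is a tree rather than merely connected.
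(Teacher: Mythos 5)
Your proposal is correct and follows the same route as the paper, whose proof of this proposition consists precisely of combining Corollary \ref{geomred} with Lemmas \ref{irredcomps} and \ref{int}; you have simply spelled out the combination the paper leaves implicit (the dual graph is a tree because the unique level-$m$ vertex is $(0,m)$ and each $(v',k-1)$ has the unique parent $((v')^{>i_k},k)$, whence connectedness and $\chi(\CO_{C_{\os}})=1$ via the normalization sequence). The only point worth making explicit is that the component and intersection descriptions in those lemmas are given by equations over $k(s)$ with $E_{w\ell}\cong\BP^1_{k(s)}$ and $k(s)$-rational intersection points, so they indeed persist after base change to $\os$.
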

 \begin{proof}
Combine Corollary \ref{geomred} and Lemmas \ref{irredcomps} and \ref{int}.
\end{proof}

\subsection{$V$-fern structure on $\CC_\OF$}
Consider the left $G$-action $\phi_\OF$ on $\CC_\OF$ described in \ref{SubSectAction} and the $\hat V$-marking $\lambda_\OF$ from Proposition \ref{Vsects}.
\begin{proposition}\label{LocFern} The triple $(\CC_\OF, \lambda_\OF,\phi_\OF)$ is a $V$-fern over $U_\OF$.
\end{proposition}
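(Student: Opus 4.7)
The verification breaks into four parts: the underlying scheme $\CC_\OF$ is a stable $\hat V$-marked curve of genus $0$; the $G$-action satisfies condition (1) of Definition \ref{FernDef}; and the $\BF_q^\times$-action on components of the chain from $0$ to $\infty$ satisfies condition (2). The first claim is the geometric content; the rest follow almost immediately from earlier propositions.

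First, properness of $\pi_\OF\colon \CC_\OF\to U_\OF$ follows because $\CC_\OF$ is a closed subscheme of $U_\OF\times P^\Sigma$ and $P^\Sigma$ is projective over $\Spec\BF_q$, while flatness is Corollary \ref{CFProps}(d). The geometric-fiber conditions (1) and (2) of Definition \ref{stablecurve} — reduced, connected, at worst nodal, and $H^1(C_\os,\CO_{C_\os})=0$ — are established in Proposition \ref{fibs}. So the underlying scheme is an $\hat V$-marked curve (once we verify the sections land in it) and it remains to check the three stability conditions (3)--(5) of Definition \ref{stablecurve}.

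For conditions (3) and (4), observe first that the $\infty$-section is the constant section $(1:0)$ on every component of $P^\Sigma$; restricted to a fiber over $s\in\Omega_\CF$, it lies in the component $E_{0,m}$ from Lemma \ref{irredcomps} and Lemma \ref{compP1} identifies a neighborhood of it with an open in $\BP^1_{k(s)}$, so $\CC_\OF$ is smooth along $\lambda_\OF(\infty)$. For the $0$-section the explicit formula from the proof of Proposition \ref{Vsects} together with Lemma \ref{kpoints}(a) shows that on fibers the $(0,k)$-coordinate is $(0:1)$, and one checks using $Q^{\ell_v}_{b_{i_k}}(\Ut)=0$ for $(v,k)\in\Sigma_\CF$ with $v\neq 0$, $k<m$ that the point lies on $E_{0,1}\subset W_\CF^1$ — a smooth open subset by Lemma \ref{opennbhds}(b). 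The other $\lambda_\OF(v)$ for $v\in V$ are translates under $\phi_\OF$ of $\lambda_\OF(0)$, and by Proposition \ref{Action} the $G$-action is by automorphisms over $U_\OF$; hence smoothness propagates. Distinctness of the sections fiberwise reduces, by the same $G$-equivariance argument, to showing that $\lambda_\OF(0)(s)$, $\lambda_\OF(v)(s)$ for $v\neq 0$, and $\lambda_\OF(\infty)(s)$ are pairwise distinct, which one reads off by comparing the relevant coordinates in the explicit description above.

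For condition (5), we use the identification of irreducible components $E_{w,\ell}$ of $C_s$ with $(w,\ell)\in\Sigma_\CF$ from Lemma \ref{irredcomps}, and the intersection count from Lemma \ref{int}: $E_{w,\ell}$ meets exactly one $E_{-,\ell+1}$ (when $\ell<m$) and exactly $|V_{i_\ell}/V_{i_{\ell-1}}|\geq q$ components $E_{-,\ell-1}$ (when $\ell>1$). So for $1<\ell<m$ the component already has $\geq q+1\geq 3$ nodal points. For $\ell=1$ the single nodal point (if $m>1$) must be supplemented by the $V_{i_1}$-marked points lying on it, which by the $G$-equivariant description and a direct check give $|V_{i_1}|\geq q\geq 2$ marked points on $E_{w,1}$ for every $w$; combined we get at least $3$ special points. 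For $\ell=m$ the component contains the $\infty$-section and at least $q$ nodal points. The case $m=1$ is handled separately: then $C_s\cong\BP^1_{k(s)}$ and contains the entire $|\hat V|\geq 3$ marked sections.

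Condition (1) of Definition \ref{FernDef} is exactly the compatibility of the $\hat V$-marking with the $G$-action, established in Proposition \ref{Vsects}. For condition (2), fix $s\in\Omega_\CF$ and $E_{0,k}$ in the chain from $0$ to $\infty$; the projection onto the $(0,k)$-component gives an isomorphism $E_{0,k}\stackrel\sim\to\BP^1_{k(s)}$ by Lemma \ref{compP1}. Under this isomorphism the distinguished special points $x_k,y_k$ (the intersections with $E_{0,k-1},E_{0,k+1}$, or the $0$- resp.\ $\infty$-marked section at the endpoints) correspond to $(0:1)$ and $(1:0)$ by the formula in Lemma \ref{int}. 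To read off the induced $\BF_q^\times$-action, we use the defining equation (\ref{defeqs}) with $v=v'=0$, $w=b_{i_k}$, $w'=\xi^{-1}b_{i_k}$, and $k=i_k$: since $Q^{i_k}_{b_{i_k}}=1$ and $Q^{i_k}_{\xi^{-1}b_{i_k}}=\xi^{-1}$, it reduces to $X_{0,b_{i_k}}Y_{0,\xi^{-1}b_{i_k}}=\xi^{-1}X_{0,\xi^{-1}b_{i_k}}Y_{0,b_{i_k}}$, so the action of $(0,\xi)$ sends $(X:Y)$ on the $(0,k)$-component to $(\xi X:Y)$, which is precisely the matrix $\bigl(\begin{smallmatrix}\xi&0\\0&1\end{smallmatrix}\bigr)$. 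This verifies (2).

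The main obstacle, I expect, is the bookkeeping in condition (5): at $\ell=1$ (and dually the boundary between ``node counting'' and ``marked-point counting'' at $\ell=m$) one must carefully identify which marked sections land on which $E_{w,1}$, which requires chasing the $G$-action through the reindexing of Proposition \ref{RelComps} together with Lemma \ref{unit} to verify that the coordinate $(-Q^\ell_v:Q^\ell_w)$ specializes correctly at $s\in\Omega_\CF$.
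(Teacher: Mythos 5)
Your proposal is correct and follows essentially the same route as the paper's proof: properness, flatness, the fiber conditions and the $G$-compatibility of the marking are quoted from the earlier results, stability is checked by counting special points on each $E_{w\ell}$ via Lemmas \ref{irredcomps} and \ref{int} (with the same case split at $\ell=1$, $1<\ell<m$, $\ell=m$), and condition (2) of Definition \ref{FernDef} is verified by the identical computation with (\ref{defeqs}) using $Q^{i_k}_{\xi^{-1}b_{i_k}}=\xi^{-1}$. You are in fact slightly more thorough than the paper, which works only over $\Omega_{\OF}$ "for simplicity" and leaves the smoothness and distinctness of the marked sections largely implicit, whereas you treat a general stratum $\Omega_\CF$ and check these explicitly.
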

\begin{proof}
The morphism $\pi_\OF\colon \CC_\OF\to U_\OF$ is projective and hence proper. In light of Propositions \ref{Action}, \ref{Vsects}, \ref{fibs} and Corollary  \ref{CFProps}, it only remains to verify that $(\CC_\OF,\lambda_\OF)$ is stable and that Condition (2) of Definition \ref{FernDef} holds.
\vspace{2mm}

Let $s\in U_\OF$. For simplicity, we assume that $s\in\Omega_\OF$. The general case is handled similarly. Write we first write $C$ as the union of its irreducible components: 
$$C_s=\bigcup_{(w,\ell)\in \Sigma_\OF}E_{w\ell}.$$
By Lemma \ref{int}, for each $(v,k)\in\Sigma_{\OF}$ such that $k>1$, the rational component $E_{vk}\subset C_{\os}$ intersects each $E_{v'k-1}$ such that $v'\equiv v\mod V_k$. There are $q$ such $v'$. For $k<n$, the same lemma shows that $E_{vk}$ also intersects $E_{v^{>k+1}k+1}$. It follows that $E_{vk}\subset C_\os$ contains $q+1$ singular points for all $(v,k)\in\Sigma_\OF$ such that $1<k< n$. For $E_{0n}$, we have the aforementioned $q$ singular points and the $\infty$-marked point. Since $q\geq 2$, there are therefore at least three special points on $E_{vk}$. 

Fix a pair $(v,1)\in \Sigma_\OF$. One then checks directly from the definitions of $E_{v1}$ and $\lambda_\OF$ that the sections $\lambda_\OF(v+\xi b_1)$ with $\xi\in\BF_q$ determine $q$ distinct smooth marked points on $E_{v1}$. Taken with the singular point where $E_{v,1}$ intersects $E_{v^{>2}2}$ or the $\infty$-marked point when $n=1$, we conclude that there are at least $3$ special points on $E_{v1}$. It follows that $(\CC_{\OF},\lambda_\OF)$ is stable.

\vspace{2mm}
To check condition (2) of Definition \ref{FernDef}, we first note that the chain from $0$ to $\infty$ in $C_s$ is given by $\{E_{0k},\ldots,E_{0n}\}$. For each $1\leq k\leq n$, the projection $P^{\Sigma_\OF}_s\to \BP^1_{k(s)}$ onto the $(0,k)$-component induces an isomorphism $\rho_{k}\colon E_{0k}\stackrel\sim\to\BP^1_{k(s)}.$ Let $\xi\in\BF_q^\times.$ The $(0,k)$-component of $\phi_s(\xi)(E_{0k})$ is equal to $(x_{0,\xi^{-1}b_k}:y_{0,\xi^{-1}b_k})$. By (\ref{defeqs}), we have
$$x_{0k}y_{0,\xi^{-1}b_k}=Q^k_{\xi^{-1}b_k}(\Ut)x_{0,\xi^{-1}b_k}y_{0k}.$$
Since $Q^k_{\xi^{-1}b_k}(\Ut)=\xi^{-1}$, this yields
$$\frac{x_{0,\xi^{-1}b_k}}{y_{0,\xi^{-1}b_k}}=\xi\Bigl(\frac{x_{0k}}{y_{0k}}\Bigr).$$
It follows that $\BF_q^\times$ acts on the affine chart $D(Y_{0k})\subset\BP^1_{k(s)}$ via scalar multiplication under the action induced by $\rho_k$. The isomorphism $\rho_k$ sends the distinguished special points of $E_{0k}$ to the correct targets, and thus satisfies Definition \ref{FernDef}(2). We conclude that $(\CC_\OF,\lambda_\OF,\phi_\OF)$ is a $V$-fern, as desired.
\end{proof}

\begin{proposition}\label{fibflag}Let $s\in\Omega_\CF\subset U_\OF$. The flag associated to the fiber $C_s$ is equal to $\CF$. In particular, the $V$-fern $\CC_\OF$ is an $\OF$-fern over $U_\OF$.
\end{proposition}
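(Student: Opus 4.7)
The plan is to compute, for each $j \in \{1, \ldots, m\}$, the stabilizer $V_j^s := \Stab_V(E_{0,j}) \subset V$ of the $j$-th component in the chain $\SE_s$ from $0$ to $\infty$ on $C_s$, and to show that it coincides with $V_{i_j}$. By Proposition \ref{FlagFib} this identifies the flag $\CF_s$ associated to $C_s$ with $\{0, V_{i_1}, \ldots, V_{i_m}\} = \CF$; the ``in particular'' assertion is then immediate, since every $s \in U_\OF$ lies in some $\Omega_\CF$ with $\CF \subset \OF$.

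First, I would combine Lemmas \ref{irredcomps} and \ref{int} with the analysis already carried out in the proof of Proposition \ref{LocFern} to record the following description of $C_s$: its irreducible components are the $E_{w, \ell}$ indexed by $(w, \ell) \in \Sigma_\CF$; two such components $E_{w, \ell}$ and $E_{w', \ell - 1}$ meet in a single node precisely when $w \equiv w' \mod V_{i_\ell}$, and are disjoint otherwise; and the chain $\SE_s$ is $E_{0, 1}, \ldots, E_{0, m}$. A short computation using these intersection conditions then shows that each $E_{u^{>i_j}, j}$ meets exactly one component at level $j + 1$, namely $E_{u^{>i_{j+1}}, j+1}$, so that for any $u \in V$ the unique chain in $C_s$ connecting the tail $E_{u^{>i_1}, 1}$ to $E_{0, m}$ is $E_{u^{>i_1}, 1}, E_{u^{>i_2}, 2}, \ldots, E_{u^{>i_m}, m} = E_{0, m}$.

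Next I would verify by direct computation that $\lambda_\OF(u)(s) \in E_{u^{>i_1}, 1}$ for every $u \in V$. Using the explicit formula for $\lambda_\OF(0)$ from the proof of Proposition \ref{Vsects} together with the identity $\lambda_\OF(u) = \phi_\OF(u) \circ \lambda_\OF(0)$ and the description of the $G$-action in Subsection \ref{SubSectAction}, the $(v, w)$-coordinate of $\lambda_\OF(u)(s) \in P^\Sigma_s$ is $\bigl(-Q^\ell_{v-u}(\Ut) : Q^\ell_w(\Ut)\bigr)$, where $\ell$ is minimal such that $v - u, w \in V_\ell$. A case-by-case substitution of these coordinates into the defining equations (\ref{comps}) of $E_{u^{>i_1}, 1}$ then yields the claim; the essential inputs are that $t_{i_j} = 0$ at $s \in \Omega_\CF$ for $j < m$, which forces $Q^\ell_{b_{i_{k'}}}(\Ut) = 0$ whenever $\ell > i_{k'}$ (Lemma \ref{kpoints}(a)), together with the nonvanishing $Q^\ell_{v-u}(\Ut) \neq 0$ for $v \neq u$ (Lemma \ref{kpoints}(b)).

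The conclusion follows by combining the two preceding steps. Since $\phi_s(u)\bigl(\lambda_\OF(0)(s)\bigr) = \lambda_\OF(u)(s) \in E_{u^{>i_1}, 1}$, one has $\phi_s(u)(E_{0, 1}) = E_{u^{>i_1}, 1}$. Because $\phi_s(u)$ is an automorphism of $C_s$ sending $\SE_s$ to the unique chain from $\lambda_\OF(u)(s)$ to $\lambda_\OF(\infty)(s)$, identified in the first step as $E_{u^{>i_1}, 1}, \ldots, E_{0, m}$, it follows that $\phi_s(u)(E_{0, j}) = E_{u^{>i_j}, j}$ for every $j$, whence $V_j^s = \{u \in V : u^{>i_j} = 0\} = V_{i_j}$. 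I expect the main obstacle to be the coordinate calculation in the second step, which requires a careful case analysis depending on whether $v - u$ lies in $V_{i_{k'}}$ and on the maximal position with nonzero coefficient in $v - u$, in order to determine $\ell$ correctly before invoking Lemma \ref{kpoints} in each case.
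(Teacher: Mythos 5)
Your proposal is correct, but it reaches the identity $\Stab_V(E_{0,j})=V_{i_j}$ by a genuinely different route than the paper. The paper works entirely at the level of defining equations: it writes down the equations (\ref{comps}) cutting out $E_{0\ell}$ inside $P^{\Sigma_\CF}_s$, writes down their translates under $u\in V$, and uses the relations (\ref{eqsFlag}) holding on $C_s$ together with Lemma \ref{kpoints} to show that the translated equations follow from the original ones precisely when $u\in V_{i_\ell}$. You instead locate the marked points: you compute that $\lambda_u(s)$ lies on $E_{u^{>i_1},1}$, deduce $\phi_s(u)(E_{0,1})=E_{u^{>i_1},1}$, and then use the intersection pattern of Lemma \ref{int} plus uniqueness of chains in the dual tree to propagate this to $\phi_s(u)(E_{0,j})=E_{u^{>i_j},j}$ for all $j$. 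Both arguments hinge on the same inputs from Lemma \ref{kpoints} (namely $t_{i_j}=0$ on $\Omega_\CF$ and the nonvanishing of the relevant $Q$-polynomials), and your coordinate check for the position of $\lambda_u(s)$ does go through. What your approach buys is an explicit description of how $V$ permutes the components meeting the chain, and as a byproduct the location of every marked section; the paper's argument is more local, treating one component $E_{0\ell}$ at a time without needing the global combinatorics of the dual graph. One small point worth making explicit in your write-up: the deduction $\phi_s(u)(E_{0,1})=E_{u^{>i_1},1}$ uses that $\lambda_u(s)$ is a smooth point of $C_s$ and hence lies on a unique irreducible component; this is available because $(\CC_\OF,\lambda_\OF)$ has already been shown to be a stable marked curve in Proposition \ref{LocFern}.
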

\begin{proof}Write $\CF=\{V_{i_0},\ldots, V_{i_m}\}.$ Recall that the flag associated to a $V$-fern over a field was defined to be the set of stabilizers of the irreducible components in the chain connecting the $0$- and $\infty$-marked points. Writing $C_s=\bigcup_{(w,\ell)\in\Sigma_\CF} E_{w\ell}$, it follows from the construction of the $\hat V$-marking in Proposition \ref{Vsects} that this chain consists of $\{E_{01},\ldots,E_{0m}\}.$ We must therefore show that $\Stab_{V}(E_{0\ell})=V_{i_\ell}$ for each $1\leq \ell\leq m$.

Fix $1\leq \ell\leq m$. Then by (\ref{comps}), the irreducible component $E_{0\ell}$ is the closed subscheme of $C_s\subset P^{\Sigma_\CF}_s$ defined by
\begin{eqnarray}\label{0lcomp}
X_{0k}&=&0,\mbox{\,\,\,\,\,for all $k>\ell$},\\
\nonumber Y_{vk}&=&0,\mbox{\,\,\,\,\,for all $(v,k)\in\Sigma_\CF$ such that $v\neq 0$ or $k<\ell$}.
\end{eqnarray}
In order to simplify the description of the $V$-action, we consider $C_s$ as a closed subscheme of $P_s^\Sigma$. An element $u\in V$ then acts on $C_s$ via the automorphism
$$(x_{vw}:y_{vw})_{(v,w)\in\Sigma}\mapsto(x_{(v-u)w}:y_{(v-u)w})_{(v,w)\in\Sigma}.$$ For each $u\in V$ and $(v,i_k)\in\Sigma_\CF$, let 
$$(X^u_{vk}:Y^u_{vk}):=(X_{v-u,b_{i_k}}:Y_{v-u,b_{i_k}}).$$  Then $u\in V$ stabilizes $E_{0\ell}$ if and only if (\ref{0lcomp}) implies that
\begin{eqnarray}\label{u0lcomp}
X^u_{0k}&=&0,\mbox{\,\,\,\,\,for all $k>\ell$},\\
\nonumber Y^u_{vk}&=&0,\mbox{\,\,\,\,\,for all $(v,k)\in\Sigma_\CF$ such that $v\neq 0$ or $k<\ell$}.
\end{eqnarray}

Suppose $u\in V_{i_\ell}$ and that (\ref{0lcomp}) holds. The point $s\in\Omega_{\CF}$ corresponds to a tuple $\Ut:=(t_1,\ldots t_{n-1})\in k(s)^{n-1}$. For $k>\ell$ the equation
$$X_{0k}Y_{0k}^u-Q_u^{i_k}(\Ut)Y_{0k}Y^u_{0k}=X^u_{0k}Y_{0k}$$
holds on $C_s$ by (\ref{eqsFlag}). It follows from Lemma \ref{kpoints} that $Q^{i_k}_u(\Ut)=0$; hence
$$X_{0k}Y_{0k}^u=X^u_{0k}Y_{0k}.$$
Combining this with (\ref{0lcomp}) implies that $X^u_{0k}=0$. Fix $(v,k)\in\Sigma_\CF$ with $v\neq 0$. By (\ref{eqsFlag}) we have
\begin{equation}\label{c}X_{vk}^{u^{>i_k}}Y_{vk}^u-Q^{i_k}_{u^{\leq i_k}}(\Ut)Y_{vk}^{u^{>i_k}}Y_{vk}^{u}=X_{vk}^{u}Y_{vk}^{u^{>i_k}}\end{equation}
on $C_s$. Since (\ref{0lcomp}) implies that $Y_{vk}^{u^{>i_k}}=0$, equation (\ref{c}) yields $Y^u_{vk}=0$.
Finally, consider $(0,k)\in \Sigma_\CF$ with $k<\ell$. By similar reasoning, we find that $Y_{0k}^u=0$.
All together, we have shown that (\ref{u0lcomp}) holds; hence $u\in\Stab_V(E_{0\ell})$. It follows that $V_{i_\ell}\subset \Stab_V(E_{0\ell})$.
\vspace{2mm}

For the reverse inclusion, suppose $u\in V\setminus V_{i_\ell}$. 
Choose $k>\ell$ minimal such that $u\in V_{i_k}$. On $C_s$ we again have:
$$X_{0k}Y_{0k}^u-Q_u^{i_k}(\Ut)Y_{0k}Y^u_{0k}=X^u_{0k}Y_{0k}.$$
On $E_{0\ell}$, we have $X_{0k}=0$ and $Y_{0k}\neq 0$. If $X^u_{0k}=0$, then we deduce that $Q_u^{i_k}(\Ut)Y_{0k}Y^u_{0k}=0$. Since $Q_u^{i_k}(\Ut)\neq 0$ by Lemma \ref{kpoints}, it follows that $Y^u_{0k}=0$, a contradiction. Hence (\ref{0lcomp}) does not imply (\ref{u0lcomp}), so $u\not\in\Stab_V(E_{0\ell})$. We therefore have $V_{i_\ell}=\Stab(E_{0\ell})$, as desired.
\end{proof}


\subsection{Contraction of $\CC_\OF$ with respect to $\hat V_{n-1}$}
Let $V':=V_{n-1}$ and $\OF':=\{V_0,\ldots V_{n-1}\}$. The natural projection $\prod_{0\neq W\subset V}P_W\to \prod_{0\neq W\subset V'}P_W$ induces a morphism 
$$p\colon U_{\OF}\to U_{\OF'}.$$ 
Recall that we fixed a flag basis $\CB=\{b_1,\ldots,b_n\}$ associated to $\OF$ in order to describe the coordinates on $U_{\OF}$. The basis $\CB$ restricts to a flag basis of $V'$ associated to $\OF'$. We show that pullback of the universal family $\CC_{\OF'}\to U_{\OF'}$ along $p$ is precisely the contraction of $\CC_\OF$ with respect to $\hat V'$. This will be key for the induction step in our proof of the main theorem. For clarity, we recall/fix the following notation:
$$
\begin{array}{rl}
\CC_\OF& \mbox{ the $\OF$-fern over $U_\OF$ constructed above;}\\
\CC_{\OF'}& \mbox{ the analogously constructed $\OF'$-fern over $U_{\OF'}$;}\\
(\CC_{\OF})'&\mbox{ the $\OF'$-fern over $U_\OF$ obtained by contracting $\CC_\OF$ with respect to $\hat V'$;}\\
p^*\CC_{\OF'}&\mbox{ the pullback $\CC_{\OF'}\times_{U_{\OF'}}U_\OF$ along the morphism $p$.}
\end{array}
$$
Recall that for a subflag $\CF:=\{V_{i_0},\ldots, V_{i_m}\}\subset\OF$, we defined
$$\Sigma_\CF:=\{(v,k)\in V\times \{1,\ldots, m\}\mid v^{\leq i_k}=0\}.$$
 We view $\Sigma_\CF$ as a subset $\Sigma_\OF$ via $(v,k)\mapsto (v,i_k)$ and view $\Sigma_{\OF'}$ as a subset of $\Sigma_\OF$ in a similar manner.
\begin{lemma}\label{ContraMap}The natural projection $P^{\Sigma_\OF}\to P^{\Sigma_{\OF'}}$ induces a morphism of $\hat V'$-marked curves $$\gamma\colon \CC_\OF\to p^*\CC_{\OF'}.$$
\end{lemma}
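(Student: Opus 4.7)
The plan is to define $\gamma$ as the restriction to $\CC_\OF$ of the natural projection $U_\OF \times P^{\Sigma_\OF} \to U_\OF \times P^{\Sigma_{\OF'}}$ induced by the inclusion $\Sigma_{\OF'} \hookrightarrow \Sigma_\OF$, and then to verify that (i) the image lies in $p^*\CC_{\OF'}$, (ii) $\gamma$ respects the $\hat V'$-markings, and (iii) $\gamma$ satisfies one of the equivalent conditions of Proposition~\ref{morphprops}, so that it is indeed a morphism of $\hat V'$-marked curves.

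For (i), by Proposition~\ref{RelComps} applied to $\OF'$ together with base change along $p$, the subscheme $p^*\CC_{\OF'} \subset U_\OF \times P^{\Sigma_{\OF'}}$ is cut out by the equations~(\ref{eqsFlag}) for the flag $\OF'$. Since the flag basis $\CB = (b_1,\ldots,b_n)$ restricts to the flag basis $(b_1,\ldots,b_{n-1})$ associated to $\OF'$, the morphism $p$ corresponds in the affine coordinates of Proposition~\ref{Coords} to the projection $\BA^{n-1}_{\BF_q} \to \BA^{n-2}_{\BF_q}$ forgetting $T_{n-1}$; moreover the polynomials $Q^k_v(\UT)$ with $k \leq n-1$ only involve $T_1,\ldots,T_{n-2}$. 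Consequently the pulled-back equations defining $p^*\CC_{\OF'}$ coincide with the subset of the equations~(\ref{eqsFlag}) for $\OF$ corresponding to $\ell \leq n-1$, all of which hold on $\CC_\OF$, so $\gamma$ is well-defined. For (ii), by Proposition~\ref{Vsects} the explicit formulas for $\lambda_\OF(v)$ with $v \in \hat V'$ restrict, in the $\Sigma_{\OF'}$-components, to expressions involving only $Q^\ell_w$ with $\ell \leq n-1$; hence by the same argument these components pull back to the corresponding sections of $p^*\CC_{\OF'}$. The $\infty$-section is $(1:0)$ in every component and is trivially preserved.

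For (iii), we verify condition~(b) of Proposition~\ref{morphprops}: for each $s \in U_\OF$, the induced morphism $\gamma_s$ is an isomorphism on the complement of the one-dimensional fiber locus $Z_s$. Let $\CF \subset \OF$ be the subflag with $s \in \Omega_\CF$. By Lemma~\ref{irredcomps} the fiber decomposes as $(\CC_\OF)_s = \bigcup_{(w,\ell) \in \Sigma_\CF} E_{w\ell}$. Using Lemma~\ref{compP1} together with the description~(\ref{comps}) of $E_{w\ell}$, each $E_{w\ell}$ with $(w,\ell) \in \Sigma_\CF \cap \Sigma_{\OF'}$ projects isomorphically onto an irreducible component of the target (the $(w,\ell)$-coordinate remaining free), while each $E_{w\ell}$ with $(w,\ell) \in \Sigma_\CF \setminus \Sigma_{\OF'}$ has all its $\Sigma_{\OF'}$-coordinates fixed by~(\ref{comps}) and is therefore collapsed to a single point. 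The main obstacle is the fiberwise bookkeeping, particularly tracking which components are contracted when $s$ lies in a non-generic stratum $\Omega_\CF$ with $\CF \subsetneq \OF$; the expected identification of $\gamma$ with the $V'$-fern contraction $\CC_\OF \to (\CC_\OF)'$ in the sequel serves as a consistency check.
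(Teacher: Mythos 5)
Your steps (i) and (ii) are correct. Step (i) matches the paper's (terser) argument, and you supply the right justification: in the affine coordinates of Proposition \ref{Coords} the morphism $p$ forgets $T_{n-1}$, and the polynomials $Q^\ell_v$ with $\ell\le n-1$ involve only $T_1,\ldots,T_{n-2}$, so the pulled-back equations for $p^*\CC_{\OF'}$ are literally among the equations (\ref{eqsFlag}) for $\CC_\OF$. For step (ii) you take a genuinely different route: the paper restricts to the dense open subscheme $\OV$, chases the section $\frac{v}{v_0}$ through the commutative diagram relating $f_V$, $f_{V'}$ and $\OV\times\BA^1\to\Omega_{V'}\times\BA^1$, and concludes by separatedness and reducedness; your direct componentwise comparison of the formulas $\bigl(-Q^\ell_{u-v}(\UT):Q^\ell_{b_k}(\UT)\bigr)$ from Proposition \ref{Vsects} also works, since for $u,v\in V'$ the relevant $\ell$ is at most $n-1$. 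Note also that the paper's own proof of this lemma stops after (i) and (ii); the verification of the conditions of Proposition \ref{morphprops} is deferred to Proposition \ref{cont'}, where $\gamma$ is shown to be a contraction.

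Your step (iii), however, contains a genuine error. You assert that every component $E_{w\ell}$ with $(w,\ell)\in\Sigma_\CF\setminus\Sigma_{\OF'}$ has all its $\Sigma_{\OF'}$-coordinates frozen by (\ref{comps}) and is therefore collapsed to a point. This fails for the $\infty$-component $E_{0n}$, indexed by $(0,i_m)=(0,n)\in\Sigma_\CF\setminus\Sigma_{\OF'}$, whenever $V'=V_{n-1}\notin\CF_s$, i.e.\ $t_{n-1}\neq 0$. In that case $(0,n-1)\in\Sigma_{\OF'}$ but $(0,n-1)\notin\Sigma_\CF$, so the $(0,n-1)$-coordinate is not among those fixed by (\ref{comps}); it is recovered from the free $(0,n)$-coordinate via the relation $X_{0n}Y_{0\,n-1}=t_{n-1}X_{0\,n-1}Y_{0n}$ of (\ref{eqsFlag}), which is invertible since $t_{n-1}\neq0$, and hence varies along $E_{0n}$. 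Consequently $E_{0n}$ maps isomorphically onto the $\infty$-component $E'_{0\,n-1}$ of the target rather than to a point; were it collapsed as you claim, $\gamma_s$ would not even be surjective, let alone satisfy condition (b) of Proposition \ref{morphprops}. This is precisely the sub-case the paper isolates as Case 2b in the proof of Proposition \ref{cont'}. The repair is to distinguish between target coordinates lying in $\Sigma_{\OF'}\cap\Sigma_\CF$, where your free-versus-frozen dichotomy is valid, and those in $\Sigma_{\OF'}\setminus\Sigma_\CF$, which are determined by the $\Sigma_\CF$-coordinates through the unit relations of Proposition \ref{RelComps} and therefore need not be constant on a given component.
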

\begin{proof} Regard that $\CC_\OF$ is a closed subscheme of $U_\OF\times P^{\Sigma_\OF}$. Since the equations defining $p^*\CC_{\OF'}$ in $U_{\OF}\times P^{\Sigma_{\OF'}}$ pull back to a subset of the equations  (\ref{eqsFlag}) defining $\CC_\OF$, the composite $\CC_\OF\into U_\OF\times P^{\Sigma_\OF}\to U_\OF\times P^{\Sigma_{\OF'}}$ indeed factors through a morphism $\gamma\colon \CC_\OF\to p^*\CC_{\OF'}$. We must verify that $\gamma$ preserves the $\hat V'$-markings on $\CC_\OF$ and $p^*\CC_{\OF'}$. Fix $v\in V'$, and let $\lambda_{v}$ and $\lambda_v'$ denote the corresponding marked sections of $\CC_\OF$ and $p^*\CC_{\OF'}$ respectively. We wish to show that $\gamma\circ\lambda_v=\lambda_v'$. Since $p^*\CC_{\OF'}$ is separated and $U_\OF$ is reduced and $\OV\subset U_\OF$ is dense, it suffices to show that $\gamma\circ\lambda_v|_\OV=\lambda_v'|_\OV$ (\cite{GW}, Corollary 9.9). Let $\Sigma':=V'\times (V'\setminus\{0\})$, and fix $v_0\in V'\setminus\{0\}$. Define the morphisms 
\begin{eqnarray*}
f_V\colon \OV\times \BA^1\to B_V\times P^\Sigma;\\
f_{V'}\colon \Omega_{V'}\times\BA^1 \to B_V\times P^{\Sigma'}
\end{eqnarray*}
as in (\ref{theMap}). The natural inclusion $RS_{V',0}[T]\into RS_{V,0}[T]$ corresponds to a morphism 
$$\pi\colon\OV\times\BA^1\to \Omega_{V'}\times\BA^1.$$
Recall that $\CC_\OF$ is the scheme theoretic closure of $f_V$ in $U_\OF\times P^\Sigma$ and similarly for $\CC_{\OF'}$. Moreover, the product $\Pi$ of the morphism $B_V\to \CB_{V'}$ induced by the forgetful functor $(\CE_W)_{0\neq W\subset V}\mapsto (\CE_W)_{0\neq W\subset V'}$ and the natural projection $P^\Sigma\to P^{\Sigma'}$ yields the following commutative diagram:
$$
\xymatrix{\OV\times\BA^1\ar@{=}[r]\ar@/^1pc/[rr]^\pi\ar[d]\ar@/_1pc/[dd]_{f_V} &\OV\times\BA^1\ar[d]\ar[r]&\Omega_{V'}\times\BA^1\ar[d]\ar@/^1pc/[dd]^{f_{V'}}
\\\CC_\OF\ar[d]\ar[r]^\gamma & p^*\CC_{\OF'}\ar[r]\ar[d] & \CC_{\OF'}\ar[d] \\
B_V\times P^\Sigma\ar[r]\ar@/_1pc/[rr]_{\Pi} &B_V\times P^{\Sigma'}\ar[r]& \CB_{V'}\times P^{\Sigma'}.}
$$
The $v$-marked sections of $\OV\times\BA^1$ and $\Omega_{V'}\times\BA^1$ correspond to $\frac{v}{v_0}\in RS_{V',0}\subset RS_{V,0}$ and are hence preserved by $\pi$. By construction, the open immersions $\OV\times\BA^1\into \CC_{\OF}$ and $\Omega_{V'}\times\BA^1\into \CC_{\OF'}$ also preserve the $v$-marked sections. We thus have $\gamma\circ\lambda_v|_\OV=\lambda_v'|_\OV$. Since $v$ was arbitrary, we deduce that $\gamma$ preserves the $V'$-marking. We defined the infinity section $\lambda_\infty\colon U_\OF\to \CC_\OF\subset P^\Sigma$ to be the morphism $$\UT\mapsto (\UT,(1:0)_{(v,w)\in\Sigma})$$ and similarly for $\infty$-marked section of $\CC_{\OF'}\to U_{\OF'}$. This is clearly preserved by $\gamma$, which thus preserves the $\hat V'$-marked sections, as desired.
\end{proof}
\begin{proposition}\label{cont'}The contraction $(\CC_\OF)'$ of $\CC_\OF$ is isomorphic to the pullback $p^*\CC_{\OF'}$.
\end{proposition}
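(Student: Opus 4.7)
The plan is to combine Lemma \ref{ContraMap} with the uniqueness of contractions from Proposition \ref{1-prop-uniquenesscontractions}.

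First, since $\CC_{\OF'}$ is an $\OF'$-fern over $U_{\OF'}$, its pullback $p^*\CC_{\OF'}$ is an $\OF'$-fern over $U_\OF$; in particular, its underlying $\hat V'$-marked curve is stable. By Lemma \ref{ContraMap}, we have a morphism $\gamma\colon \CC_\OF \to p^*\CC_{\OF'}$ of $\hat V'$-marked curves over $U_\OF$. I would then extend the $\hat V'$-marking on $p^*\CC_{\OF'}$ to a $\hat V$-marking by defining the $v$-marked section, for each $v \in V \setminus V'$, to be $\gamma \circ \lambda_{\OF,v}$. With this extension $\gamma$ becomes a morphism of $\hat V$-marked curves, since the fiber condition of Proposition \ref{morphprops} is unaffected by whether one regards $\gamma$ as a $\hat V'$- or $\hat V$-morphism.

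By Definition \ref{cntrct}, and since $p^*\CC_{\OF'}$ is stable as a $\hat V'$-marked curve, this exhibits $p^*\CC_{\OF'}$ as a contraction of $\CC_\OF$ with respect to $\hat V'$. The uniqueness of contractions (Proposition \ref{1-prop-uniquenesscontractions}) then yields a unique isomorphism $(\CC_\OF)' \stackrel\sim\to p^*\CC_{\OF'}$ of $\hat V'$-marked curves over $U_\OF$. The $G'$-equivariance of this isomorphism (so that it is an isomorphism of $V'$-ferns, and hence of $\OF'$-ferns) follows from Lemma \ref{contrequivar} together with the uniqueness of morphisms between stable $\hat V'$-marked curves given by Corollary \ref{uniquemorph}.

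The main obstacle in this argument is already discharged by Lemma \ref{ContraMap}, namely verifying that the natural projection $P^{\Sigma_\OF} \to P^{\Sigma_{\OF'}}$ restricts to a well-defined morphism $\CC_\OF \to p^*\CC_{\OF'}$ compatible with the $V'$-markings. Beyond this, the proposition is essentially a formal consequence of the uniqueness of contractions and the fact that base change commutes with the formation of stable $\hat V'$-marked curves.
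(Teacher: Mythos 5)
Your overall strategy is the same as the paper's: exhibit the morphism $\gamma\colon \CC_\OF\to p^*\CC_{\OF'}$ of Lemma \ref{ContraMap} as a contraction with respect to $\hat V'$ and then invoke the uniqueness of contractions. The formal bookkeeping you do on top of that (extending the $\hat V'$-marking of $p^*\CC_{\OF'}$ to a $\hat V$-marking via $\gamma\circ\lambda_{\OF}$, and getting $G'$-equivariance for free from Corollary \ref{uniquemorph}) is correct and unobjectionable.

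The gap is in your final paragraph: the main obstacle is \emph{not} discharged by Lemma \ref{ContraMap}. In this paper a ``morphism of $I$-marked curves'' carries the strong condition of Proposition \ref{morphprops} --- that the map restricts to an isomorphism off a closed subscheme finite over the base --- and the proof of Lemma \ref{ContraMap} establishes only the weaker facts that the projection $P^{\Sigma_\OF}\to P^{\Sigma_{\OF'}}$ factors through $p^*\CC_{\OF'}$ and that the resulting $\gamma$ is compatible with the $\hat V'$-markings (the latter by a density argument over $\OV$). Nothing there rules out, say, $\gamma_s$ failing to be surjective, or collapsing a component onto a proper closed subset of a component of the target, or identifying two components. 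Verifying the Proposition \ref{morphprops} condition is exactly what the paper's proof of this proposition spends its length on: a fiberwise case analysis over each stratum $\Omega_\CF$, using the explicit description of the irreducible components $E_{w\ell}$ from (\ref{comps}) and Lemmas \ref{compP1}--\ref{int}, showing that each $E_{w\ell}$ with $(w,i_\ell)\in\Sigma_\CF\cap\Sigma_{\CF'}$ maps isomorphically onto the corresponding component of $(p^*\CC_{\OF'})_s$, that every other component is contracted to an explicitly identified closed point, and that $\gamma_s$ is surjective. Without this verification your argument reduces the proposition to a statement that has not actually been proved, so you need to supply the fiberwise analysis (or an equivalent argument) before the uniqueness of contractions can be applied.
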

\begin{proof} By the uniqueness of contractions, it suffices to show that the morphism $\gamma\colon \CC_\OF\to p^*\CC_{\OF'}$ from Lemma \ref{ContraMap} is a contraction with respect to $\hat V'$. Since $\gamma$ preserves the $\hat V$-marking and $p^*\CC_{\OF'}$ is a stable $\hat V'$-marked curve, it only remains to show that $\gamma$ satisfies the contraction condition on fibers (Definition \ref{cntrct}.2). Consider $s\in U_\OF$ with corresponding fibers $C_s:=(\CC_\OF)_s$ and $C'_s:=(p^*\CC_{\OF'})_s$. Let $s'\in U_{\OF'}$ be the image of $s$ under the projection $p\colon U_\OF\to U_{\OF'}$. Let $\CF\subset\OF$ and $\CF'\subset\OF'$ respectively denote the unique subflags such that $s\in\Omega_\CF$ and $s'\in\Omega_{\CF'}$. Let $K:=k(s)$. In our choice of coordinates, the point $s$ corresponds to a tuple $\Ut:=(t_1,\ldots, t_{n-1})\in K^{n-1}$, while $s'$ corresponds to $(t_1,\ldots ,t_{n-2})\in K^{n-2}$. Let $\{i_0,\ldots, i_m\}$ be the indices for which $t_i=0$. By Lemma \ref{kpoints}, we have $\CF=\{V_{i_0},\ldots,V_{i_m}\}$ and $\CF'=\{V_{i_0},\ldots, V_{i_{m-1}}, V'\}$. Note that $V_{i_{m-1}}$ may be equal to $V'$. The natural inclusions $\Sigma_\CF\subset\Sigma_\OF$ and $\Sigma_{\CF'}\subset\Sigma_{\OF'}\subset\Sigma_\OF$ yield projections
$$\xymatrix{&P^{\Sigma_\OF}_K\ar[dl]\ar[dr]&\\
P^{\Sigma_\CF}_K\ar@{-->}[drr] & & P^{\Sigma_{\OF'}}_K\ar[d]\\
& & P^{\Sigma_{\CF'}}_K,}
$$
where the morphism represented by the dotted arrow occurs only when $V'=V_{i_{m-1}}$ (so that $\Sigma_{\CF'}\subset\Sigma_\CF$). In this case, the above diagram commutes. 
By Lemma \ref{irredcomps}, we have the following decompositions into irreducible components:
\begin{eqnarray*}
C_s=\bigcup_{(w,\ell)\in\Sigma_{\CF}}E_{w\ell},\,\,\,\,C_s'=\bigcup_{(w,\ell)\in\Sigma_{\CF'}}E'_{w\ell}.
\end{eqnarray*} 
Fix $(w,i_\ell)\in \Sigma_\CF\subset\Sigma_\OF.$ We have the following two cases:
\vspace{5mm}

\noindent\emph{Case 1. $\underline{\bigl((w,i_\ell)\in \Sigma_\CF\cap\Sigma_{\CF'}\bigr)}$}: The diagram
$$\xymatrix{C_s\ar@{^`->}[r]\ar[d]_{c_s}&P^{\Sigma_{\OF}}_K\ar[d]\ar[r]^{\pi_{wi_\ell}}&\BP^1_K
\\C'_s\ar@{^`->}[r]&P^{\Sigma_{\OF'}}_K\ar[ur]_{\pi'_{wi_\ell}}&,}$$
where $\pi_{wi_\ell}$ and $\pi'_{wi_\ell}$ denote the projections to the $(w,i_\ell)$-component, commutes. By Lemma \ref{compP1}, these projections induce isomorphisms $E_{w\ell}\stackrel\sim\to\BP^1_K$ and $E_{w\ell}'\stackrel\sim\to\BP^1_K$. It follows that $\gamma_s$ restricts to an isomorphism $E_{w\ell}\stackrel\sim\to E'_{w\ell}$.
\vspace{5mm}

\noindent\emph{Case 2. $\underline{\bigl((w,i_\ell)\not\in \Sigma_\CF\cap\Sigma_{\CF'}\bigr)}$}:
We have the following subcases:
\vspace{5mm}

\emph{Case 2a. $\underline{\bigl(V_{i_m-1}= V'\bigr)}$}: Consider the closed point 
$$p'_\infty:=\bigl((1:0)_{vk}\bigr)_{(v,i_k)\in\Sigma_\CF'}\in C'_s\subset P^{\Sigma_{\CF'}}.$$
We claim that $\gamma_s(E_{w\ell})=p'_\infty$. This is equivalent to showing that the equation $Y_{vk}=0$ holds on $E_{w\ell}$ for all $(v,i_k)\in\Sigma_{\CF'}$ Recall from (\ref{comps}) that $E_{w\ell}\subset C_s\subset P^{\Sigma_\CF}$ is the closed subscheme defined by 
\begin{eqnarray*}
X_{vk}-Q^{i_{k}}_{w}(\Ut)Y_{vk}&=&0,\mbox{\,\,\,\,\,for all $(v,k)\in\Sigma_\CF$ such that $k>\ell$ and $v\equiv w$ mod $V_{i_{k}}$},\\
\nonumber Y_{vk}&=&0,\mbox{\,\,\,\,\,for all other $(v,k)\in\Sigma_\CF\setminus \{(w,i_\ell)\}$}.
\end{eqnarray*}
Fix $(v,i_k)\in\Sigma_{\CF'}\subset\Sigma_{\CF}$. The assumption that $(w,i_\ell)\not\in \Sigma_\CF\cap\Sigma_{\CF'}$ implies that the conditions $k>\ell$ and $v\equiv w \mod V_{i_k}$ do not hold simultaneously. Hence $Y_{vk}=0$ on $E_{w\ell}$, as desired.
\vspace{5mm}

\emph{Case 2b. $\underline{\bigl(V_{i_{m-1}}\subsetneq V'\bigr)}$}: Suppose first that $(w,i_\ell)\neq (0,n)$. By Lemma \ref{kpoints}, the assumption that $V_{i_{m-1}}\subsetneq V'$ implies that $t_{n-1}\neq 0$. We may thus consider the closed point
$$p'_{w}:=\bigl((x_{vk}:y_{vk})\bigr)_{(v,k)\in\Sigma_{\CF'}}\in P^{\Sigma_{\CF'}}$$
with
$$(x_{vk}:y_{vk})=\begin{cases}(1:0),\mbox{ if $(v,k)\neq (0,n-1),$}
 \\ \bigl((t_{n-1})^{-1}Q^n_w(\Ut):1\bigr),\mbox{ if $(v,k)=(0,n-1).$}
\end{cases}$$
It follows directly from the defining equations (\ref{comps}) of $E'_{0n-1}$ that $p'_w\in E'_{0,n-1}$. We claim that $\gamma_s(E_{w\ell})=p'_w.$ This is equivalent to showing that the equations 
\begin{eqnarray}\label{a}
Y_{vk}&=&0,\mbox{ for all $(v,k)\in\Sigma_{\CF'}\setminus\{(0,n-1)\}$};\\
\label{b}\frac{X_{0n-1}}{Y_{0n-1}}&=&(t_{n-1})^{-1}Q_w^n(\Ut)
\end{eqnarray}
hold on $E_{w\ell}$. That (\ref{a}) holds follows directly from (\ref{comps}) and the assumption that $(w,i_\ell)\not\in \Sigma_\CF\cap\Sigma_{\CF'}.$ Observe that on $C_s$ we have
\begin{equation}\label{Ewl I}X_{0n}Y_{0n-1}=Q^n_{b_{n-1}}(\Ut)X_{0n-1}Y_{0n}=t_{n-1}X_{0n-1}Y_{0n}\end{equation}
by (\ref{eqsFlag}), and on $E_{w\ell}$ we have
\begin{equation}\label{Ewl II}X_{0n}-Q^n_{w}(\Ut)Y_{0n}=0.\end{equation}
Together (\ref{Ewl I}) and (\ref{Ewl II}) imply (\ref{b}). Thus $\gamma_s(E_{w\ell})=p'_\ell$, as desired.
\vspace{3mm}

Finally, suppose $(w,i_\ell)=(0,n)$. We claim that $\gamma_s$ induces an isomorphism $E_{0n}\stackrel\sim\to E'_{0n-1}$. We have a commutative diagram
$$\xymatrix{C_s\ar@{^`->}[r]\ar[d]_{c_s}&P^{\Sigma_{\OF}}_K\ar[d]\ar[r]^{\pi_{0n-1}}&\BP^1_K
\\C'_s\ar@{^`->}[r]&P^{\Sigma_{\OF'}}_K\ar[ur]_{\pi'_{0n-1}}&}.$$
Proposition \ref{RelComps} implies that the $(0,n)$-component of $C_s$ is determined by the $(0,n-1)$-component and vice versa. Since $\pi_{0n}$, the projection onto the $(0,n)$-component, induces an isomorphism $E_{0n}\stackrel\sim\to\BP^1_K$, it follows that the same is true for $\pi_{0n-1}$. We may thus apply the same reasoning as in case 1 to conclude that $\gamma_s$ induces an isomorphism $E_{0n}\stackrel\sim\to E'_{0n-1}$, as claimed.
\vspace{5mm}

All together, we have shown that each irreducible component of $C_s$ is either contracted to a point under $\gamma_s$, or mapped isomorphically onto an irreducible component of $C'_s$. The above argument also shows that $\gamma_s$ is surjective. This proves that $\gamma$ is a contraction.
\end{proof}

\subsection{$V$-fern structure on $\CC_V$}
Let $\CF$ be an arbitrary flag of $V$. A choice of flag basis corresponding to $\CF$ yields an open immersion $U_\CF\into \BA^{n-1}_{\BF_q}$, and we identify $U_\CF$ with its image in $\BA^{n-1}_{\BF_q}$. Let $\CC_\CF$ be defined as in the previous subsection.
\begin{proposition}\label{localCV}We have $\CC_V\cap (U_\CF\times P^\Sigma)=\CC_\CF$ as closed subschemes of $U_\CF\times P^\Sigma$. 
\end{proposition}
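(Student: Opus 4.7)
The plan is to identify both $\CC_V \cap (U_\CF \times P^\Sigma)$ and $\CC_\CF$ with the scheme-theoretic image of the same morphism $f_V\colon \OV\times\BA^1 \to U_\CF \times P^\Sigma$, noting that the set-theoretic image of $f_V$ lies in $\OV\times P^\Sigma \subset U_\CF\times P^\Sigma$ (since $\CF_0 \subset \CF$ implies $\OV = U_{\CF_0}\subset U_\CF$), so this restriction of the codomain is legitimate. Because $f_V$ is a morphism between schemes of finite type over $\BF_q$, it is quasi-compact, and the formation of its scheme-theoretic image commutes with restriction to an open subscheme of the target. Applied to the open inclusion $U_\CF\times P^\Sigma \subset B_V\times P^\Sigma$, this gives
\[
\CC_V \cap (U_\CF \times P^\Sigma) \;=\; \text{scheme-theoretic image of } f_V \text{ in } U_\CF \times P^\Sigma.
\]

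For the inclusion $\CC_V \cap (U_\CF\times P^\Sigma) \subset \CC_\CF$, I would invoke Lemma \ref{cont}, which asserts precisely that $f_V$ factors through the closed subscheme $\CC_\CF \subset U_\CF\times P^\Sigma$. Since the scheme-theoretic image is by definition the smallest closed subscheme through which $f_V$ factors, the inclusion follows at once.

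For the reverse inclusion, I would first observe that $\CC_{\CF_0} = \pi_\CF^{-1}(\OV)$ is an open subscheme of $\CC_\CF$, since $\OV = U_{\CF_0}$ is open in $U_\CF$. Lemma \ref{immersion} then provides an open immersion $\OV\times\BA^1 \hookrightarrow \CC_{\CF_0}$ induced by $f_V$, and composing with the open inclusion $\CC_{\CF_0} \subset \CC_\CF$ exhibits $f_V$ as an open immersion $\OV\times\BA^1 \hookrightarrow \CC_\CF$ with non-empty image. By Corollary \ref{CFProps}, the scheme $\CC_\CF$ is reduced and irreducible, hence any non-empty open subscheme is dense, and the scheme-theoretic image of an open immersion from a dense open subscheme of a reduced scheme is the whole scheme. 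Therefore the scheme-theoretic image of $f_V$ in $U_\CF\times P^\Sigma$ contains $\CC_\CF$, giving the reverse inclusion.

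There is no real obstacle here; the argument is a formality once the preparatory results are in place. The only points requiring a moment of care are the compatibility of scheme-theoretic image with open restriction of the target (standard for quasi-compact morphisms), the openness of $\CC_{\CF_0}$ inside $\CC_\CF$ (immediate from $\CC_\CF := \pi_\OF^{-1}(U_\CF)$ and $\OV \subset U_\CF$ open), and the density conclusion from irreducibility and reducedness of $\CC_\CF$ established in Corollary \ref{CFProps}.
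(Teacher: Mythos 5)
Your proof is correct and follows essentially the same route as the paper: both identify $\CC_V\cap(U_\CF\times P^\Sigma)$ with the scheme-theoretic image of the quasi-compact morphism $f_V$ restricted to the open subset $U_\CF\times P^\Sigma$, obtain the inclusion into $\CC_\CF$ from Lemma \ref{cont}, and deduce equality from Lemma \ref{immersion} together with the reducedness and irreducibility in Corollary \ref{CFProps}. The only (harmless) difference is in the last step: you argue that $\Image f_V$ is a nonempty open, hence dense, subscheme of the irreducible reduced $\CC_\CF$, whereas the paper compares the two $n$-dimensional irreducible closed subschemes and concludes the inclusion is an equality.
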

\begin{proof}The scheme $\CC_V$ was defined to be the scheme-theoretic closure of $\Image f_V\subset B_V\times P^\Sigma$. Since $\OV\times \BA^1$ is reduced, the scheme $\CC_V$ is just the topological closure of the image of $f_V$ with the induced reduced scheme structure (\cite{Stacks}, Tag 056B). Since $f_V$ is quasicompact, taking the scheme-theoretic closure of the image commutes with restriction to open subsets of $B_V\times P^\Sigma$ (\cite{Stacks}, Tag 01R8). All together, this means that $\CC_\CF^\sharp:=\CC_V\cap (U_\CF\times P^\Sigma)$ is the topological closure of $\Image f_V$ in $U_\CF\times P^\Sigma$ endowed with the induced reduced subscheme structure. Since $\CC_\CF$ is reduced by Corollary \ref{CFProps}, it thus suffices to show that $C^\sharp_\CF=\CC_\CF$ as topological spaces. By Lemma \ref{cont}, we have $\Image f_V\subset \CC_\CF$, and hence $\CC_\CF^\sharp=\overline{\Image f_V}\subset \CC_\CF.$ By Lemma \ref{immersion}, we have $\Image f_V\cong \OV\times\BA^1$, which is $n$-dimensional and irreducible; hence so is $\CC_\CF^\sharp$. The same holds for $\CC_\CF$ by Corollary \ref{CFProps}. The inclusion $\CC_\CF^\sharp\subset \CC_\CF$ must therefore be an equality, proving the lemma. 
\end{proof}
Recall that $B_V\subset\bigcup_{\OF}U_{\OF}$, where $\OF$ runs over the set of complete flags of $V$ (Proposition \ref{covering}). For each such $\OF$, consider the $V$-fern $(\CC_\OF,\lambda_\OF,\phi_\OF)$. We have $\CC_V=\bigcup_\OF \CC_\OF$ by Proposition \ref{localCV}. The uniqueness of the $\lambda_\OF$ and $\phi_\OF$ implies that they glue respectively to a $\hat V$-marking $\lambda_V$ of $\CC_V\to B_V$ and a left $G$-action $\phi_V\colon G\to\Aut_{B_V}(\CC_V)$. The following proposition follows directly from Proposition \ref{LocFern}.
\begin{proposition}\label{VfernStr}The tuple $(\CC_V,\lambda_V,\phi_V)$ is a $V$-fern over $B_V$. 
\end{proposition}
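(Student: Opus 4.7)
The plan is to glue the local $V$-fern structures $(\CC_\OF, \lambda_\OF, \phi_\OF)$ established in Proposition \ref{LocFern} as $\OF$ ranges over the complete flags of $V$. By Proposition \ref{covering} the open subschemes $U_\OF$ for complete $\OF$ cover $B_V$, and by Proposition \ref{localCV} the preimages $\CC_\OF = \CC_V \cap (U_\OF \times P^\Sigma)$ cover $\CC_V$. The proposition reduces to verifying that the markings $\lambda_\OF$ and the actions $\phi_\OF$ agree on overlaps $\CC_\OF \cap \CC_{\OF'}$, since all defining properties of a $V$-fern in Definition \ref{FernDef} are Zariski local on the base.

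For the $\hat V$-markings: each section $\lambda_\OF(v)$ was constructed as the unique extension to $\CC_\OF$ of the section of $\OV \times \BA^1$ corresponding to $v/v_0 \in RS_{V,0}$, a formula that is manifestly independent of the flag $\OF$; the section $\lambda_\OF(\infty)$ was defined as the constant section $(1:0)$ on each component, again independent of $\OF$. By Lemma \ref{immersion} the open immersion $\OV \times \BA^1 \hookrightarrow \CC_{\CF_0}$ has dense image, and $\CC_{\CF_0}$ sits inside every $\CC_\OF$. Since $\CC_V \to B_V$ is separated (it is closed in $B_V \times P^\Sigma$) and each $\CC_\OF$ is reduced by Corollary \ref{CFProps}, two sections of $\CC_V|_{U_\OF \cap U_{\OF'}}$ that coincide on the dense open $\CC_{\CF_0}|_{U_\OF \cap U_{\OF'}}$ must coincide everywhere on the overlap. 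The $\lambda_\OF$ therefore glue to a global $\hat V$-marking $\lambda_V \colon \hat V \to \CC_V(B_V)$.

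For the $G$-action: each $\phi_\OF$ was defined in Subsection \ref{SubSectAction} as the restriction to $\CC_\OF$ of a single globally defined left $G$-action on $B_V \times P^\Sigma$ permuting the $P^\Sigma$-components via \eqref{transl}. Since this ambient action is independent of any flag, the $\phi_\OF$ agree tautologically on overlaps and combine to a faithful left $G$-action $\phi_V \colon G \to \Aut_{B_V}(\CC_V)$.

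Given these two gluings, it remains to observe that all axioms of a $V$-fern are local on $B_V$: flatness and properness of $\CC_V \to B_V$, the condition that geometric fibers are connected reduced curves of arithmetic genus zero with at worst nodal singularities, the stability and smoothness-at-marked-points conditions of Definition \ref{stablecurve}, compatibility (1) of $\phi_V$ with $\lambda_V$, and the fiberwise condition (2) on the chain from $0$ to $\infty$. Each of these holds on every $U_\OF$ by Proposition \ref{LocFern}, hence on $B_V$. The only delicate point is the density-and-separatedness argument used to match markings on overlaps, but all heavy lifting has already been carried out in the construction of $(\CC_\OF, \lambda_\OF, \phi_\OF)$; no new computation is required.
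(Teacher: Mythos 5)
Your proposal is correct and follows essentially the same route as the paper: cover $B_V$ by the $U_\OF$ for complete flags, identify $\pi^{-1}(U_\OF)$ with $\CC_\OF$ via Proposition \ref{localCV}, glue the markings and the $G$-action using their uniqueness (the markings because they restrict to a flag-independent formula on the dense open $\OV$ and the base is reduced and separated, the action because it is the restriction of one global action on $B_V\times P^\Sigma$), and conclude from Proposition \ref{LocFern} since all the defining conditions of a $V$-fern are local on the base. The only cosmetic remark is that the separatedness-and-density argument for matching sections really uses reducedness of the base $U_\OF\cap U_{\OF'}$ and density of $\OV$ therein, rather than reducedness of $\CC_\OF$, but both facts hold and the argument is sound.
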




\section{The Moduli Space of $V$-ferns}
In this section we prove the following more precise version of the Main Theorem:
\begin{theorem}[Main Theorem]\label{rep} The pair $(\CC_V,B_V)$ represents $\Fern_V$.
\end{theorem}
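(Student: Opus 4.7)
The plan is to follow the strategy already outlined in the introduction, carrying out each step carefully. Since the universal family $(\CC_V,\lambda_V,\phi_V)$ is already constructed (Proposition \ref{VfernStr}), the task is to prove representability, which amounts to producing, for each $V$-fern $(C,\lambda,\phi)$ over an $\BF_q$-scheme $S$, a unique morphism $f_C\colon S\to B_V$ with $C\cong f_C^\ast\CC_V$, depending only on the isomorphism class of $C$.

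First, I would reduce to the case of $\CF$-ferns. By Proposition \ref{flagcov}, the open subschemes $S_\CF\subset S$ as $\CF$ ranges over complete flags form an open covering of $S$, and on each $S_\CF$ the restriction of $C$ is an $\CF$-fern. Since $\{U_\CF\}$ is an open covering of $B_V$ (Proposition \ref{covering}), it suffices to show that $U_\CF$ with the $\CF$-fern $\CC_\CF$ represents $\Fern_\CF$ on $\SchFq$; the morphisms $f_{C|_{S_\CF}}\colon S_\CF\to U_\CF$ will then glue to $f_C\colon S\to B_V$ by the uniqueness on overlaps. From now on fix a complete flag $\CF=\{V_0,\dots,V_n\}$ and an $\CF$-fern $C$ over $S$.

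Next, I would construct $f_C$ using the line-bundle construction of \S\ref{LineBundles}. For each non-zero subspace $V'\subset V$, contract $C$ to a $V'$-fern $C'$ and then pass to the associated pair $(L_{V'},\lambda_{V'})$ consisting of a line bundle on $S$ and a fiberwise non-zero $\BF_q$-linear map $\lambda_{V'}\colon V'\to L_{V'}(S)$ (Proposition \ref{lbstr}, Corollary \ref{fibnonzero}). By the modular description of $P_{V'}$ this yields a morphism $S\to P_{V'}$, and the maps are compatible with inclusions $V''\subset V'$, giving a morphism $f_C\colon S\to \prod_{0\neq V'\subset V}P_{V'}$ which, via Lemma \ref{AltRep}, lands in $B_V$; one checks using the flag $\CF_s$ coming from each fiber $C_s$ that $f_C$ factors through $U_\CF$.

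Now I would prove $C\cong f_C^\ast\CC_\CF$ by induction on $n=\dim V$. The base case $n=1$ is immediate from Proposition \ref{dim1fern}, since there is a unique $V$-fern up to unique isomorphism and $U_{\{0,V\}}$ is a point. For the inductive step, set $V':=V_{n-1}$ and $\CF':=\{V_0,\dots,V_{n-1}\}$, and let $p\colon U_\CF\to U_{\CF'}$ be the natural projection. The key identifications are: (i) the $V'$-contraction $C'$ has associated morphism $f_{C'}=p\circ f_C$, which holds by the functoriality of the line-bundle construction under contraction together with compatibility with base change (Proposition \ref{basechangelb}); (ii) by Proposition \ref{cont'}, $(\CC_\CF)'\cong p^\ast\CC_{\CF'}$, and contraction commutes with pullback (remark in \S 4.1), so that
\[
(f_C^\ast\CC_\CF)'\;\cong\;f_C^\ast\bigl((\CC_\CF)'\bigr)\;\cong\;f_C^\ast p^\ast\CC_{\CF'}\;\cong\;f_{C'}^\ast\CC_{\CF'}.
\]
By the induction hypothesis $C'\cong f_{C'}^\ast\CC_{\CF'}$, so the $V'$-contractions of $C$ and of $f_C^\ast\CC_\CF$ agree. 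Separately, one shows that $C^\infty$ and $(f_C^\ast\CC_\CF)^\infty$ are isomorphic as $\hat V$-marked $\BP^1$-bundles (both are determined by the pair $(L_V,\lambda_V)$, which by construction is also the pair associated to $f_C^\ast\CC_\CF$, using Proposition \ref{basechangelb}). The Key Lemma then gives $C\cong f_C^\ast\CC_\CF$.

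Finally I would prove uniqueness: two morphisms $g_1,g_2\colon S\to B_V$ with $g_1^\ast\CC_V\cong g_2^\ast\CC_V$ as $V$-ferns must coincide. This follows by applying the line-bundle construction to the pullback and using that it recovers the tuples $(L_{V'},\lambda_{V'})_{V'}$ which, under the isomorphism $B_V\cong \underline{\CB}_V$ of Lemma \ref{AltRep}, determine the morphism to $B_V$. Combined with the universality of the line-bundle construction (base-change compatibility, Proposition \ref{basechangelb}), this forces $g_1=g_2$.

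The main obstacle I expect is the inductive step, specifically verifying that the $\infty$-component contraction $(f_C^\ast\CC_\CF)^\infty$ really matches $C^\infty$ as $\hat V$-marked $\BP^1$-bundles: one must trace the line bundle $L_V$ constructed from $C$ through the chain of identifications and check it equals the pullback along $f_C$ of the tautological line bundle on $U_\CF$ coming from $\CC_\CF$. Once that compatibility is in place, the Key Lemma packages everything neatly, but the bookkeeping of pullbacks, contractions, and $\infty$-components is the technical heart of the argument.
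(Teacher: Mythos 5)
Your proposal is correct and follows essentially the same route as the paper: reduction to $\CF$-ferns for complete flags, construction of $f_C$ via the line-bundle/contraction data, induction on $\dim V$ using the identification $(\CC_\CF)'\cong p^*\CC_{\CF'}$ and the Key Lemma, and uniqueness via the fact that the associated line bundles with marked sections recover the components of the morphism to $\prod P_{V'}$. The point you flag as the technical heart (matching $C^\infty$ with $(f_C^*\CC_\CF)^\infty$) is exactly what the paper's Lemma \ref{infcomps} handles, in the way you describe.
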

We prove Theorem \ref{rep} at the end of this section.
\begin{remark}To say that the pair $(\CC_V,B_V)$ represents $\Fern_V$ means that $C_V$ is a $V$-fern over $B_V$ and for each $S\in\SchFq$ and each $V$-fern $C$ over $S$, there exists a unique morphism $f_C\colon S\to B_V$ such that $C\cong f_C^*(\CC_V)$. This is equivalent to giving an explicit isomorphism of functors 
$$\begin{array}{ccc}\Mor_{\BF_q}(-,B_V)&\stackrel\sim\longto& \Fern_V,\\
(f\colon S\to B_V)&\mapsto&[f^*\CC_V],
\end{array}$$
where $[f^*\CC_V]$ denotes the isomorphism class of the $V$-fern $f^*\CC_V$ over $S$.
\end{remark}

\subsection{Reduction to the case of $\OF$-ferns}
Recall that a $V$-fern $C$ over $S$ is called an $\CF$-fern for a flag $\CF$ if $S=S_\CF$, i.e., for all $s\in S$, the flag $\CF_s$ associated to the fiber $C_s$ is contained in $\CF$. Consider the following analog to Theorem \ref{rep}:
\begin{theorem}\label{flagrep}Let $\CF$ be a flag of $V$. The scheme $U_\CF$ together with the $V$-fern $C_\CF$ represent $\Fern_\CF$.
\end{theorem}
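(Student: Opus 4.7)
The plan is to first reduce from an arbitrary flag $\CF$ to the case of a complete flag $\OF$, and then prove the complete case by induction on $n = \dim V$.

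For the reduction, embed $\CF$ in a complete flag $\OF$. By Lemma \ref{contains}, $U_\CF$ is an open subscheme of $U_\OF$, and by Proposition \ref{flagcov} the functor $\Fern_\CF$ is an open subfunctor of $\Fern_\OF$. Granting the theorem for $\OF$, it suffices to identify these two open subobjects, which amounts to showing that the classifying morphism $f_C\colon S\to U_\OF$ attached to an $\OF$-fern $C$ factors through $U_\CF$ precisely when $C$ is an $\CF$-fern. By Proposition \ref{fibflag}, the universal $\OF$-fern is fiberwise an $\OF'$-fern over $\Omega_{\OF'}$ for each $\OF' \subset \OF$, so the statement follows once the universal case is established. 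The base case $n=1$ is immediate from Proposition \ref{dim1fern}, since the only flag is trivial and $U_{\{0,V\}} = \Spec \BF_q$.

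For the inductive step, let $\OF = \{V_0, \ldots, V_n\}$ with $n \geq 2$, let $V' := V_{n-1}$, and let $\OF' := \OF \cap V'$. Given an $\OF$-fern $(C, \lambda, \phi)$ over $S$, construct $f_C\colon S \to U_\OF$ by applying the line bundle construction of Subsection \ref{LineBundles} to the $\hat W$-contraction of $C$ for each $0 \neq W \subset V$; this yields a fiberwise non-zero $\BF_q$-linear map $\lambda_W\colon W \to L_W(S)$, hence (by Proposition \ref{PVrep}) a morphism $S \to P_W$. Assembling these across $W$ produces $f_C\colon S \to \prod_{0\neq W \subset V} P_W$. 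The compatibility condition \eqref{BVcond} follows from the functoriality of contraction (commuting contractions with respect to nested $\hat W'' \subset \hat W$), so $f_C$ factors through $B_V$. That it further factors through $U_\OF$ is an open condition \eqref{UFRep} which can be checked fiberwise using the explicit description of the flag $\CF_s$ associated to $C_s$ and the relation between $V_i/V_{i-1}$ and the special points of the chain from $0$ to $\infty$ (Corollary \ref{specialpoints}).

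To show $C \cong f_C^*\CC_\OF$, invoke the Key Lemma \ref{key}. The $V'$-contraction $C'$ is an $\OF'$-fern, and by induction $C' \cong f_{C'}^* \CC_{\OF'}$. Using Proposition \ref{basechangelb} (compatibility of the line bundle construction with base change) together with the compatibility of the line bundle construction with contractions, one checks that $f_{C'} = p \circ f_C$, where $p\colon U_\OF \to U_{\OF'}$ is the natural projection. Combining this with Proposition \ref{cont'}, which identifies $(\CC_\OF)' \cong p^*\CC_{\OF'}$, yields $C' \cong (f_C^*\CC_\OF)'$ as $\OF'$-ferns. Separately, the contraction to the $\infty$-component $C^\infty$ is entirely determined by the pair $(L_V, \lambda_V)$ and its translates by $V$, which is precisely the data encoded by the $V$-component of $f_C$; this gives $C^\infty \cong (f_C^*\CC_\OF)^\infty$ as $\hat V$-marked curves. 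The Key Lemma then produces the desired isomorphism $C \cong f_C^*\CC_\OF$. Uniqueness of $f_C$ follows because any morphism $g\colon S \to U_\OF$ with $g^*\CC_\OF \cong C$ must, by the universal property of each $P_W$, induce the same pair $(L_W, \lambda_W)$ as $f_C$, forcing $g = f_C$.

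The main obstacles will be, first, verifying cleanly that $f_C$ factors through $U_\OF$ (as opposed to merely $B_V$) by translating the open condition \eqref{UFRep} into fern-theoretic terms via the flag $\CF_s$; and second, establishing the isomorphism $C^\infty \cong (f_C^*\CC_\OF)^\infty$ in a $\hat V$-marked and functorially compatible way, so that the two hypotheses of the Key Lemma can be combined. The rest of the argument is bookkeeping built on the functorial constructions of Section~4 and the explicit description of $\CC_\OF$ in Section~6.
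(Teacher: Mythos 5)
Your proposal follows essentially the same route as the paper: reduction to a complete flag via Proposition \ref{fibflag}, construction of the classifying morphism from the line-bundle-on-contractions data, factorization through $B_V$ and then $U_\OF$, induction on $\dim V$ with base case Proposition \ref{dim1fern}, the identifications $C'\cong (f_C^*\CC_\OF)'$ and $C^\infty\cong (f_C^*\CC_\OF)^\infty$ feeding into the Key Lemma, and uniqueness via the components in $\prod_{0\neq W\subset V}P_W$. The two obstacles you flag at the end are exactly the points the paper spends the most care on (Proposition \ref{flagrest} and Lemmas \ref{samehom}, \ref{infcomps}), so the plan is sound as stated.
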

\begin{corollary}\label{7-cor-smoothferns}
The scheme $\OV$ represents the functor that associates to an $\BF_q$-scheme $S$ the set of isomorphism classes of smooth $V$-ferns over $S$.
\end{corollary}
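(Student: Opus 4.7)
The strategy is to identify smooth $V$-ferns with $\CF_0$-ferns, where $\CF_0 := \{0, V\}$ is the trivial flag, and then invoke Theorem \ref{flagrep} together with the identification $U_{\CF_0} \cong \OV$ from Section \ref{OVBV}.

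First, I would verify the following equivalence: a $V$-fern $(C,\lambda,\phi)$ over $S$ has smooth fibers if and only if it is an $\CF_0$-fern. The ``only if'' direction is immediate, since a smooth fiber $C_s$ is irreducible of genus $0$, so the chain $\SE_s$ from $0$ to $\infty$ has length $1$, forcing $\CF_s = \{0, V\}$. For the ``if'' direction, suppose $\CF_s \subset \CF_0$ for every $s \in S$. Then $\CF_s = \{0, V\}$, so $\SE_s$ consists of a single irreducible component $E$, and by Corollary \ref{specialpoints} every $V$-marked point of $C_s$ lies on $E$; the $\infty$-marked point lies on $E$ as well. Now consider the dual graph $\Gamma_{C_s}$, which is a connected tree by the discussion following Definition \ref{1-def-dualgraph}. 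Any tail of $\Gamma_{C_s}$ is an irreducible component containing at most one singular point, and by stability must contain at least two marked points. Since all marked points of $C_s$ lie on $E$, the unique possible tail is $E$ itself, so $\Gamma_{C_s}$ consists of a single vertex. Thus $C_s$ is irreducible; combined with stability (smoothness at marked points) and $\dim H^1(C_s,\CO_{C_s}) = 0$, it follows that $C_s \cong \BP^1_{k(s)}$, which is smooth.

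With this equivalence in hand, the functor sending $S$ to isomorphism classes of smooth $V$-ferns over $S$ is identified with $\Fern_{\CF_0}$. Applying Theorem \ref{flagrep} to $\CF = \CF_0$ shows that $U_{\CF_0}$ together with $\CC_{\CF_0}$ represents $\Fern_{\CF_0}$. By Proposition \ref{strata} we have $\Omega_{\CF_0} = U_{\CF_0}$, and the natural isomorphism $\Omega_{\CF_0} \stackrel{\sim}{\to} \OV$ recalled after that proposition transports this representability to $\OV$, yielding the corollary.

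The only non-formal step is the dual-graph argument showing that an $\CF_0$-fern has irreducible fibers; once that is established, the corollary is essentially a direct specialization of Theorem \ref{flagrep} to the trivial flag.
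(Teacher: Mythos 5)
Your proof is correct and follows exactly the route the paper intends: the corollary is the specialization of Theorem \ref{flagrep} to the trivial flag $\CF_0$, combined with the identification $U_{\CF_0}=\Omega_{\CF_0}\cong\OV$; the paper leaves the proof (in particular the equivalence between smooth $V$-ferns and $\CF_0$-ferns) implicit, and you supply it correctly. Your dual-graph verification is sound, though Corollary \ref{specialpoints} already shows the unique component of the chain carries only marked points besides $y_1=\lambda_\infty(s)$, hence no nodes, which gives irreducibility (and then $C_s\cong\BP^1_{k(s)}$ by Proposition \ref{P1fibs}) slightly more directly.
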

\begin{proposition}\label{redtoflag}Theorem \ref{rep} follows from Theorem \ref{flagrep}. 
\end{proposition}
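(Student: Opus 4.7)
The plan is to reduce representability of $\Fern_V$ to the flag-by-flag version by exploiting the open covering of both $B_V$ by the $U_\CF$ (Proposition \ref{covering}) and of an arbitrary $V$-fern by the flag loci $S_\CF$ (Proposition \ref{flagcov}), and then to glue.

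Concretely, starting from a $V$-fern $(C,\lambda,\phi)$ over a scheme $S$, I would first invoke Proposition \ref{flagcov} to obtain an open covering $S=\bigcup_\CF S_\CF$ indexed by the flags of $V$ such that $C|_{S_\CF}$ is an $\CF$-fern. Applying Theorem \ref{flagrep} to each flag then yields a unique morphism $f_\CF\colon S_\CF\to U_\CF$ with $f_\CF^*\CC_\CF\cong C|_{S_\CF}$, and Proposition \ref{localCV} identifies $\CC_\CF$ with $\CC_V|_{U_\CF}$.

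The next step, which is the heart of the argument, is to check that the $f_\CF$ agree on overlaps so as to glue to a morphism $f_C\colon S\to B_V$. On $S':=S_\CF\cap S_{\CF'}$ the fern $C|_{S'}$ is actually an $(\CF\cap\CF')$-fern, so Theorem \ref{flagrep} applied to $\CF\cap\CF'$ provides a unique morphism $g\colon S'\to U_{\CF\cap\CF'}$ with $g^*\CC_{\CF\cap\CF'}\cong C|_{S'}$. Combining Lemma \ref{contains}, which gives $U_{\CF\cap\CF'}\subset U_\CF$, with Proposition \ref{localCV}, the composition of $g$ with this inclusion is a morphism $S'\to U_\CF$ pulling $\CC_\CF$ back to $C|_{S'}$; by the uniqueness part of Theorem \ref{flagrep} for $\CF$ it must coincide with $f_\CF|_{S'}$, and the symmetric argument matches it with $f_{\CF'}|_{S'}$. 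The local isomorphisms $f_\CF^*\CC_\CF\cong C|_{S_\CF}$ in turn glue to an isomorphism $f_C^*\CC_V\cong C$, their compatibility on overlaps being automatic from Corollary \ref{uniquemorph}.

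Finally, for uniqueness of $f_C$, I would argue that any $f\colon S\to B_V$ with $f^*\CC_V\cong C$ must send $S_\CF$ into $U_\CF$: by Proposition \ref{fibflag} the flag associated to the fiber of $\CC_V$ at any $x\in U_\CF$ is a subflag of $\CF$, while $U_\CF=\bigsqcup_{\CF'\subset\CF}\Omega_{\CF'}$ by Proposition \ref{strata}, so the condition $\CF_s\subset\CF$ for $s\in S_\CF$ forces $f(s)\in U_\CF$. The restriction $f|_{S_\CF}$ then falls under the scope of Theorem \ref{flagrep}, which pins it down to the previously constructed $f_\CF$, and since this holds for every $\CF$ in the cover, $f=f_C$. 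I expect no serious obstacle beyond keeping the overlap compatibilities straight; the work is essentially a gluing argument once the identifications $\CC_\CF=\CC_V|_{U_\CF}$ and $S_\CF\cap S_{\CF'}=S_{\CF\cap\CF'}$ are in hand.
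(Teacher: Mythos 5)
Your proposal is correct and follows essentially the same route as the paper: cover $S$ by the open loci $S_\CF$, apply Theorem \ref{flagrep} on each piece, use the uniqueness statement over $S_\CF\cap S_{\CF'}=S_{\CF\cap\CF'}$ (together with $U_{\CF\cap\CF'}\subset U_\CF$ and $\CC_\CF=\CC_V|_{U_\CF}$) to match the local morphisms and isomorphisms on overlaps, and glue. The paper's proof is terser — in particular it leaves the uniqueness of the glued morphism implicit — whereas you spell out, via Propositions \ref{fibflag} and \ref{strata}, why any representing morphism must carry $S_\CF$ into $U_\CF$; this is a worthwhile clarification but not a different argument.
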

\begin{proof}Suppose Theorem \ref{flagrep} holds. By Propositions \ref{covering} and \ref{localCV}, the family of morphisms $$(C_\CF\to U_\CF)_\CF,$$ where $\CF$ runs over all flags of $V$, glue to the structure morphism $\CC_V\to B_V$. Let $S$ be a scheme and let $\pi\colon C\to S$ be a $V$-fern. By Proposition \ref{flagcov}, we have $S=\bigcup_{\CF}S_\CF.$ By assumption, we obtain a family of unique morphisms $$(f_\CF\colon S_\CF\to U_\CF)_\CF$$ inducing unique isomorphisms $$(F_\CF\colon \pi^{-1}(S_\CF)\stackrel\sim\to C_{\CF}\times_{U_\CF}S_{\CF})_\CF.$$ The uniqueness implies that for any two flags $\CF$ and $\CG$, we have $$f_\CF|_{S_{\CF\cap\CG}}=f_\CG|_{S_{\CF\cap\CG}}=f_{\CF\cap\CG}$$ and similarly for $F_\CF$ and $F_\CG$. By gluing, we thus obtain a unique morphism $f\colon S\to B_V$ inducing a unique isomorphism $F\colon C\stackrel\sim\to \CC_V\times_{B_V} S.$ This proves the proposition.
\end{proof}

\begin{proposition}\label{completetogeneral}Let $\CF$ be a flag, and let $\OF$ be a complete flag containing it. If Theorem \ref{flagrep} holds for $\OF$, then it holds for $\CF$.
\end{proposition}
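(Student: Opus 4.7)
The plan is to exploit three facts set up earlier: $U_\CF$ is an open subscheme of $U_\OF$ (Lemma \ref{contains}), the universal family satisfies $\CC_\CF = \CC_\OF \times_{U_\OF} U_\CF$ by construction, and $\Fern_\CF$ is an open subfunctor of $\Fern_\OF$ (since any $\CF$-fern is automatically an $\OF$-fern). Under the assumption that $(U_\OF, \CC_\OF)$ represents $\Fern_\OF$, proving representability for $\CF$ then reduces to identifying the open locus in $U_\OF$ that parametrizes $\CF$-ferns and showing it equals $U_\CF$.

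First I would take an arbitrary $\CF$-fern $(C,\lambda,\phi)$ over $S$. Viewed as an $\OF$-fern, it yields a unique morphism $f\colon S \to U_\OF$ together with an isomorphism $C \cong f^*\CC_\OF$. The central task is to show $f$ factors (uniquely) through the open immersion $U_\CF \hookrightarrow U_\OF$. I would do this point-wise: for $s \in S$, by Proposition \ref{fibflag} the flag associated to the fiber $(\CC_\OF)_{f(s)}$ is the unique $\CG \subset \OF$ with $f(s) \in \Omega_\CG$. On the other hand, $C_s = (\CC_\OF)_{f(s)} \otimes_{k(f(s))} k(s)$, and the associated flag is preserved under this field extension because the irreducible components of fibers of a stable $\hat V$-marked curve are geometrically irreducible (Lemma \ref{1-lem-irred-comps}), so the chain from $0$ to $\infty$ and the stabilizer subspaces $V_i \subset V$ are unchanged. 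Hence the flag $\CF_s$ of $C_s$ equals $\CG$. Since $C$ is an $\CF$-fern we have $\CF_s \subset \CF$, so $\CG \subset \CF$, and by the stratification $U_\CF = \bigsqcup_{\CG \subset \CF}\Omega_\CG$ of Proposition \ref{strata} we conclude $f(s) \in U_\CF$ as a set-theoretic point.

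Because $U_\CF$ is open in $U_\OF$, set-theoretic containment $f(S) \subset U_\CF$ implies that $f$ factors through $U_\CF$ as a morphism of schemes, yielding a unique $g\colon S \to U_\CF$. The pullback identity $C \cong f^*\CC_\OF \cong g^*(\CC_\OF \times_{U_\OF} U_\CF) = g^*\CC_\CF$ follows immediately. Uniqueness of $g$ is automatic: any two such morphisms to $U_\CF$ would give two morphisms to $U_\OF$ pulling back $\CC_\OF$ to $C$, hence would coincide by the representability assumption for $\OF$, and then coincide as maps to $U_\CF$ since $U_\CF \hookrightarrow U_\OF$ is monic. Conversely, every morphism $g\colon S \to U_\CF$ produces an $\CF$-fern $g^*\CC_\CF$ (since $\CC_\CF$ itself is an $\CF$-fern, again by Proposition \ref{fibflag} and the stratification), so the correspondence is bijective.

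The main obstacle I expect is the verification that the associated flag of a fiber behaves well under the base-change $C_s = (\CC_\OF)_{f(s)} \otimes_{k(f(s))} k(s)$. This is intuitively clear from geometric irreducibility, but it is the one place where we are comparing flags defined over two different fields, and it must be stated carefully (the chain from $0$ to $\infty$ and the stabilizers in $V$ are invariant under field extension precisely because the relevant irreducible components are geometrically irreducible). Once this compatibility is in place, everything else is a direct consequence of the stratification of $U_\OF$ and the fact that open subfunctors of representable functors are represented by open subschemes.
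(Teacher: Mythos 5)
Your proposal is correct and follows essentially the same route as the paper: view the $\CF$-fern as an $\OF$-fern, obtain the unique classifying morphism $f\colon S\to U_\OF$, and use Proposition \ref{fibflag} together with the stratification $U_\CF=\bigsqcup_{\CG\subset\CF}\Omega_\CG$ to show $f$ factors through the open subscheme $U_\CF$. You are in fact slightly more careful than the paper at the one delicate point, namely that $C_s$ is only a field-extension base change of $(\CC_\OF)_{f(s)}$, and your appeal to geometric irreducibility of the components (Lemma \ref{1-lem-irred-comps}) to see that the associated flag is unchanged is a correct way to justify the step the paper passes over.
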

\begin{proof}Suppose Theorem \ref{flagrep} holds for $\OF$ and let $C$ be an $\CF$-fern over $S$. Then $C$ is also an $\OF$-fern, and it suffices to show that the unique morphism $f\colon S\to U_\OF$ corresponding to $C$ factors through $U_\CF\subset U_\OF$. Let $s\in S$ and let $p:=f(s)$. Then the isomorphism $C\stackrel\sim\to \CC_\OF\times_{U_\OF} S$ induces an isomorphism $C_s\stackrel\sim\to (\CC_\OF)_p$. This implies that the flag associated to $(\CC_\OF)_p$ is contained in $\CF$. By Proposition \ref{fibflag}, we deduce that $p\in U_\CF$, as desired.
\end{proof}

For the remainder of this section, fix a complete flag $\OF=\{V_0,\ldots,V_n\}.$
\subsection{The representing morphism associated to an $\OF$-fern}\label{morphfern}
Let $S$ be a scheme and let $(C,\lambda,\phi)$ be an $\OF$-fern over $S$. For each $\BF_q$-subspace $0\neq V'\subset V$, let $(C',\lambda',\phi')$ denote the $V'$-fern obtained from contracting $C$ (Proposition \ref{FernContr}). By Proposition \ref{lbstr} and Corollary \ref{fibnonzero}, there is a natural line bundle $L'$ associated to $C'$ endowed with a fiberwise nonzero $\BF_q$-linear map $\lambda'\colon V'\to L'(S)$. The pair $(L',\lambda')$ thus determines a morphism of schemes $S\to P_{V'}$. By varying $V',$ we obtain a morphism 
\begin{equation}\label{assocMorph}f_C\colon S\to \prod_{0\neq V'\subset V}P_{V'}.\end{equation}

\begin{proposition}\label{isoclass} The morphism $f_C$ depends only on the isomorphism class of $(C,\lambda,\phi)$.
\end{proposition}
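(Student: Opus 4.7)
My plan is to trace the isomorphism through the chain of constructions that defines $f_C$ and show at each step that an isomorphism of $V$-ferns induces a compatible isomorphism on the output. So suppose $\alpha\colon (C,\lambda,\phi)\stackrel\sim\to (D,\mu,\psi)$ is an isomorphism of $V$-ferns over $S$. Fix $0\neq V'\subset V$, and let $(C',\lambda',\phi')$ and $(D',\mu',\psi')$ denote the respective $V'$-contractions, with contraction morphisms $\gamma_C\colon C\to C'$ and $\gamma_D\colon D\to D'$.

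First I would invoke the functoriality of contraction on stable $\hat V'$-marked curves: since $\gamma_D\circ\alpha\colon C\to D'$ presents $D'$ as a contraction of $C$ with respect to $\hat V'$, the uniqueness of contractions (Proposition \ref{1-prop-uniquenesscontractions}) produces a unique isomorphism $\alpha'\colon (C',\lambda')\stackrel\sim\to (D',\mu')$ with $\alpha'\circ\gamma_C=\gamma_D\circ\alpha$. Next I would pass to the contraction to the $\infty$-component. By Corollary \ref{1-cor-uniquenessCi} applied to $f^\infty_{D'}\circ\alpha'\colon C'\to (D')^\infty$, there is a unique isomorphism $\alpha'^\infty\colon (C')^\infty\stackrel\sim\to (D')^\infty$ of $\hat V'$-marked curves commuting with the respective $f^\infty$'s.

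Now $L'$ and $M'$ were defined as the complements of the image of the $\infty$-section in $(C')^\infty$ and $(D')^\infty$ respectively, and $\alpha'^\infty$ preserves the $\infty$-section; hence it restricts to an isomorphism of schemes $\beta\colon L'\stackrel\sim\to M'$ over $S$ which carries $\lambda'_v$ to $\mu'_v$ for every $v\in V'$. To conclude that $\beta$ is an isomorphism of line bundles respecting $\lambda'$ and $\mu'$, I would invoke the characterization of the line bundle structure in Proposition \ref{lbstr}: the zero section is the $0$-marked section, which $\beta$ preserves, and addition and scalar multiplication are determined Zariski locally on $S$ by the induced $\PGL_2$-action coming from the $G$-action, which $\alpha'^\infty$ also preserves (since it arises from an isomorphism of $V'$-ferns and hence is automatically $G'$-equivariant). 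Thus $\beta$ is an isomorphism of line bundles over $S$ carrying $\lambda'$ to $\mu'$, so the pairs $(L',\lambda')$ and $(M',\mu')$ represent the same $S$-valued point of $P_{V'}$.

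Letting $V'$ range over all nonzero subspaces of $V$, each component of $f_C$ and $f_D$ agrees, so $f_C=f_D$. I do not foresee a genuine obstacle: the entire argument is an assembly of the functorialities already established (functoriality of stable contractions, functoriality of the contraction to the $\infty$-component, and the intrinsic characterization of the line bundle structure on $L'$). The only point that merits explicit verification is that $\beta$ is linear rather than merely a scheme isomorphism preserving the $V'$-marked sections, and this is handled by the fact that addition and scalar multiplication on $L'$ are encoded in the $G'$-action, which is transported by $\alpha'^\infty$.
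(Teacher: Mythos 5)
Your argument is correct and takes essentially the same route as the paper: uniqueness of contractions gives $C'\stackrel\sim\to D'$, Corollary \ref{1-cor-uniquenessCi} gives $(C')^\infty\stackrel\sim\to(D')^\infty$, and restricting to the complement of the $\infty$-section yields the isomorphism of pairs $(L',\lambda')\stackrel\sim\to(M',\mu')$. One small caveat on your final verification: the finite group $G'$ does not generate the full additive or scalar structure of $L'$, so linearity of $\beta$ is not literally ``encoded in the $G'$-action''; the correct (and readily available) reason is that $\beta$ extends to an isomorphism of $\BP^1$-bundles fixing the $0$- and $\infty$-sections and is therefore Zariski-locally of the form $x\mapsto ax$ --- the same argument used to glue the local trivializations at the end of the proof of Proposition \ref{lbstr}.
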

\begin{proof} Let $(D,\mu,\psi)$ be a $V$-fern isomorphic to $(C,\lambda,\phi)$. Let $0\neq V'\subset V$, and let $D'$ denote the contraction of $D$ with respect to $\hat V'$. The composite $C\stackrel\sim\to D\to D'$ makes $D'$ a contraction of $C$ with respect to $\hat V'$. We thus obtain a unique isomorphism $C'\stackrel\sim\to D'$ of $V'$-ferns. Similarly, by the uniqueness of the contraction to the $\infty$-component (Corollary \ref{1-cor-uniquenessCi}), we obtain an isomorphism of $\hat V'$-marked curves $\alpha\colon (C')^{\infty}\stackrel\sim\to (D')^\infty.$ Recall that the pair $(L',\lambda')$ was obtained from $(C')^\infty$ by taking the complement of the $\infty$-divisor and restricting the $\hat V'$-marking to $V'$. Denoting the corresponding line bundle associated to $D$ by $(M',\mu')$, the isomorphism $\alpha$ this induces an isomorphism of pairs $$(L',\lambda')\stackrel\sim\to(M',\mu').$$
Hence $C$ and $D$ induce the same morphism $S\to P_{V'}.$ The proposition follows.
\end{proof}

\begin{lemma}\label{infcompF}Fix $0\neq V'\subset V$ and let $1\leq i\leq n$ be the smallest integer such that $V'\subset V_i$. Let $v'\in V'\setminus (V_{i-1}\cap V')$. The contraction of $C'$ to the $\infty$-component $(C')^\infty$ is isomorphic to the contraction of $C$ with respect to $\{0, v',\infty\}.$
\end{lemma}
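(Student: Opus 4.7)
The strategy is to exhibit $(C')^\infty$, endowed with the $\{0,v',\infty\}$-marking pushed forward from $C$, as a stable $\{0,v',\infty\}$-marked curve receiving a natural morphism from $C$; the identification with the contraction of $C$ with respect to $\{0,v',\infty\}$ then follows from the uniqueness of contractions (Proposition \ref{1-prop-uniquenesscontractions}).

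First I would set things up as a composite: the contraction $c\colon C\to C'$ and the contraction to the $\infty$-component $f^\infty\colon C'\to (C')^\infty$ are both morphisms of $\hat V$-marked curves (equipping $C'$ and $(C')^\infty$ with the full $\hat V$-markings pushed forward, not merely their $\hat V'$-markings). By Proposition \ref{1-prop-category-stable-curves}, the composite $f^\infty\circ c\colon C\to (C')^\infty$ is again a morphism of $\hat V$-marked curves, hence \emph{a fortiori} of $\{0,v',\infty\}$-marked curves.

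The heart of the argument is to check that $(C')^\infty$, with these three sections, is stable on every fiber. Fix $s\in S$ and let $\{V'_0,\ldots,V'_{m'}\}$ be the flag of $C'_s$. By Proposition \ref{flgctr} one has $V'_k=V_{j_k}\cap V'$ for indices $0=j_0<\cdots<j_{m'}$ drawn from $\CF_s\subset\overline{\CF}$. The penultimate index must satisfy $V_{j_{m'-1}}\not\supset V'$, hence $j_{m'-1}\leq i-1$, giving $V'_{m'-1}\subseteq V_{i-1}\cap V'$. The hypothesis $v'\notin V_{i-1}\cap V'$ then forces $v'\in V'_{m'}\setminus V'_{m'-1}$. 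With this in hand, I would trace the three sections through $C'_s\to (C'_s)^\infty$. When $m'=1$, the fiber $C'_s\cong\BP^1_{k(s)}$ is already smooth and the sections $\lambda'_0,\lambda'_{v'},\lambda'_\infty$ are visibly distinct. When $m'\geq 2$, the component $E'_{m'}$ carries no finite $V'$-marked points (by the argument in Case 2 of the proof of Proposition \ref{FlagFib}), so $\lambda'_0$ is collapsed to the distinguished special point $x_{m'}\in E'_{m'}$, while $\lambda'_{v'}=\phi'_{v'}(\lambda'_0)$ is collapsed to $\phi'_{v'}(x_{m'})$. By Corollary \ref{specialpoints} the stabilizer of $x_{m'}$ in $V'$ equals $V'_{m'-1}$, so $v'\notin V'_{m'-1}$ implies $\phi'_{v'}(x_{m'})\neq x_{m'}$; and $\lambda'_\infty$ remains a smooth marked point, distinct from both.

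Once $(C')^\infty$ is known to be stable as a $\{0,v',\infty\}$-marked curve with a morphism from $C$, Proposition \ref{1-prop-uniquenesscontractions} supplies a unique isomorphism to the contraction of $C$ with respect to $\{0,v',\infty\}$. The main technical obstacle is the inclusion $V'_{m'-1}\subseteq V_{i-1}\cap V'$ together with the fiberwise tracking of the three sections through the nested contractions; once those are in place the remainder of the argument is formal.
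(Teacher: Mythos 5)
Your proposal is correct and follows essentially the same route as the paper: both proofs reduce to showing that the $0$-, $v'$- and $\infty$-sections remain pairwise disjoint on $(C')^\infty$ (via the observation that the penultimate step of the flag of $C'_s$ is contained in $V_{i-1}\cap V'$, which $v'$ avoids), and then invoke the uniqueness of contractions. Your fiberwise tracking of where the three sections land is just a more explicit version of the paper's one-line appeal to Corollary \ref{specialpoints} and Proposition \ref{loccontr}.
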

\begin{proof}On each fiber, the marked points of $(C')^\infty$ are in one-to-one correspondence with $V'/(V_{j}\cap V')\cup\{\infty\}$ for some $j\leq i-1$. The choice of $v'$ thus guarantees that the $0\,$- and $v'$- and $\infty$-marked sections are disjoint in $(C')^\infty$. By Proposition \ref{loccontr}, we deduce that $(C')^\infty$ is the contraction of $C'$ (and hence $C$) with respect to $\{0,v',\infty\}$. 
\end{proof}
Consider $0\neq V''\subsetneq V'\subset V$, with corresponding pairs $(L'',\lambda'')$ and $(L',\lambda')$ as in the definition of $f_C$ above. 
\begin{lemma}\label{7-lem-line-bundle-hom}There exists a morphism $\psi\colon L''\to L'$ (as schemes over $S$)  such that $\lambda'|_{V''}=\psi\circ\lambda''.$
\end{lemma}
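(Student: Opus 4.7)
I would construct $\psi$ as a homomorphism of invertible sheaves $L'' \to L'$ (which in particular is a morphism of schemes over $S$). Since $L''$ and $L'$ are both invertible sheaves, the question is local on $S$. By Corollary \ref{fibnonzero}, the map $\lambda'' \colon V'' \to L''(S)$ is fiberwise non-zero, so around any point $s_0 \in S$ one can choose $v_0 \in V''$ such that $\lambda''_{v_0}$ is nowhere-vanishing on some open neighborhood $U \ni s_0$. On $U$, I would define $\psi_U\colon L''|_U \to L'|_U$ as the unique $\CO_U$-linear morphism sending $\lambda''_{v_0}$ to $\lambda'_{v_0}$, and then show the $\psi_U$ glue to a global $\psi$.

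The crux of the argument is the compatibility identity
\[
\lambda'_v \otimes \lambda''_{v_0} \;=\; \lambda'_{v_0} \otimes \lambda''_v \qquad \text{in }(L' \otimes L'')(S)
\]
for all $v \in V''$ (and, more generally, for all pairs $v_0, v_1 \in V''$). This identity is equivalent to $\psi_U \circ \lambda''_v = \lambda'_v$ for every $v \in V''$ and ensures consistency when gluing across different choices of $v_0$. I would prove it fiberwise first, then deduce the global statement from the fact that a section of an invertible sheaf on $S$ which vanishes at every point is the zero section.

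The fiberwise analysis would proceed through Proposition \ref{flgctr} and Lemma \ref{infcompF}. Fix $s \in S$ and let $\CF_s = \{V_0,\ldots,V_{m_s}\}$ be the flag associated to $C_s$, with $j'$ (resp.\ $j''$) minimal such that $V' \subset V_{j'}$ (resp.\ $V'' \subset V_{j''}$); since $V'' \subset V'$ one has $j'' \leq j'$. Lemma \ref{infcompF} identifies $(C'_s)^\infty$ as the contraction of $C_s$ to a $\BP^1$ corresponding to the component $E_{j'}$, and similarly $(C''_s)^\infty$ corresponds to $E_{j''}$. When $j' = j''$, both $L'_s$ and $L''_s$ arise from the \emph{same} component $E_{j'} = E_{j''}$, and Proposition \ref{lbstr} endows them with the same line-bundle structure: the same $0$-section (namely $\lambda_0^\infty$, the image of $\lambda_0$) and the same $\BF_q^\times$-scaling determined by Definition \ref{FernDef}(2) applied to $E_{j'}$. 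Hence $\lambda'_v(s) = \lambda''_v(s)$ for $v \in V''$, and the identity is trivially symmetric. When $j'' < j'$, the containment $V'' \subset V_{j'-1}$ forces every $V''$-marking in $(C'_s)^\infty$ to collapse onto $\lambda'^\infty_0 = 0 \in L'_s$, so $\lambda'_v(s) = 0$ for all $v \in V''$ and both sides of the identity vanish.

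The main obstacle I expect is the careful verification of Case~1: confirming that the fiberwise identification of $L'_s$ with $L''_s$ (via the common component $E_{j'}$) globalizes to an honest equality of the two sections $\lambda'_v \otimes \lambda''_{v_0}$ and $\lambda'_{v_0} \otimes \lambda''_v$ of $L' \otimes L''$ over $S$. This requires tracing the line-bundle structures of Proposition \ref{lbstr} through the chain of contractions $C \to C' \to C''$ and invoking the base-change compatibility of the construction (Proposition \ref{basechangelb}) to promote the fiberwise agreement to a global identity on $S$. Once the identity is in hand, the local morphisms $\psi_U$ glue unambiguously, yielding the desired $\psi \colon L'' \to L'$.
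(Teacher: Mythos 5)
Your route is genuinely different from the paper's. The paper never works fiberwise here: it contracts $C$ with respect to the four-element set $\alpha=\{0,v',v'',\infty\}$, observes via Lemma \ref{infcompF} that $L'$ and $L''$ sit inside the further contractions $C^{\alpha'}$ and $C^{\alpha''}$ of $C^\alpha$, and defines $\psi$ as the composite $L''\stackrel\sim\to U''\into C^\alpha\to C^{\alpha'}$, where $U''$ is the preimage of $L''$ and the first arrow is an isomorphism by Proposition \ref{contriso}. That construction is a morphism of schemes over an arbitrary base with no pointwise input, and the (harder) fact that it is a line bundle homomorphism is deferred to Lemma \ref{7-lem-line-bundle-hom-general}. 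Your approach would instead produce $\psi$ directly as a sheaf homomorphism, subsuming Proposition \ref{7-prop-hom-line-bundles}; the local-trivialization scheme and the bilinear identity $\lambda'_v\otimes\lambda''_w=\lambda'_w\otimes\lambda''_v$ are the right reformulation, and your fiberwise case analysis (same component versus collapse to the zero section, governed by whether the minimal steps of $\CF_s$ containing $V''$ and $V'$ coincide) is correct.

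The gap is in promoting the fiberwise identity to an identity of sections over $S$. You invoke the principle that a section of an invertible sheaf vanishing at every point is zero, but this is false when $S$ is non-reduced (e.g.\ $\epsilon\in\CO_S$ for $S=\Spec k[\epsilon]/(\epsilon^2)$), and the lemma must hold for arbitrary $\BF_q$-schemes. Citing Proposition \ref{basechangelb} does not repair this by itself: base-change compatibility only helps if you can transport the problem to a universal family over a \emph{reduced} base, and you never say which one. You cannot use $B_V$, since its representing property is exactly what is being proved. The viable fix is to note that $(L',\lambda'|_{V''})$, $(L'',\lambda'')$ and their line bundle structures depend only on the underlying stable $\hat V$-marked curve $(C,\lambda)$ (the structures of Proposition \ref{lbstr} are determined by the $0$- and $\infty$-sections of the contractions), so everything pulls back from the universal curve over the smooth, hence integral, moduli scheme $M_{\hat V}$ of Theorem \ref{moduli}, where the fiberwise-to-global step is legitimate; this is precisely the device the paper uses in the proof of Lemma \ref{7-lem-line-bundle-hom-general}. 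Without some such reduction your argument does not establish the identity, and hence neither the gluing nor the relation $\lambda'|_{V''}=\psi\circ\lambda''$, over a general base.
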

 \begin{proof} 

%
%
Let $1\leq i\leq j\leq n$ be minimal such that $V''\subset V_i$ and $V'\subset V_{j}$. Let $v''\in V''\setminus (V''\cap V_{i-1})$ and $v'\in V'\setminus (V'\cap V_{j-1})$. Define $\alpha:=\{0,v',v'',\infty\}$ and $\alpha':=\{0,v',\infty\}$ and $\alpha'':=\{0,v'',\infty\}$, with corresponding contractions $C^\alpha$ and $C^{\alpha'}$ and $C^{\alpha''}$. Let $C'$ and $C''$ denote the contractions of $C$ with respect to $\hat V'$ and $\hat V''$ respectively. By Lemma \ref{infcompF}, for the contractions to the $\infty$-component we have $(C'')^\infty\cong C^{\alpha''}$ and $(C')^\infty\cong C^{\alpha'}$. By construction, the line bundle $L'$ is the complement of the $\infty$-divisor in $C^{\alpha'}$ and similarly for $L''$ in $C^{\alpha''}$. Moreover, the contraction morphisms $C\to C^{\alpha'}$ and $C\to C^{\alpha''}$ factor uniquely through $C^\alpha$. We depict this in the following commutative diagram: 
$$\xymatrix{&C\ar[ldd]\ar[rdd]\ar[d]&\\&C^\alpha\ar[ld]\ar[rd]&\\*+[l]{L'\subset C^{\alpha'}}&&*+[r]{C^{\alpha''}\supset L''.}}$$
Let $U''\subset C^\alpha$ denote the inverse image of $L''$. By Proposition \ref{contriso}, the contraction $C^\alpha\to C^{\alpha''}$ induces an isomorphism $U''\stackrel\sim\to L''$. The composite 
$L''\stackrel\sim\to U''\into C^{\alpha}\to C^{\alpha'}$ yields a morphism $\psi\colon L''\to L'$ which preserves $V''$-marked sections.\end{proof}
\begin{figure}[H]
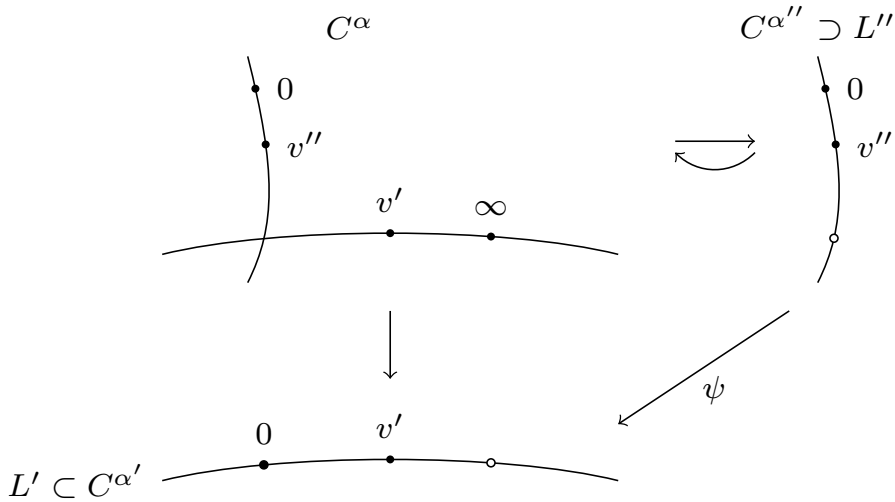

\centering
 \caption{The morphism $\psi\colon L''\to L'$ on a fiber where $C^\alpha$ is singular. The unfilled points indicate the $\infty$-marked points of $C^{\alpha'}$ and $C^{\alpha''},$ which are not included in $L'$ and $L''$. In this case, the morphism $\psi$ maps $L''$ to the $0$-marked point of $L'$.}
\includestandalone[scale=1.5]{Figures/LineBundleMap}
  \label{fig:LineBundles}
  \end{figure}
 \begin{remark}For any $s\in S$ for which the fiber $C^\alpha_s$ is smooth,  there is a Zariski open neighborhood $U$ of $s$ over which $C^\alpha$ is smooth. The contraction morphisms $C^\alpha\to C^{\alpha'}$ and $C^\alpha\to C^{\alpha''}$ are isomorphisms over $U$ and induce an isomorphism $C^{\alpha'}\stackrel\sim\to C^{\alpha''}$ over $U$. The morphism $\psi$ is then obtained by restriction. This implies that $\psi_s\colon L''_s\to L'_s$ is an isomorphism when $C^\alpha_s$ is smooth. Otherwise $\psi_s$ maps $L''_s$ to the $0$-marked point of $L'_s$ as in Figure \ref{fig:LineBundles}.
 \end{remark}
The construction of the morphism $\psi\colon L''\to L'$ does not rely on the $V$-fern structure of $C$. Indeed, we can preform the same construction for an arbitrary stable $\alpha$-marked curve over $S$, where $\alpha=\{0,v,v'',\infty\}$. Let $(X,\lambda)$ be such a curve. The contractions of $X$ with respect to $\alpha'=\{0,v',\infty\}$ and $\alpha''=\{0,v'',\infty\}$ are both isomorphic to $\BP^1_S$. Denote the complement of the $\infty$-divisor in the contractions by $L_X'$ and $L_X''$ respectively. We endow each with a line bundle structure by taking any isomorphism to the trivial line bundle $\BG_{a,S}$ which extends to an isomorphism $X^{\alpha'}\stackrel\sim\to \BP^1_S$ or $X^{\alpha''}\stackrel\sim\to \BP^1_S$ and sends the $0$-marked section to $0$. We then define the line bundle structures on $L_X'$ and $L''_X$ to be the unique ones induced by these isomorphisms. Note that the resulting line bundle structure does not depend on the choice of isomorphism since any two such choices differ by a linear automorphism of $\BG_{a,S}$. As in the proof of Lemma \ref{7-lem-line-bundle-hom},  we obtain a morphism $\psi_X\colon L_X''\to~L_X'$ of schemes over $S$ via Proposition \ref{contriso}.
\begin{lemma}\label{7-lem-line-bundle-hom-general}\sloppy With the aforementioned line bundle structures, the morphism ${\psi_X\colon L_X''\to L_X}$ is a homomorphism of line bundles.
\end{lemma}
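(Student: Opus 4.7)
The plan is to reduce to the universal family of stable $\alpha$-marked curves and then combine a fiberwise analysis with reducedness of the moduli base.

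Paragraph 1 (Reduction to the universal case). By Theorem \ref{moduli} the moduli functor of stable $\alpha$-marked genus-zero curves is represented by a smooth proper scheme $M_\alpha$ over $\Spec\BZ$, equipped with its universal family $X_\alpha\to M_\alpha$. Any stable $\alpha$-marked curve $X$ over $S$ arises as $f^*X_\alpha$ for a unique morphism $f\colon S\to M_\alpha$. Since contractions (Proposition \ref{basechangeCi}) and the line-bundle structures defined immediately before the lemma are compatible with base change, the morphism $\psi_X$ identifies with $f^*\psi_{X_\alpha}$. Pullback preserves the property of being a homomorphism of line bundles, so it suffices to prove the lemma for $\psi_{X_\alpha}$ over $M_\alpha$.

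Paragraph 2 (Local trivialization). Zariski locally on $M_\alpha$ the line bundles $L'_{X_\alpha}$ and $L''_{X_\alpha}$ are trivial, so I would work on an affine open $V=\Spec A\subset M_\alpha$ small enough that both admit trivializations preserving their $0$-sections. Under these trivializations $\psi_{X_\alpha}|_V$ becomes a morphism $\BA^1_V\to\BA^1_V$ over $V$ corresponding to a polynomial $F(y)=a_1 y+a_2 y^2+\cdots+a_d y^d\in A[y]$; the constant term vanishes because the construction via contractions preserves every marked section, including the $0$-section. The goal is to show $a_i=0$ for all $i\geq 2$.

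Paragraph 3 (Fiberwise analysis and conclusion). Fix $s\in V$. The remark preceding the lemma identifies the restriction $\psi_{X_\alpha,s}$ as either a scheme isomorphism (when $X^\alpha_{\alpha,s}$ is smooth) or the constant map sending $L''_{X_\alpha,s}$ onto the $0$-marked point of $L'_{X_\alpha,s}$ (when $X^\alpha_{\alpha,s}$ is singular). A scheme isomorphism $\BA^1_{k(s)}\to\BA^1_{k(s)}$ fixing the origin is automatically multiplication by a unit, so in either case $\psi_{X_\alpha,s}$ is a homomorphism of line bundles. Consequently the reduction $F(y)\bmod\Fm_s$ is a linear polynomial in $k(s)[y]$, i.e.\ $a_i(s)=0$ in $k(s)$ for every $s\in V$ and every $i\geq 2$. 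Since $M_\alpha$ is smooth, $A$ is reduced, and so an element of $A$ whose image in every residue field vanishes must itself be zero. Hence $a_i=0$ on $V$, $F(y)=a_1 y$ is linear, and $\psi_{X_\alpha}|_V$ is a homomorphism of line bundles. Covering $M_\alpha$ by such opens and invoking Paragraph 1 completes the proof.

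The main obstacle is the fiberwise identification underlying Paragraph 3: verifying, from the construction via Proposition \ref{contriso}, that on each type of singular stable four-marked curve the morphism $\psi_{X_\alpha,s}$ is indeed either an isomorphism of $\BA^1$'s or the constant collapse onto the $0$-section. This requires a careful case analysis of the three possible two-component degenerations corresponding to the splittings of $\alpha=\{0,v',v'',\infty\}$ across the node.
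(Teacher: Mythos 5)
Your reduction to the universal family over $M_\alpha$ is exactly the paper's first move (the paper phrases it as a reduction to an integral base, justified by $\CM_\alpha\cong\BP^1_\BZ$), and your use of reducedness to kill the higher coefficients of $F$ is the same mechanism the paper uses. Where you diverge is the endgame. The paper never analyzes $\psi_{X,s}$ at singular fibers when a smooth fiber exists: over an integral base with at least one smooth fiber, the smooth locus is a dense open $U$, over which $\psi_X$ extends to an automorphism of $\BP^1_U$ fixing $0$ and $\infty$ and is therefore linear; the additivity and scalar-multiplication diagrams then commute over $U$, and since the sources are reduced and the targets separated, they commute everywhere. The only other case is that every fiber is singular, where $\psi_X=0$ and the diagrams commute trivially. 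Your version instead demands linearity of $F\bmod\Fm_s$ at \emph{every} point $s$, which is why you are forced into the case analysis of the three two-component degenerations that you defer as ``the main obstacle.''

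That deferred step is a genuine gap, and moreover it is the wrong hill to climb: it is both unnecessary and more delicate than you suggest. Unnecessary, because once $a_i$ vanishes on the dense open smooth locus of a reduced scheme it already lies in every prime ideal and hence vanishes identically --- you do not need any information at the boundary beyond the separate all-singular case. Delicate, because the dichotomy ``isomorphism if smooth, collapse onto the $0$-section if singular'' does not hold uniformly over all of $M_\alpha$: parametrizing $\CM_\alpha$ by the cross-ratio $\mu$ (the position of the $v''$-section in $X^{\alpha'}$-coordinates), the map $\psi$ is multiplication by $\mu$ on the smooth locus, so at the degeneration $\{0,v''\}\mid\{v',\infty\}$ it is indeed the zero map, at $\{0,\infty\}\mid\{v',v''\}$ it is an isomorphism despite the fiber being singular, and at $\{0,v'\}\mid\{v'',\infty\}$ the isomorphism $U''\stackrel\sim\to L''_X$ from Proposition \ref{contriso} fails (the contracted component lies over the $0$-marked point rather than the $\infty$-marked point), so $\psi$ is not even defined there. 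I recommend you replace the pointwise argument of Paragraph~3 by the density argument: prove linearity only over the smooth locus, where $\psi$ extends to $\BP^1$, and handle the all-singular case separately as the zero map.
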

\fussy
\begin{proof}We must show that the following diagrams, which correspond respectively to additivity and compatibility with scalar multiplication, commute:
\begin{equation}\label{add}\begin{gathered}\xymatrix{L''_X\times_S L''_X\ar[d]_{\psi_X\times\psi_X}\ar[r]^-{+''}& L''_X\ar[d]_{\psi_X}\\
L'_X\times_S L'_X\ar[r]^-{+'}& L'_X,}\end{gathered}
\end{equation}
\begin{equation}\label{scalmult}\begin{gathered}\xymatrix{\BG_a\times L''_X\ar[d]_{\id\times\psi_X}\ar[r]& L''_X\ar[d]_{\psi_X}\\
\BG_a\times L'_X\ar[r]& L'_X.}\end{gathered}
\end{equation}
We first reduce to the case where $S$ is integral. For this we note that the constructions of the schemes $L'_X$ and $L''_X$ and the morphism $\psi_X$ are compatible with base change by Proposition \ref{basechangeCi}. Let $\CC_\alpha\to \CM_\alpha$ be the universal family over the moduli space of stable $\alpha$-marked curves. There exists a unique morphism $f\colon S\to \CM_\alpha$ such that $X\cong f^*\CC_\alpha$. Preforming the exact same construction on $\CC_\alpha$, we obtain line bundles $(L_\alpha)'$ and $(L_\alpha)''$ such that $L'_X\cong (f^\alpha)^*(L_\alpha)'$ and $L''_X\cong (f^\alpha)^*(L_\alpha)''$ as line bundles. Moreover, the morphism $\psi_X$ is the pullback of the analogously defined morphism $\psi_\alpha\colon (L_\alpha)''\to (L_\alpha)'.$ Since $\CM_\alpha\cong\BP^1_\BZ$ is integral, it thus suffices to prove the lemma when $S$ is integral.
\vspace{1mm}

Suppose that $S$ is integral and there exists an $s\in S$ such that the fiber $X_s$ is smooth. Then $X$ is smooth over a non-empty open subset $U\subset S$. The morphism $\psi_X$ restricts to an isomorphism over $U$ and extends to an isomorphism $X^{\alpha''}|_U\stackrel\sim\to X|_U$. In terms of the isomorphism of (trivial) line bundles $\tilde\psi_X\colon\BG_{a,U}\stackrel\sim\to\BG_{a,U}$ induced by $\psi_X$, this means that $\tilde\psi_X$ extends to an isomorphism of $\BP^1_U$ and is hence linear. It follows that (\ref{add}) and (\ref{scalmult}) are commutative over $U$. Since $L'_X$ and $L''_X$ are reduced and separated and $+'\circ(\psi\times\psi)$ and $\psi\circ +''$ are equal over $U$, it follows that $+'\circ(\psi_X\times\psi_X)=\psi_X\circ +''$ over $S$; hence (\ref{add}) commutes. A similar argument shows that (\ref{scalmult}) commutes. If $X_s$ is singular for every $s\in S$, then $\psi_X$ must be the zero map, and it follows immediately that (\ref{add}) and (\ref{scalmult}) commute.
\end{proof}
We now return to the setting of $V$-ferns and the morphism $\psi\colon L''\to L'$ of Lemma \ref{7-lem-line-bundle-hom}.
\begin{proposition}\label{7-prop-hom-line-bundles} The morphism $\psi$ is a homomorphism of line bundles.
\end{proposition}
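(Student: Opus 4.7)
The plan is to deduce this directly from the more general Lemma \ref{7-lem-line-bundle-hom-general}. First I would set $\alpha := \{0, v', v'', \infty\}$ and consider the stable $\alpha$-marked curve $X := C^\alpha$ obtained by contracting $C$ with respect to $\alpha$. As already observed in the proof of Lemma \ref{7-lem-line-bundle-hom}, the contractions of $X$ with respect to $\alpha' := \{0, v', \infty\}$ and $\alpha'' := \{0, v'', \infty\}$ coincide with $C^{\alpha'}$ and $C^{\alpha''}$, which by Lemma \ref{infcompF} are naturally isomorphic to $(C')^\infty$ and $(C'')^\infty$ respectively. Hence the underlying schemes $L'_X$ and $L''_X$ in Lemma \ref{7-lem-line-bundle-hom-general} are literally the same as $L'$ and $L''$, and the morphism $\psi_X \colon L''_X \to L'_X$ is constructed by exactly the same recipe (via the universal property of the contractions $C^\alpha \to C^{\alpha'}$ and $C^\alpha \to C^{\alpha''}$) as the morphism $\psi$ in Lemma \ref{7-lem-line-bundle-hom}. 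So $\psi = \psi_X$.

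Next I would check that the line bundle structures on $L'$ and $L''$ coming from Proposition \ref{lbstr} agree with those fixed immediately before Lemma \ref{7-lem-line-bundle-hom-general}. Both are defined by the same Zariski-local prescription: choose an isomorphism of the ambient $\BP^1$-bundle with $\BP^1_S$ sending the $0$-section to $(0:1)$ and the $\infty$-section to $(1:0)$, identify the complement of the $\infty$-section with $\BA^1_S$, and transport the standard line bundle structure. Any two such trivializations differ by multiplication by a unit on $\BA^1_S$, so both constructions yield the same (canonical) line bundle structure. Having identified $(\psi, L', L'')$ with $(\psi_X, L'_X, L''_X)$ as line bundles together with their morphism, the proposition follows at once from Lemma \ref{7-lem-line-bundle-hom-general}.

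I expect no real obstacle: all the genuine work went into Lemma \ref{7-lem-line-bundle-hom-general}, where the integral-base reduction via the universal curve over $\CM_\alpha$ and the density of the smooth locus were used to upgrade the fiberwise linearity of $\psi_X$ to global linearity. The only thing to verify here is the compatibility of the two a priori distinct recipes for the line bundle structure, and this is immediate since both are characterized by the same universal property.
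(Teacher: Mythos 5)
Your proposal is correct and follows exactly the route of the paper's own (one-line) proof: identify $(L'',L',\psi)$ with $(L''_X,L'_X,\psi_X)$ for $X=C^\alpha$, observe that the line bundle structures from Proposition \ref{lbstr} agree with those fixed before Lemma \ref{7-lem-line-bundle-hom-general}, and invoke that lemma. The extra detail you supply on why the two trivialization recipes give the same structure is exactly the point the paper leaves implicit.
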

\begin{proof}This follows directly from the Lemma \ref{7-lem-line-bundle-hom-general} and the construction of the line bundle structures on $L'$ and $L''$ from the proof of Proposition \ref{lbstr}, which is the same as for the line bundle structures obtained as in the discussion preceeding Lemma \ref{7-lem-line-bundle-hom-general}.
\end{proof}

\begin{proposition}The morphism $f_C$ factors though $B_V\into \prod_{0\neq V'\subset V}P_{V'}$.
\end{proposition}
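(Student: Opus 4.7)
The plan is to exploit the alternative description of $B_V$ furnished by Lemma~\ref{AltRep}, namely as the functor $\underline{\CB}_V$ whose $S$-points are isomorphism classes of tuples $(\CL,\lambda,\psi)$. I would show that the data used to build $f_C$ already assemble into such a tuple, and that the morphism $S\to B_V$ it classifies composes with the closed immersion $B_V\hookrightarrow\prod_{0\neq V'\subset V}P_{V'}$ to give back $f_C$.

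Concretely, for each $0\neq V'\subset V$ let $\CL_{V'}$ denote the sheaf of sections of the line bundle $L'$ associated in Subsection~\ref{morphfern} to the $V'$-contraction $C'$, and let $\lambda_{V'}\colon V'\to\CL_{V'}(S)$ be the fiberwise non-zero $\BF_q$-linear map of Corollary~\ref{fibnonzero}. For each pair $0\neq V''\subsetneq V'\subset V$, Proposition~\ref{7-prop-hom-line-bundles} furnishes a line bundle homomorphism $L''\to L'$, which translates into an $\CO_S$-linear morphism of invertible sheaves $\psi^{V'}_{V''}\colon\CL_{V''}\to\CL_{V'}$; the relation $\lambda'|_{V''}=\psi\circ\lambda''$ of Lemma~\ref{7-lem-line-bundle-hom} is precisely the commutativity required in the definition of $\underline{\CB}_V$. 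Setting $\psi^{V'}_{V'}:=\id$ for each $V'$, the tuple $(\CL,\lambda,\psi)$ thus defines an $S$-valued point of $\underline{\CB}_V$, and by Lemma~\ref{AltRep} a morphism $g_C\colon S\to B_V$.

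It remains to verify that $g_C$ factors $f_C$ through the closed immersion $B_V\hookrightarrow\prod_{0\neq V'\subset V}P_{V'}$. This is immediate from the construction of $f_C$ in Subsection~\ref{morphfern}: the $V'$-component of $f_C$ was by definition the morphism $S\to P_{V'}$ classified by the pair $(\CL_{V'},\lambda_{V'})$, which is exactly the image of the tuple $(\CL,\lambda,\psi)$ under the forgetful natural transformation $\underline{\CB}_V\to\prod_{0\neq V'\subset V}P_{V'}$ dropping the family $\psi$. No real obstacle is anticipated here; the conceptual content of the factorization---existence of the homomorphisms $\psi^{V'}_{V''}$ and their compatibility with the $\lambda_{V'}$ and $\lambda_{V''}$---has already been established in Lemma~\ref{7-lem-line-bundle-hom} and Proposition~\ref{7-prop-hom-line-bundles}, so the remaining work is purely bookkeeping.
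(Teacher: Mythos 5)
Your proposal is correct and follows essentially the same route as the paper: the paper's proof likewise invokes Lemma \ref{7-lem-line-bundle-hom} and Proposition \ref{7-prop-hom-line-bundles} to obtain the commutative squares relating $\lambda_{V''}$, $\lambda_{V'}$ and $\psi^{V'}_{V''}$, and then concludes via the alternative description of $B_V$ in Lemma \ref{AltRep}. Your additional check that the classified morphism agrees componentwise with $f_C$ is exactly the bookkeeping the paper leaves implicit.
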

\begin{proof}
 By the Lemma \ref{7-lem-line-bundle-hom} and Proposition \ref{7-prop-hom-line-bundles}, we have the following commutative diagram: 
$$\xymatrix{V'\ar[r]^{\lambda'} & L'(S)\\
V''\ar[r]^{\lambda''}\ar@{^{(}->}[u]  &L''(S)\ar[u]_\psi.}$$
Varying $V''$ and $V'$, it then follows from Lemma \ref{AltRep} that $f_C$ factors through $B_V$.
\end{proof}

\begin{proposition}\label{flagrest} The morphism $f_C\colon S\to B_V$ factors through the open immersion $U_\OF\into B_V$.
\end{proposition}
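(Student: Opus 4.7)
The plan is to unravel what it means for $f_C$ to factor through the open subscheme $U_\OF$, using the characterization (\ref{UFRep}), and then reduce the required non-vanishing to a fiberwise statement about the flag attached to each geometric point.

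By Lemma \ref{AltRep}, the morphism $f_C$ is determined by the family $(\CL_{V'},\lambda_{V'})_{0\neq V'\subset V}$, where $\CL_{V'}$ is the sheaf of sections of the line bundle $L'$ associated to the $V'$-fern contraction $(C',\lambda',\phi')$ of $C$; equivalently, the associated subsheaves $\CE_{V'}:=\ker(V'\otimes\CO_S\onto\CL_{V'})$ encode $f_C$. By (\ref{UFRep}), $f_C$ factors through $U_\OF$ precisely when, for every pair $0\neq V''\subset V'\subset V$ admitting no $W\in\OF$ with $V''\subset W$ and $V'\not\subset W$, the equality $V'\otimes\CO_S=\CE_{V'}+(V''\otimes\CO_S)$ holds. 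This equality is equivalent to the composite $V''\otimes\CO_S\to\CL_{V'}$ being surjective, which, since $\CL_{V'}$ is invertible, is equivalent to $\lambda'|_{V''}$ being fiberwise non-zero in the sense of Definition \ref{fiberwise}.

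Fix such a pair $V''\subset V'$ and a point $s\in S$, and consider the fiber $C'_s$ with associated flag $\CF'_s=\{V'_0,\ldots,V'_m\}$ and chain $\SE'_s=E'_1,\ldots,E'_m$ from $0$ to $\infty$. The contraction to the $\infty$-component is $V'$-equivariant (Corollary \ref{1-cor-uniquenessCi}) and restricts to an isomorphism $E'_m\isoto(C')^\infty_s$. Hence the $V'$-action on $(C')^\infty_s$ factors through its action on $E'_m$, whose kernel is $V'_{m-1}$ by Lemma \ref{triv}. Combined with Proposition \ref{lbstr}(b), which asserts that $\phi'_v$ acts on $L'$ as translation by $\lambda'_v$, this shows that $\lambda'_v(s)=0$ in $L'_s$ if and only if $v\in V'_{m-1}$. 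In particular, $\lambda'|_{V''}$ vanishes at $s$ if and only if $V''\subset V'_{m-1}$.

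Now suppose for contradiction that $V''\subset V'_{m-1}$ for some $s\in S$. Because $C$ is an $\OF$-fern we have $\CF_s\subset\OF$, and Proposition \ref{flgctr} then gives $\CF'_s=\CF_s\cap V'\subset\{W\cap V'\mid W\in\OF\}$. Hence $V'_{m-1}=W\cap V'$ for some $W\in\OF$, and since $V'_{m-1}\subsetneq V'$ this forces $V'\not\subset W$; combined with $V''\subset V'_{m-1}\subset W$, this contradicts our hypothesis on $V''$ and $V'$. Therefore $\lambda'|_{V''}$ is fiberwise non-zero, and $f_C$ factors through $U_\OF$. The hardest step is the identification $\{v\in V'\mid\lambda'_v(s)=0\}=V'_{m-1}$ (where Proposition \ref{lbstr} and Lemma \ref{triv} do the real work); once that is in hand, the rest is a formal translation between the $\OF$-fern hypothesis, Proposition \ref{flgctr}, and the open condition (\ref{UFRep}).
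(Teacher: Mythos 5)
Your proof is correct, but it establishes the key fiberwise non-vanishing of $\lambda_{V'}|_{V''}$ by a genuinely different mechanism than the paper. Both arguments begin identically: unwinding the open condition (\ref{UFRep}) reduces the claim to showing that $V''\otimes\CO_S\to\CL_{V'}$ is surjective, i.e.\ that $\lambda_{V'}|_{V''}$ is fiberwise non-zero. The paper then takes $j$ minimal with $V''\subset V_j$ (which, by the hypothesis on the pair, is also minimal with $V'\subset V_j$), picks $v''\in V''\setminus(V''\cap V_{j-1})$, and invokes Lemma \ref{infcompF} to identify $(C')^\infty$ and $(C'')^\infty$ with one and the same contraction of $C$; hence $L'=L''$ and $\lambda_{V'}|_{V''}=\lambda_{V''}$, which is fiberwise non-zero by Corollary \ref{fibnonzero} applied to the $V''$-fern $C''$. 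You instead compute the exact fiberwise kernel of $\lambda_{V'}$ --- the second-to-last step $V'_{m-1}$ of $\CF'_s$, obtained from the $V'$-equivariance of the contraction to the $\infty$-component together with Lemma \ref{triv} and Proposition \ref{lbstr}(b) --- and then rule out $V''\subset V'_{m-1}$ by the flag combinatorics of Proposition \ref{flgctr} combined with $\CF_s\subset\OF$. Your route makes the role of the $\OF$-fern hypothesis more transparent (it enters only through $\CF'_s=\CF_s\cap V'$) and sharpens Corollary \ref{fibnonzero} to an exact description of the vanishing locus; the cost is that you redo at the fiber level some analysis that Lemma \ref{infcompF} and Corollary \ref{fibnonzero} already package, which is why the paper's version is shorter --- it simply reuses two statements that are needed elsewhere anyway.
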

\begin{proof} We just need to verify that the tuple $\CE_\bullet$ associated to $f_C$ satisfies the open condition (\ref{UFRep}) defining $U_\OF$. Fix $0\neq V''\subset V'\subset V$ such that there does \textbf{not} exist $1\leq i\leq n$ with $V''\subset V_i$ and $V'\not\subset V_i$. Denote the corresponding line bundles by $L''$ and $L'$. Let $1\leq j\leq n$ be minimal such that $V''\subset V_j$. By assumption $j$ is also minimal such that $V'\subset V_j$. Choose $v\in V''\setminus (V''\cap V_{j-1})$. It follows from Lemma \ref{infcompF} that $(C')^\infty$ and $(C'')^\infty$ are both equal to the contraction of $C$ with respect to ${\{0,v'',\infty\}}$. By the constructions of $(L'',\lambda_{V''})$ and $(L',\lambda_{V'})$, we conclude that $L'=L''$ and $\lambda_{V''}=\lambda_{V'}|_{V''}$. Recall that 
$$\CE_{V'}=\ker(\lambda_{V'}\otimes\CO_S\colon V'\otimes\CO_S\to \CL'),$$
 where $\CL'$ denotes the sheaf of sections of $L'$. We wish to show that
$$
V'\otimes \CO_S=\CE_{V'}+(V''\otimes\CO_S).
$$
It suffices to show equality on stalks, which follows directly from the fact that $\lambda|_{V''}=\lambda_{V'}|_{V''}$ and that if $\phi\colon M\to L$ is a surjective morphism of $A$-modules and $N\subset M$ is a submodule such that $\phi|_N$ is still surjective, then $M=\ker(\phi)+N$.
\end{proof}

\subsection{Key lemma for induction}
Let $V':=V_{n-1}$ and define $\OF':=\OF\cap V'$. Our inductive proof of Theorem \ref{rep} will rely on the following lemma:
\begin{lemma}[Key Lemma]\label{key} Let $(C,\lambda,\phi)$ and $(D,\mu,\psi)$ be $\OF$-ferns such that the $V'$-contractions $C'$ and $D'$ are isomorphic $\OF'$-ferns. Suppose further that $C^\infty\cong D^\infty$ as $\hat V$-marked curves. Then $C$ and $D$ are isomorphic.
\end{lemma}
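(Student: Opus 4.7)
The plan is to construct a morphism $f\colon C\to D$ of $\hat V$-marked curves; by Proposition \ref{stableiso} such a morphism is automatically an isomorphism, and by the proposition following Definition \ref{FernDef} it is automatically $G$-equivariant, hence an isomorphism of $V$-ferns. Uniqueness of $f$ follows from Corollary \ref{uniquemorph}, so only existence needs work.

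Given the hypothesized isomorphisms $f'\colon C'\stackrel\sim\to D'$ of $V'$-ferns and $g\colon C^\infty\stackrel\sim\to D^\infty$ of $\hat V$-marked curves, together with the contractions $c_C\colon C\to C'$ and $c_C^\infty\colon C\to C^\infty$ (and the analogous $c_D,c_D^\infty$ for $D$), I would define $f$ piecewise on an open cover of $C$ and then glue. Let $U' := C\setminus c_C^{-1}(Z_{V\setminus V'})$, where $Z_{V\setminus V'}\subset C'$ is the union of images of the $(V\setminus V')$-marked sections; by Proposition \ref{contriso} the restriction $c_C|_{U'}$ is an open immersion into $C'$, and similarly $c_C^\infty|_{U^\infty}$ is an open immersion into $C^\infty$ for $U^\infty := C\setminus (c_C^\infty)^{-1}(Z_V)$. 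Composing with the given isomorphisms yields morphisms
$$f_1 := c_D^{-1}\circ f'\circ c_C\colon U'\longto D,\qquad f_2 := (c_D^\infty)^{-1}\circ g\circ c_C^\infty\colon U^\infty\longto D.$$
Fiberwise, $U^\infty$ covers the smooth locus of the $\infty$-component, $U'$ covers the smooth part of the sub-tree attached to $E_n$ at the distinguished point $x_n$ (the ``$0$-coset sub-tree''), the $V$-translates $\phi_v(U')$ cover the remaining coset sub-trees (the value of $f$ on $\phi_v(U')$ being forced to be $\psi_v\circ f_1\circ\phi_v^{-1}$ by $G$-equivariance), and on the $(V\setminus V')$-marked sections the value of $f$ is forced to be $\mu_v$ by the $\hat V$-marking requirement.

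The main obstacle is verifying that all these partial definitions agree on overlaps, so that they glue to a single morphism $C\to D$. By Corollary \ref{uniquemorph} and the separatedness of $D$, compatibility on each overlap reduces to agreement on the relevant $\hat V$-marked sections; this in turn follows from the hypotheses that $f'$ preserves the $\hat V'$-marking and $g$ preserves the full $\hat V$-marking, combined with the compatibility of the contractions $c_C, c_C^\infty$ with the $G$-action (Lemma \ref{contrequivar} and Corollary \ref{1-cor-uniquenessCi}). The argument goes through uniformly for fibers with $V'\in\CF_{\bar s}$; fibers with $V'\notin\CF_{\bar s}$ require slightly more care because the layout of branches is different, but the coset decomposition together with the $G$-equivariance constraint reduces the problem to the same uniqueness-of-morphisms principle. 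Once the global morphism $f\colon C\to D$ is assembled, Proposition \ref{stableiso} ensures it is an isomorphism.
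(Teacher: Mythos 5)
Your overall strategy --- cover $C$ by open pieces on which some contraction restricts to an isomorphism, transport the given isomorphisms piece by piece, and glue using uniqueness of morphisms of stable marked curves --- is the same skeleton as the paper's proof (its Claims 3 and 4). But as stated your open sets do not cover $C$, and the missing points are exactly where the content of the lemma lies. Consider a fibre $C_{\bar s}$ whose associated flag has $V'=V_{n-1}$ as its penultimate step, so that $C_{\bar s}$ is the $\infty$-component $E_\infty$ with one subtree $T_{u+V'}$ attached for each coset $u+V'$. Every $(V\setminus V')$-marked section of $C'$ then coincides with $\lambda'_\infty$ on this fibre, and its preimage in $C_{\bar s}$ is $E_\infty$ together with all subtrees at nonzero cosets, \emph{including} the node $T_{V'}\cap E_\infty$; hence $U'_{\bar s}=T_{V'}\setminus\{\mbox{node}\}$. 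Likewise the fibre of $(c_C^\infty)^{-1}(Z_V)$ is the union of all the subtrees, so $U^\infty_{\bar s}=E_\infty\setminus\{\mbox{nodes}\}$. The $[V:V']$ nodes where the subtrees meet $E_\infty$ therefore lie in none of $U'$, its $V$-translates, $U^\infty$, or the marked sections (which are smooth points). A morphism defined off a node need not extend over it --- one must show the two branches map to the same point, and over a general base one needs more than that --- and arranging this compatibility is precisely the nontrivial gluing the lemma asserts. The paper's remedy is to contract with respect to $\hat V'\cup\{v\}$ for a fixed $v\in V\setminus V'$: the resulting $C^v$ retains the $0$-coset subtree \emph{and} $E_\infty$ glued at that node, so the preimage $U_0\subset C$ of $C^v\setminus Z_{(V\setminus V')}$ contains the node, and its $V$-translates genuinely cover $C$.

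Second, your reduction of the overlap conditions to ``agreement on the relevant marked sections'' conceals the real work. The isomorphism $f'\colon C'\to D'$ is only hypothesized to respect the $\hat V'$-marking, yet to compare $f_1$ with $f_2$ (already on smooth fibres) and with its own translates $\psi_u\circ f_1\circ\phi_u^{-1}$ for $u\in V\setminus V'$, you need $f'$ to respect the $v$-marked section for $v\notin V'$ as well. This is not formal: it is the paper's Claim 1, proved by playing the hypothesis $C^\infty\cong D^\infty$ against $C'\cong D'$ via the $\{0,v',v,\infty\}$-contraction and the uniqueness of stabilizations. Only with that input can one construct the isomorphism $C^v\cong D^v$ and hence the glue at the nodes. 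So the proposal needs both the corrected cover and an argument in the spirit of Claim 1 before the gluing step can go through.
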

\begin{proof}
If $\dim V=n=1$, then $C=C^\infty$ and $D=D^\infty$ and there is nothing to prove. Suppose $n\geq 2$. Fix $v\in V\setminus V'$. By Lemma \ref{infcompF}, the schemes $C^\infty$ and $D^\infty$ are just the contractions of $C$ and $D$ respectively with respect to $\{0,v,\infty\}$. By assumption, there is an isomorphism of $V'$-ferns 
\begin{equation}\label{rho'}\rho'\colon C'\stackrel\sim\to D'\end{equation}
 In particular, the morphism (\ref{rho'}) preserves the $\hat V'$-marking. We break the remainder of the proof into steps:

\vspace{5mm}
\noindent\emph{\textbf{Claim 1}. The isomorphism (\ref{rho'}) also preserves the $v$-marked section.}

\begin{proof}[Proof of Claim 1]Fix $v'\in V'\setminus V_{n-2}$ and consider the contractions $\tilde C$ and $\tilde D$ of $C$ and $D$ with respect to $\{0,v',v,\infty\}.$ Since the isomorphism $\rho^\infty\colon C^\infty\stackrel\sim\to D^\infty$ preserves the $\{0,v',v,\infty\}$-marked sections, it follows that $\tilde C$ and $\tilde D$ are both stabilizations (Section \ref{stab}) of the same stable $\{0,v,\infty\}$-marked curve endowed with an additional $v'$-marked section. By the uniqueness of stabilizations, we obtain an isomorphism $\tilde\rho\colon\tilde C\stackrel\sim\to\tilde D$ of stable $\{0,v',v,\infty\}$-marked curves. 


On the other hand, by Lemma \ref{infcompF}, the contraction to the infinity component $(C')^\infty$ is obtained by contracting $C$ with respect to $\{0,v',\infty\}$. There is thus a contraction morphism $\tilde C\to (C')^\infty$. By the uniqueness of contractions, the isomorphism $\rho'$ induces an isomorphism $(\rho')^\infty\colon (C')^\infty\stackrel\sim\to (D')^\infty$ such that the following diagram commutes:
$$
\xymatrix{\tilde C\ar[d]\ar[r]^{\tilde\rho}_\sim &\tilde D\ar[d]\\ (C')^\infty\ar[r]^{(\rho')^\infty}_\sim &(D')^\infty.}
$$
Since $\tilde\rho$ preserves the $v$-marked section, it follows that $(\rho')^\infty$ does as well.

We claim that the contraction $C'\to (C')^\infty$ is an isomorphism in a neighborhood of the $v$-marked section. Indeed, for every $s\in S$ the $v$-marked point of the fiber $C'_s$ is either equal to the $\infty$-marked point or a smooth point on the $\infty$-component that is disjoint from the $V'$-marked sections. The claim then follows from Proposition \ref{contriso}.

Finally, we consider the commutative diagram:
\begin{equation}\label{vsection}\begin{gathered}
\xymatrix{C'\ar[d]\ar[r]^{\rho'}_\sim &D'\ar[d]\\ (C')^\infty\ar[r]^{(\rho')^\infty}_\sim &(D')^\infty.}\end{gathered}
\end{equation}
By the above argument, the vertical arrows in (\ref{vsection}) become isomorphisms when restricted to a neighborhood of the $v$-marked section. Since $(\rho')^\infty$ preserves the $v$-marked section, we conclude that the same holds for $\rho'$.
\end{proof}

Let $C^v$ and $D^v$ denote the contractions of $C$ and $D$ with respect to $\hat V'\cup \{v\}$. We provide examples of the contractions $C'$ and $C^v$ on fibers with varying associated flags below:
\begin{figure}[H]
\centering
 \caption{Example with  $\dim V=3$ and $q=3$ of the contractions $C^v$ and $C'$ for a fiber with corresponding flag equal to $\OF=\{V_0,V_1,V_2, V_3\}$.}
\includestandalone[scale=1.5]{Figures/diagramkeylemma}
  \label{fig:keylemma}
  \end{figure}
\vspace{5mm}
\begin{figure}[H]
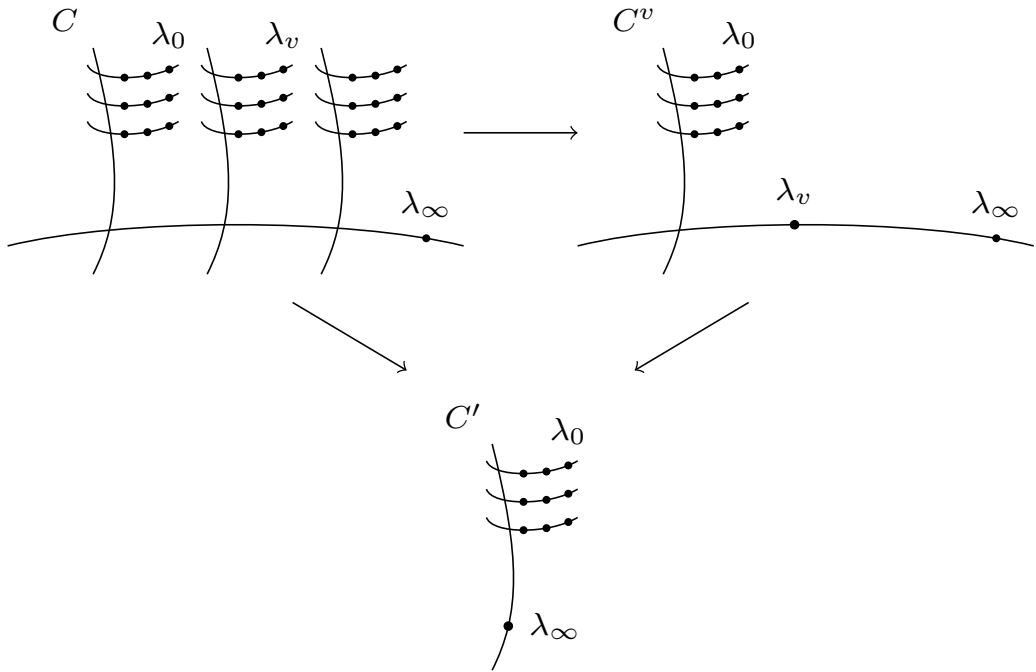

\centering
 \caption{Example for a fiber with corresponding flag equal to $\{V_0,V_1, V_3\}$.}
\includestandalone[scale=1.5]{Figures/diagramkeylemma2}
  \label{fig:keylemma2}
  \end{figure} 
Claim 1 says that $\rho'\colon C'\to D'$ is an isomorphism of \textbf{$(\hat V'\cup\{v\}$)-marked curves.} This implies that $C^v$ and $D^v$ may both be viewed as stabilizations of same stable $\hat V'$-marked curve endowed with an additional $v$-marked section. By the uniqueness of stabilizations, we obtain an isomorphism of stable $(\hat V'\cup \{v\})$-marked curves
\begin{equation}\label{stabiso}\rho^v\colon C^v\stackrel\sim\to D^v.
\end{equation}
\vspace{1mm}

\noindent\emph{\textbf{Claim 2.} This isomorphism preserves the $\hat V$-marking.}
\begin{proof}[Proof of Claim 2] By the uniqueness of contractions, the following diagram commutes: 
$$\xymatrix{C^v\ar[r]^{\rho^v}_\sim\ar[d] & D^v\ar[d]\\C^\infty\ar^{\rho^\infty}_\sim[r]&D^\infty.}$$
The bottom arrow is an isomorphism of $\hat V$-marked curves by assumption. Let $Z_{V'}\subset C^\infty$ denote the union of the images of the $V'$-marked sections and consider the open subscheme $U^\infty:= C^\infty\setminus Z_{V'}$. Let $U^v$ denote the inverse image of $U^\infty$ in $C^v$. By Proposition \ref{contriso}, we obtain an isomorphism 
\begin{equation}\label{infnbhd}
U^v\stackrel\sim\to U^\infty.
\end{equation}

We claim that the $(V\setminus V')$-marked sections of $C^v$ factor through $U^v$. In light of (\ref{infnbhd}), this is equivalent to the same assertion for $C^\infty$ and $U^\infty$. It suffices to check the latter on fibers, so fix $s\in S$. Recall that $C$ is an $\OF$-fern and $V'=V_{n-1}$. Let $\CF_s=\{V_{i_0},\ldots, V_{i_m}\}$ be the flag associated to $C_s$. For $t,u\in V$, we have $\lambda^\infty_t(s)=\lambda^\infty_u(s)$ if and only if $t\cong u\mod V_{i_{m-1}}$. It follows that for $t\in V\setminus V'$, we have $\lambda_t^\infty(s)\neq \lambda_{v'}^\infty(s)$ for all $v'\in V'$. We deduce that $\lambda^\infty_t(s)\cap Z=\emptyset$ for all $t\in V\setminus V'$. The $(V\setminus V')$-marked sections thus factor through $U^v$, as claimed.

Define $W^\infty\subset D^\infty$ and $W^v\subset D^v$ in analog to $U^\infty$ and $U^v$. We obtain a commutative diagram: 
$$\xymatrix{U^v\ar[r]^{\rho^v|_{U^v}}_{\sim}\ar[d]_\wr & W^v\ar[d]_\wr\\U^\infty\ar[r]_\sim^{\rho^\infty|_{U^\infty}}& W^\infty.}$$
 Since the vertical arrows and the bottom horizontal arrow preserve $(V\setminus V')$-marked sections, so does the top arrow. It follows that $\rho^v$ preserves the $(V\setminus V')$-marked sections. Since we already know that $\rho^v$ preserves the $\hat V'$-marked sections, this proves Claim 2.
\end{proof}
Let $Z_{(V\setminus V')}\subset C^v$ denote the union of the images of the $(V\setminus V')$-marked sections. Consider the open subscheme $U:=C^v\setminus Z_{(V\setminus V')}$ and let $U_0$ be the inverse image of $U$ in $C$.\footnote{We use this notation to indicate that $U_0$ is an open neighborhood of the $0$-section in $C$. We will show (Claim 3) that its translates cover $C$.} Again applying Proposition \ref{contriso}, we find that the contraction $C\to C^v$ induces an isomorphism $U_0\stackrel\sim\to U$. 
\begin{figure}[H]
\centering
 \caption{Examples of $U_0\stackrel\sim\to U$ for $C$ as in Figures \ref{fig:keylemma} and \ref{fig:keylemma2}. The open subschemes $U_0$ and $U$ are represented by thick lines, and the unfilled points are omitted. Note that, in these examples, the $(V\setminus V')$-translates of $U_0$ cover $C$.}
\includestandalone[scale=1.5]{Figures/UandU0}
\includestandalone[scale=1.5]{Figures/UandU02}
  \label{fig:U0U}
  \end{figure}
For each $w\in V$, consider the translate $U_w:=\phi_w(U_0)\subset C$.
\vspace{5mm}

\noindent\emph{\textbf{Claim 3.} We have $C=\bigcup_{w\in{V}}U_w.$} 
\begin{proof}[Proof of Claim 3]It suffices to verify the claim on fibers. Let $s\in S$, 
and let $0\leq j<n$ be maximal such that $V_j\in\CF_s$. Write $\oV:=V/V_j$ and $\oV':=V'/V_j$. Then the special points of the $\infty$-component $E_\infty\subset C_s$ are in one-to-one correspondence with $\oV$ by Corollary \ref{specialpoints}. The open subscheme $U_{0,s}$ can be described as follows: The preceding remarks imply that the connected components of $C_s\setminus E_{\infty}$ are indexed by $\oV$. For each $u\in \oV$, we denote the corresponding connected component by $X_u$. Let $p_u:=X_u\cap E_\infty$. We have 
$$U_{0,s}=\Bigl(\bigcup_{u\in\oV'}X_u\cup E_\infty\Bigr)\setminus\Bigl(\bigcup_{u\in \oV\setminus\oV'}\{p_u\}\Bigr).
\footnote{In Figure \ref{fig:U0U} above, the $X_u$ consist of the branches coming off the $\infty$-component. The $p_u$ are simply the non-$\infty$ special points on the $\infty$-component.}$$ 
For any $u\in\oV$ and any lift $\tilde u\in V$ of $u$, we have $\phi_s(\tilde u)(X_0)=X_u$. Since $C_s=\bigl(\bigcup_{u\in\oV}X_u\bigr)\cup E_\infty,$ it follows that the $(V\setminus V')$-translates of $U_{0,s}$ cover $C_s$. In other words, we have $C_s=\bigcup_{w\in V} U_{w,s}$. Since $s$ was arbitrary, this implies the claim.
\end{proof}
For each $w\in V$, define $W_w\subset D$ in analogy to $U_w$. As for $C$, we have $D=\bigcup_{w\in V} W_w$. The isomorphism $\rho^v\colon C^v\stackrel\sim\to D^v$ in (\ref{stabiso}) induces an isomorphism $$\pi_0\colon U_0\stackrel\sim\to W_0.$$
For each $w\in V$, consider the isomorphism 
\begin{equation}\label{transliso}\pi_w:=\psi_w\circ\pi_0\circ\phi_w^{-1}\colon U_w\stackrel\sim\longto W_w.
\end{equation}
\vspace{1mm}

\noindent\emph{\textbf{Claim 4.} The isomorphisms (\ref{transliso}) glue to an isomorphism $\pi\colon C\stackrel\sim\to D$ of $V$-ferns.}

\begin{proof}[Proof of Claim 4]
Consider two elements $u,w\in V$ and let $U_{uw}:=U_u\cap U_w$. We wish to show that $\pi_u|_{U_{uw}}=\pi_w|_{U_{uw}}$. If $\pi_0|_{U_{0t}}=\pi_t|_{U_{0t}}$ for all $t\in V$, then on $U_{uw}$ we have
\begin{eqnarray*}
\pi_u&:=&\psi_{u}\circ\pi_0\circ\phi_u^{-1}=\psi_{u}\circ\pi_{w-u}\circ\phi_u^{-1}
\\&=&\psi_{u}\circ(\psi_{w-u}\circ\pi_0\circ\phi_{w-u}^{-1})\circ\phi_u^{-1}
\\&=&\psi_{w}\circ\pi_{0}\circ\phi_w^{-1}=:\pi_w.
\end{eqnarray*}
We may thus suppose that $u=0$. There is a unique isomorphism $\phi_w^\infty\colon C^\infty\stackrel\sim\to C^\infty$ such that the following diagram commutes
$$\xymatrix{C\ar[d]\ar[r]^\sim_{\phi_w}&C\ar[d]\\
C^\infty\ar[r]^\sim_{\phi^\infty_w}& C^\infty.}
$$ Similarly, there is an isomorphism $\psi_w^\infty\colon D^\infty\stackrel\sim\to D^\infty$ satisfying an analogous diagram.
In addition, by the definition of $\pi_0$ and the uniqueness of contractions, both squares in the diagram
$$\xymatrix{U_{0}\ar[d]\ar[r]^\sim_{\pi_0}&W_0\ar[d]\\
C^v\ar[d]\ar[r]^\sim_{\rho^v}&D^v\ar[d]\\
C^\infty\ar[r]^\sim_{\rho^\infty_w}& D^\infty.}
$$ commute; hence the outer square commutes as well.
Consider the diagram
\begin{equation}\label{cube}
\begin{gathered}\xymatrix{%
&\phi_w^{-1}(U_{0w})\ar@{->}[rr]^(.65){\pi_0}\ar@{->}[dl]\ar@{->}[dd]_(.65){\phi_w}|!{[d];[d]}\hole&&\psi_w^{-1}(W_{0w}) %
\ar@{->}[dl] \ar@{->}[dd]_(.65){\psi_w} 
\\
C^\infty\ar@{->}[rr]^(.65){\rho^\infty}\ar@{->}[dd]_(.65){\phi_w^\infty}&&D^\infty\ar@{->}[dd]_(.65){\psi_w^\infty}\\
&U_{0w}\ar@{->}[rr]^(.65){\pi_0}|!{[r];[r]}\hole\ar@{->}[dl] %
\hole&&W_{0w}
\ar@{->}[dl]\hole
\\
C^\infty\ar@{->}^(.65){\rho^\infty}[rr]&&D^\infty.}\end{gathered}
\end{equation}
The assertion that $\pi_0|_{U_{0w}}=\pi_w|_{U_{0w}}$ is equivalent to the commutativity of rear face of the cube in (\ref{cube}). The above arguments show that the other faces commute. Moreover, the arrows from the rear face to the front face are isomorphisms onto their images by Proposition \ref{contriso}. Hence the rear face also commutes, as desired.
\vspace{2mm}

We thus have $\pi_u|_{U_{uw}}=\pi_w|_{U_{uw}}$ for all $u,w\in V$. The $\pi_w$ therefore glue to an isomorphism $\pi\colon C\stackrel\sim\to D$ of the underlying curves. Since each $\pi_w$ preserves the $(w+\hat V')$-marking, it follows that $\pi$ is an isomorphism of stable $\hat V$-marked curves and hence of $V$-ferns.

\end{proof}
This concludes the proof of Lemma \ref{key}.
\end{proof}
\subsection{Remainder of the proof}
We now conclude the proof of Theorem \ref{flagrep} in the case $\CF:=\OF$, which implies Theorem \ref{rep} by Propositions \ref{redtoflag} and \ref{completetogeneral}. Let $L_\OF$ denote the line bundle over $U_\OF$ whose underlying scheme is $\CC_\OF^\infty\setminus\lambda_{\OF,\infty}(U_\OF)$, as in Proposition \ref{lbstr}. The corresponding fiberwise non-zero linear map $\lambda_\OF\colon V\to L_\OF(U_\OF)$ determines a morphism $$g_\OF\colon U_\OF\to P_V.$$ On the other hand, recall from Section \ref{OVBV} that $U_\OF$ is a locally closed subscheme of $\prod_{0\neq V'\subset V}P_{V'}$, and there is thus a natural morphism
 $$p_\OF\colon U_\OF\into \prod_{0\neq V'\subset V}P_{V'}\to P_V,$$
where the latter morphism is the projection onto the $V$-component.
\begin{lemma}\label{samehom}The morphisms $g_\OF$ and $p_\OF$ are equal.
\end{lemma}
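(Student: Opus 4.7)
The plan is to reduce the verification to the dense open subscheme $\OV\subset U_\OF$ and then exhibit an explicit isomorphism of the pairs $(\CL,\lambda)$ on $\OV$ that represent the two morphisms.

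First, by Proposition \ref{Coords} the choice of flag basis $\CB$ identifies $U_\OF$ with an open subscheme of $\BA^{n-1}_{\BF_q}$, which is reduced and irreducible. By Proposition \ref{strata} the stratum $\Omega_{\CF_0}=\OV$ is a non-empty open subscheme of $U_\OF$, hence dense. Since $P_V$ is separated, two morphisms $U_\OF\to P_V$ that agree on $\OV$ must coincide on all of $U_\OF$. I therefore reduce to proving $g_\OF|_\OV=p_\OF|_\OV$.

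Next, I identify both restrictions. Under the identification $\Omega_{\CF_0}\stackrel{\sim}{\to}\OV$ from Section \ref{OVBV}, the composite $p_\OF|_\OV\colon\OV\hookrightarrow U_\OF\hookrightarrow B_V\to P_V$ is, by construction of the embedding $B_V\hookrightarrow\prod_{V'}P_{V'}$, the tautological open immersion $\OV\hookrightarrow P_V$; this corresponds to the pair $(\CO_{P_V}(1)|_\OV,\lambda_V|_\OV)$ with $\lambda_V\colon v\mapsto v$. For $g_\OF|_\OV$, I invoke Lemma \ref{immersion} and the construction from Proposition \ref{VfernStr}: the restriction $\CC_\OF|_\OV$ contains $\OV\times\BA^1\hookrightarrow \CC_{\CF_0}$ as a dense open, its fibers are smooth (so the contraction to the $\infty$-component is the identity), and the $\infty$-section is $(1:0)$. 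Consequently $L_\OF|_\OV$ is the trivial line bundle $\OV\times\BA^1$, and by construction of $\lambda_\OF(v)$ in Proposition \ref{Vsects} via $V\hookrightarrow RS_{V,0},\ v\mapsto v/v_0$, the associated linear map $\lambda_\OF|_\OV\colon V\to \CO_\OV(\OV)$ sends $v$ to $v/v_0$.

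Finally, multiplication by the section $v_0\in\Gamma(\OV,\CO_{P_V}(1)|_\OV)$, which is invertible on $\OV$ since $\OV$ is the complement of all $\BF_q$-hyperplanes, furnishes an $\CO_\OV$-linear isomorphism
\[
\CO_\OV\isoto \CO_{P_V}(1)|_\OV,\qquad 1\mapsto v_0,
\]
intertwining the maps $v\mapsto v/v_0$ and $v\mapsto v$. Hence the two pairs represent the same $\OV$-valued point of $P_V$, so $g_\OF|_\OV=p_\OF|_\OV$, and the lemma follows. The only mildly subtle point is the unwinding of $\lambda_\OF$ on the smooth locus, which reduces to tracking the explicit formula in Proposition \ref{Vsects}; once that is done, the comparison of pairs is immediate from the functorial description of $P_V$.
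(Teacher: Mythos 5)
Your proof is correct and follows essentially the same route as the paper: restrict to the dense open $\OV\subset U_\OF$ (using that $P_V$ is separated and $U_\OF$ is reduced), identify $L_\OF|_\OV$ as $\OV\times\BA^1$ with marking $v\mapsto v/v_0$ via Lemma \ref{immersion}, and recognize both restrictions as the tautological inclusion $\OV\into P_V$. Your explicit isomorphism of pairs via multiplication by the invertible section $v_0$ just makes precise what the paper asserts more tersely.
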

\begin{proof}
Since $\CC_\OF$ is smooth over $\OV$, it follows that $\CC_\OF$ is isomorphic to $\CC_\OF^\infty$ after base change to $\OV$. The restriction of $L_\OF$ to $\OV$ is isomorphic via the morphism $f_V$ defined in (\ref{theMap}) to $\OV\times \BA^1$ with $V$-marked sections corresponding to 
$$\Bigl\{\frac{v}{v_0}\Bigm\vert v\in V\Bigr\}\subset \Gamma(\OV,\CO_{\OV})=RS_{V,0}.$$ 
It follows that $g_\OF|_\OV\colon\OV\to P_V$ is the natural inclusion of $\OV$ as an open subscheme of $P_V$. Since $p_\OF|_{\OV}$ is also the natural inclusion, we deduce that $p_\OF|_{\OV}=g_\OF|_{\OV}$. Since $P_V$ is separated and $U_\OF$ is reduced and $\OV\subset U_\OF$ is dense, it follows that $g_\OF=p_\OF$ (\cite[Corollary 9.9]{GW}).
\end{proof}
As an aside, we observe that there are natural morphisms $g_V,p_V\colon B_V\to P_V$ defined in analogy to $g_\OF$ and $p_\OF$. The same argument as in the above proof yields:
\begin{proposition}\label{PVmorph}
The morphisms $g_V$ and $p_V$ are equal.
\end{proposition}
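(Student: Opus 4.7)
The plan is to reduce the global statement to the local one already established in Lemma \ref{samehom}, using the open covering of $B_V$ by the $U_\OF$ as $\OF$ ranges over the complete flags of $V$.

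First I would unpack the definitions: $g_V\colon B_V\to P_V$ comes from the line bundle $L_V$ over $B_V$ obtained as the complement of the image of $\lambda_{V,\infty}$ in $\CC_V^\infty$, together with its fiberwise non-zero $\BF_q$-linear map $\lambda_V\colon V\to L_V(B_V)$; meanwhile $p_V$ is the composite $B_V\hookrightarrow\prod_{0\neq V'\subset V}P_{V'}\to P_V$ of the closed embedding from Proposition \ref{BVrep} with the projection onto the $V$-component. Both of these are compatible with pullback along an open immersion $U_\OF\hookrightarrow B_V$: the construction of $L_V$ commutes with base change by Proposition \ref{basechangelb}, so $g_V|_{U_\OF}=g_\OF$, and the definition of $p_V$ restricts tautologically to $p_V|_{U_\OF}=p_\OF$.

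Next I would invoke Lemma \ref{samehom}, which asserts that $g_\OF=p_\OF$ for each complete flag $\OF$. Combined with the restriction compatibilities above, this yields $g_V|_{U_\OF}=p_V|_{U_\OF}$ for every complete $\OF$. By Proposition \ref{covering}, the open subschemes $U_\OF$ corresponding to complete flags form an open cover of $B_V$, so two morphisms that agree on each $U_\OF$ must be equal globally.

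Alternatively — and this is the "direct" proof sketched by the analogy to Lemma \ref{samehom} — one can argue without passing to the cover: $B_V$ is reduced (being smooth over $\BF_q$ by \cite{P-Sch}), the scheme $P_V$ is separated, and $\OV\subset B_V$ is dense. Over the open subscheme $\OV\subset B_V$, the curve $\CC_V$ restricts to $\OV\times\BP^1$ via the open immersion induced by $f_V$, so $L_V|_{\OV}\cong\OV\times\BA^1$ with $V$-marked sections given by $\{v/v_0\mid v\in V\}$; hence $g_V|_{\OV}$ is the natural open immersion $\OV\hookrightarrow P_V$. The morphism $p_V|_{\OV}$ is also the natural inclusion by Proposition \ref{BVrep} applied to the trivial stratum $\Omega_{\CF_0}=\OV$. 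Thus $g_V$ and $p_V$ agree on the schematically dense open $\OV$, and the separatedness of $P_V$ forces $g_V=p_V$ on all of $B_V$ (\cite[Corollary 9.9]{GW}). I expect no real obstacles here — the only subtlety is checking the base-change compatibility of the $L_V$ construction carefully enough to identify $g_V|_{U_\OF}$ with $g_\OF$, which is already handled by Proposition \ref{basechangelb}.
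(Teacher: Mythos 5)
Your proposal is correct, and your second ("direct") argument is precisely the paper's own proof: the paper proves Proposition \ref{PVmorph} by repeating the argument of Lemma \ref{samehom} with $B_V$ in place of $U_\OF$, i.e.\ both $g_V$ and $p_V$ restrict to the natural inclusion on the dense open reduced subscheme $\OV\subset B_V$, and separatedness of $P_V$ forces equality. Your first argument via the open cover by the $U_\OF$ and Lemma \ref{samehom} is only a cosmetic repackaging of the same idea, so there is nothing substantive to compare.
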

Let $(C,\lambda,\phi)$ be an $\OF$-fern over an arbitrary scheme $S$ and consider the corresponding morphism $f_C\colon S\to U_\OF$ as defined in (\ref{assocMorph}). 
\begin{lemma}\label{infcomps}
The $\hat V$-marked curves $C^\infty$ and $f_C^*(\CC_{\OF}^\infty)$ are uniquely isomorphic.
\end{lemma}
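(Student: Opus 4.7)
The plan is to exploit the fact that a $\BP^1$-bundle over $S$ equipped with a $\hat V$-marking having disjoint $0$- and $\infty$-sections is canonically recovered, up to unique $\hat V$-compatible isomorphism, from the complementary line bundle together with its induced $\BF_q$-linear fiberwise non-zero $V$-marking. Consequently, identifying $C^\infty$ with $f_C^*\CC_\OF^\infty$ as $\hat V$-marked curves should reduce to identifying the associated line-bundle-with-$V$-marking pairs on $S$.

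First I would compare those line bundle data. By the construction of $f_C$ in Subsection~\ref{morphfern}, the composite $p_\OF\circ f_C\colon S\to P_V$ represents the pair $(L,\lambda)$ built from $C^\infty$ via Proposition~\ref{lbstr}. By Lemma~\ref{samehom} this composite also equals $g_\OF\circ f_C$, which represents the pullback $f_C^*(L_\OF,\lambda_\OF)$. The universal property of $P_V$ thus yields a unique isomorphism of line-bundle-with-$V$-marking pairs $(L,\lambda)\stackrel\sim\to (f_C^*L_\OF,f_C^*\lambda_\OF)$ on $S$. I would then upgrade this to the desired isomorphism of $\hat V$-marked curves: Proposition~\ref{basechangelb} identifies $f_C^*L_\OF$ with the complement of the $\infty$-section in the $\BP^1$-bundle $f_C^*\CC_\OF^\infty$ as a $V$-marked line bundle, and by construction the same holds for $L$ inside $C^\infty$. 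The line bundle isomorphism from the first step then extends uniquely to an isomorphism of projective completions matching $\infty$-section to $\infty$-section, and the resulting $S$-isomorphism $C^\infty\stackrel\sim\to f_C^*\CC_\OF^\infty$ preserves the full $\hat V$-marking by construction.

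For uniqueness, any two such isomorphisms differ by an $S$-automorphism of $f_C^*\CC_\OF^\infty$ that fixes the $\hat V$-marking. Since it fixes the $\infty$-section, it restricts to an automorphism of the line bundle $f_C^*L_\OF$ preserving the $0$-section and each $f_C^*\lambda_{\OF,v}$; such an automorphism is multiplication by some unit $\alpha\in\Gamma(S,\CO_S^\times)$ satisfying $\alpha\cdot\lambda_v=\lambda_v$ for all $v\in V$. Since $\lambda$ is fiberwise non-zero, the opens $\{\lambda_v\neq 0\}$ cover $S$ as $v$ varies, forcing $\alpha=1$ locally and hence globally. The main obstacle I anticipate is keeping straight the compatibility of the line bundle structure of Proposition~\ref{lbstr} with base change along $f_C$, so that $f_C^*L_\OF$ really does coincide, as a line bundle with $V$-marking, with the one associated to the $\hat V$-marked curve $f_C^*\CC_\OF^\infty$; once Proposition~\ref{basechangelb} is invoked, the remainder is formal bookkeeping with universal properties.
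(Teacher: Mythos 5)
Your proposal is correct and follows essentially the same route as the paper: both proofs reduce the statement to an identification of the pairs $(L,\lambda)$ and $(f_C^*L_\OF,f_C^*\lambda_\OF)$ via the universal property of $P_V$ together with Lemma \ref{samehom}, and then extend that line-bundle isomorphism to the ambient $\BP^1$-bundles. The only (harmless) difference is in the extension step, where the paper instead produces the isomorphism of $\BP^1$-bundles as the unique isomorphism of $\{0,v,\infty\}$-contractions for some $v\in V\setminus V_{n-1}$ (Lemma \ref{infcompF}) and then checks that its restriction to $L$ agrees with the line-bundle isomorphism, whereas you extend the line-bundle isomorphism directly to the projective completions.
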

\begin{proof}The uniqueness results directly from Proposition \ref{i-comp}. In analogy to $L_\OF$, let $L$ denote the line bundle whose underlying scheme is $C^\infty\setminus \lambda_\infty(S)$. Let $\lambda\colon V\to L(S)$ be corresponding fiberwise non-zero linear map, which determines a morphism 
$$g\colon S\to P_V.$$
Let $f_C^*L_\OF$ be the pullback of $L_\OF$ to $S$ along $f_C$. The map $\lambda_\OF$ induces a fiberwise non-zero $\BF_q$-linear map $f_C^*\lambda_\OF\colon V\to f^*_CL_\OF(S)$, which corresponds to the composite $g_\OF\circ f_C$, which equals $p_\OF\circ f_C$ by Lemma \ref{samehom}. By the construction of $f_C$, the morphism $g$ is precisely the $V$-component of $f_C$, i.e., we have $g=p_\OF\circ f_C$. The pairs $(L,\lambda)$ and $(f^*_CL_\OF,f_C^*\lambda_\OF)$ therefore define the same morphism to $P_V$, and it follows that there is an isomorphism of line bundles $\alpha\colon L\stackrel\sim\to f^*_CL_\OF$ inducing a commutative diagram: $$ \xymatrix{V\ar[r]\ar[dr]& L(S)\ar[d]_\wr^\alpha\\&f^*_CL_\OF(S).}$$

We claim that the isomorphism $\alpha$ extends to an isomorphism $C^\infty\stackrel\sim\to f_C^*(\CC^\infty_{\OF})$ of $\hat V$-marked curves. Since $C$ and $\CC_\OF$ are $\OF$-ferns, the schemes $C^\infty$ and $f_C^*(\CC^\infty_\OF)$ are trivial $\BP^1$-bundles given by contracting relative to the $\{0,v,\infty\}$-sections for any choice of $v\in V\setminus V_{n-1}$ by Lemma \ref{infcompF}. There is a unique isomorphism $\overline\beta\colon C^\infty\stackrel\sim\to C^\infty_{\OF,S}$ respecting the $\{0,v,\infty\}$-marking. The restriction to $L$ induces an isomorphism of line bundles  $\beta\colon L\stackrel\sim\to f^*_CL_\OF.$ Since $L$ and $f^*_CL_\OF$ are both trivial and generated by the $v$-marked section, any isomorphism preserving the $v$-section is unique, and hence $\beta=\alpha$. Since $\alpha$ preserves the $V$-marked sections, we conclude that $\overline\beta$ preserves the $\hat V$-marking, and is hence the desired extension of $\alpha$.
\end{proof}

\begin{proposition}\label{pullback}There exists a unique $V$-fern isomorphism $F_C\colon C\stackrel\sim\to f_C^*(\CC_\OF)$ over~$S$.
\end{proposition}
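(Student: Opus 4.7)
The plan is to prove the proposition by induction on $n:=\dim V$. In the base case $n=1$, Proposition \ref{dim1fern} asserts that any two $V$-ferns over $S$ are uniquely isomorphic, so both existence and uniqueness are immediate. For the induction step, set $V':=V_{n-1}$ and $\OF':=\OF\cap V'$, and let $(C',\lambda',\phi')$ denote the contraction of $C$ with respect to $\hat V'$, which by Proposition \ref{flgctr} is an $\OF'$-fern over $S$. By the induction hypothesis (i.e.\ the instance of Theorem \ref{flagrep} for $\OF'$), there is a unique morphism $f_{C'}\colon S\to U_{\OF'}$ together with a unique $V'$-fern isomorphism $F_{C'}\colon C'\stackrel\sim\to f_{C'}^*\CC_{\OF'}$.

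The first substantive step would be to verify the compatibility $f_{C'}=p\circ f_C$, where $p\colon U_\OF\to U_{\OF'}$ is the natural morphism introduced before Lemma \ref{ContraMap}. By the construction of $f_C$ and $f_{C'}$ in Subsection \ref{morphfern}, both sides correspond to the tuple of line bundle pairs $(L_W,\lambda_W)_{0\neq W\subset V'}$ obtained by first taking the $W$-contraction and then the $\infty$-component construction of Subsection \ref{LineBundles}. For $W\subset V'$ the $W$-contraction of $C$ factors uniquely through $C'$, so the two recipes yield the same pair, giving the identity on the level of morphisms to $\prod_{0\neq W\subset V'}P_W$, and hence $f_{C'}=p\circ f_C$. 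Combined with Proposition \ref{cont'} and the (routine) fact that contraction commutes with base change---which follows from uniqueness of contractions applied to the pulled-back contraction morphism---this produces the chain
\[
(f_C^*\CC_\OF)'\cong f_C^*\bigl((\CC_\OF)'\bigr)\cong f_C^*\bigl(p^*\CC_{\OF'}\bigr)\cong (p\circ f_C)^*\CC_{\OF'}=f_{C'}^*\CC_{\OF'}\cong C'
\]
of isomorphisms of $\OF'$-ferns over $S$.

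Next, Lemma \ref{infcomps} gives an isomorphism $C^\infty\stackrel\sim\to f_C^*(\CC_\OF^\infty)$ of $\hat V$-marked curves, and Proposition \ref{basechangeCi} identifies $f_C^*(\CC_\OF^\infty)$ with $(f_C^*\CC_\OF)^\infty$. Thus $C$ and $f_C^*\CC_\OF$ are two $\OF$-ferns over $S$ whose $V'$-contractions are isomorphic as $\OF'$-ferns and whose $\infty$-components are isomorphic as $\hat V$-marked curves, so the Key Lemma (Lemma \ref{key}) delivers the desired $V$-fern isomorphism $F_C\colon C\stackrel\sim\to f_C^*\CC_\OF$. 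For uniqueness, any such isomorphism is in particular a morphism of stable $\hat V$-marked curves between the same pair of objects, so Corollary \ref{uniquemorph} forces it to be unique.

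The main obstacle in executing this plan is nothing conceptually new---the Key Lemma, which is the genuinely deep input, is already established---but rather the careful bookkeeping required to keep track of contractions, $\infty$-component constructions, and base change simultaneously. In particular, pinning down $f_{C'}=p\circ f_C$ and checking that all intermediate isomorphisms in the displayed chain above are compatible with the $V'$-fern structure (and not just with the underlying $\hat V'$-marked curves) demands appealing to the functoriality statements for contraction and for the $\infty$-component construction established earlier in the text.
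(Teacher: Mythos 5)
Your proposal is correct and follows essentially the same route as the paper: induction on $\dim V$ with Proposition \ref{dim1fern} as the base case, the identity $f_{C'}=p\circ f_C$ together with Proposition \ref{cont'} to identify the $V'$-contractions, Lemma \ref{infcomps} for the $\infty$-components, and the Key Lemma \ref{key} to conclude, with uniqueness from Corollary \ref{uniquemorph}. The only difference is that you spell out the verification of $f_{C'}=p\circ f_C$ and the base-change compatibilities, which the paper's proof asserts more briefly.
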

\begin{proof}
The uniqueness follows directly from the uniqueness of morphisms of stable $\hat V$-marked curves. For the existence, we proceed by induction on $\dim V$. Suppose first that $\dim V=1$. Then $U_\OF=\Spec\BF_q$ and $\CC_\OF\cong \BP^1_{\BF_q}$. 
In this case, the proposition follows directly from Proposition \ref{dim1fern}. 
\vspace{2mm}

Now let $\dim V=n>1$. Let $V':=V_{n-1}$ and let $C'$ be the contraction of $C$ with respect to $\hat V'$. Write $\OF':=\OF\cap V'$, and let $f_{C'}$ denote the morphism $S\to U_{\OF'}$ corresponding to $C'$, which is defined analogously to $f_C$. Recall that there is a natural morphism $p\colon U_\OF\to U_{\OF'}$ induced by the forgetful functor, and by construction we have $p\circ f_C=f_{C'}$. By induction, there are unique isomorphisms of $V'$-ferns $$C'\stackrel\sim\to f_{C'}^*(\CC_{\OF'})\cong (f_C^*(\CC_\OF))',$$ with the latter isomorphism given by Proposition \ref{cont'}. Lemmas \ref{key} and \ref{infcomps} then yield the desired isomorphism $F_C\colon C\stackrel\sim\to f_C^*(\CC_\OF)$.
\end{proof}

\begin{proof}[Proof of Theorem \ref{flagrep}]In order to show that $U_\OF$ represents the functor of $\OF$-ferns, we must show that for any scheme $S$ and any $\OF$-fern $(C,\lambda,\phi)$ over $S$, there exists a unique $f\colon S\to U_\OF$ such that $f^*\CC_\OF$ and $C$ are isomorphic $\OF$-ferns. In light of Proposition \ref{pullback}, it only remains to verify the uniqueness of $f$. Consider distinct elements $$f,g\in\Mor(S,U_\OF)\subset\Mor\Bigl(S, \prod_{0\neq V'\subset V}P_{V'}\Bigr)\cong\prod_{0\neq V'\subset V}\Mor(S, P_{V'}).$$ Denote the corresponding pullback $V$-ferns by $f^*\CC_\OF$ and $g^*\CC_\OF$. For each $0\neq V'\subset V$, let $f_{V'}$ (resp. $g_{V'}$) be the $V'$-component of $f$ (resp. $g$). Since $f\neq g$, there exists a $V'$ such that $f_{V'}\neq g_{V'}$. The line bundles together with $V'$-marked sections associated to the $\hat V'$-contractions $(f^*\CC_\OF)'$ and $(g^*\CC_\OF)'$ determine $f_{V'}$ and $g_{V'}$, and are therefore not isomorphic. But any isomorphism of $V$-ferns $f^*\CC_\OF\stackrel\sim\to g^*\CC_\OF$ induces an isomorphism of the associated line bundles preserving marked sections. We deduce by contradiction that $f^*\CC_\OF\not\cong g^*\CC_\OF$.
\end{proof}


\section{Morphisms between moduli and $V$-fern constructions}
Having proved that $B_V$ is a fine moduli space for $\Fern_V$, we now investigate the morphisms of moduli spaces corresponding to the various constructions involving $V$-ferns. We have already described the natural morphism $B_V\to P_V$ (Proposition \ref{PVmorph}) and will see that the remaining constructions correspond to morphisms arising naturally in the functorial interpretations of $Q_V$ and $B_V$ in \cite{P-Sch}. Let $0\neq V'\subsetneq V$ be a subspace. 

\subsection{Contraction}
The forgetful functor $$(\CE_W)_{0\neq W\subset V}\mapsto (\CE_W)_{0\neq W\subset V'}$$ yields a morphism $$\gamma\colon B_V\to B_{V'},$$ which is precisely the morphism induced by the projection 
$$\pi\colon\prod_{0\neq W\subset V}P_W\to \prod_{0\neq W\subset V'}P_W.$$
Contraction of $V$-ferns with respect to $\hat V'$ gives a natural transformation $\Fern_V\to\Fern_{V'}$ and hence also corresponds to a morphism 
$B_V\to B_{V'}.$
Explicitly, one contracts the universal family $\CC_V$ with respect to $\hat V'$ to obtain a $V'$-fern $(\CC_V)'$ over $B_V$, which then corresponds to a unique morphism from $B_V$ to $B_{V'}$ by Theorem \ref{rep}.
\begin{proposition}\label{contrmorph} The morphism $B_V\to B_{V'}$ corresponding to contraction is equal to $\gamma$.
\end{proposition}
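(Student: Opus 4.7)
The plan is to prove $c = \gamma$, where $c$ denotes the morphism $B_V \to B_{V'}$ associated by Theorem \ref{rep} to the $V'$-fern $(\CC_V)' \to B_V$, by reducing the comparison to the dense open subscheme $\OV \subset B_V$. First I would argue that the reduction is legitimate: $B_V$ is smooth (hence reduced) by the construction in \cite{P-Sch}, and by Proposition \ref{strata} it is the union of the strata $\Omega_\CF$, the unique open one of maximal dimension being $\Omega_{\CF_0} = \OV$; in particular $\OV$ is open and dense in $B_V$. Since $B_{V'}$ is separated (it is projective), any two morphisms $B_V \to B_{V'}$ that agree on $\OV$ coincide (see for instance \cite[Corollary 9.9]{GW}), so it suffices to show $c|_\OV = \gamma|_\OV$.

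Next I would compute $c|_\OV$ explicitly. By Corollary \ref{7-cor-smoothferns}, $\OV$ parametrizes smooth $V$-ferns, and $\CC_V|_\OV$ is the universal one: a trivial $\BP^1$-bundle equipped with the $\hat V$-marking obtained from the tautological fiberwise injective linear map $\lambda\colon V \to \CO_\OV(1)(\OV)$. Since $V' \neq 0$, for every $v' \in V'\setminus\{0\}$ the sections $\lambda_0,\lambda_{v'},\lambda_\infty$ are already fiberwise distinct, so the $\hat V'$-marked curve underlying $\CC_V|_\OV$ is stable; by Proposition \ref{equivdefs}(a) the contraction morphism $\CC_V|_\OV \to (\CC_V|_\OV)'$ is the identity. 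Hence $(\CC_V|_\OV)'$ is the same smooth $V'$-fern $(\BP^1_\OV, \lambda|_{\hat V'})$, and the morphism $c|_\OV\colon \OV \to B_{V'}$ it defines is, by Proposition \ref{PVrep} and Lemma \ref{AltRep}, the one classifying the tuple $\bigl(\ker(\lambda|_W \otimes \id \colon W \otimes \CO_\OV \to \CO_\OV(1))\bigr)_{0\neq W\subset V'}$.

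On the other hand, $\gamma$ is by construction the projection induced by the forgetful functor on the functorial description of Proposition \ref{BVrep}. Hence $\gamma|_\OV$ sends the universal tuple $(\CE_W)_{0\neq W\subset V}$ on $\OV$ to its restriction to indices $W \subset V'$, and Proposition \ref{PVrep} identifies $\CE_W = \ker(\lambda|_W \otimes \id)$. This produces precisely the same tuple as in the preceding paragraph, so $c|_\OV = \gamma|_\OV$, and the density argument concludes $c = \gamma$.

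The only delicate point in this plan is the identification of $(\CC_V|_\OV)'$ with $\CC_V|_\OV$ endowed with the restricted $\hat V'$-marking, which hinges on the stability of that restricted marking on a smooth fiber. Because $V' \neq 0$ guarantees at least three distinct $\hat V'$-sections on each geometric fiber, this is immediate, and I do not anticipate a substantive obstacle beyond invoking the density argument carefully.
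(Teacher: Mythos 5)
Your argument is correct, but it takes a different route from the paper. The paper's proof works with an arbitrary $V$-fern $C$ over an arbitrary base $S$ and never invokes density: it observes that, by the construction in Subsection \ref{morphfern}, the $W$-component of the classifying morphism $f_C$ depends only on the contraction of $C$ with respect to $\hat W$, and that for $W\subset V'$ this contraction factors through (and agrees with) the contraction of $C'$ with respect to $\hat W$; hence $f_{C'}=\pi\circ f_C$ componentwise, which is exactly the statement that contraction is represented by $\gamma$. Your proof instead restricts both morphisms to the dense open $\OV\subset B_V$, where the universal fern is smooth and the contraction is the identity, computes both tuples explicitly as $\bigl(\ker(\lambda|_W\otimes\id)\bigr)_W$, and concludes by reducedness of $B_V$ and separatedness of $B_{V'}$ --- the same device the paper itself uses for the neighbouring Propositions \ref{PVmorph}, \ref{graftingmorph} and \ref{QVmorph}, so the reduction is certainly legitimate here. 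What the paper's route buys is an equality of natural transformations valid fern by fern, with no appeal to density; what yours buys is a very concrete verification on the smooth locus. Two small points to tidy up: the identification $(\CC_V)'|_{\OV}\cong(\CC_V|_{\OV})'$ needs the compatibility of contraction with base change (the remark following Proposition \ref{FernContr}), which you use tacitly; and the reference for ``the contraction of an already-stable marked curve is the identity'' should be Proposition \ref{stableiso} or the uniqueness of contractions (Proposition \ref{1-prop-uniquenesscontractions}) rather than Proposition \ref{equivdefs}(a), which is stated only for forgetting a single index over an algebraically closed field. Neither affects the validity of the argument.
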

\begin{proof}
Let $C$ be a $V$-fern over a scheme $S$ and let $C'$ be the $V'$-fern obtained by contracting $C$. Let 
$$f_C\colon S\to B_V\subset\prod_{0\neq W\subset V}P_W$$
and
$$f_{C'}\colon S\to B_{V'}\subset\prod_{0\neq W\subset V'}P_W$$
be the morphisms corresponding to $C$ and $C'$ respectively, as constructed in Subsection \ref{morphfern}. Recall that for each $0\neq W\subset V$ the $W$-component $S\to P_W$ of $f_C$ depends only on the contraction of $C$ with respect to $\hat W$. When $W\subset V'$ this is isomorphic to the contraction of $C'$ with respect to $\hat W$. The $W$-component of $f_C$ and $f_{C'}$ are thus equal, and it follows that $f_{C'}=\pi\circ f_C$. This implies the proposition.
\end{proof}

\subsection{Grafting and the boundary strata}Let $\oV:=V/V'$ and $\CF:=\{0,V',V\}.$  For each $W\not\subset V'$, let $\oW:=(W+V')/V'$, and consider the natural surjection 
$$\pi_W\colon W\otimes\CO_S\onto \oW\otimes\CO_S.$$
Recall from Proposition \ref{grafting-subquot} and the discussion following it that there is a natural morphism
\begin{eqnarray*}
g\colon B_{V'}\times B_{\oV}&\stackrel\sim\longto &B_\CF\into B_V.\\\nonumber
(\CE'_\bullet,\oCalE_\bullet)&\mapsto& \CE_\bullet
\end{eqnarray*}
Where $\CE_\bullet$ is the tuple defined by
\begin{equation}\label{tuple}\CE_W=\begin{cases}\CE'_W,\mbox{ if $W\subset V'$,}\\
\pi_W^{-1}(\oCalE_\oW),\mbox{ if $W\not\subset V'$}.\end{cases}
\end{equation}

On the other hand, the grafting construction (Section \ref{grafting-section}) yields a natural transformation
$\Fern_V\times\Fern_{V'}\to\Fern_V$ and hence also induces a morphism $B_{V'}\times B_{\oV}\to B_V$. It can be obtained by pulling the universal families $C_{V'}$ and $C_{\oV}$ back to $B_{V'}\times B_{\oV}$, grafting to a $V$-fern, and then taking the unique corresponding morphism $$\tilde g\colon B_{V'}\times B_{\oV}\to B_V.$$
\begin{proposition}\label{graftingmorph} The morphisms $g$ and $\tilde g$ are equal.\end{proposition}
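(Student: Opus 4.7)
The plan is to invoke the universal property of $B_V$ from Theorem \ref{rep}: both $g$ and $\tilde g$ classify $V$-ferns over $B_{V'}\times B_{\oV}$, so it suffices to show these two $V$-ferns correspond to the same tuple $\CE_\bullet$ via the functor $\underline{\CB}_V$ of Lemma \ref{AltRep}. More concretely, for any scheme $S$ and any $S$-valued point $(\CE'_\bullet,\oCalE_\bullet)$ of $B_{V'}\times B_{\oV}$, let $C'$ and $\oC$ be the corresponding pulled-back universal ferns and let $C$ be their graft. Then $g\circ(f_{C'}\times f_{\oC})$ corresponds to the tuple $\CE_\bullet$ defined by (\ref{tuple}), while $\tilde g\circ(f_{C'}\times f_{\oC})=f_C$ corresponds to the tuple built from $C$ via the construction in Subsection \ref{morphfern}. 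I would show these two tuples coincide componentwise in $\prod_{0\neq W\subset V}P_W$.

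For a fixed $0\neq W\subset V$, the $W$-component of $f_C$ is the pair $(L_W,\lambda_W)$ arising from the contraction of $C$ to the $\infty$-component with respect to $\hat W$ (Proposition \ref{lbstr}). By Lemma \ref{infcompF}, this contraction equals the contraction of $C$ with respect to $\{0,w,\infty\}$ for any $w\in W$ not contained in an appropriate subspace, which makes it tractable to compute from the grafting construction. The analysis splits into two cases. If $W\subset V'$, choose $w\in W\setminus(W\cap V_{n-1})$ inside the $V'$-part of the graft; since every $W$-marked point of $C$ lies on the single copy $(C')_0$, the stable $\{0,w,\infty\}$-contraction of $C$ collapses $\oC$ and all translated copies $(C')_u$ for $u\neq0$, and what remains reproduces the analogous contraction of $C'$. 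This yields a canonical isomorphism $(L_W,\lambda_W)\cong(L'_W,\lambda'_W)$ and hence $\CE_W=\CE'_W$, matching (\ref{tuple}). If $W\not\subset V'$, pick $w\in W$ whose image $\bar w\in\oW=(W+V')/V'$ is nonzero; the $W$-marked points of $C$ sit on distinct copies $(C')_u$ indexed by their classes modulo $V'$, so elements of $W$ with the same image in $\oW$ collapse to the same section in the $\infty$-contraction. Hence $\lambda_W$ factors as $W\twoheadrightarrow\oW\to L_W$, and the resulting pair is canonically isomorphic to the one associated to $\oC$ for the subspace $\oW$. This gives $\CE_W=\pi_W^{-1}(\oCalE_\oW)$, again matching (\ref{tuple}).

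The main obstacle will be the second case: one must rigorously identify the contraction of the graft $C$ with respect to $\{0,w,\infty\}$ for $w\not\in V'$ with the corresponding contraction of $\oC$ with respect to $\{0,\bar w,\infty\}$, and simultaneously identify the resulting line bundles together with their $V$-markings. This is delicate because the non-zero $(V\setminus V')$-marked points of $C$ are distributed across many grafted copies $(C')_u$, and one must verify both that these copies collapse uniformly to the correct points and that the resulting line bundle structure on the $\infty$-component is intertwined correctly with the $\oC$-structure. The needed compatibility of contractions with base change (Propositions \ref{basechangeCi} and \ref{basechangelb}) together with the uniqueness of morphisms of stable $\hat W$-marked curves (Corollary \ref{uniquemorph}) should reduce this verification to an explicit check on geometric fibers, where the grafting construction of Section \ref{grafting-section} makes the component structure transparent.
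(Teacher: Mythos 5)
Your proposal is correct in outline, but it takes a genuinely different route from the paper. The paper first restricts to the dense open subscheme $\Omega_{V'}\times\Omega_{\oV}\subset B_{V'}\times B_{\oV}$ (using that the source is reduced and $B_V$ is separated), so that $C'$ and $\oC$ may be assumed smooth and the graft $C$ has associated flag exactly $\CF=\{0,V',V\}$ on every fiber; it then invokes \cite[Lemma 10.1]{P-Sch}, which says that on $\Omega_\CF$ the tuple $\CE_\bullet$ is determined by the two components $(\CE_{V'},\CE_V)$, reducing the whole verification to $\CE_{V'}=\CE'_{V'}$ and $\CE_V=\pi_V^{-1}(\oCalE_{\oV})$. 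You instead verify $\CE_W$ for \emph{every} $0\neq W\subset V$ over the whole base, splitting into $W\subset V'$ and $W\not\subset V'$; this is more work but avoids both the density reduction and the appeal to the injectivity of $\CE_\bullet\mapsto(\CE_{V'},\CE_V)$ on the stratum, and it establishes the componentwise identity (\ref{tuple}) directly rather than by continuity. Both arguments ultimately rest on the same geometric input for the hard component(s): one needs a morphism $C\to\oC$ collapsing each grafted copy $(C')_u$ to the attachment point $\overline\lambda_{\bar u}$, so that the $\infty$-contraction of $C$ is identified with that of $\oC$ and the marking factors as $W\onto\oW\to L_W(S)$. The paper constructs this morphism explicitly from the universal property of clutching (Proposition \ref{clutching}(a)); your sketch correctly isolates this as the delicate step but stops short of naming the clutching property as the mechanism that produces the \emph{global} comparison morphism — a fiberwise check alone does not produce a morphism to compare, so you should make that construction explicit when writing this up. With that ingredient supplied, your case analysis goes through, and your first case is exactly the transitivity-of-contractions argument the paper uses for $\CE_{V'}$, extended to all $W\subset V'$.
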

\begin{proof}Let $S$ be a scheme and let $(C',\lambda',\phi')$ and $(\oC,\overline{\lambda},\overline{\phi})$ be a $V'$-fern and $\oV$-fern over $S$ respectively. Denote the graft of $C'$ and $\oC$ by $(C,\lambda,\phi)$.  Let $\CE_\bullet\in B_V(S)$ be the tuple corresponding to $C$. Similarly, let $\CE'_\bullet\in B_{V'}(S)$ and $\OE_\bullet\in B_{\oV}(S)$ be the tuples corresponding to $C'$ and $\oC$. To show that $\tilde g=g,$ we must show that $\CE_\bullet$ is precisely the tuple obtained from $\CE'_\bullet$ and $\OE_\bullet$ via (\ref{tuple}). 

Since $U:=(\Omega_{V'}\times\Omega_\oV)\subset B_{V'}\times B_\oV$ is dense and $B_{V'}\times B_\oV$ is separated and reduced, it suffices to show that $\tilde g|_U=g|_U.$ We may thus assume that $C'$ and $\oC$ are both \textbf{smooth} over $S$. The flag associated to each fiber $C_s$ must be equal to $\CF$, so the restriction $\tilde g|_U$ factors through $\Omega_\CF$.  By \cite[Lemma 10.1]{P-Sch}, the forgetful functor $\CE\bullet\mapsto (\CE_{V'},\CE_V)$ induces an isomorphism from $\Omega_\CF$ onto its image in $P_{V'}\times P_{V}$. It thus suffices to show that $$(\CE_{V'},\CE_V)=\bigl(\CE'_{V'},\pi_V^{-1}(\OE_{\oV})\bigr).$$ 

It follows easily from the fact that the contraction of $C$ with respect to $\hat V'$ is isomorphic to $C'$ that $\CE_{V'}=\CE'_{V'}$. To show that $\CE_V=\pi_V^{-1}(\OE_\oV)$, we first construct a morphism $\rho\colon C\to \oC$. Consider the prestable curve
$$\tilde C:=\oC\sqcup\Bigl(\bigsqcup_{\ov\in\oV}C'\Bigr)$$ as in the construction of $C.$ For each $\ov\in \oV$, we denote the copy of $C'$ indexed by $\ov$ by $(C')_\ov$ and the corresponding $\infty$-section by $\lambda'_{\infty_\ov}$. Let $\pi'\colon C'\to S$ be the structure morphism. Define a morphism $\rho\colon \tilde C\to \oC$ via
\begin{eqnarray*}
\tilde\rho|_{\oC}&:=&\id_\oC\\
\tilde\rho|_{(C')_\ov}&:=&\overline{\lambda}_\ov\circ\pi'
\end{eqnarray*}
By construction, we have $\tilde\rho\circ\overline{\lambda}_\ov=\tilde\rho\circ\lambda'_{\infty_\ov}$ for each $\ov\in \oV$. The universal property of Proposition \ref{clutching}.1 then yields a unique morphism $\rho\colon C\to \oC$ such that the following diagram commutes:
$$\xymatrix{\tilde C\ar[d]\ar[r]^{\tilde\rho}&\oC\\
C\ar[ur]_{\rho}&.}$$
Let $v\in V$, and let $\ov\in \oV$ be its image in $\oV$. It follows directly from the constructions of $\rho$ and the $\hat V$-marking on $C$ that $\rho\circ \lambda_v=\lambda_\ov$. The composite 
$$\xymatrix{C\ar[r]^\rho&\oC\ar[r]&\oC^\infty}$$
 is thus a contraction to the $\infty$-component of $C$. We thus have a commutative diagram
 \begin{equation}\label{SameInfComp}
 \begin{gathered}\xymatrix{\hat V\ar[d]\ar[r]&C^\infty(S)\ar@{=}[d]\\
\hat \oV\ar[r]&\oC^\infty(S).}
\end{gathered}
\end{equation}
Let $\CL$ and $\overline{\CL}$ denote the sheaves of sections of the line bundles associated to $C$ and $\oC$ respectively as in Section \ref{LineBundles}. The diagram (\ref{SameInfComp}) then corresponds to a commutative diagram
$$\xymatrix{V\otimes\CO_S\ar@{->>}[d]^{\pi_V}\ar[r]^-\lambda&\CL\ar@{=}[d]\\
\oV\otimes\CO_S\ar[r]^-{\overline{\lambda}}&\overline{\CL}.}
$$
The sheaves $\CE_V$ and $\OE_{\oV}$ are respectively the kernels of $\lambda$ and $\overline{\lambda}$; hence $\CE_V=\pi_V^{-1}(\OE_{\oV})$, as desired.

\end{proof}
\begin{remark}Let $0\neq V'\subsetneq V$ and $0\neq W\subset V$ be subspaces such that $0\neq W\cap V'\subsetneq W.$ As a consequence of Propositions \ref{contrmorph} and \ref{graftingmorph}, we see that contraction and grafting are compatible in that they induce a commutative diagram
$$\xymatrix{\Fern_{V'}\times\Fern_\oV\ar[d]\ar[r]&\Fern_V\ar[d]\\
\Fern_{W'}\times\Fern_\oW\ar[r]&\Fern_W,}$$
where the horizontal arrows are given by grafting and the vertical arrows by contraction.
\end{remark}
Proposition \ref{graftingmorph} can be generalized as follows. Let $\CF=\{V_0,\ldots V_m\}$ be a flag of $V$. By Proposition \ref{grafting-subquot}, there is a natural closed embedding $$g\colon B_{V_1}\times B_{V_1/V_2}\times\cdots\times B_{V_m/V_{m-1}}\stackrel\sim\longto B_\CF\into B_V.$$ We obtain the following corollary by induction:
\begin{corollary}\label{boundary} The morphism $B_{V_1}\times B_{V_1/V_2}\times\cdots\times B_{V_m/V_{m-1}}\to B_V$ obtained by grafting is equal to $g$.
\end{corollary}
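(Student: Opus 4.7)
The plan is to proceed by induction on the length $m$ of the flag $\CF = \{V_0, \ldots, V_m\}$. The case $m = 1$ is trivial, since both morphisms reduce to the identity on $B_V$, and the case $m = 2$ is precisely the content of Proposition \ref{graftingmorph}. So assume $m \geq 3$ and that the statement holds for all flags of length $< m$.

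The central step is to factor the iterated grafting morphism through the two-step grafting at the top of $\CF$. Set $\CF' := \{V_0, V_1, \ldots, V_{m-1}\}$, regarded as a flag of $V_{m-1}$, and write $\oV := V/V_{m-1}$. Denote by $\tilde g_\CF$ the morphism
\[
B_{V_1} \times B_{V_2/V_1} \times \cdots \times B_{V_m/V_{m-1}} \longto B_V
\]
obtained by iterated grafting, and similarly $\tilde g_{\CF'}$ and $\tilde g_{\{0, V_{m-1}, V\}}$ for the flags $\CF'$ and $\{0, V_{m-1}, V\}$. I plan to show the factorization
\[
\tilde g_\CF \ =\ \tilde g_{\{0, V_{m-1}, V\}} \circ \bigl(\tilde g_{\CF'} \times \id_{B_\oV}\bigr).
\]
Since $B_V$ represents $\Fern_V$ by Theorem \ref{rep}, it is enough to check this on $S$-valued points: given $V_i/V_{i-1}$-ferns $C_i$ over $S$, the iterated graft of $(C_1, \ldots, C_m)$ is isomorphic to the graft (across $V_{m-1} \subset V$) of the iterated graft of $(C_1, \ldots, C_{m-1})$ with $C_m$. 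This is an associativity statement for grafting, which I expect to verify by unwinding the clutching construction of Proposition \ref{clutching}: the prestable curve $\tilde C$ attached to $\CF$ can be built by first building the analogous curve for $\CF'$, performing the required clutchings to obtain a $V_{m-1}$-fern $D$, and then clutching copies of $D$ to $C_m$; the functoriality of clutching and the independence of the grafting isomorphism class from auxiliary choices of complementary subspaces combine to give the claimed isomorphism of resulting $V$-ferns.

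Granted the factorization, the induction hypothesis yields $\tilde g_{\CF'} = g_{\CF'}$, and Proposition \ref{graftingmorph} yields $\tilde g_{\{0, V_{m-1}, V\}} = g_{\{0, V_{m-1}, V\}}$. It then remains to verify the purely functorial identity
\[
g_\CF \ =\ g_{\{0, V_{m-1}, V\}} \circ \bigl(g_{\CF'} \times \id_{B_\oV}\bigr),
\]
which I plan to check directly by tracing through the formula (\ref{tuple}) twice: for a subspace $W \subset V$, either $W \subset V_{m-1}$, in which case the output $\CE_W$ is determined by $g_{\CF'}$ applied to the first tuple, or $W \not\subset V_{m-1}$, in which case (\ref{tuple}) identifies $\CE_W$ with the preimage under $\pi_W$ of the $\oW$-component produced by grafting over $\{0, V_{m-1}, V\}$. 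In both cases the result matches the definition of $g_\CF$ coming from Proposition \ref{grafting-subquot} applied to $\CF$.

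The main obstacle is the associativity/factorization step: making rigorous the intuitive claim that iterated grafting can be decomposed at any intermediate step of the flag. All subsequent ingredients (Proposition \ref{graftingmorph}, the inductive hypothesis, and the bookkeeping of (\ref{tuple})) are essentially formal once this step is in place. A possible shortcut is to avoid reasoning directly with the clutching construction and instead invoke Theorem \ref{rep} twice, observing that both composites above define the same isomorphism class of $V$-fern over $S$ and hence the same morphism $S \to B_V$; this would reduce the associativity statement to the weaker claim that the two iterated grafts are fiberwise isomorphic, which is immediate from the description of grafting on geometric fibers.
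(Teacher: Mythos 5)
Your proposal is correct and matches the paper's (essentially unwritten) argument: the paper derives the corollary from Proposition \ref{graftingmorph} by exactly this induction on the length of the flag, and since the iterated graft is built bottom-up, your ``factorization'' $\tilde g_\CF=\tilde g_{\{0,V_{m-1},V\}}\circ(\tilde g_{\CF'}\times\id)$ is essentially the definition of the last grafting step rather than a genuine obstacle, while the identity $g_\CF=g_{\{0,V_{m-1},V\}}\circ(g_{\CF'}\times\id)$ follows from formula (\ref{tuple}) as you describe. One small caution: the ``shortcut'' of checking that the two grafts are merely fiberwise isomorphic does not by itself force the two morphisms to $B_V$ to agree; the clean way to finish that variant is to compare the two morphisms on the dense open product of the $\Omega$'s and use that the source is reduced and $B_V$ separated, as in the proof of Proposition \ref{graftingmorph}.
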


\begin{remark}Let $\CF:=\{V_0,\ldots, V_m\}$. Corollary \ref{boundary} shows that the closed subschemes $B_\CF\into B_V$ represent $V$-ferns that can be obtained by grafting a given tuple $(C_i)_{1\leq i\leq m},$ where $C_i$ is a $(V_i/V_{i-1})$-fern. The boundary stratum $\Omega_\CF$ then corresponds to those tuples consisting of smooth ferns.
\end{remark}



\subsection{Reciprocal maps and the scheme $Q_V$}
In this section we discuss an alternative compactification of $\OV$, denoted by $Q_V$, which is the primary focus of the same paper \cite{P-Sch} in which Pink and Schieder introduce $B_V$. Let $\mrV:=V\setminus\{0\}$. Recall that we defined $S_V$ to be the symmetric algebra of $V$ over $\BF_q.$ Let $K_V$ be the quotient field of $S_V$, and let $R_V$ be the $\BF_q$-subalgebra of $K_V$ generated by all elements of the form $\frac{1}{v}$ with $v\in\mrV$. We view $R_V$ as a graded ring, where each $\frac{1}{v}$ is homogeneous of degree $-1$. Define
$$Q_V:=\Proj(R_V).$$
The natural inclusion $R_V\into RS_V$ induces an open immersion $\OV\into Q_V$.
\vspace{2mm}

 Let $S$ be a scheme and let $\CL$ be an invertible sheaf on $S$.
\begin{definition}\label{recipmap}
We say a map $\rho\colon \mrV\to \CL(S)$ is \emph{reciprocal} if
\begin{enumerate}
\item $\rho(\alpha v)=\alpha^{-1}\rho(v)$ for all $v\in\mrV$ and $\alpha\in\BF_q^\times$, and
\item $\rho(v)\cdot\rho(v')=\rho(v+v')\cdot(\rho(v)+\rho(v'))$ in $\CL^{\otimes 2}(S)$ for all $v, v'\in\mrV$ such that $v+v'\in\mrV.$
\end{enumerate}
\end{definition}
\begin{definition}
Let $i\colon V'\into V$ denote the natural inclusion of a non-zero subspace $V'$ of $V$. Given a reciprocal map $\rho'\colon \mrV'\to\Gamma(S,\CL)$, one defines the \emph{extension by zero} of $\rho'$ to be the reciprocal map
$$i_*\rho'\colon \mrV\to\Gamma(S,\CL),\,\,\,v\mapsto\begin{cases}\rho'(v)&\mbox{if $v\in V',$}\\
0&\mbox{otherwise}.\end{cases}$$
\end{definition}

In \cite{P-Sch}, the scheme $Q_V$ is given a modular interpretation in terms of reciprocal maps. For this, we observe that the natural map $\rho_V\colon\mrV\to R_{V,-1}\cong\Gamma(Q_V,\CO_{Q_V}(1))$ given by $v\mapsto \frac{1}{v}$ is reciprocal and fiberwise non-zero.
\begin{theorem}[\cite{P-Sch}, Theorem 7.10, Proposition 7.11]\label{QVrep} The scheme $Q_V$ with the universal family $(\CO_{Q_V}(1),\rho_V)$ represents the functor which associates to a scheme $S$ over $\BF_q$ the set of isomorphism classes of pairs $(\CL,\rho)$ consisting of an invertible sheaf $\CL$ on $S$ and a fiberwise non-zero reciprocal map $\rho\colon \mrV\to\Gamma(S,\CL)$.  The open subscheme $\OV\subset Q_V$ represents the subfunctor of fiberwise injective reciprocal maps.
\end{theorem}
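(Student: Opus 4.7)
The plan is to invoke the universal property of $\Proj$: morphisms $S \to Q_V = \Proj R_V$ correspond to isomorphism classes of pairs $(\CL, \alpha)$ consisting of a line bundle $\CL$ on $S$ together with a graded $\BF_q$-algebra homomorphism $\alpha\colon R_V \to \bigoplus_{n \geq 0} \Gamma(S, \CL^{\otimes n})$ whose degree-one part generates $\CL$ Zariski-locally. Because $R_V$ is generated as an $\BF_q$-algebra by its degree-one elements $\{1/v\}_{v \in \mrV}$, such an $\alpha$ is determined by the map $\rho\colon \mrV \to \Gamma(S, \CL)$, $v \mapsto \alpha(1/v)$, and the generation condition translates into fiberwise non-vanishing of $\rho$. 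The heart of the argument is therefore to identify which maps $\rho$ extend to graded ring homomorphisms $R_V \to \bigoplus_n \Gamma(S,\CL^{\otimes n})$, and my claim is that these are precisely the reciprocal ones.

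To see that $\rho_V\colon \mrV \to \Gamma(Q_V, \CO_{Q_V}(1))$, $v \mapsto 1/v$, is itself reciprocal and fiberwise non-zero, one checks the axioms directly in the function field $K_V$: axiom (1) is the identity $1/(\xi v) = \xi^{-1}(1/v)$, while axiom (2) follows from $1/v + 1/v' = (v+v')/(vv')$ upon multiplying both sides by $1/(v+v')$. Conversely, given a pair $(\CL,\rho)$ with $\rho$ reciprocal and fiberwise non-zero, I would define $\tilde\rho\colon R_V \to \bigoplus_n \Gamma(S,\CL^{\otimes n})$ by sending $1/v \mapsto \rho(v)$ and extending multiplicatively and $\BF_q$-linearly. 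The Proj universal property then produces a unique morphism $f_\rho\colon S \to Q_V$ realizing $\rho$ as the pullback of $\rho_V$ under the induced isomorphism $\CL \cong f_\rho^* \CO_{Q_V}(1)$.

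The main obstacle is well-definedness of $\tilde\rho$, i.e., verifying that the reciprocal axioms imply every polynomial identity among the $1/v$'s that holds in $R_V \subset K_V$. My plan is to bypass an explicit presentation and argue locally. For each $v \in \mrV$, let $S_v \subset S$ be the open locus where $\rho(v)$ generates $\CL|_{S_v}$; by the fiberwise non-vanishing hypothesis, $S = \bigcup_{v \in \mrV} S_v$. On $S_v$, the quotients $\rho(w)/\rho(v)$ for $w \in \mrV$ are well-defined sections of $\CO_{S_v}$, and the reciprocal axioms are precisely the relations needed for these to satisfy the polynomial identities defining the coordinate ring of the standard affine chart $D_+(1/v) \subset Q_V$. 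This yields a morphism $S_v \to D_+(1/v)$, and axiom (2) forces two such local morphisms to agree on overlaps $S_v \cap S_{v'}$, so they glue to the desired global $f_\rho$. Uniqueness is automatic from the recovery $\rho(v) = f_\rho^*(\rho_V(v))$.

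For the final assertion, $\OV = \Proj(RS_V)$ is obtained from $Q_V$ by inverting every $1/v$, so $\OV \subset Q_V$ is the open subscheme on which every $\rho_V(v)$ is a generator of $\CO_{Q_V}(1)$. A morphism $f_\rho$ thus factors through $\OV$ precisely when every $\rho(v)$ generates $\CL$ at every point of $S$, which—using the $\BF_q^\times$-scaling and sum relations encoded in axioms (1) and (2)—is equivalent to fiberwise injectivity of $\rho$ on $\mrV$, completing the modular description of $\OV$.
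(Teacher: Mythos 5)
This statement is quoted from Pink--Schieder (\cite{P-Sch}, Theorem 7.10 and Proposition 7.11) and the thesis gives no proof of its own, so your proposal must be judged as a self-contained argument. As such it has a genuine gap at exactly the point you yourself flag as ``the main obstacle.'' Your reduction via the universal property of $\Proj$ is fine: since $R_V$ is generated in degree one by the $1/v$, a morphism $S\to Q_V$ is equivalent to a pair $(\CL,\alpha)$ with $\alpha$ a graded homomorphism whose degree-one part generates $\CL$, and this is determined by $\rho(v):=\alpha(1/v)$. The entire content of the theorem is then the claim that the two reciprocal axioms generate \emph{all} relations among the elements $1/v$ inside $K_V$ --- equivalently, that the natural surjection from the abstract $\BF_q$-algebra presented by generators $Y_v$ ($v\in\mrV$) and the relations $Y_{\alpha v}=\alpha^{-1}Y_v$ and $Y_vY_{v'}=Y_{v+v'}(Y_v+Y_{v'})$ onto $R_V$ is an isomorphism. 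This presentation theorem is the main algebraic result of \cite{P-Sch}, proved there by a nontrivial induction on $\dim V$; it is not something that can be asserted.

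Your proposed workaround --- passing to the loci $S_v$ and mapping to the charts $D_+(1/v)$ --- does not bypass this: the coordinate ring of $D_+(1/v)$ is the degree-zero part of $R_V[(1/v)^{-1}]$, generated by the ratios $v/w$ for $w\in\mrV$, and to send $v/w\mapsto \rho(w)/\rho(v)$ you must know that every polynomial identity among the $v/w$ holding in $K_V$ follows from the reciprocal axioms. That is the same presentation problem, merely localized, and your sentence asserting that ``the reciprocal axioms are precisely the relations needed'' is an unproved restatement of it. The gluing of the local morphisms over $S_v\cap S_{v'}$ rests on the same unestablished fact. The remaining pieces of your argument are sound: $\rho_V$ is indeed reciprocal by the computation you give, and for the last assertion the equivalence between fiberwise injectivity and fiberwise invertibility of a reciprocal map does follow from axiom (2) (if $\rho(v)=\rho(w)\neq 0$ in a fiber with $v\neq w$, then $\rho(v)\rho(-w)=\rho(v-w)(\rho(v)+\rho(-w))=0$ forces $\rho(v)^2=0$). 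But without the presentation theorem --- or an explicit citation of it --- the central implication ``reciprocal $\Rightarrow$ extends to a graded homomorphism on $R_V$'' is unproven, and the proof is incomplete.
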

Pink and Schieder construct a natural morphism
\begin{equation}\label{BVQV}
\pi_Q\colon B_V\to Q_V,
\end{equation}
which is compatible with the natural inclusions $\OV\into B_V$ and $\OV\into Q_V$. This is done as follows (see \cite[Theorem 10.17]{P-Sch} for details): Let $\CE_\bullet\in B_V(S)$. One considers the commutative diagram of invertible sheaves $(V'\otimes \CO_S)/\CE_{V'}$ for all $0\neq V'\subset V,$ with the natural homomorphisms $(V''\otimes\CO_S)/\CE_{V''}\to(V'\otimes\CO_S)/\CE_{V'}$ for each $0\neq V''\subset V'\subset V.$ Dualizing, one obtains a direct system (which is not filtered) and defines 
\begin{equation}\label{RecipLB}\CL:=\varinjlim_{0\neq V'\subset V}\bigl((V'\otimes \CO_S)/\CE_{V'}\bigr)^{-1}.
\end{equation}
 One can show that $\CL$ is an invertible sheaf on $S$.
To define a reciprocal map $\rho\colon \mrV\to\CL(S)$, we associate to each $v\in\mrV$ the $\CO_S$-linear homomorphism $\ell_v\colon (\BF_q\otimes\CO_S)/\CE_{\BF_qv}\to\CO_S,\,\, v\otimes a\mapsto a$. Then $\ell_v\in\bigl((\BF_qv\otimes\CO_S)/\CE_{\BF_qv}\bigr)^{-1},$ and we define $\rho(v)$ to be the image of $\ell_v$ in $\CL$ under the natural homomorphism from $(\BF_qv\otimes\CO_S)/\CE_{\BF_qv}$ to the direct limit. The association $\CE_\bullet\mapsto (\CL,\rho)$ is functorial and induces the map (\ref{BVQV}).
\vspace{2mm}

 Locally, the invertible sheaf $\CL$ has a simpler description:
\begin{lemma}[\cite{P-Sch}, Section 10]\label{locRecipLB}Let $\CE_\bullet\in U_\CF(S)$ for some flag $\CF=\{V_0,\ldots,V_m\}$ of $V$. Then $\CL=\bigl((V_1\otimes\CO_S)/\CE_{V_1}\bigr)^{-1}.$
\end{lemma}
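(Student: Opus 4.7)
The plan is to identify $\CL$ with $\CM_{V_1}:=\bigl((V_1\otimes\CO_S)/\CE_{V_1}\bigr)^{-1}$ by constructing a compatible family of morphisms $\beta_{V'}\colon \CM_{V'}\to \CM_{V_1}$, where $\CM_{V'}:=\bigl((V'\otimes\CO_S)/\CE_{V'}\bigr)^{-1}$. Via the universal property of the colimit these will assemble to a morphism $\beta\colon \CL\to \CM_{V_1}$ with $\beta\circ \iota=\id$, where $\iota\colon \CM_{V_1}\to \CL$ is the canonical map. This will exhibit $\iota$ as a split monomorphism; a direct inspection of the defining relations of the colimit will then show $\iota$ is also surjective, whence the desired isomorphism.

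The key step is to show that on $U_\CF$, for every non-zero subspace $V'\subset V$, the natural map
\[
(V'\otimes\CO_S)/\CE_{V'}\longto \bigl((V'+V_1)\otimes\CO_S\bigr)/\CE_{V'+V_1}
\]
is an isomorphism. I would deduce this from the open condition (\ref{UFRep}) applied to the pair $\bigl(V',\,V'+V_1\bigr)$: since every element of $\CF\setminus\{V_0\}$ contains $V_1$, any $W\in\CF$ containing $V'$ automatically contains $V'+V_1$, so the condition ``there exists no $W\in\CF$ with $V'\subset W$ and $V'+V_1\not\subset W$'' is trivially satisfied. Hence (\ref{UFRep}) yields $(V'+V_1)\otimes\CO_S=\CE_{V'+V_1}+V'\otimes\CO_S$, a surjection between invertible sheaves, which is therefore an isomorphism. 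Dualizing, I obtain an isomorphism $\CM_{V'+V_1}\xrightarrow{\sim}\CM_{V'}$, whose inverse I denote $g_{V'}$.

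I then define $\beta_{V'}$ as the composite of $g_{V'}\colon \CM_{V'}\to \CM_{V'+V_1}$ with the canonical direct-system morphism $\CM_{V'+V_1}\to \CM_{V_1}$, which is well defined since $V_1\subset V'+V_1$. Writing $f_{V',V''}\colon \CM_{V'}\to \CM_{V''}$ for the direct-system map associated to $V''\subset V'$, compatibility amounts to the identity $\beta_{V''}\circ f_{V',V''}=\beta_{V'}$, and this follows by a diagram chase through the square
\[
\xymatrix{\CM_{V'+V_1}\ar[r]^{\sim}\ar[d] & \CM_{V'}\ar[d]^{f_{V',V''}}\\ \CM_{V''+V_1}\ar[r]^{\sim} & \CM_{V''}}
\]
coming from functoriality applied to $V''+V_1\subset V'+V_1$, combined with the analogous triangle whose vertex is $\CM_{V_1}$. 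In particular $\beta_{V_1}=\id$, so $\beta\circ\iota=\id$.

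For the surjectivity of $\iota$, I observe that for any $m\in \CM_{V'}$ the defining relations of the colimit yield $[m]=[g_{V'}(m)]$ and $[g_{V'}(m)]=[\beta_{V'}(m)]$ in $\CL$, so $[m]=\iota(\beta_{V'}(m))$ lies in the image of $\iota$. Together with the split mono property, this shows $\iota$ is an isomorphism. The principal technical point I anticipate is the bookkeeping in the compatibility diagram and the fact that the indexing poset is not filtered, which forces one to argue directly with the explicit colimit presentation rather than invoke the usual shortcuts for filtered direct limits; once the isomorphism $\CM_{V'+V_1}\xrightarrow{\sim}\CM_{V'}$ coming from (\ref{UFRep}) is secured, however, functoriality makes all the remaining verifications routine.
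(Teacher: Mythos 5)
Your proof is correct. The paper itself gives no argument for this lemma --- it is quoted from Pink--Schieder, Section 10 --- and your proof is a complete and correct reconstruction along the intended lines: the heart of the matter is exactly the observation that the open condition (\ref{UFRep}), applied to the pair $V'\subset V'+V_1$ (admissible because every non-zero member of $\CF$ contains $V_1$), forces the transition map $(V'\otimes\CO_S)/\CE_{V'}\to\bigl((V'+V_1)\otimes\CO_S\bigr)/\CE_{V'+V_1}$ to be a surjection, hence an isomorphism, of invertible sheaves, after which the cocone $(\beta_{V'})$ and the retraction/epi argument identify the (non-filtered) colimit with $\CM_{V_1}$. The only cosmetic caveat is that the element-chasing in the colimit should be read stalkwise (or on local sections), since the colimit of sheaves is the sheafification of the presheaf colimit; this does not affect the argument.
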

Using Lemma \ref{locRecipLB} and unraveling all of the definitions, we obtain the following:
\begin{lemma}\label{RecipOV}
The restriction $\pi_Q|_\OV$ corresponds to the functor sending the isomorphism class of a pair $(\CM,\lambda)\in\OV(S)$ consisting of an invertible sheaf $\CM$ over a scheme $S$ and a fiberwise injective linear map $\lambda\colon V\to\Gamma(S,\CM)$ to the isomorphism class of the pair $(\CM^{-1},\lambda^{-1})\in Q_V(S)$, where $\lambda^{-1}$ is the fiberwise invertible reciprocal map defined by the formula $\lambda^{-1}(v)=\lambda(v)^{-1}$ for all $v\in\mrV$.
\end{lemma}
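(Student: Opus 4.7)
The plan is to simply unwind all the relevant definitions on the open subscheme $\OV \subset B_V$, using the local description of the line bundle $\CL$ from Lemma \ref{locRecipLB} to avoid dealing with the direct limit in any serious way.

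First, I would identify the tuple $\CE_\bullet \in B_V(S)$ corresponding to an $S$-valued point $(\CM,\lambda)$ of $\OV$ under the open immersion $\OV\into B_V$. Since $\lambda\colon V\to\CM(S)$ is fiberwise injective, for every non-zero subspace $V'\subset V$ the restriction $\lambda|_{V'}\otimes\id\colon V'\otimes\CO_S\to\CM$ is fiberwise surjective (any non-zero image in the one-dimensional fiber $\CM\otimes k(s)$ is the whole fiber), hence surjective by Nakayama. Setting $\CE_{V'}:=\ker(\lambda|_{V'}\otimes\id)$ yields an invertible quotient $(V'\otimes\CO_S)/\CE_{V'}$, and $\lambda|_{V'}$ induces a canonical isomorphism $\alpha_{V'}\colon (V'\otimes\CO_S)/\CE_{V'}\isoto\CM$. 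One checks immediately that $\CE_\bullet:=(\CE_{V'})_{V'}$ satisfies condition (\ref{BVcond}) and that the corresponding $B_V$-point lies in $\Omega_{\CF_0}\cong\OV$ and recovers $(\CM,\lambda)$.

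Next, since $\OV=\Omega_{\CF_0}\subset U_{\CF_0}$ for the trivial flag $\CF_0=\{0,V\}$, Lemma \ref{locRecipLB} applies with $V_1=V$ and gives a canonical identification
\[
\CL = \bigl((V\otimes\CO_S)/\CE_V\bigr)^{-1} \stackrel{\alpha_V^{-1}}{\isoto} \CM^{-1}.
\]
Thus the line bundle underlying the $Q_V$-point $\pi_Q|_\OV(\CM,\lambda)$ is canonically $\CM^{-1}$, as claimed.

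It remains to compute $\rho\colon\mrV\to\CL(S)=\CM^{-1}(S)$ on the level of sections. Fix $v\in\mrV$. By definition $\rho(v)$ is the image in $\CL$ of the element
\[
\ell_v\in\bigl((\BF_q v\otimes\CO_S)/\CE_{\BF_q v}\bigr)^{-1}, \qquad \ell_v\colon (\BF_q v\otimes\CO_S)/\CE_{\BF_q v}\to\CO_S,\ \overline{v\otimes a}\mapsto a,
\]
under the natural morphism to the direct limit. Via the isomorphism $\alpha_{\BF_q v}$, the class of $v\otimes 1$ maps to $\lambda(v)\in\CM(S)$; hence the dual of $\alpha_{\BF_q v}$ sends $\ell_v$ to the unique section of $\CM^{-1}$ pairing with $\lambda(v)$ to give $1$, namely $\lambda(v)^{-1}$. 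Finally, the transition maps in the direct system defining $\CL$ are, under the identifications $\alpha_{V'}$, all equal to the identity on $\CM$, so the map from $((\BF_q v\otimes\CO_S)/\CE_{\BF_q v})^{-1}$ to $\CL=\CM^{-1}$ is simply $(\alpha_{\BF_q v})^{-\mathsf{T}}$. Therefore $\rho(v)=\lambda(v)^{-1}$, as desired. The only ``obstacle'' is really bookkeeping: making sure the identifications of the various $(V'\otimes\CO_S)/\CE_{V'}$ with $\CM$ are mutually compatible so that the direct limit collapses to $\CM^{-1}$, which follows from the fact that $\alpha_{V''}$ and $\alpha_{V'}$ agree upon restriction for $V''\subset V'$ since both are induced by $\lambda$.
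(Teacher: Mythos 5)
Your proposal is correct and is essentially the paper's own argument: the paper gives no written proof beyond the phrase ``Using Lemma \ref{locRecipLB} and unraveling all of the definitions,'' and your write-up is exactly that unraveling — identifying $\CE_{V'}=\ker(\lambda|_{V'}\otimes\id)$, using the trivial flag in Lemma \ref{locRecipLB} to collapse $\CL$ to $\CM^{-1}$, and chasing $\ell_v$ through the (now trivial) direct system to get $\rho(v)=\lambda(v)^{-1}$. The bookkeeping point you flag at the end — that the $\alpha_{V'}$ are mutually compatible because all are induced by $\lambda$ — is indeed the only thing that needs checking, and you check it.
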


The existence of (\ref{BVQV}) suggests that one can construct a line bundle and reciprocal map directly from a given $V$-fern, and we now describe how this is done. The construction is analogous to that of the line bundle and fiberwise non-zero linear map associated to a $V$-fern from subsection \ref{LineBundles}. Let $(C,\lambda,\phi)$ be a $V$-fern over a scheme $S$. Let $C^0$ denote the contraction to the $0$-component (Section \ref{contractionicomp}) and let $\hat L:=C^0\setminus \lambda_0(S)$. The marking $\lambda^0\colon \hat V\to C^0(S)$ induces a map
$$\rho\colon\mrV\to \hat L(S).$$

\begin{proposition}\label{VfernRecip}There is natural structure 
of line bundle on $\hat L$ such that
\begin{enumerate}[label={(\alph*)}]
\item the zero section of $\hat L$ is $\lambda^0(\infty)$, and
\item the induced map $\rho\colon\mrV\to\hat\CL(S)$, where $\hat\CL$ denotes the sheaf of sections of $\hat L$, is a reciprocal map.
\end{enumerate}
\end{proposition}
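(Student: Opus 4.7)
The plan is to mirror Proposition~\ref{lbstr}, first constructing the line bundle structure by reduction to the standard chart on $\BP^1$ and then verifying the reciprocal identities by restricting to the smooth locus. First I would work Zariski locally on $S$: shrinking $S$ to a connected affine open $U = \Spec R$, we may choose a trivialization $C^0|_U \cong \BP^1_R$ sending $\lambda^0_0$ to $(0:1)$ and $\lambda^0_\infty$ to $(1:0)$. Under this trivialization $\hat L|_U$ becomes the affine chart $D^+(X) = \Spec R[Y/X] \cong \BA^1_R$, which inherits the standard line bundle structure whose zero section is $(1:0) = \lambda^0_\infty$, establishing (a) on $U$. To see that this local structure is independent of the trivialization, observe that any two such trivializations differ by an $R$-automorphism of $\BP^1_R$ fixing both $(0:1)$ and $(1:0)$, hence given by $(X:Y) \mapsto (aX:Y)$ for some $a \in R^\times$. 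In the coordinate $Y/X$ such an automorphism corresponds to scalar multiplication by $a^{-1}$, so the resulting line bundle structures agree. The local structures then glue to a global line bundle structure on $\hat L$.

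For the reciprocal property, I would reduce to the universal $V$-fern over $B_V$. An argument analogous to Proposition~\ref{basechangelb}, using that the formation of $C^0$ commutes with base change (Proposition~\ref{basechangeCi}), shows that the construction $(C,\lambda,\phi) \mapsto (\hat L, \rho)$ is itself compatible with base change. Hence it suffices to check that the map $\rho_V \colon \mrV \to \hat\CL_V(B_V)$ associated to $(\CC_V, \lambda_V, \phi_V)$ is reciprocal. On the open subscheme $\OV \subset B_V$ the universal $V$-fern is smooth, so $\CC_V|_\OV$ agrees with both $\CC_V^0|_\OV$ and $\CC_V^\infty|_\OV$. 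In any local trivialization sending $\lambda_0$ to $(0:1)$ and $\lambda_\infty$ to $(1:0)$, the $v$-marked section has the form $(\lambda_V(v):1)$, so in the coordinate $Y/X$ we compute $\rho_V(v) = \lambda_V(v)^{-1}$ for $v \in \mrV$. Both reciprocal identities then reduce to elementary identities in the multiplicative group of the coordinate ring, precisely the computation implicit in Lemma~\ref{RecipOV}.

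To extend these identities from $\OV$ to all of $B_V$, I would use that the conditions in Definition~\ref{recipmap} are equalities of global sections of the separated sheaves $\hat\CL_V$ and $\hat\CL_V^{\otimes 2}$ over $B_V$. Since $B_V$ is reduced (being smooth over $\Spec \BF_q$ as shown in \cite{P-Sch}) and $\OV$ is dense in $B_V$ (being the open stratum $\Omega_{\CF_0}$ in Proposition~\ref{strata}), two global sections of a separated sheaf agreeing on $\OV$ must agree on $B_V$ by \cite[Corollary 9.9]{GW}. The hard part will be the bookkeeping around extending to the singular locus: one must accept that for a point $s \in S$ with associated flag $\{V_0, V_1, \ldots, V_m\}$ and any $v \in V \setminus V_1$ the section $\rho(v)$ vanishes on the fiber over $s$ (since the $v$-marked section of $C^0$ is pushed to the image of the node $y_1$, which coincides with $\lambda^0_\infty$ on that fiber). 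The virtue of the reciprocal formalism over the linear one used for $L$ is precisely that this vanishing is naturally captured by the formula $\rho(v) = \lambda(v)^{-1}$ degenerating to $0$, and the reciprocal identities continue to hold on the boundary by the density argument.
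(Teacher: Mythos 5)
Your proposal is correct, and its skeleton matches the paper's: construct the line bundle structure by normalizing the $0$- and $\infty$-sections in trivializations of $C^0$ and noting that the transition automorphisms are linear, then reduce the reciprocal identities to the universal fern via compatibility with base change. The genuine divergence is in how reciprocality is verified. The paper checks it pointwise at every $s\in U_\CF$: near $s$ the contraction $(\CC_\CF)^0$ coincides with the line bundle attached to the $V_i$-contraction of $\CC_\CF$, where $V_i$ is the first nonzero step of the flag at $s$, and $\rho$ is identified explicitly as the extension by zero of $v\mapsto\lambda_i(v)^{-1}$, which is then checked to be reciprocal. You instead verify the two identities only over the dense open $\OV$, where $\rho=\lambda^{-1}$ makes them elementary, and propagate them to all of $B_V$ using that both sides are global sections of $\hat\CL_V$ (resp.\ $\hat\CL_V^{\otimes2}$) and that $B_V$ is reduced with $\OV$ schematically dense. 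Both arguments are sound and both ultimately lean on reducedness (the paper uses reducedness of $U_\CF$ to pass to fibers). Your route is shorter and avoids the stratum-by-stratum bookkeeping; what it gives up is the explicit extension-by-zero description of $\rho$ over the boundary, which you only remark on in passing, and which is the picture that makes the later identification of $\hat\pi_Q$ with $\pi_Q$ (Proposition \ref{QVmorph}) transparent. A cosmetic difference in the construction of the line bundle structure: the paper trivializes $(\CC_\CF)^0$ globally over each $U_\CF$ using the three pairwise disjoint sections $0$, $v$, $\infty$ for $v\in V_1\setminus\{0\}$ and then glues over the flag covering, whereas you work Zariski-locally on an arbitrary base with only the two sections $0$ and $\infty$, which forces you to check (as you do) that the residual $\BG_m$-ambiguity acts linearly; the two constructions produce the same structure.
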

\begin{proof}We use the notation $(-)^0$ to denote the contraction of a $V$-fern to the $0$-component. Observe first that the same argument as in the proof of Proposition \ref{basechangelb} shows that the construction of $\hat L$ is compatible with base change. Thus, by pullback of line bundle structure along the morphism $f_C\colon S\to B_V$ corresponding to $C$, it suffices to prove the proposition for the universal $V$-fern $\CC_V$. Let $\hat L_V:=(\CC_V)^0\setminus\{\lambda^0_V(B_V)\}$.
Fix a flag $\CF=\{V_0,\ldots, V_m\}$ and let $\hat L_\CF$ be the pullback of $\hat L_V$ along the open immersion $U_\CF\into B_V$. We will first endow $\hat L_\CF$ with a line bundle structure satisfying the properties in the statement of the proposition. We then show that these glue to the desired line bundle structure on $\hat L_V$.
\vspace{2mm}

For any $v\in V_1\setminus\{0\}$, the $0$- and $v$- and $\infty$-sections of $(\CC_\CF)^0$ are disjoint, so $(\CC_\CF)^0$ is a trivial $\BP^1$-bundle over $U_\CF$. Choose any isomorphism 
$$\overline{\eta}_\CF\colon (\CC_\CF)^0\stackrel\sim\to \BP^1_{U_\CF}$$
 sending the $0$-section to $(0:1)$ and the $\infty$-section to $(1:0)$, and consider the restriction 
 $$\eta_\CF\colon\hat L_\CF\stackrel\sim\to\BA^1_{U_\CF}\subset\BP^1_{U_\CF}$$ mapping to the standard affine chart around $(1:0)$. Furthermore, let $L_\CF$ be the inverse image under $\overline{\eta}_\CF$ of the standard affine chart around $(0:1)$ so that $(\CC_\CF)^0=L_\CF\cup \hat L_\CF.$
\vspace{2mm}

We endow $L_\CF$ and $\hat L_\CF$ with the line bundle structures induced by pulling back the trivial line bundle structure on $\BA^1_{U_\CF}$ via $\overline{\eta}_\CF$. Consider the map
$$\rho_\CF\colon \mrV\to \hat L_\CF(U_\CF).$$
In order to show that $\rho_\CF$ is a reciprocal map, it suffices by the reducedness of $U_\CF$ to show that
$$\rho_s\colon \mrV\to \hat L_s:=\hat L_\CF\times_{U_\CF}\Spec k(s)$$ is reciprocal for each $s\in U_\CF$. Fix $s\in U_\CF$. There is a unique flag $\CF_s\subset \CF$ such that $s\in \Omega_{\CF_s}$. Let $i\geq 1$ be minimal such that $V_i\in\CF_s$. In a Zariski open neighborhood $U\subset U_\CF$ of $s$, the contraction to the $0$-component $(\CC_\CF)^0$ is equal to the $V_i$-fern $(\CC_\CF)_i$ obtained by contracting with respect to $\hat V_i$. We also have $L_\CF|_U= L_i|_U$, where $L_i$ is the line bundle associated to $(\CC_\CF)_i$ as in Subsection \ref{LineBundles}. By Proposition \ref{fibnonzero}, the marking $\lambda_i\colon V_i\to L_\CF|_U(U)$ is a fiberwise invertible $\BF_q$-linear map. By construction, the composite of $\rho_\CF$ with the restriction $\hat\CL_\CF(U_\CF)\to\hat\CL_\CF(U)$ is precisely the extension by zero of the reciprocal map from $\mrV_i$ to $\hat \CL_\CF(U)$ defined by $v\mapsto \frac{1}{\lambda_i(v)}$. It follows that $\rho_s$ is reciprocal, as desired.
\vspace{2mm}

It remains to show that the line bundle structures on each $\hat L_\CF$ glue to a line bundle structure on $\hat L_V$. This follows from the fact that the isomorphisms $\eta_\CF$ used to define the (trivial) line bundle structure on $\hat L_\CF$ extend to isomorphisms $\overline{\eta}_\CF\colon (\CC_\CF)^0\stackrel\sim\to\BP^1_{U_\CF}$ so that for two flags $\CF$ and $\CG$, the automorphism $\eta_\CG\circ\eta_{\CF}^{-1}\colon\BA^1_{U_{\CF\cap\CG}}\stackrel\sim\to\BA^1_{U_{\CF\cap\CG}}$ extends to an automorphism of $\BP^1_{U_{\CF\cap\CG}}$ and is hence linear.
\end{proof}
As a corollary of the above proof, we obtain the following:
\begin{corollary}\label{RestOV}
Let $(C,\lambda,\phi)$ be a smooth $V$-fern over $S$. Let $(L,\lambda)$ be the line bundle and fiberwise invertible linear map from $V$ to $L(S)$ associated to $C$ as in Section \ref{LineBundles} and let $(\hat L,\rho)$ be as in Proposition \ref{VfernRecip}. Then $(\hat L,\rho)\cong (L^{-1},\lambda^{-1}).$ In addition, we have $\pi_Q|_\OV=\hat\pi_Q|_\OV.$
\end{corollary}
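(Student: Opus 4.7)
The plan is to split the work into two parts: first, establish the pairwise isomorphism $(\hat L,\rho)\cong(L^{-1},\lambda^{-1})$ in the smooth case; second, use this together with Lemma \ref{RecipOV} to conclude the equality $\pi_Q|_\OV=\hat\pi_Q|_\OV$.

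For the first part, I would begin by observing that smoothness of $C$ forces each fiber $C_s$ to be a single irreducible component, hence $C$ is a $\BP^1$-bundle over $S$ and both contraction morphisms $C\to C^\infty$ and $C\to C^0$ are already isomorphisms. Consequently $L$ is the complement of $\lambda_\infty(S)$ in $C$ and $\hat L$ is the complement of $\lambda_0(S)$, and together they form an affine cover of $C$.

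Next I would work Zariski-locally: choose an open cover on which $C\cong\BP^1_S$ with $\lambda_0\mapsto(0:1)$ and $\lambda_\infty\mapsto(1:0)$, giving coordinates $x:=X/Y$ on $L$ and $\hat x:=Y/X$ on $\hat L$. Any two such trivializations differ by $(X:Y)\mapsto(\alpha X:Y)$ for some $\alpha\in\CO_S^\times$, which acts as $x\mapsto\alpha x$ on $L$ and $\hat x\mapsto\alpha^{-1}\hat x$ on $\hat L$. Thus the sheaves of sections of $L$ and $\hat L$ have reciprocal transition cocycles and admit a canonical isomorphism $\hat L\stackrel{\sim}{\to} L^{-1}$ carrying $\lambda^0(\infty)$ to the zero section. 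To identify $\rho$ with $\lambda^{-1}$ under this isomorphism, I would note that in the local trivialization each $\lambda_v$ is a section $(c_v:1)$ with $c_v\in\CO_S^\times$ (a unit because $\lambda$ is fiberwise invertible on $V\setminus\{0\}$ in the smooth case), so $\lambda_v=c_v$ as a section of $L$ and $\rho(v)=1/c_v$ as a section of $\hat L$; the isomorphism $\hat L\cong L^{-1}$ then identifies $1/c_v$ with $c_v^{-1}=\lambda_v^{-1}$, which is exactly $\lambda^{-1}(v)$.

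For the second statement, my plan is to combine the above with the explicit description of $\pi_Q|_\OV$. By Proposition \ref{smoothferns0} (equivalently Corollary \ref{7-cor-smoothferns}), a point of $\OV(S)$ corresponds both to a fiberwise injective pair $(\CM,\lambda)$ and to an isomorphism class of smooth $V$-fern $C$ over $S$; the construction of $f_C$ in Subsection \ref{morphfern} yields $(L,\lambda)\cong(\CM,\lambda)$. Lemma \ref{RecipOV} says that $\pi_Q|_\OV$ sends $(\CM,\lambda)$ to $(\CM^{-1},\lambda^{-1})$, while by construction $\hat\pi_Q$ sends $C$ to $(\hat L,\rho)$, which by the first part equals $(L^{-1},\lambda^{-1})\cong(\CM^{-1},\lambda^{-1})$ in $Q_V(S)$. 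The two morphisms thus agree on $S$-valued points functorially in $S$, so they coincide as morphisms $\OV\to Q_V$. The only non-routine step is the canonical identification $\hat L\cong L^{-1}$ and the matching of $\rho$ with $\lambda^{-1}$; everything else reduces directly to Lemma \ref{RecipOV} and the functorial constructions already in place.
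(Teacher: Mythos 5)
Your proposal is correct and follows essentially the same route as the paper: the paper presents the first statement as an immediate corollary of the proof of Proposition \ref{VfernRecip} (where $\rho$ is identified locally with the extension by zero of $v\mapsto 1/\lambda_i(v)$, which in the smooth case is just $1/\lambda(v)$), and disposes of the second statement by citing Lemma \ref{RecipOV}. You have merely made explicit the local trivialization and the reciprocal-cocycle identification $\hat L\cong L^{-1}$ that the paper leaves implicit.
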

\begin{proof}
The latter statement follows from Lemma \ref{RecipOV}.
\end{proof}

The association $(C,\lambda,\phi)\mapsto (\hat L,\rho)$ determines a morphism 
\begin{equation}\label{hatBVQV}\hat\pi_Q\colon B_V\to Q_V
\end{equation}
\begin{proposition}\label{QVmorph}
The morphisms (\ref{BVQV}) and (\ref{hatBVQV}) are equal.
\end{proposition}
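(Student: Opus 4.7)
The plan is to reduce the desired equality on all of $B_V$ to the already-established agreement of the two morphisms on the open subscheme $\OV \subset B_V$, and then invoke the standard density argument for morphisms into a separated target.

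First, I would record that by Corollary \ref{RestOV} we already have $\pi_Q|_{\OV} = \hat\pi_Q|_{\OV}$, so nothing more need be said about the smooth locus. The entire content of the proof is therefore the extension of this equality across the boundary.

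Next, I would verify that $\OV$ is dense in $B_V$. By Proposition \ref{covering} the open subschemes $U_\CF$ with $\CF$ complete form an open cover of $B_V$, and by Proposition \ref{Coords} each such $U_\CF$ is an open subscheme of $\BA^{n-1}_{\BF_q}$, hence is irreducible and reduced. By Lemma \ref{contains} we have $\OV = \Omega_{\CF_0} \subset U_\CF$ for every flag $\CF$, and since $\OV$ is a nonempty open subscheme of each irreducible $U_\CF$ it is dense there; consequently $\OV$ is dense in $B_V$, and $B_V$ itself is reduced.

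Finally, since $Q_V = \Proj(R_V)$ is separated, two morphisms from the reduced scheme $B_V$ to $Q_V$ that coincide on the dense open $\OV$ must be equal (cf.\ the citation of \cite[Corollary 9.9]{GW} used in the proof of Lemma \ref{samehom}). Applying this to $\pi_Q$ and $\hat\pi_Q$ yields $\pi_Q = \hat\pi_Q$.

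There is no real obstacle here: all of the substantive comparison between the two reciprocal-map constructions has already been carried out in Corollary \ref{RestOV}, where one identifies $(\hat L,\rho)$ with $(L^{-1},\lambda^{-1})$ in the smooth case; the present statement is then a formal consequence of density and separatedness. If one wished to avoid the density argument, the alternative would be to construct, over each open $U_\CF$, an explicit isomorphism between the local description of $\CL$ from Lemma \ref{locRecipLB} and the line bundle $\hat L$ coming from the contraction to the $0$-component of $\CC_\CF$, compatibly with the reciprocal maps; this is possible but unnecessarily laborious given the density argument available.
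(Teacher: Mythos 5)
Your proof is correct and is essentially identical to the paper's: the paper likewise reduces to $\pi_Q|_{\OV}=\hat\pi_Q|_{\OV}$ (Corollary \ref{RestOV}) and concludes by density of $\OV$ in $B_V$ together with reducedness of the source and separatedness of the target. Your extra paragraph justifying the density of $\OV$ via the irreducible affine charts $U_\CF$ is a welcome elaboration of a step the paper takes for granted.
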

\begin{proof}Since $\OV\subset B_V$ is dense and $B_V$ is reduced and separated, it suffices to show that $\pi_Q|_\OV=\hat\pi_Q|_\OV$, which is Corollary \ref{RestOV}.
\end{proof}
\begin{remark}Using the fact that the corresponding diagram in \cite[Theorem 10.17]{P-Sch} commutes, we deduce that the morphisms corresponding to line bundle constructions involving $V$-ferns of Propositions \ref{lbstr} and \ref{VfernRecip} fit into the following commutative diagram:
$$\xymatrix{P_V&B_V\ar[l]\ar[r]&Q_V\\&\OV\ar@{^(->}[ul]\ar@{^(->}[ur]\ar@{^(->}[u]&.}$$

\end{remark}


\addcontentsline{toc}{section}{References}



\end{document}